\documentclass[a4paper]{amsart}

\usepackage[all]{xy}
\usepackage{amsmath}
\usepackage{amssymb}
\usepackage{amsthm}
\usepackage{latexsym}
\usepackage{enumitem}
\usepackage{prettyref}
\usepackage{hyperref}
\usepackage[abbrev,lite,alphabetic]{amsrefs}
\usepackage{bm}
\usepackage{tikz}
\usepackage{tikz-cd}
\usetikzlibrary{matrix,arrows,decorations.pathmorphing}%\usetikzlibrary{calc}
\usepackage{mathrsfs}
\usepackage{youngtab}
\usepackage{a4wide}
\usepackage{tikz}
\usepackage{tikz-cd}
\usepackage{comment}
\usepackage{xcolor}
\newtheorem{theorem}{Theorem}[section]
\newrefformat{thm}{\hyperref[{#1}]{Theorem~\ref*{#1}}}
\newtheorem{definition}[theorem]{Definition}
\newrefformat{def}{\hyperref[{#1}]{Definition~\ref*{#1}}}
\newtheorem{lemma}[theorem]{Lemma}
\newrefformat{lem}{\hyperref[{#1}]{Lemma~\ref*{#1}}}
\newtheorem{proposition}[theorem]{Proposition}
\newrefformat{prop}{\hyperref[{#1}]{Proposition~\ref*{#1}}}
\newtheorem{corollary}[theorem]{Corollary}
\newrefformat{cor}{\hyperref[{#1}]{Corollary~\ref*{#1}}}
\newtheorem{remark}[theorem]{Remark}
\newrefformat{rem}{\hyperref[{#1}]{Remark~\ref*{#1}}}

\newtheorem{convention}[theorem]{Convention}

{%\theorembodyfont{\upshape}
\newtheorem{examplecore}[theorem]{Example}}
\newrefformat{ex}{\hyperref[{#1}]{Example~\ref*{#1}}}

\newenvironment{example}{\begin{examplecore} \rm}{\hspace*{\fill}
$\square$\par\vspace{.1cm}\end{examplecore}}

\newcommand{\op}{\operatorname}
\newcommand{\scr}{\mathscr}
\newcommand{\et}{\mathrm{\acute{e}t}}
\newcommand{\ret}{\mathrm{r\acute{e}t}}

\newcommand{\spec}{\mathrm{Spec}\;}

\newcommand{\iso}{\cong}

\newcommand{\R}{\mathbb{R}}
\newcommand{\I}{\mathcal{I}}

\newcommand{\C}{\mathbb{C}}

\newcommand{\ga}{\gamma}

\newcommand{\de}{\delta}

\newcommand{\ra}{\rightarrow}
\newcommand{\si}{\sigma}

\newcommand{\Aut}{\operatorname{Aut}}

\newcommand{\II}{\mathcal{I}}

\newcommand{\pt}{{\rm pt}}

\newcommand{\intt}{\operatorname{int}}
\newcommand{\Out}{{\rm Out}}
\newcommand{\Int}{{\rm Inn}}
\newcommand{\inv}{{\rm inv}}
\begin{document}

\title[Witt-sheaf cohomology and homotopy fixed points]{Equivariant real cycle class map and\\ Witt-sheaf cohomology of classifying spaces}

\author{Lorenzo Mantovani, \'Akos K.\ Matszangosz and Matthias Wendt}

\date{February 2026}

\address{\'Akos K. Matszangosz, HUN-REN Alfr\'ed R\'enyi Institute of Mathematics, Re\' altanoda utca 13-15, 1053 Budapest, Hungary}
\email{matszangosz.akos@gmail.com}

\address{Matthias Wendt, Fachgruppe Mathematik und Informatik, Bergische Universit\"at Wuppertal, Gauss\-strasse 20, 42119 Wuppertal, Germany}
\email{m.wendt.c@gmail.com}

\subjclass[2020]{14F43, 55N91, 55R91, 19G12, 11E70}
\keywords{Witt-sheaf cohomology, equivariant cohomology, real cycle class map, homotopy fixed-points, classifying spaces, strong real forms, ...}

\thanks{\'A. K. M. is supported by the Hungarian National Research, Development and Innovation Office, NKFIH K 138828 and NKFIH PD 145995.}

\begin{abstract}
  In this paper, we study equivariant real cycle class maps for group actions on real schemes, with a view toward Witt-sheaf characteristic classes. The cycle class maps take values in singular cohomology of the real points of the quotient stack, which are identified with the homotopy fixed points of complex conjugation on the complex points. This provides a strong relation between Witt-sheaf cohomology of geometric classifying spaces of a real algebraic group $G$ and singular cohomology of classifying spaces of its strong real forms, which we discuss in a number of examples. As a sample application, we compute the number of Witt-sheaf cohomological invariants of spin groups over the reals.
\end{abstract}

\maketitle
\setcounter{tocdepth}{1}
\tableofcontents

\section{Introduction}

In algebraic geometry and motivic homotopy, there has been recent interest in cohomology theories related to quadratic forms, such as hermitian K-theory aka higher Grothendieck--Witt groups, Chow--Witt groups or Witt-sheaf cohomology. Besides their relevance in connection to quadratic forms, these cohomology theories have found applications in refined enumerative geometry, e.g.{}~\cite{levine:enumerative} or \cite{bachmann:wickelgren}, and the study of vector bundles or more generally torsors for linear algebraic groups, e.g.{}~\cite{asok:fasel:secondary}. There are also strong connections to real algebraic geometry and the topology of real points of real varieties, via real realization functors in motivic homotopy or real cycle class maps, e.g.{}~\cites{jacobson,4real}.

Equivariant versions of cohomology theories naturally appear when studying questions related to algebraic groups, and consequently equivariant cohomology for reductive group actions appears in many areas from enumerative geometry to geometric representation theory. Already the simplest case of the trivial $G$-action on a point is very interesting -- the cohomology of the classifying space of a group $G$ encodes the characteristic classes of $G$-bundles. In fact, the current project arose from attempts toward conceptual understanding of Witt-sheaf characteristic classes for quadratic forms, i.e., the computation of Witt-sheaf cohomology of ${\rm B}_\et{\rm O}(n)$ (over fields of characteristic $\neq 2$). 

The setting we consider for the paper is the following: we are interested in the \emph{Borel-equivariant} versions of  Witt-sheaf cohomology or ${\bf I}$-cohomology. Based on previous work in \cite{4real}, we describe an \emph{equivariant real cycle class map} from ${\bf I}$-cohomology to the singular cohomology of real points of a quotient stack associated to a group action $G\looparrowright X$. Moreover, we identify the target of the real cycle class map as the homotopy fixed points of complex conjugation on the complex points of the stack. This might not be too surprising since homotopy fixed point spaces have been much considered in the context of geometric representation theory of real Lie groups, and the current paper draws quite some inspiration from papers of Ben~Zvi--Nadler \cite{ben-zvi:nadler:loopreps} and Virk \cite{virk:hc2}. As a consequence, real forms of groups and actions play a significant role in equivariant Witt-sheaf cohomology, but as far as we know the consequences of this observation for quadratic cohomology theories haven't been spelled out so far. We also discuss a number of examples in Section~\ref{sec:examples}. As a particular case, our results provide a conceptual explanation of the Witt-sheaf characteristic classes of orthogonal groups in terms of Pontryagin and Euler classes of its real forms, the indefinite orthogonal groups. 

\subsection{Equivariant real cycle class map}

Recall that for a (separated, finite type) real scheme $X/\mathbb{R}$, Jacobson \cite{jacobson} defined a real cycle class map ${\rm H}^p_{\rm Zar}(X,{\bf I}^q)\to {\rm H}^p_{\rm sing}(X(\mathbb{R}),\mathbb{Z})$ from the Zariski cohomology of the sheaf ${\bf I}^q$ of $q$-th powers of fundamental ideals to singular cohomology of the real points $X(\mathbb{R})$ with the analytic topology. The real cycle class map is induced by a morphism $\op{sgn}\colon{\bf I}^q\to a_\ret\mathbb{Z}$ from ${\bf I}^q$ to the real-\'etale sheafification $a_\ret\mathbb{Z}$, essentially given by the signature of quadratic forms over real closed fields, cf.{}~\cite{bachmann:real-etale}. In \cite{4real}, this cycle class map was extended to incorporate line bundle twists, and various compatibilities with pullbacks, pushforwards (along closed immersions) and intersection products were established. In the present paper, we now want to consider a Borel-equivariant version of the real cycle class map. Alternatively, we want to extend the real cycle class map to quotient stacks, see Section~\ref{sec:quot-stacks} for terminology and background on quotient stacks. 

There are different ways to view (quotient) stacks as objects in motivic homotopy theory, see e.g.{}~\cite{hoskins:pepinlehalleur} or \cite{choudhury:deshmukh:hogadi:stacks}. On the one hand, one can consider approximations of the Borel construction by smooth schemes, following Totaro \cite{totaro:bg} and Edidin--Graham~\cite{edidin:graham:equivariant}. Using this perspective allows for a fairly direct definition of the real cycle class maps, and all compatibilities with pullbacks, pushforwards (along closed immersions) and intersection products from \cite{4real} carry over to the equivariant situation. On the other hand, the alternative way via realization of topological groupoids (or homotopy fixed point spaces) makes it easier to identify (or compute) the real points of spaces in terms of homotopy fixed points, which we will discuss in Section~\ref{sec:intro-hfp} below.

The following provides a synopsis of the basic properties of the equivariant real cycle class map defined in terms of approximations of the Borel construction, cf.{}~Theorem~\ref{thm:real-cycle-class}. Precise formulations of the compatibility statements can be found in Section~\ref{sec:equivariant-cycle-class}. To explain the notation in the theorem, for a group action $G\looparrowright X$ we denote by $[G\backslash X]$ the associated quotient stack as an object in the unstable motivic homotopy category, cf.{}~Section~\ref{sec:prelims}, and by ${\rm Re}_{\mathbb{R}}[G\backslash X]$ its real realization, cf.{}~Section~\ref{sec:real-pts-hc2}. 

\begin{theorem}
  \label{thm:main-real-cycle}
Let $G$ be a linear algebraic group over $\mathbb{R}$ and let $G\looparrowright X$ be a smooth scheme with $G$-action, and let $\mathscr{L}\in {\rm CH}^1_G(X)$ be a $G$-equivariant line bundle on $X$. Then the signature map ${\rm sgn}\colon {\bf I}^q\to a_{\ret}\mathbb{Z}$ induces a well-defined equivariant real cycle class map
\[
{\rm H}^p_G(X,{\bf I}^q(\mathscr{L}))={\rm H}^p([G\backslash X],{\bf I}^q(\mathscr{L}))\to {\rm H}^p_{\rm sing}({\rm Re}_{\mathbb{R}}[G\backslash X],\mathbb{Z}(\mathscr{L}))
\]
which satisfies the following properties:
\begin{enumerate}[label=\roman*)]

  \item The equivariant real cycle class map only depends on the stack $[G\backslash X]$. 
  \item The case of the trivial group $G=\{1\}$ recovers the non-equivariant cycle class map of \cites{jacobson,4real}. For a free action $G\looparrowright X$, the equivariant real cycle class map agrees with the non-equivariant cycle class map for the quotient scheme $G\backslash X$.
  \item The equivariant real cycle class map is compatible with the intersection product, cf.{}~Proposition~\ref{prop:compat-product}. 
  \item The equivariant real cycle class map is compatible with (lci) pullbacks and proper pushforwards (along representable closed immersions of quotient stacks), cf.{}~Propositions~\ref{prop:compat-pullback} and \ref{prop:compat-pushforward}. It is compatible with localization sequences and the quotient/induction equivalences, cf.{}~Corollaries~\ref{cor:quotient-equiv} and \ref{cor:induction-equiv}.
  \item The real cycle class map is an isomorphism for $q\gg 0$, and it is an isomorphism after tensoring with $\mathbb{Z}[1/2]$.
\end{enumerate}
\end{theorem}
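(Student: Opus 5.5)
The plan is to define the map through Totaro/Edidin--Graham approximations of the Borel construction and to reduce every listed property to the corresponding non-equivariant statement of \cites{jacobson,4real}.

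\textbf{Construction.} Fix $p$. Choose a representation $V$ of $G$ and a $G$-stable open $U\subset V$ on which $G$ acts freely, such that the complement has codimension $>p+1$. Then $X\times^G U=G\backslash(X\times U)$ is a smooth scheme, and $\mathscr{L}$ descends to a line bundle on it. By homotopy invariance and purity for ${\bf I}^q$-cohomology, ${\rm H}^p_G(X,{\bf I}^q(\mathscr{L}))\cong{\rm H}^p(X\times^G U,{\bf I}^q(\mathscr{L}))$.

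On the topological side, $(X\times^G U)(\mathbb{R})$ approximates ${\rm Re}_{\mathbb{R}}[G\backslash X]$ through degree $p$. To see this, use that real realization of the colimit over approximations of $[G\backslash X]$ computes ${\rm Re}_{\mathbb{R}}[G\backslash X]$. We also need that the real points of the complement of $U$ have codimension $>p+1$ in $V(\mathbb{R})$. For this we choose $V$ defined over $\mathbb{R}$ with a real-closed codimension bound, e.g.\ $V=\mathrm{Hom}(\mathbb{A}^n,\mathbb{A}^N)$ and $U$ the injective maps. Then $(X\times U)(\mathbb{R})\to {\rm Re}_{\mathbb{R}}[G\backslash X]$ is $(p+1)$-connected.

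We define the equivariant map as Jacobson's signature map for $X\times^G U$ composed with this identification.

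\textbf{Well-definedness.} Independence of $(V,U)$ follows from Bogomolov's double fibration trick. Compare $U$ and $U'$ through $X\times^G(U\times U')$, which is a vector bundle complement over both. Then use naturality of the non-equivariant cycle class map under smooth pullbacks \cite{4real}, together with homotopy invariance on both sides. This argument also gives i): an isomorphism of quotient stacks $[G\backslash X]\cong[H\backslash Y]$ identifies the approximation systems up to such zig-zags.

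\textbf{Properties.} ii) For $G=\{1\}$, take $U=V$. For a free action, $X\times^G U\to G\backslash X$ is a vector bundle complement of high codimension, so naturality of the non-equivariant map settles it. iii) and iv) follow from the non-equivariant compatibilities of \cite{4real}. Products are computed on $X\times^G(U\times U')$ via diagonal pullback. Representable closed immersions and lci morphisms of stacks pull back to closed immersions and lci morphisms of the approximations. Localization sequences and the quotient/induction equivalences, e.g.\ $[H\backslash X]\cong[G\backslash(G\times^H X)]$, reduce to ii) and pullback compatibility. v) Jacobson's theorem says ${\rm sgn}$ induces an isomorphism of Zariski sheaf cohomology for $q>\dim$. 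After inverting $2$ the map is an isomorphism in all degrees, using ${\bf W}[1/2]\cong a_\ret\mathbb{Z}[1/2]$ and the identification of real-étale cohomology with cohomology of real points. We apply this to $X\times^G U$. The required bound on $q$ depends only on $\dim X\times^G U$, which grows with $p$; so "$q\gg0$" is to be read as depending on $p$. Since ${\bf I}^q\to{\bf I}^{q+1}$ is compatible with multiplication by $2$ on $\mathbb{Z}$, stabilization is uniform in the approximation.

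\textbf{Main obstacle.} I expect the hardest step to be the comparison that $(X\times^G U)(\mathbb{R})$ computes the singular cohomology of ${\rm Re}_{\mathbb{R}}[G\backslash X]$ in degrees $\le p$. The difficulty is that real points of a free quotient are not the quotient of real points: the torsors over a real point are classified by $H^1(\mathbb{R},G)$, which is where the strong real forms enter. I would first prove it via the colimit description of the realization: real realization commutes with the relevant colimits and sends the approximation maps to highly connected maps of spaces. This avoids any explicit description of the real points.
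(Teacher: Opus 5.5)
Your route is the paper's. You define ${\rm cyc}_{\mathbb{R}}$ on a Totaro/Edidin--Graham approximation and identify ${\rm Re}_{\mathbb{R}}[G\backslash X]$ with the colimit of real points of the approximations. You then reduce well-definedness and i)--v) to the non-equivariant results of \cite{jacobson} and \cite{4real}. Your double-fibration argument in place of the paper's limit over an admissible gadget is a harmless variant. Your reading of ``$q\gg 0$'' as depending on $p$ matches Remark~\ref{rem:big-enough}.

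\textbf{The gap: topological stabilization.} This step does not work as you wrote it. A codimension bound for $(V\setminus U)(\mathbb{R})$ in $V(\mathbb{R})$ does not control $(X\times^G U)(\mathbb{R})$. As you note yourself, real points of the quotient are not quotients of real points. Whole components, namely those of non-trivial torsors, do not meet the image of $(X\times U)(\mathbb{R})$; an example is the purely imaginary component of $((\mathbb{A}^n\setminus 0)/\mu_2)(\mathbb{R})$. Over such components, the real fibres of $X\times^G(U\times V')\to X\times^G U$ are real points of twisted forms of $V'$, not $V'(\mathbb{R})$.

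So the special choice $V=\mathrm{Hom}(\mathbb{A}^n,\mathbb{A}^N)$ does not yield the asserted $(p+1)$-connectivity of $(X\times^G U)(\mathbb{R})\to{\rm Re}_{\mathbb{R}}[G\backslash X]$. (You also wrote $X\times U$ there.) Your ``main obstacle'' paragraph asserts this high connectivity without proving it.

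\textbf{The missing input, which the paper uses,} is a codimension estimate inside the smooth quotient varieties. Let $C$ be the closed complement of $X\times^G U$ in $X\times^G U'$, or of $X\times^G(U\times U')$ in $X\times^G(U\times V')$. Then $\op{codim}C\geq p+2$. Since $\dim_{\mathbb{R}}C(\mathbb{R})\leq\dim C$ for any real variety, $C(\mathbb{R})$ has real codimension $\geq p+2$ in the manifold of real points. This holds for every choice of $V$.

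Excision then gives the isomorphism on ${\rm H}^{\leq p}$ with coefficients in $\mathbb{Z}(\mathscr{L})$. The paper does this either via a semialgebraic triangulation, or algebraically via Jacobson's description of singular cohomology as Zariski cohomology of $\op{colim}_n{\bf I}^n$. With this in place, your colimit argument and your proof of ii) for free actions go through.

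\textbf{A smaller correction.} Compatibility with localization sequences does not reduce to ii) and pullbacks. The boundary maps need the non-equivariant localization compatibility \cite{4real}*{Proposition~4.8, Corollary~4.13}, applied to $Z\times^G U\hookrightarrow X\times^G U\hookleftarrow (X\setminus Z)\times^G U$.
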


Similarly to the non-equivariant cycle class map, it is possible to twist the equivariant quadratic cohomology theories by line bundles on the quotient stack, i.e., equivariant line bundles. For an equivariant line bundle $\mathscr{L}$, the $\mathscr{L}$-twisted equivariant ${\bf I}$-cohomology maps to the singular cohomology of ${\rm Re}_{\mathbb{R}}[G\backslash X]$ with coefficients in the local system $\mathbb{Z}(\mathscr{L})$. This assignment $\mathscr{L}\to\mathbb{Z}(\mathscr{L})$ is encoded in the mod 2 equivariant cycle class map
\[
  {\rm Ch}^1_G(X)\to {\rm H}^1_{\rm sing}({\rm Re}_{\mathbb{R}}[G\backslash X],\mathbb{F}_2)\colon \mathscr{L}\mapsto \mathbb{Z}(\mathscr{L}).
\]
One reason for incorporating twists by equivariant line bundles is that they have some relevance in the localization sequences, where they appear as (determinants of) equivariant normal bundles.

The behaviour of twists in the real cycle class maps can be quite subtle. One particularly interesting case is the real cycle class map for the classifying space of the normalizer $N={\rm N}_{{\rm SL}_2}(T)$ of the maximal torus $T$ in ${\rm SL}_2$. The Witt-sheaf cohomology in this case was computed by Marc Levine in \cite{levine:normalizer}, and we discuss the relation between his computation and the real realization ${\rm Re}_{\mathbb{R}}[N\backslash *]={\rm BS}^1\sqcup {\rm BC}_4$ in Section~\ref{sec:normalizer}.

\begin{remark}
  Obviously, it is also possible to extend cycle class maps on Chow groups to the equivariant setting. For our purposes, we also provide an extension of the Borel--Haefliger mod 2 cycle class map ${\rm Ch}^p(X)\to{\rm H}^p(X(\mathbb{R}),\mathbb{F}_2)$ to the equivariant setting. In fact, we discuss a cycle class map on equivariant mod 2 Milnor K-cohomology for a real scheme $X/\mathbb{R}$, cf.{}~Theorem~\ref{thm:borel-haefliger-equivariant}:
  \[
  {\rm H}^p_G(X,{\bf K}^{\rm M}_q/2)\to {\rm H}^p_{\rm sing}({\rm Re}_{\mathbb{R}}[G\backslash X],\mathbb{F}_2).
  \]
  The case $p=q=1$, providing real realization of equivariant line bundles, was already used above in the formulation of the twisted equivariant real cycle class map. 
\end{remark}

\subsection{Realizations of quotient stacks and homotopy fixed points}
\label{sec:intro-hfp}

The equivariant real cycle class map can provide some intuition of what the equivariant Witt-sheaf cohomology or ${\bf I}$-cohomology should look like, but to make full use of this, we need to understand real points of quotient stacks. In general, for an action $G\looparrowright X$ of a non-special group $G$, the real points $[G\backslash X](\mathbb{R})$ of a quotient stack are not simply given by $[G(\mathbb{R})\backslash X(\mathbb{R})]$ (i.e., the Borel construction for the real Lie group action $G(\mathbb{R})\looparrowright X(\mathbb{R})$). This is already apparent for $\mu_2\looparrowright *$. In this case, the nerve of the groupoid $[\mu_2\backslash*](\mathbb{R})$ is identified with ${\rm BC}_2\sqcup{\rm BC}_2$, compatible with the formulas for Witt-sheaf cohomology in \cite{dilorenzo:mantovani}, cf. also the discussion in Section~\ref{sec:finite}. This means that the target of the equivariant cycle class map is not simply equivariant singular cohomology ${\rm H}^*_{{\rm sing}, G(\mathbb{R})}(X(\mathbb{R}),\mathbb{Z})$ for the action $G(\mathbb{R})\looparrowright X(\mathbb{R})$.

Instead, viewing the quotient stack as an object in motivic homotopy, there are two ways to understand its real realization. On the one hand, one can consider scheme approximations of the Borel construction and take their real points. For the example $\mu_2$, this produces the correct answer; the two connected components arise from the points in $\left(\mathbb{C}^n\setminus\{0\}\right)/\{\pm 1\}$ having all coordinates real resp. all coordinates imaginary. However, this procedure might be difficult to carry out in practice, depending on the group $G$. On the other hand, we can take the geometric realization of the (topological) groupoid of real points. This can generally be identified with the homotopy fixed points for complex conjugation on the geometric realization of the (topological) groupoid $[G\backslash X](\mathbb{C})$ of complex points of the stack.

In fact, we can get an even better perspective on real realization functors by factoring through ${\rm C}_2$-equivariant homotopy.\footnote{We apologize that there are two different equivariant settings here. Whenever we are in the context of equivariant or real realization and our group is ${\rm C}_2$, we typically talk about Bredon-type (genuine) equivariant topology. On the other hand, whenever the group of equivariance is a linear group acting on a scheme, we are talking Borel-type equivariant topology. This shouldn't cause more than baseline confusion.} In the $\Gamma$-equivariant setting, there are two types of equivalences that can be considered: coarse $\Gamma$-equivalences only care about the underlying space, while fine $\Gamma$-equivalences are maps which induce equivalences on fixed-point spaces for all subgroups $H\leq \Gamma$. Over $\mathbb{R}$, there are well-known realization functors which we discuss in Section~\ref{sec:equivariant-htpy-realization}: from the (Nisnevich) motivic homotopy category to the fine ${\rm C}_2$-equivariant homotopy category, and similarly from \'etale motivic to coarse ${\rm C}_2$-equivariant homotopy. We can get the real realization by first taking fine ${\rm C}_2$-equivariant realization, followed by (genuine) ${\rm C}_2$-fixed points. The key point we want to make is that the equivariant realization of a quotient stack is also an equivariant quotient stack, i.e., a \emph{coarse} homotopy orbit space. This way, homotopy fixed points naturally appear in the real realization of \'etale classifying spaces. 

The result describing equivariant and real realizations of quotient stacks is then the following, cf.{}~Section~\ref{sec:real-pts-hc2}, in particular Propositions~\ref{prop:equiv-realization-bar} and \ref{prop:krishna-equiv-realization} for the Nisnevich case, and Propositions~\ref{prop:real-admissible-gadget} and \ref{prop:real-simplicial} for the \'etale case.

\begin{theorem}
  \label{thm:main1}
  Let $G$ be a linear algebraic group over $\mathbb{R}$, let $G\looparrowright X$ be a smooth scheme with $G$-action, and denote by $[G\backslash X]$ the quotient stack. Denote by $G(\mathbb{C})\looparrowright X(\mathbb{C})$ the equivariant realization, taking complex points with the analytic topology and ${\rm C}_2$-action by complex conjugation. Similarly, we denote by $G(\mathbb{R})\looparrowright X(\mathbb{R})$ the real realization, taking real points with the analytic topology.

  \begin{description}
  \item[equivariant] The ${\rm C}_2$-equivariant realization of Nisnevich homotopy orbits $X_{{\rm h}G}$ is given by the fine homotopy orbits $X(\mathbb{C})_{{\rm h}G(\mathbb{C})}$. The ${\rm C}_2$-equivariant realization of \'etale homotopy orbits $[G\backslash X]$ is given by the coarse homotopy orbits $[G(\mathbb{C})\backslash X(\mathbb{C})]$. In particular, ${\rm Re}_{\rm C_2}{\rm B}_\et G$ is the space classifying equivariant principal $({\rm C}_2,G(\mathbb{C}))$-bundles. 
  \item[real]
    The real realization of Nisnevich homotopy orbits $X_{{\rm h}G}$ is given by the homotopy orbits $X(\mathbb{R})_{{\rm h}G(\mathbb{R})}$. The real realization of \'etale homotopy orbits, aka the quotient stack $[G\backslash X]$, is given by the homotopy fixed-points of the equivariant realization:
  \[
    {\rm Re}_{\mathbb{R}}([G\backslash X])\simeq {\rm Re}_{\mathbb{C}}([G\backslash X])^{{\rm hC}_2},
  \]
  \end{description}
  In particular, we have the following equivalent descriptions of the real realization of a quotient stack in motivic homotopy:
  \begin{itemize}
  \item as the geometric realization of the (topological) groupoid $[G\backslash X](\mathbb{R})$ of real points of the quotient stack,
  \item as the homotopy-fixed points of the ${\rm C}_2$-equivariant realization (which is the geometric realization of the topological groupoid of complex points, with ${\rm C}_2$-action by complex conjugation), or
  \item as colimit of the real points of smooth scheme approximations of $[G\backslash X]$ using an admissible gadget (i.e., real points of an algebraic Borel construction).
  \end{itemize}
\end{theorem}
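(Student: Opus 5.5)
The plan is to work with simplicial presheaves and compute both realizations on explicit models. In the Nisnevich case, $X_{{\rm h}G}$ is modelled by the bar construction, the simplicial scheme $[n]\mapsto G^n\times X$. The ${\rm C}_2$-equivariant realization ${\rm Re}_{{\rm C}_2}$ is a left Quillen functor from the Nisnevich-local model structure to fine ${\rm C}_2$-spaces, sending a smooth scheme $Y$ to $Y(\mathbb{C})$ with conjugation. Since it commutes with homotopy colimits, it sends the bar construction to the simplicial ${\rm C}_2$-space $[n]\mapsto G(\mathbb{C})^n\times X(\mathbb{C})$. The geometric realization of this simplicial space is the fine homotopy orbit space $X(\mathbb{C})_{{\rm h}G(\mathbb{C})}$, with the diagonal ${\rm C}_2$-action.

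Genuine ${\rm C}_2$-fixed points commute with geometric realization of simplicial spaces and with finite products. Applying them levelwise gives $[n]\mapsto G(\mathbb{R})^n\times X(\mathbb{R})$, which realizes to $X(\mathbb{R})_{{\rm h}G(\mathbb{R})}$. This settles the real Nisnevich statement, using ${\rm Re}_\mathbb{R}=(-)^{{\rm C}_2}\circ{\rm Re}_{{\rm C}_2}$.

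For the étale case, I would use an admissible gadget à la Totaro/Morel--Voevodsky. Choose a faithful representation $G\subset{\rm GL}_N$ and open subsets $U_n\subset V_n$ of representations on which $G$ acts freely, with codimension of the complement growing. Then $[G\backslash X]\simeq\operatorname{colim}_n\,G\backslash(U_n\times X)$ in the étale (equivalently, for the ${\rm GL}_N$-torsor model, Nisnevich) homotopy category. Realizing these gives $\operatorname{colim}_n\,(G\backslash(U_n\times X))(\mathbb{C})$ with conjugation. On underlying spaces this is the homotopy quotient $[G(\mathbb{C})\backslash X(\mathbb{C})]$, because $U_n(\mathbb{C})$ becomes contractible in the colimit and carries a free $G(\mathbb{C})$-action. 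This is exactly the coarse statement, and it identifies ${\rm Re}_{{\rm C}_2}{\rm B}_\et G$ with a classifying space for $({\rm C}_2,G(\mathbb{C}))$-bundles.

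The real realization is then $\operatorname{colim}_n\,(G\backslash(U_n\times X))(\mathbb{R})$, which gives the third bullet. The real points of a quotient $G\backslash Y$ of a free action decompose by Galois cohomology: they form a disjoint union over classes $[\gamma]\in {\rm H}^1({\rm C}_2,G(\mathbb{C}))$ of the quotients of the real points of the twisted forms, $G_\gamma(\mathbb{R})\backslash Y_\gamma(\mathbb{R})$. Passing to the colimit identifies this with the nerve of the topological groupoid $[G\backslash X](\mathbb{R})$, which gives the first bullet.

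For the homotopy fixed point comparison, I would use that for a free $G(\mathbb{C})$-space $E$ with compatible conjugation, $(E\times X(\mathbb{C}))/G(\mathbb{C})$ is a ${\rm C}_2$-space. Its homotopy fixed points split over twisted forms in the same way, since a homotopy fixed point is a ${\rm C}_2$-equivariant section, i.e.\ a twisted real structure. The key input is that $U_\infty(\mathbb{C})$ is a universal $({\rm C}_2,G(\mathbb{C}))$-bundle in the fine sense, meaning its fixed points under every twisted conjugation are contractible. I would verify this from the fact that real points of complements of high-codimension real subvarieties in a real vector space are highly connected. This makes the comparison map from genuine fixed points to homotopy fixed points an equivalence: both sides compute maps out of ${\rm E}{\rm C}_2$ into a fine-universal object.

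I expect this step to be the main obstacle: the model must be fine-universal for all twisted forms at once, not just for $G(\mathbb{R})$. My first attempt would be to reduce to $G={\rm GL}_N$ and $X=*$ via the induction equivalence $[G\backslash X]\simeq[{\rm GL}_N\backslash({\rm GL}_N\times^G X)]$. There Hilbert 90 gives ${\rm H}^1=0$ for all twists, and Stiefel varieties give explicit fine classifying spaces.
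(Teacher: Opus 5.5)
Your Nisnevich argument and the admissible-gadget computation agree with the paper (Propositions~\ref{prop:equiv-realization-bar}, \ref{prop:krishna-equiv-realization} and \ref{prop:real-admissible-gadget}). For the first bullet you take a different and more explicit route. You split $(G\backslash(U_n\times X))(\mathbb{R})$ by torsor class into the pieces $G_\gamma(\mathbb{R})\backslash(U_n\times X)_\gamma(\mathbb{R})$. The paper instead compares $\mathcal{N}([G\backslash X](\mathbb{R}))$ with ${\rm Re}_{\mathbb{R}}[G\backslash X]$ through a ${\rm GL}_n$-presentation, Krishna's equivalence for the special group ${\rm GL}_n$, and the induction equivalence (Proposition~\ref{prop:real-simplicial}). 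Your route works. It also settles what you call the main obstacle: $U_\infty(\mathbb{C})^{\sigma_\gamma}=\op{colim}_n (U_n)_\gamma(\mathbb{R})$, and $(V_n)_\gamma$ is a real vector space by Hilbert~90 for ${\rm GL}(V_n)$, in which $(U_n)_\gamma$ has complement of the same growing codimension. So fine universality for all twisted forms is immediate, and no reduction to ${\rm GL}_N$ is needed.

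The genuine gap is the comparison of genuine and homotopy fixed points. Fine universality of the total space $U_\infty(\mathbb{C})$ does not make the base $(U_\infty(\mathbb{C})\times X(\mathbb{C}))/G(\mathbb{C})$ Borel complete (cofree). Genuine fixed points are maps out of $\pt$, not out of ${\rm EC}_2$, so ``both sides compute maps out of ${\rm EC}_2$'' is not correct. A ${\rm C}_2$-map from ${\rm EC}_2$ is also not a twisted real structure, so ``homotopy fixed points split over twisted forms'' is unjustified. For general $X$ the space-level comparison is in fact false: with $G$ trivial and $X=\mathbb{P}^1$ it would give $\mathbb{RP}^1\simeq(\mathbb{CP}^1)^{\rm hC_2}$, which the paper rules out in Section~\ref{sec:fixed-points}.

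Your fallback does not help either. Induction replaces $X=\pt$ by ${\rm GL}_N/G$, not by a point, so the twisted forms reappear in the $X$-direction; for instance the real points of ${\rm GL}_n/{\rm O}(n)$ have $n+1$ components. That is exactly where genuine and homotopy fixed points diverge.

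In the paper, homotopy fixed points enter instead through Galois/\'etale descent for the stack. Corollary~\ref{cor:main-hc2} identifies the topological groupoid $[G\backslash X](\mathbb{R})$ with the fixed-point groupoid $[G\backslash X](\mathbb{C})^{\rm hC_2}$ of Definition~\ref{def:hG-groupoid}. Its realization is then treated as the homotopy fixed points (Section~\ref{sec:fp-gpd-hfp}, Corollary~\ref{cor:equivariant-bg-fp}). That groupoid is strict in the $X$-direction (for trivial $G$ it is just $X(\mathbb{R})$) and homotopical only in the group direction. So the statement provable for general $X$ is this groupoid-level one.

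Your decomposition into action groupoids $G_\gamma(\mathbb{R})\looparrowright X_\gamma(\mathbb{R})$ already is that descent identification: each piece is the part of the fixed-point groupoid of Example~\ref{ex:hc2-orbits} with a fixed cocycle class. Use it in place of the universality argument. For $X=\pt$, the paper passes to the space ${\rm B}G(\mathbb{C})^{\rm hC_2}$ by appeal to Thomason and Guillou--May--Merling, not by any universality property of the gadget.
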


Modulo some discussion of equivariant and real realization functors, this is ultimately a consequence of the \'etale descent property for quotient stacks. Versions of the homotopy-fixed-point description have been noted in other places, such as \cite{GuillouMayMerling2017} from an equivariant perspective, \cite{acosta} from a GIT perspective (though somewhat hidden in Galois cohomology statements), and most recently \cites{realDM1,realDM2}.

We discuss this and more background on fixed points for group actions on stacks in Sections~\ref{sec:real-pts-fp-gpd}, \ref{sec:coarse} and \ref{sec:hc2-galois}. The complex and real realization of quotient stacks are discussed in Section~\ref{sec:real-pts-hc2}. In Section~\ref{sec:coarse}, we also indicate a proof that the ${\rm C}_2$-equivariant realization of a real quotient stack $[G\backslash *]$ is the universal ${\rm C}_2$-equivariant $G$-bundle.

The description in terms of homotopy fixed points allows very explicit computations of the real realization for classifying spaces ${\rm B}_\et G\simeq [G\backslash*]$ of a linear algebraic group $G$. We will discuss this (as well as connections to Galois cohomology and strong real forms) in Section~\ref{sec:hc2-galois}, cf. in particular Proposition~\ref{prop:bg-formula}.

\begin{corollary}
  Let $G$ be a smooth affine algebraic group over $\mathbb{R}$. We consider $G(\mathbb{C})$ as ${\rm C}_2$-group by means of the Galois automorphism of $G(\mathbb{C})$, which we denote by $\sigma$. Then there is an equivalence
  \[
    {\rm Re}_{\mathbb{R}}({\rm B}_\et G)\simeq \left({\rm B}G(\mathbb{C})\right)^{{\rm hC_2}}\simeq  \bigsqcup_{[g]\in {\rm H}^1({\rm C_2},G)}{\rm B}[G(\C)^{\rm C_2}],
  \]
  where $[g]\in {\rm H}^1({\rm C}_2,G)$ runs through the Galois cohomology set. On the right-hand side,  the group $G(\mathbb{C})$ in the component corresponding to $[g]$ has ${\rm C}_2$-action by the $[g]$-twisted complex conjugation $\si_g=\intt(g)\circ \si$.  
\end{corollary}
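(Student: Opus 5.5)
The first equivalence is the special case $X=*$ of Theorem~\ref{thm:main1}, which gives ${\rm Re}_{\mathbb{R}}([G\backslash *])\simeq {\rm Re}_{\mathbb{C}}([G\backslash *])^{{\rm hC}_2}$. Here ${\rm Re}_{\mathbb{C}}([G\backslash *])$ is the geometric realization of the topological groupoid $[G(\mathbb{C})\backslash *]$, namely ${\rm B}G(\mathbb{C})$, with ${\rm C}_2$ acting through $\sigma$. So the work is in computing $({\rm B}G(\mathbb{C}))^{{\rm hC}_2}$ for a topological group $G(\mathbb{C})$ with involution $\sigma$.

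For this I would use the semidirect product $\Gamma=G(\mathbb{C})\rtimes {\rm C}_2$, with the short exact sequence $1\to G(\mathbb{C})\to\Gamma\to{\rm C}_2\to 1$. This gives a fibration ${\rm B}G(\mathbb{C})\to{\rm B}\Gamma\to{\rm BC}_2$ whose monodromy action is $\sigma$, and ${\rm B}G(\mathbb{C})_{{\rm hC}_2}\simeq{\rm B}\Gamma$. The homotopy fixed points are then the space of sections,
\[
({\rm B}G(\mathbb{C}))^{{\rm hC}_2}\simeq \op{Map}_{/{\rm BC}_2}({\rm BC}_2,{\rm B}\Gamma).
\]
The standard description of mapping spaces between classifying spaces over a base applies here: components are conjugacy classes of splittings ${\rm C}_2\to\Gamma$, and each component is ${\rm B}$ of the centralizer of the splitting. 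Since ${\rm C}_2$ is discrete, a splitting is determined by an element $(g,\sigma)$ with $(g,\sigma)^2=1$, i.e.\ $g\sigma(g)=1$. This is exactly a $1$-cocycle, and $G(\mathbb{C})$-conjugacy of splittings is cohomologousness, so $\pi_0$ is ${\rm H}^1({\rm C}_2,G)$. The centralizer of $(g,\sigma)$ inside $G(\mathbb{C})$ is $\{h : h=g\sigma(h)g^{-1}\}$, which is the fixed group $G(\mathbb{C})^{\sigma_g}$ for $\sigma_g=\intt(g)\circ\sigma$.

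To justify the component description directly, I would use the model ${\rm B}G(\mathbb{C})^{{\rm hC}_2}=\op{Map}_{{\rm C}_2}({\rm EC}_2,{\rm B}G(\mathbb{C}))$ together with the simplicial bar model of ${\rm B}G(\mathbb{C})$. Equivalently, one can analyze the fixed points of the groupoid $[G(\mathbb{C})\backslash *]$ under $\sigma$ in the $2$-categorical sense. Its objects are pairs $(*,g)$ with a $2$-cell $g\colon *\to\sigma(*)$ subject to the cocycle condition, and its morphisms are the $h$ intertwining these. This is precisely the groupoid of cocycles, $\bigsqcup_{[g]}[G(\mathbb{C})^{\sigma_g}\backslash *]$. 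By Theorem~\ref{thm:main1} (the description via the groupoid of real points), its realization is the real realization.

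The main obstacle is that $G(\mathbb{C})$ is a topological group rather than a discrete one, so the homotopy fixed points of its classifying space must be compared with the classifying spaces of the fixed groupoid. The risk is extra components, or failure of the mapping-space identification. I would handle this by reducing to the groupoid of real points of the stack, which Theorem~\ref{thm:main1} already identifies with the homotopy fixed points. Its topological structure is straightforward: the cocycle space $Z^1$ carries its analytic topology, and $G(\mathbb{C})$ acts on it with finitely many orbits, each of which is open and closed by the finiteness of real Galois cohomology. Each orbit $G(\mathbb{C})/G(\mathbb{C})^{\sigma_g}$ then contributes ${\rm B}G(\mathbb{C})^{\sigma_g}$ to the realization of the action groupoid.
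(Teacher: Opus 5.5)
\textbf{Verdict: genuine gap.} The route you end up on is the paper's. The first equivalence is quoted from Theorem~\ref{thm:main1} (that is, Corollary~\ref{cor:main-hc2} plus Section~\ref{sec:fp-gpd-hfp}). The second is the cocycle/twisted-conjugacy description of the fixed-point groupoid with automorphism groups $G(\mathbb{C})^{\sigma_g}$, as in Proposition~\ref{prop:fp-gpoid-galois}. The gap is the step you flag yourself, and citing Theorem~\ref{thm:main1} does not close it.

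\textbf{Where the section-space argument fails.} Your ``standard description'' of sections of ${\rm B}(G(\mathbb{C})\rtimes{\rm C}_2)\to{\rm BC}_2$ says components are conjugacy classes of splittings and each component is ${\rm B}$ of the centralizer. This is correct for discrete groups (Thomason). For Lie groups it is the Dwyer--Zabrodsky theorem, which is only a mod-$2$ statement. Read ${\rm hC}_2$ as ${\rm Map}^{{\rm C}_2}({\rm EC}_2,-)$, as in Section~\ref{sec:fixed-points}, and take $G={\rm SO}(3)$, the compact form:
\begin{itemize}
\item $\sigma$ fixes ${\rm SO}(3)\subset{\rm SO}(3;\mathbb{C})$ pointwise, and this inclusion is a homotopy equivalence. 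Hence $({\rm B}G(\mathbb{C}))^{\rm hC_2}\simeq\op{Map}({\rm BC}_2,{\rm BSO}(3))$.
\item $\op{Map}_*({\rm BC}_2,-)$ kills the fibre of ${\rm BSO}(3)\to{\rm BSO}(3)^\wedge_2$, because its homotopy groups are uniquely $2$-divisible.
\item Dwyer--Zabrodsky identifies the relevant component of $\op{Map}({\rm BC}_2,{\rm BSO}(3)^\wedge_2)$ with ${\rm BO}(2)^\wedge_2$.
\item So the component of ${\rm B}\rho$, for $\rho=\op{diag}(1,-1,-1)$, is the homotopy pullback ${\rm BSO}(3)\times_{{\rm BSO}(3)^\wedge_2}{\rm BO}(2)^\wedge_2$.
\item Its $\pi_2$ is the $2$-adic integers $\mathbb{Z}_2$, not $\pi_2{\rm BSO}(1,2)=\pi_2{\rm BO}(2)=\mathbb{Z}$.
\end{itemize}
Hence $({\rm B}G(\mathbb{C}))^{\rm hC_2}\not\simeq{\rm BSO}(3)\sqcup{\rm BSO}(1,2)$, even though ${\rm H}^1({\rm C}_2,{\rm SO}(3))$ has exactly these two classes. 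In effect the component is the fibrewise $2$-completion of ${\rm BO}(2)\to{\rm BSO}(3)$: its fibre is $(\mathbb{RP}^2)^\wedge_2$ rather than $\mathbb{RP}^2$.

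\textbf{What this means for your proof and the paper's.} The identification you import says: the realization of the \emph{topological} fixed-point groupoid equals the homotopy fixed points of the realization. The paper's proof of Proposition~\ref{prop:bg-formula} rests on exactly the same claim. It holds for discrete groupoids, but not integrally for topological ones; compare the paper's own remark that $(\mathbb{CP}^1)^{\rm hC_2}\not\simeq\mathbb{RP}^1$. Your first step, ${\rm Re}_{\mathbb{R}}[G\backslash\pt]\simeq{\rm Re}_{\mathbb{C}}[G\backslash\pt]^{\rm hC_2}$, is therefore where the argument breaks.

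\textbf{What your argument does prove.} It establishes
\[
{\rm Re}_{\mathbb{R}}({\rm B}_\et G)\simeq\mathcal{N}([G\backslash\pt](\mathbb{R}))\simeq\bigsqcup_{[g]}{\rm B}G(\mathbb{C})^{\sigma_g}.
\]
Your open-and-closed orbit analysis on ${\rm Z}^1$ is a useful addition here. The same formula also follows directly from Proposition~\ref{prop:real-admissible-gadget}, since $(U_i/G)(\mathbb{R})=\bigsqcup_{[g]}U_i(\mathbb{C})^{\sigma_g}/G(\mathbb{C})^{\sigma_g}$. The middle term $({\rm B}G(\mathbb{C}))^{\rm hC_2}$ agrees with this for discrete $G$, but, as the example shows, not integrally in general.
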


The Galois cohomology set ${\rm H}^1({\rm C}_2,G)$ can be identified with the set of strong real forms with a given central invariant; for more information on strong real forms, cf.{}~\cite{adams:taibi} or our discussion in Section~\ref{sec:strong-real-forms}.\footnote{Note that \cite{adams:taibi} denote the Galois cohomology set by ${\rm H}^1(\sigma,G)$, emphasizing the action of the generator of ${\rm C}_2$ by the automorphism $\sigma$ of $G(\mathbb{C})$.} With this terminology, the above result shows that the real realization of the geometric/\'etale classifying space ${\rm B}_\et G$ for a linear algebraic group $G$ over $\mathbb{R}$ is a disjoint union of classifying spaces of strong real forms of $G$ with given central invariant. In particular, this means that the real forms of $G$ with the same central invariant have equivalent real realizations.

In fact, using non-abelian cohomology and the theory of bitorsors, we discuss more generally the invariance of the classifying stack $[G\backslash*]$ under strong inner forms in Theorem~\ref{thm:strong-inner--eq_formulations}. As consequences, we get that cohomology and real realization of classifying spaces for strong inner forms will be equivalent, cf.{}~Corollary~\ref{cor:cohom-inv-strong-forms}, cf. also Section~\ref{sec:bitorsors} for discussion of the relation of strong real forms and strong inner forms. 

\begin{corollary}
  Let $G$ and $G'$ be linear algebraic groups over a field $F$ which are strong inner forms of each other, and let $E$ be a cohomology theory representable in stable motivic homotopy. Then any $(G,G')$-bitorsor induces isomorphisms
  \[
  E^*([G\backslash\pt])\cong E^*([G'\backslash\pt]).
  \]
  Similarly,  over $F=\mathbb{R}$, the equivariant real cycle class map of Theorem~\ref{thm:main-real-cycle} is invariant under strong inner forms. 
\end{corollary}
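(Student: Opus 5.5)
The plan is to deduce both statements from a single equivalence of quotient stacks, $[G\backslash\pt]\simeq[G'\backslash\pt]$, induced by a bitorsor. Once that equivalence is available in the unstable motivic homotopy category, and compatibly with real realization, both claims follow by functoriality.

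\textbf{Step 1: the bitorsor gives an equivalence of stacks.} Let $P$ be a $(G,G')$-bitorsor over $F$, meaning a left $G$-torsor with a commuting right $G'$-action under which it is also a right $G'$-torsor. Such $P$ exist exactly when $G'$ is a strong inner form of $G$. I will use the functor
\[
\text{($G'$-torsors } Q\text{ over } S) \;\longmapsto\; P\times^{G'} Q,
\]
written here with $Q$ as a left $G'$-torsor after inverting the action. It lands in left $G$-torsors over $S$. Its quasi-inverse is contraction with the opposite $(G',G)$-bitorsor $P^{\rm op}$. The natural isomorphisms $P\times^{G'}P^{\rm op}\cong G$ and $P^{\rm op}\times^G P\cong G'$ are checked étale locally, where $P$ is trivial. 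This gives an equivalence of stacks of groupoids on the big étale site, $[G'\backslash\pt]\simeq [G\backslash\pt]$. I will use this as the content of Theorem~\ref{thm:strong-inner--eq_formulations}.

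\textbf{Step 2: pass to motivic homotopy.} The paper models the quotient stack in the unstable motivic homotopy category as the étale classifying space ${\rm B}_\et G$, that is, the étale-local homotopy type of the nerve of the torsor groupoid. An equivalence of étale stacks therefore induces an isomorphism ${\rm B}_\et G\simeq{\rm B}_\et G'$ there. Applying $E^*$, which is representable and hence a functor on this category, gives the isomorphism $E^*([G\backslash\pt])\cong E^*([G'\backslash\pt])$.

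There is a subtlety at this point. If $E$ is only Nisnevich-local, for example ${\bf I}$-cohomology, then $E^*$ of a stack must be read through the Borel-construction model $\operatorname{colim} (U\times^G V)$ built from admissible gadgets. A bare stack equivalence does not directly give an isomorphism for this model. To handle it, I will compare both sides with the smooth algebraic space $G\backslash(P\times V')$, where $V'$ is a gadget for $G'$. This space is a vector-bundle-type approximation for both groups simultaneously: $P\times V'$ with the $G$-action is an admissible gadget for $G$, because $P$ is a $G$-torsor. Independence of the choice of gadget, which I take from the paper's Totaro--Edidin--Graham setup, then identifies both sides. \textbf{I expect this to be the main obstacle:} the Nisnevich-local theory does not see étale equivalences, so the argument must go through geometric approximations rather than the stack equivalence alone.

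\textbf{Step 3: the real cycle class map.} Over $\mathbb{R}$, the equivariant real cycle class map of Theorem~\ref{thm:main-real-cycle} depends only on the stack, by part i). It is natural for maps of gadget approximations, by the pullback compatibility in part iv). The same common approximation $G\backslash(P\times V')$ therefore gives a commutative square: ${\bf I}$-cohomology maps to singular cohomology of real points, and the vertical maps are the isomorphisms induced by $P$. On the topological side, the realization of the stack equivalence is the equivalence of homotopy fixed points $({\rm B}G(\mathbb{C}))^{{\rm hC}_2}\simeq({\rm B}G'(\mathbb{C}))^{{\rm hC}_2}$ from Theorem~\ref{thm:main1}. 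This is consistent with the Corollary describing both sides as $\bigsqcup_{[g]\in{\rm H}^1({\rm C}_2,G)}{\rm B}[G(\C)^{\rm C_2}]$.

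The remaining point is twists. Line bundles on $[G\backslash\pt]$ correspond to line bundles on $[G'\backslash\pt]$ under the equivalence, and the mod $2$ realization $\mathscr{L}\mapsto\mathbb{Z}(\mathscr{L})$ is natural. The twisted versions are therefore invariant as well.
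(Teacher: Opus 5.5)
Your core argument is the paper's. Theorem~\ref{thm:strong-inner--eq_formulations} turns the bitorsor into an equivalence of stacks, equivalent stacks give isomorphic objects in $\mathbf{H}(F)$, and you then apply $E$. The invariance of ${\rm cyc}_{\mathbb{R}}$ follows because the map depends only on the stack. The first paragraph of your Step~2 together with the opening of Step~3 already amounts to the full proof.

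The ``main obstacle'' you flag is not an obstacle, and the reason you give for it is a misdiagnosis. The Nisnevich-versus-\'etale issue arises for maps that are only \'etale-locally equivalences, such as ${\rm B}_{\rm Nis}G\to{\rm B}_\et G$. A bitorsor equivalence is not of that kind. The functor $P\times_{/G'}(-)$ has the quasi-inverse $P^{\rm op}\times_{/G}(-)$ over every $S$, so it is an equivalence of groupoids sectionwise. By Definition~\ref{def:stack-motivic-nerve}, its nerve is therefore an objectwise weak equivalence, hence an isomorphism already in the Nisnevich-local category $\mathbf{H}(F)$. Every representable theory sees this isomorphism, ${\bf I}$-cohomology included.

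The gadget model is covered too, because Proposition~\ref{prop:approx-comparison} identifies $X_G(\rho)$ with the stack in $\mathbf{H}(F)$. For homotopy-module coefficients the paper also cites \cite{dilorenzo:mantovani}*{Proposition~2.3.1}, which is exactly independence of the approximation-based definition from the chosen presentation.

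Your workaround is also mis-stated. If $G$ acts only through $P$, then $G\backslash(P\times V')$ is just $V'$, and $P\times V'$ is not a $G$-representation. What you want is the contracted product $W=P\times_{/G'}V'$:
\begin{itemize}
\item it is an $F$-vector space by descent, carrying a linear $G$-action;
\item it contains $P\times_{/G'}U'$ as a $G$-stable open subset on which $G$ acts freely;
\item the complement $P\times_{/G'}(V'\setminus U')$ has the same codimension as $V'\setminus U'$ in $V'$;
\item the quotient is $U'/G'$.
\end{itemize}
With this correction the common-approximation argument is valid. It gives a hands-on proof of the presentation-independence the paper cites, and it makes the comparison square for ${\rm cyc}_{\mathbb{R}}$ literally a square on a single smooth scheme. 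It is not needed for the corollary.
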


\subsection{Examples: orthogonal and spin groups}
We discuss several instances of these descriptions and their consequences for equivariant real cycle class maps in Section~\ref{sec:examples}. One example is the case $G=\mu_2$, in which case the real realization is
\[
{\rm Re}_{\mathbb{R}}\left({\rm B}_\et\mu_2\right)\simeq \mathbb{RP}^\infty\sqcup\mathbb{RP}^\infty,
\]
see the more general discussion of finite group examples in Section~\ref{sec:finite}. More generally, in the particular case of orthogonal groups, we get the following description, cf. the discussion in Section~\ref{sec:orthogonal} and in particular Theorem~\ref{thm:realization-orthogonal}:
\begin{equation}
  \label{eq:realization-bon}
{\rm Re}_{\mathbb{R}}({\rm B}_\et{\rm O}(n))\simeq \bigsqcup_{p+q=n}{\rm B}{\rm O}(p,q),
\end{equation}
where ${\rm O}(p,q)$ on the right-hand side denotes the indefinite orthogonal group associated to the symmetric bilinear form over $\mathbb{R}$ of rank $p+q$ and signature $p-q$ (viewed as Lie group). A similar statement is true for special orthogonal groups. This formula provides the key intuition that the characteristic classes for orthogonal groups in Witt-sheaf cohomology should be related to Pontryagin and Euler classes of the strong real forms. Alternatively, this shows that the characteristic classes which are not in the image of restriction along ${\rm B}_\et{\rm O}(n)\to{\rm BGL}_n$ are detected on the fiber ${\rm GL}_n/{\rm O}(n)$. Note that the fiber is the space of invertible symmetric matrices, and the space of real points is
\[
\op{Sym}(n;\mathbb{R})\simeq \bigsqcup_{0\leq k\leq n}\op{Gr}_k(n;\mathbb{R}).
\]
For a proof of this homotopy equivalence, cf. \cite{jones}; essentially, the components are the orbits of $\op{diag}(-1_k,1_{n-k})$, and the identification maps a symmetric matrix to the subspace spanned by eigenvectors for negative eigenvalues.\footnote{MW would like to acknowledge discussions with Anubhav Nanavaty about varieties of symmetric matrices.}

The takeaway is that, in some sense, Witt-sheaf cohomology knows about the real forms of orthogonal groups. This can also be seen in the real realization: for an algebraic ${\rm O}(n)$-torsor over a real scheme $X/\mathbb{R}$, the real realization over each component of $X(\mathbb{R})$ will be an ${\rm O}(p,q)$-torsor, but the decomposition $p+q=n$ can vary from one real component to the next. 

Combining the equivariant real cycle class map with the description of the real realization of a quotient stack can produce insights into the Witt-sheaf cohomology for classifying spaces. We formulate one specific consequence concerning Witt-sheaf cohomological invariants\footnote{As a note on terminology, \emph{Witt-sheaf cohomological invariants} for a group $G$ will mean degree 0 Witt-sheaf cohomology classes on the classifying space, i.e., elements of ${\rm H}^0({\rm B}_\et G,{\bf W})$.}  of real reductive groups, which by our results are related to strong real forms, cf.{}~the discussion in Section~\ref{sec:cohom-inv}:

\begin{corollary}
  \label{cor:spin-cohom-inv}
  Let $G$ be a split reductive group over $\mathbb{R}$. Then there is an isomorphism
  \[
  {\rm H}^0({\rm B}_\et G,\mathbf{W}[1/2])\cong {\rm H}^0({\rm B}G(\mathbb{C})^{\rm hC_2},\mathbb{Z}[1/2]).
  \]
  Consequently, up to torsion, the number of Witt-sheaf cohomological invariants for $G$-torsors over $\mathbb{R}$ equals the number of strong real forms of $G(\mathbb{R})$ with trivial central invariant.

  In particular, for the split spin groups, we get the following formulas for the number of Witt-sheaf cohomological invariants (up to torsion) from the tables in \cite{adams:taibi}:
  \begin{align}
  \op{rk}_{\mathbb{Z}}{\rm H}^0({\rm B}_\et \op{Spin}(n,n+1),\mathbf{W})&\cong
  \left\lfloor \frac{2n+1}{4}\right\rfloor+\left\{\begin{array}{ll}
  2 & n\equiv 0,3\bmod 4\\
  1 & n\equiv 1\bmod 4\\
  0 & n\equiv 2\bmod 4\\
  \end{array}\right.\\
  \op{rk}_{\mathbb{Z}}{\rm H}^0({\rm B}_\et \op{Spin}(n,n),\mathbf{W})&\cong
  \left\lfloor \frac{n}{2}\right\rfloor+\left\{\begin{array}{ll}
  3 & n\equiv 0\bmod 4\\
  1 & n\equiv 1\bmod 4\\
  0 & n\equiv 2,3\bmod 4\\
  \end{array}\right.
  \end{align}
\end{corollary}

This result for the spin groups seems to be new. To the best of our knowledge, Witt-sheaf cohomological invariants for $\op{Spin}_n$ have not been determined in general, the closest thing is the computation of mod 2 cohomological invariants for $n\leq 14$, cf.{}~\cite{garibaldi:spin} and the discussion of cohomological invariants in the thesis of Weinzierl \cite{weinzierl}, or Section~\ref{sec:cohom-inv} for further discussion.\footnote{Some of these computations in loc.{}~cit.{}~are for general fields, but some of these computations are for fields containing a square root of $-1$. It can be checked that our formulas agree with the formulas in loc.{}~cit.{}~whenever applicable. The consistent difference between our formulas and the ones in loc.{}~cit.{}~is due to the fact that the trivial cohomological invariant $1\in{\rm H}^0({\rm B}_\et G,{\bf W})$ contributes to our formulas, but is not counted as a cohomological invariant in loc.{}~cit..}

\subsection{Conjectural picture of Witt-sheaf cohomology of classifying spaces}
\label{sec:conjectures}

Having the equivariant real cycle class map provides some first approximation of what the Witt-sheaf cohomology of classifying spaces should look like. This is a good start, but of course we would ultimately like to understand on a more conceptual level how to produce cohomology classes, or possible general descriptions of Witt-sheaf cohomology of classifying spaces (think of the usual Weyl-group formulas ${\rm H}^\bullet({\rm B}G;\mathbb{Q})\cong{\rm H}^\bullet({\rm B}T;\mathbb{Q})^W$ for rational cohomology of semisimple Lie groups). We outline some parts of the bigger picture as far as we understand them for now. 

The question of Witt-sheaf cohomology is already very interesting in degree 0, as we discussed in the previous section. Revisiting the case of orthogonal groups in Equation~\eqref{eq:realization-bon} above, we note that the number of connected components of ${\rm Re}_{\mathbb{R}}{\rm B}_\et {\rm O}(n)$ agrees with Serre's computation of Witt-sheaf cohomological invariants of orthogonal groups in \cite{garibaldi:merkurjev:serre}. We can interpret this in terms of an augmentation morphism
\begin{align}
  \label{eq:augmentation}
  {\rm W}^0_G(\ast)\to {\rm H}^0({\rm B}_\et G,{\bf W})
\end{align}
from the $G$-equivariant Witt group of the point, i.e., the Grothendieck ring of symmetric $G$-representations, to the unramified Witt-group. The morphism can be viewed as the (Zariski or Nisnevich) sheafification morphism for the presheaf of Witt groups, and it is one source of Witt-sheaf cohomological invariants. More precisely, we can interpret Serre's computation as saying that the images of the fundamental ${\rm O}(n)$-representations (of which there are $n+1$ many, given as exterior powers of the defining $n$-dimensional representation) are a basis of the Witt-sheaf cohomological invariants. In light of our results, in particular Corollary~\ref{cor:spin-cohom-inv}, it seems interesting to investigate the connection between Witt-sheaf cohomological invariants and strong real forms (suggested by the equivariant real cycle class map) in more detail. In the same direction, one can wonder about general formulas for Witt-sheaf cohomological invariants for a semisimple group $G$, expressed in terms of Weyl-group invariants on the Witt groups of toral or non-toral elementary abelian 2-subgroups of $G$.

The augmentation morphism in \eqref{eq:augmentation} above also seems to be a good choice for the augmentation morphism in possible Atiyah--Segal-type completion results for Witt groups of classifying spaces, as in \cite{hornbostel:rohrbach:zibrowius} (although at the moment all the known cases of quadratic Atiyah--Segal completion results are for special groups where the augmentation morphism in \eqref{eq:augmentation} really only encodes rank information). As a first guess, one could ask if the natural map
\begin{align}
{\rm W}^0_G(\ast)\to{\rm W}^0({\rm B}_\et G)
\end{align}
is indeed a completion at the augmentation ideal for the sheafification map in Equation~\eqref{eq:augmentation}.

In this context, we can also ask about various possible filtrations on ${\rm W}^0_G(\ast)$ and their relation with Witt-sheaf cohomology. Likely, the Witt-sheaf case is comparable to the corresponding, quite classical, situation for K-theory and Chow groups where different filtrations on representation rings and their relations are discussed e.g.{}~in \cite{karpenko:merkurjev}. From our discussion above, we have the augmentation filtration by powers of the kernel of the sheafification map \eqref{eq:augmentation}. The symmetric representation ring ${\rm W}^0_G(\ast)$ also has a $\lambda$-ring structure described and discussed in \cite{zibrowius:symmetric}, giving rise to a corresponding $\gamma$-filtration. Finally, we should expect some geometric filtration related to the coniveau filtration and the Gersten--Witt spectral sequence of Balmer and Walter \cite{balmer:walter}:\footnote{Note that strictly speaking the assumptions in loc.{}~cit.{}~don't apply to $X={\rm B}_\et G$ and Witt groups don't stabilize in admissible gadgets like Witt-sheaf cohomology groups do. So, while the results of loc.{}~cit.{}~are not immediately applicable, it seems plausible to assume that the relation between Witt groups and Witt-sheaf cohomology extends to classifying spaces, or quotient stacks for that matter.}\footnote{Also, the indexing of the spectral sequence is a bit subtle. The $E_2$-page of the spectral sequence has Witt-sheaf cohomology, as indicated, in rows $q\equiv 0\bmod 4$ and nothing in the other rows.}
\begin{align}
E_2^{p,q}\colon {\rm H}^p(X,{\bf W}(\mathscr{L}))\Rightarrow{\rm W}^{p+q}(X,\mathscr{L}).
\end{align}
As in the Chow-ring case discussed in \cite{karpenko:merkurjev}, we should probably not expect the filtrations to agree, but rather expect quite subtle differences between the filtrations. Nevertheless, as in the K-theory/Chow ring case, it seems a reasonable guess that the three filtrations induce the same topology on the symmetric representation ring ${\rm W}^0_G(\ast)$. In the case of Chow rings, the $\lambda$-ring structure of the representation ring provides Chern classes for representations, producing many interesting elements in ${\rm CH}^\bullet({\rm B}_\et G)$. Very likely, not all Witt-sheaf cohomology classes are characteristic classes; already in degree 0 this is related to the purity question for quadratic forms (for which examples based on classifying spaces of finite 2-groups over the complex numbers will be given somewhere else). Still, the $\lambda$-ring structure and $\gamma$-operations on the symmetric representation ring should be a rich source of classes in the Witt-sheaf cohomology ${\rm H}^\bullet({\rm B}_\et G,{\bf W})$ as characteristic classes of symmetric representations. 

Finally, we could also ask for an analogue of the rational identification of K-theory and Chow rings. For Witt-rings and Witt-sheaf cohomology, the related question would be if there is a rational isomorphism\footnote{We leave open precisely which filtration we consider on the left-hand side. If we think of the isomorphism as the rational degeneration of the Gersten--Witt spectral sequence of \cite{balmer:walter}, then it would be the geometric/coniveau filtration. In the K-theory case, the difference between geometric and $\gamma$-filtration is only torsion and therefore would not be visible in a rational statement.}
\begin{align}
  \label{eq:borel-iso}
{\rm gr}^\bullet{\rm W}^0_G(\ast)\otimes\mathbb{Q}\cong{\rm H}^\bullet({\rm B}_\et G,{\bf W})\otimes\mathbb{Q}.
\end{align}
This question would be strongly related to the Borel character in \cite{deglise:fasel:borel-character}. If true, we could combine it with the equivariant real cycle class map to relate the real representations of a real reductive group $G$ with the cohomologies of its strong real forms (related to the appearance of strong real forms in the representation theory of real Lie groups, cf.{}~\cite{adams:taibi}). In the case of ${\rm SO}(n)$ for small $n$, at least a preliminary dimension count does seem to work out. On the other hand, the case of finite groups suggests some subtleties -- if there is an isomorphism as in \eqref{eq:borel-iso}, then the numbers of real representations and conjugacy classes of involutions would need to agree (which they don't in some examples, cf. the discussion in section~\ref{sec:finite}). Possibly rational isomorphisms using the Borel character require to work in a proper hermitian K-theory setting, and something is lost in the translation to Witt groups and Witt-sheaf cohomology.

We will discuss some of the aspects of symmetric representation theory in our examples in Section~\ref{sec:examples}, in particular in the case of orthogonal groups. Generally, it would also be interesting to see if there is a version of this story for general quotient stacks, and what that should look like. 

\subsection{Further directions}

As we will discuss in Section~\ref{sec:examples}, the equivariant real cycle class map suggests a concrete description for the Witt-sheaf cohomology of ${\rm B}_\et {\rm O}(n)$. Based on the stratification method as in \cite{RojasVistoli}, it is possible to carry out the actual computation of the Witt-sheaf cohomology ring of the (special) orthogonal groups (over a general base field of characteristic $\neq 2$). Essentially, the Witt-sheaf cohomology ring looks like the cohomology ring of the real realization $\bigoplus_{p+q=n}{\rm BO}(p,q)$, up to some spurious 2-torsion arising from some nontrivial boundary map computations. As a more long-term goal, one could hope that a computation of quadratic cohomologies of ${\rm B}_\et {\rm O}(n)$ could improve our understanding of operations in variants of hermitian K-theory. 

While we have focused on classifying spaces, i.e., quotient stacks for group actions $G\looparrowright \ast$, there are loads of interesting examples of other quotient stacks one could investigate using the equivariant real cycle class map. One particularly fun example, also related to geometric representation theory, is the character/inertia stack $[G/\!/_{\rm ad}G]$ associated to the adjoint action of $G$ on itself. Taking inspiration from the Freed--Hopkins--Teleman theorem and its Real version in \cite{fok:verlinde}, we could think of ${\rm W}_G(G)$ as an algebraic version of the Real Verlinde algebra. In a version of the classifying-space-story outlined above, we could consider the sheafification augmentation ${\rm W}_G(G)\to {\rm H}^0([G/\!/_{\rm ad}G],{\bf W})$ and hope that the associated graded for the corresponding augmentation filtration would rationally agree with ${\rm H}^\bullet([G/\!/_{\rm ad}G],{\bf W})$. The equivariant real cycle class map would then suggest a relation between the algebraic Real Verlinde algebra and the combined Real Verlinde algebras of its strong real forms, thus providing an interesting interplay between Real Verlinde algebras of strong real forms. We believe it could be fun to work out the details of this.

\subsection*{Structure of the paper}

We begin by covering some background related to stacks in Section~\ref{sec:prelims}, and we recall the realization of stacks in motivic homotopy in Section~\ref{sec:stacks-motivic}. We recall the description of the real points of a stack in terms of fixed-point groupoids in Section~\ref{sec:real-pts-fp-gpd}. Then we discuss ${\rm C}_2$-equivariant homotopy and (equivariant and real) realization functors in Section~\ref{sec:equivariant-htpy-realization}. In Section~\ref{sec:coarse} we offer a coarse homotopy perspective on equivariant principal bundles which is well-suited to study ${\rm C}_2$-equivariant realization of \'etale homotopy orbits. Then we prove the main results on equivariant and real realization of quotient stacks in Section~\ref{sec:real-pts-hc2}. We discuss the relation between homotopy fixed points, Galois cohomology and strong real forms in Section~\ref{sec:hc2-galois}, where we also recall the main formula for the real realization of \'etale classifying spaces. Section~\ref{sec:equivariant-cycle-class} then describes the equivariant real cycle class map and establishes its compatibilities with natural structures in (Borel-)equivariant cohomology. In Section~\ref{sec:bitorsors}, we prove various results on invariance of quotient stacks (as well as their realizations and cycle class maps) under strong inner forms. Finally, we discuss a number of specific example computations in Section~\ref{sec:examples}. All the other interesting stuff alluded to will be deferred to sequels. Or prequels. Or maybe not. You know.

\subsection*{Acknowledgements:} MW would like to thank Fr\'ed\'eric D\'eglise, Jens Eberhardt, Vincent Gajda, Jens Hornbostel, Marc Levine and Kirsten Wickelgren for discussions on the topics of the paper. We thank Jens Hornbostel for helpful comments on a preliminary draft of the paper.

\section{Preliminaries on (quotient) stacks}
\label{sec:prelims}

In this preliminary section, we'll be collecting some of the material on (quotient) stacks we'll need in the paper. In particular, we  will need real and complex points of quotient stacks and various types of morphisms of quotient stacks for our later discussions. Throughout the paper, we work with schemes over a base field $F$; in the context of motivic homotopy or quadratic forms, the base fields are assumed perfect of characteristic $\neq 2$.

\subsection{Groups and actions}

When dealing with group actions, we will follow the notation of \cite{BGK}*{I.2.1}:

\begin{definition}
  By a \emph{variety with action} $(G\looparrowright X)$ or a $G$-variety, we mean a triple $(G,X,\rho)$ consisting of a linear algebraic group $G$, a (quasi-projective) variety $X$ and a $G$-action $\rho\colon G\times X\to X$. A \emph{morphism $(\varphi,f)\colon(H\looparrowright Y)\to (G\looparrowright X)$ of varieties with action} is a pair consisting of a homomorphism $\varphi\colon H\to G$ of linear agebraic groups and a $\varphi$-equivariant morphism $f\colon Y\to X$ of varieties. 
\end{definition}

\begin{remark}
  We will typically consider left actions $G\looparrowright X$ and accordingly write scheme quotients or quotient stacks as $G\backslash X$ or $[G\backslash X]$, respectively.
\end{remark}

We recall the classical notion of torsors which is the analogue of principal bundles in algebraic geometry. 

\begin{definition}
  \label{def:torsors}
  For an algebraic group $G$, a \emph{$G$-torsor} over a base $B$ is a variety with action $\rho\colon G\looparrowright E$ such the induced morphism $(\rho,{\rm pr}_2)\colon G\times E\to E\times_BE$ is an isomorphism. 
\end{definition}

\begin{remark}
  The isomorphism in the condition encodes that the $G$-action on the fiber is free and transitive. One typically requires that $G$-torsors are locally trivial (i.e., locally isomorphic to the trivial $G$-torsor, $G$ acting on itself by multiplication) for a suitable topology. For example, when we speak of \'etale $G$-torsors later, this means that the torsor is locally trivial in the \'etale topology. 
\end{remark}

For the construction of approximations of Borel constructions, we use the same conventions as in \cite{dilorenzo:mantovani}*{Section~2.2.2}: 

\begin{convention}
  \label{conventions-actions}
Let $F$ be a perfect field of characteristic $\neq 2$, $G$ be a smooth affine algebraic group of finite type over $F$. Throughout the paper, we assume $X$ to be a finite type scheme over $F$, such that one of the following conditions is satisfied:
\begin{enumerate}[label=\roman*)]
\item The reduced scheme $X_{\rm red}$ is quasi-projective and the action of $G$ is linearized w.r.t. some quasi-projective embedding.
\item The group $G$ is connected and $X_{\rm red}$ embeds equivariantly as closed subscheme of a normal variety.
\item The group $G$ is special in the sense of Serre, i.e., \'etale-locally trivial torsors are already Nisnevich-locally trivial. 
\end{enumerate}
\end{convention}

In fact, for all the key examples, our schemes will be quasi-projective of finite type over a field (we will occasionally call such schemes \emph{varieties}). For the study of real realization functors and equivariant real cycle class maps, we will mostly be interested in varieties over $\mathbb{R}$. Some of the general discussion of equivariant Witt-sheaf cohomology is valid over perfect base fields of characteristic $\neq 2$. 

For two varieties with action $(G\looparrowright X)$ and $(U\looparrowleft G)$, i.e., the latter one with right $G$-action, we have the \emph{balanced product}
\[
U\times_{/G} X:=G\backslash (U\times X)
\]
with the $G$-action given by $g\cdot(u,x)=(ug^{-1},gx)$, i.e., we make the identifications $(ug,x)\sim(u,gx)$ in the orbit space $G\backslash(U\times X)$. Again, we follow the notation $U\times_{/G}X$ of \cite{BGK} to distinguish the balanced product from pullbacks. 

\subsection{Quotient stacks, morphisms and relevant examples}
\label{sec:quot-stacks}

We now recall the basics concerning quotient stacks we will need. For general reference, see \cite{stacks-project}*{Tags~0268,02ZH,044O} and go from there. In particular, stacks are categories fibered in groupoids (over the base category $\mathbf{Sch}_F$), and morphisms of stacks are functors compatible with that structure.

Let $G\looparrowright X$ be a variety with action over the base field $F$. The associated quotient stack $[G\backslash X]$ is the category whose objects are spans $S\xleftarrow{p} P\xrightarrow{f} X$ where $p\colon P\to S$ is a $G$-torsor and $f\colon P\to X$ is a $G$-equivariant morphism. A morphism from $T\xleftarrow{q} Q\xrightarrow{g} X$ to $S\xleftarrow{p} P\xrightarrow{f} X$ is the datum of a map $\psi\colon T\rightarrow S$ of $F$-schemes and of a $G$-equivariant map $\varphi\colon Q\rightarrow P$ such that $f\circ \varphi=g$ and such that the square 
\begin{equation*}
  \begin{tikzcd}
    Q \arrow[r,"\varphi"] \arrow[d,"p'"'] & P \arrow[d,"p"]\\
    T \arrow[r,"\psi"'] & S
  \end{tikzcd}
\end{equation*}
is cartesian.

Note that for every $F$-scheme $S$, giving an object $(S\leftarrow P \rightarrow X)\in [G\backslash X](S)$ is equivalent to giving the datum of a $G_S$-torsor $P$ over $S$ together with a $G_S$-equivariant map of $S$-schemes $P \ra X_S$, and similarly for morphisms. 

The projection
\[
  [G\backslash X]\ra \mathbf{Sch}_F, \quad (S\leftarrow P\rightarrow X)\mapsto S
\]
equips $[G\backslash X]$ with the structure of a category fibered in groupoids. The fiber over a scheme $S$ is the groupoid $[G\backslash X](S)$ whose objects are spans $S \leftarrow P\rightarrow X$, and whose morphisms are $G$-equivariant isomorphisms $\varphi\colon Q\to P$ over $S$ compatible with the maps to $X$. If $T\to S$ is a map of $F$-schemes we have an induced functor
\[
  [G\backslash X](S) \ra [G\backslash X](T)
\]
given on objects by 
\[
\quad \left(S\overset{p}{\leftarrow} P\overset{f}{\rightarrow} X\right) \mapsto \left(T\overset{p'}{\leftarrow} P' \overset{f'}{\rightarrow} X\right),
\]
where $P':=T\times_S P$, the morphism $p'\colon P' \rightarrow T$ is the pull-back of the arrow $p\colon P\ra S$ along $T\ra S$ and where $f'$ is the composition of the projection $P'\rightarrow P$ and $f\colon P\ra X$. In a picture 
\begin{equation*}
  \begin{tikzcd}[column sep=4pt, row sep=8pt]
    &  X \\
    P \arrow[ur,"f"] \arrow[dr,"p"'] & \\
    & S
  \end{tikzcd}
  \mapsto
  \begin{tikzcd}[column sep=4pt, row sep=8pt]
    & & & X \\
    P' \arrow[dr,"p'"'] \arrow[rr] \arrow[urrr,"f'"]& & P \arrow[ur,"f"'] \arrow[dr,"p"]& \\
    & T \arrow[rr] & & S
  \end{tikzcd}
\end{equation*}
The functor is defined on morphisms by declaring that it sends a morphism $\varphi\colon P\ra Q$ of $G$-torsors over $S$, compatible with maps to $X$, to the associated morphism 
\[T\times_S \varphi\colon P'=T\times_S P \ra  T\times_S Q = Q'.\]

\begin{example}
  The most important example for us at this point is the trivial action $G\looparrowright\ast$, giving rise to the classifying stack $[G\backslash *]$ of a linear algebraic group $G/F$. In this case, for a smooth $F$-scheme $S$, the groupoid $[G\backslash *](S)$ is identified with the groupoid of \'etale $G$-torsors over $S$.\footnote{This uses smoothness of the base $S$, in general $G$-torsors would only be locally trivial in the fppf topology, cf.\cite{stacks-project}*{Tag 044O}.} We have $\pi_0([G\backslash *](S))\cong{\rm H}^1_\et(S,G)$, and $\pi_1([G\backslash*](S),P)\cong{\rm Aut}_S(P)$, i.e., the fundamental group of a connected component corresponding to the $G$-torsor $P/S$ is the group of equivariant $S$-automorphisms of the torsor.
\end{example}
  
Given a morphism of varieties with action $(\varphi,f)\colon (H\looparrowright Y)\to (G\looparrowright X)$ over the base field $F$, there is an induced morphism of quotient stacks $f\colon[H\backslash Y]\to [G\backslash X]$. For an $F$-scheme $S$, the morphism $[H\backslash Y](S)\to[G\backslash X](S)$ is given as follows:
\[
\left(S\xleftarrow{p}P\xrightarrow{g}Y\right)\mapsto \left(S\xleftarrow{q}(G\times_{/H}P)\xrightarrow{\tilde{g}} X\right),
\]
i.e., it sends a span $S\xleftarrow{p}P\xrightarrow{g}Y$, consisting of an $H$-torsor $p\colon P\to S$ and an equivariant map $g\colon P\to Y$, to the span consisting of the induced $G$-torsor $q\colon (G\times_{/H} P)\to S$ together with the morphism $\tilde{g}\colon G\times_{/H}P\to X$ which is adjoint (under the induction-restriction adjunction) to the $H$-equivariant morphism $P\xrightarrow{g} Y\xrightarrow{f} X$ where $X$ is viewed as an $H$-scheme via restriction along $\varphi\colon H\to G$.

For our purposes, a morphism of stacks $f\colon \mathcal{X}\to\mathcal{Y}$ over $F$ is \emph{representable} if for each $F$-scheme $S$ and morphism $g\colon S\to\mathcal{Y}$, the pullback
\[
\xymatrix{
  \mathcal{X}\times_{\mathcal{Y}} S\ar[r] \ar[d]_{\tilde{f}} & \mathcal{X} \ar[d]^f\\
  S\ar[r]_g & \mathcal{Y}
}
\]
is again a scheme. In this case, we say that $f\colon \mathcal{X}\to\mathcal{Y}$ is \emph{smooth/proper/finite type/etc} if for each $g\colon S\to\mathcal{Y}$ the base-change $\tilde{f}\colon\mathcal{X}\times_{\mathcal{Y}}S\to S$ is also smooth/proper/finite type/etc.

We will discuss some criteria for morphisms of quotient stacks to be proper and smooth in Example~\ref{ex:quotient-stack-morphism-properties} below.

\begin{example}
  \label{ex:representable}
  Note that a morphism $(H\looparrowright*)\to(G\looparrowright*)$ of varieties with actions is nothing but a group homomorphism. The corresponding morphism $[H\backslash*]\to[G\backslash *]$ of quotient stacks is representable if $H\leq G$ is a subgroup. In this case, we can think of the morphism of quotient stacks as a $G/H$-fiber bundle over $[G\backslash*]$: for any smooth scheme $X$ and morphism $X\to[G\backslash\pt]$, the pullback of $[H\backslash*]\to[G\backslash *]$ to $X$ is a $G/H$-fiber bundle over $X$, locally trivial in the \'etale topology.

  For a general group homomorphism $\varphi\colon H\to G$, the corresponding morphism of quotient stacks will not be representable. For an exact sequence $1\to K\to H\to G\to 1$ of groups, we should get a fiber bundle $[K\backslash*]\to[H\backslash*]\to[G\backslash*]$ of quotient stacks, and the representability of the morphism $[H\backslash*]\to[G\backslash*]$ would require $[K\backslash*]$ to be a scheme. 

  For an inclusion of subgroups $H\leq G$ the corresponding representable morphism $[H\backslash \ast]\to [G\backslash \ast]$ is smooth/proper if $G/H$ is. The relative dimension of this morphism is $d=\dim(G/H)$. 
\end{example}

We close the section with two well-known and relevant isomorphisms of quotient stacks, related to the quotient and induction equivalences in equivariant cohomology. In the situation of algebraic Borel constructions, the proofs of these can be found in \cite{krishna}*{Corollaries~2.6 and 2.7}.

\begin{proposition}[quotient and induction equivalence for stacks]
  \label{prop:quotient-induction-stacks}\,%Hack
  \begin{enumerate}
  \item Let $G\looparrowright X$ be a variety with action and let $N\subset G$ be a closed normal subgroup such that the restricted action $N\looparrowright X$ is free. Then the morphism $(\pi,p)\colon (G\looparrowright X)\to (G/N\looparrowright N\backslash X)$ induces an isomorphism of quotient stacks
    \[
      [G\backslash X]\cong \left[\left(G/N\right)\backslash\left(N\backslash X\right)\right].
    \]
  \item Let $i\colon H\hookrightarrow G$ be the inclusion of a smooth closed subgroup into a smooth affine algebraic group $G$ and let $H\looparrowright X$ be a variety with action such that the quotient $G\times_{/H}X$ of the diagonal $H$-action on $G\times X$ exists as a smooth quasi-projective variety. Then the obvious morphism $(i,s)\colon (H\looparrowright X)\to (G\looparrowright G\times_{/H}X)$ induces an isomorphism of quotient stacks
  \[
    [H\backslash X]\cong [G\backslash (G\times_{/H}X)].
  \]
  \end{enumerate}
\end{proposition}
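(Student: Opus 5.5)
For both parts I will write down explicit functors between the fibered categories, over each $F$-scheme $S$, and check that they are mutually quasi-inverse. Such a functor comes for free from the morphism of varieties with action, by the construction recalled above. Fully faithful and essentially surjective fiberwise implies an equivalence of stacks, so it suffices to work with a fixed $S$ and check compatibility with pullback along $T\to S$. Throughout, torsors are fppf- (or étale-)locally trivial, and I use descent freely for torsors and equivariant morphisms.

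\emph{Part (1).} The morphism $(\pi,p)$ sends a span $S\leftarrow P\xrightarrow{f} X$ to $S\leftarrow (G/N)\times_{/G}P\to N\backslash X$. Since $(G/N)\times_{/G}P = N\backslash P$, this is the span $S\leftarrow N\backslash P\xrightarrow{\bar f} N\backslash X$, where $\bar f$ is induced by $f$.

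For the inverse, start with a $G/N$-torsor $Q\to S$ and a $G/N$-equivariant map $h\colon Q\to N\backslash X$. Form the fiber product $P:=Q\times_{N\backslash X}X$. Freeness of $N$ on $X$, together with the hypothesis that $N\backslash X$ exists as a quotient, means $X\to N\backslash X$ is an $N$-torsor. Hence $P\to Q$ is an $N$-torsor. The diagonal $G$-action on $Q\times X$ (through $G\to G/N$ on $Q$) preserves $P$. Locally on $S$, where $Q\cong G/N\times S$, one sees that $P\to S$ is a $G$-torsor, and the projection $P\to X$ is $G$-equivariant.

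The two composites are naturally isomorphic. For $N\backslash P\cong Q$ this is immediate from the construction. For $P\cong (N\backslash P)\times_{N\backslash X}X$, the canonical map is a morphism of $G$-torsors over $S$, and any such morphism is an isomorphism. Both constructions commute with base change, since the fiber products and quotients involved are by torsors and so are stable under pullback. The main obstacle is the torsor claim for $P\to S$: one needs $N\backslash X$ to be a scheme with $X\to N\backslash X$ an $N$-torsor (not just an orbit space). I would take this as part of the meaning of ``free'' here, as in \cite{krishna}.

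\emph{Part (2).} The morphism $(i,s)$, with $s(x)=[1,x]$, sends $S\leftarrow P\xrightarrow{g} X$ to $S\leftarrow G\times_{/H}P\xrightarrow{\tilde g} G\times_{/H}X$, where $\tilde g[a,p]=[a,g(p)]$.

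For the inverse, start with a $G$-torsor $R\to S$ and a $G$-equivariant map $k\colon R\to G\times_{/H}X$. Compose with the projection $G\times_{/H}X\to G/H$ and set $P:=k^{-1}(H/H\times_{/H}X)$. Equivalently, $P$ is the fiber product of $R\to G/H$ with the point $eH$. Since $R\to G/H$ is $G$-equivariant and $G\to G/H$ is an $H$-torsor (smooth closed subgroup), $P$ is stable under $H$. Étale-locally on $S$ we have $R\cong G\times S$, and the map $R\to G/H$ becomes $(a,s)\mapsto a\,c(s)$ for some section $c$. Thus $P$ becomes $H\cdot c(s)^{-1}$ in each fiber, so $P$ is an $H$-torsor over $S$. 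Since $H\backslash X\cong$ the fiber of $G\times_{/H}X$ over $eH$, restricting $k$ gives an $H$-equivariant map $P\to X$.

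Finally I check the two natural isomorphisms. The action map $G\times_{/H}P\to R$ is a map of $G$-torsors over $S$, hence an isomorphism. In the other direction, the fiber of $G\times_{/H}P$ over $eH$ is $P$. Both isomorphisms are compatible with base change and with the maps to $X$, which completes the proof.
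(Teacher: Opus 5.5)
Your proof is correct, but it takes a different route from the paper. The paper writes out only part (1) and says (2) is similar. It takes the functor $P\mapsto P/N$ and uses the fact that being an equivalence of stacks is local. This reduces to the case where all $G$-torsors over $T$ are trivial. It then checks full faithfulness and essential surjectivity directly on orbit maps $G\times T\to X$:
\begin{itemize}
\item fullness by lifting $\bar g\in G/N$ to $G$;
\item faithfulness from freeness of $N$;
\item essential surjectivity by lifting $T\to N\backslash X$ to $T\to X$, using local triviality of the $N$-torsor $T\times_{N\backslash X}X$.
\end{itemize}
You instead construct global quasi-inverses. In (1) you take $Q\mapsto Q\times_{N\backslash X}X$. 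In (2) you take $R\mapsto$ the fibre of $R\to G/H$ over $eH$. You then identify both composites with the identity because morphisms of torsors are isomorphisms.

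The key input is the same in both arguments: $X\to N\backslash X$ is an $N$-torsor. The paper assumes this tacitly as well, so your remark about it is appropriate. You use this input globally, as a pullback, whereas the paper uses it only locally to produce lifts.

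What your route gains: an explicit description of the inverse, including a written-out treatment of (2), and no need for the local criterion for equivalences of stacks. What it costs: you must check by hand that the objects you build are torsors and that the constructions commute with base change. In the paper, the local check reduces to a few group-theoretic identities.

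Two small slips in (2) should be fixed:
\begin{itemize}
\item The fibre of $G\times_{/H}X\to G/H$ over $eH$ is $X$, embedded via $s$, not $H\backslash X$. Your next clause, which produces an $H$-equivariant map $P\to X$, already uses the correct identification.
\item To describe the fibres of $P$ as $H\,\tilde c(s)^{-1}$, you need an \'etale-local lift $\tilde c\colon S\to G$ of $c\colon S\to G/H$. Such a lift exists because $G\to G/H$ is a smooth $H$-torsor.
\end{itemize}
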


\begin{proof}
  We indicate the proof of (1),  the arguments for (2) are similar.

  For a scheme $T$, there is a natural functor
  \[
    [G\backslash X](T)\to[(G/N)\backslash(N\backslash X)](T)\colon
    \left(T\xleftarrow{p} P\xrightarrow{f}X\right)\mapsto \left(T\xleftarrow{\overline{p}} P/N\xrightarrow{\overline{f}}N\backslash X\right)
  \]
  where $p\colon P\to T$ is a $G$-torsor, $f$ is $G$-equivariant, and $\overline{f}$ is the induced map on quotients by the free $N$-actions. The functors are natural in $T$, giving the natural morphism of stacks $[G\backslash X]\to[(G/N)\backslash(N\backslash X)]$. Since we have a morphism of stacks, it suffices to check locally in the \'etale topology that the morphism is an equivalence, i.e., we can assume that \'etale $G$-torsors over $T$ are trivial. On Hom-sets, the functor sends a morphism of spans in $[G\backslash X](T)$, corresponding to a commutative triangle 
  \[
  \xymatrix{
    G\times T \ar[rr]^{(-)\cdot g^{-1}} \ar[rd]_{x}&& G\times T \ar[ld]^{g\cdot x}\\
    & X
  }
  \]
  to the corresponding triangle for $(G/N)$ and $N\backslash X$, with the top horizontal morphism $(-)\cdot g^{-1}$ replaced by $(-)\cdot\overline{g}^{-1}$.

  The functor is full because any morphism between $G/N$-orbit maps in $N\backslash X$, given by right multiplication by an element $\overline{g}^{-1}\in G/N$ can be lifted to a morphism between $G$-orbit maps in $X$, given by right multiplication by $\overline{g}^{-1}\in G$.

  The functor is faithful: given two morphisms $(-)\cdot g_1^{-1},(-)\cdot g_2^{-1}$ between $G$-orbit maps to $X$, if the corresponding morphisms between $G/N$-orbit maps to $N\backslash X$ agree, then $g_1\cdot g_2^{-1}\in N$ is in the stabilizer of a $G$-orbit on $X$. But since $N$ acts freely on $X$, this implies $g_1=g_2$.

  The functor is essentially surjective: given an orbit map $G/N\times T\to N\backslash X$, i.e.,  a scheme morphism $T\to N\backslash X$, we can lift to a scheme morphism $T\to X$ because the $N$-torsor $T\times_{N\backslash X} X$ is trivial. The given orbit map can then easily be lifted to an orbit map $G\times T\to X$.
\end{proof}

\section{Stacks in motivic homotopy}
\label{sec:stacks-motivic}

In this section, we recall different ways of associating objects in unstable motivic homotopy theory to group actions $G\looparrowright X$. There are at least two popular options for this. One is the Nisnevich homotopy orbit space, which most commonly can be described in terms of a simplicial Borel construction which we discuss in Section~\ref{sec:stacks-motivic-simplicial}. The other one realizes the quotient stack associated to the group action as an \'etale homotopy orbit space. This can be described as ind-smooth scheme, and we discuss this in Section~\ref{sec:stacks-motivic-approx}. For special groups $G$, both approaches coincide. The discussion here is going to be central in the second half of the paper, as a key input in our description of the real realization of quotient stacks in Section~\ref{sec:real-pts-hc2},  the definition and properties of the equivariant real cycle class map in Section~\ref{sec:equivariant-cycle-class} as well as the explicit example computations in Section~\ref{sec:examples}. 

\begin{remark}
  Before the more detailed discussion of homotopy orbits, we remark that one can consider a more general setting of group actions $G\looparrowright X$ in $\infty$-topoi. Taking the appropriate quotient yields the (homotopy) orbit space $X_{{\rm h}G}$ which for $X=\pt$ provides an object classifying principal $G$-bundles in the given $\infty$-topos. For a version formulated in simplicial presheaves, cf.{}~\cite{wendt:jhrs}, for an $\infty$-categorical reformulation of that, cf.{}~\cite{nss:bundles}. For us, the relevant settings would be $\infty$-topoi of Nisnevich resp.~ \'etale sheaves on ${\bf Sm}_F$. The non-formal part of our results below is that we are interested in realization functors in the Nisnevich setting, but applied to the \'etale homotopy orbit spaces, hence the general results aren't directly applicable.
\end{remark}

\subsection{Stacks in motivic homotopy}

We begin by recalling (from \cite{choudhury:deshmukh:hogadi:stacks}) a definition of unstable motivic homotopy type of stacks, based on viewing the stack as a sheaf of groupoids and applying the nerve construction to get a simplicial sheaf. The following is defined in Remark~2.1 of loc.{}~cit.

\begin{definition}
  \label{def:stack-motivic-nerve}
  For an algebraic stack $\mathcal{X}$ (on the category ${\bf Sm}_F$ of smooth schemes over a field $F$), there is an associated sheaf of groupoids also denoted by $\mathcal{X}$. Taking sectionwise nerve of the sheaf of groupoids defines a simplicial sheaf and therefore an object in the unstable motivic homotopy category ${\bf H}(F)$. This is the unstable motivic homotopy type of $\mathcal{X}$, and it is again denoted by $\mathcal{X}$.
\end{definition}

\begin{remark}
  Stabilizing we also get a stable motivic homotopy type $\Sigma^\infty_{\mathbb{P}^1}\mathcal{X}$ in $\mathbf{SH}(F)$, and similarly a motive ${\rm M}(\mathcal{X})$ in $\mathbf{DM}(F)$. This is the definition of motives of stacks, cf.{}~\cite{choudhury:deshmukh:hogadi:stacks}*{Definition~2.4}. The definitions apply more generally, not just to quotient stacks. For our purposes in this paper, we will only need quotient stacks, but the description of real realization for quotient stacks can then naturally be extended to algebraic stacks which are Nisnevich-locally quotient stacks. In any case, the usual quasi-projectivity or quasi-separatedness assumptions in the literature on motives of stacks are all satisfied in our intended quotient stacks setting.
\end{remark}

\subsection{Homotopy orbit spaces and simplicial approach}
\label{sec:stacks-motivic-simplicial}

We next discuss the simplicial approach, considering the Nisnevich homotopy orbits, aka simplicial Borel construction, associated to a group action $G\looparrowright X$ in ${\bf Sm}_F$. The quotient stack $[G\backslash X]$ associated to the group action is given by the ``stackification'' of the simplicial Borel construction. Our discussion including choice of notation is based loosely on \cite{krishna}.

\begin{definition}
  \label{def:action_groupoid}
  Let $X$ be a set with a left $G$-action. The \emph{action groupoid} $\mathbf E_G(X)$ is the groupoid with object set $X$, and where the set of morphisms $x\rightarrow y$ (for $x,y\in X$) is the set of $g\in G$ such that $y=g\cdot x$. We denote by $(g,x)$ the arrow $g\colon x\rightarrow g\cdot x$. The composition is defined by setting $(h,y)\circ(g,x)=(hg,x)$, assuming that $y=g\cdot x$.

  We will denote by ${\rm E}_G(X)$ the \emph{nerve of the action groupoid}. Concretely, ${\rm E}_G(X)$ is the simplicial set with $n$-simplices $G^{\times n}\times X$, with face maps
  \[d_i\colon  G^{\times n} \times X \rightarrow  G^{\times {n-1}}\times X\colon
  (g_{n},\dots,g_{1},x)\mapsto 
  \begin{cases}
    (g_{n},\dots,g_{2},g_{1}\cdot x) & $i=0$,\\
    (g_{n}, \dots, g_{i+2},g_{i+1}g_i,g_{i-1},\dots, g_{1},x) &  0<i<n,\\
    (g_{n-1},\dots,g_{1},x) &   $i=n$.\\
  \end{cases}\]
  and with degeneracy maps
  \[s_i\colon G^{\times n}\times X \ra G^{\times {n+1}}\times X\colon (g_{n},\dots,g_{1},x)\mapsto (g_{n},\dots,g_{i+1},e,g_{i},\dots,g_{1},x) \textrm{ for } i=0,\dots,n.\]
\end{definition}

\begin{remark}
  \begin{itemize}
  \item The construction ${\rm E}_G(-)$ is functorial in the category of left $G$-sets, and recovers the usual construction of ${\rm B}G={\rm E}_G(\ast)$ and of ${\rm E}G={\rm E}_G(G)$, once $G$ is endowed with the left action on itself by translation. 
  \item The definition works more generally, for a category $\mathbf{C}$, a group object $G\in\mathbf{C}$ and an object $X\in\mathbf{C}$ with a $G$-action. The action groupoid is then a groupoid object in $\mathbf{C}$, and its nerve is a simplicial object in $\mathbf{C}$. There is also a straightforward $\infty$-categorical formulation. 
  \item This applies in particular to varieties/schemes with action $G\looparrowright X$ as defined above. We note that \cite{krishna} uses the notation $X_G^\bullet={\rm E}_G(X)$ which is closer to homotopy orbit notation $X_{{\rm h}G}$. 
  \end{itemize}
\end{remark}

\begin{remark}
The simplicial set ${\rm E}_G(G)$ can be also constructed in a different way, cf. also \cite{krishna}*{Section~3}. One can consider the simplicial set $\underline n \mapsto \mathrm{Hom}_{\mathbf{Set}}(\underline n,G)=\check C(G/\ast)$, i.e., the \v{C}ech nerve of the map $G\to\ast$. Explicitly we have $\check C(G/\ast)_n=G^{n+1}$ with face maps
\[d_i\colon  G^{n+1} \rightarrow G^n\colon (g_0, \dots,g_n) \mapsto (g_0, \dots,g_{i-1},\hat g_i,g_{i+1},g_n)
\]
and degeneracy maps
\[s_i\colon G^{\times n+1} \rightarrow G^{\times {n+2}}\colon (g_0,\dots,g_n)\mapsto (g_{0},\dots,g_{i},g_{i},\dots,g_{n-1},g_n)\]
for $i=0,\dots,n$. The map $\check C(G/\ast) \rightarrow {\rm E}_G(G)$ defined on $n$-simplices by
\[\check C(G/\ast)_n \rightarrow {\rm E}_G(G)_n,\quad (g_0, \dots,g_n)\mapsto (g_0g_1^{-1},g_1g_2^{-1},\dots,g_{n-1}g_n^{-1},g_n)\]
is a simplicial isomorphism. Along this isomorphism the component-wise left translation action of $G$ on $\check C(G/\ast)$ corresponds to the left action of $G$ on ${\rm E}_G(G)$ given on $n$-simplices by 
\[(h,(g_0, \dots,g_n))\mapsto (c_h(g_0),\dots,c_h(g_{n-1}),hg_n).\]
On the other hand the component-wise right translation action of $G$ on $\check C(G/\ast)$ corresponds to the right action of $G$ on ${\rm E}_G(G)$ given on $n$-simplices by 
\[((g_0, \dots,g_n),h)\mapsto (g_0, \dots,g_nh).\]
For every left $G$-set $X$ we have natural simplicial isomorphisms
\[\check C(G/\ast)\times X \rightarrow {\rm E}_G(G)\times X,
\quad (g_0,\dots,g_n,x)\mapsto (g_0g_1^{-1},g_1g_2^{-1},\dots,g_{n-1}g_n^{-1},g_n,x)\]
\[{\rm E}_G(G)\times X\rightarrow {\rm E}_G(X),
\quad (g_0,\dots,g_n,x)\mapsto (g_0,\dots,g_{n-1},g_n\cdot x)\]
which are equivariant for the "mixed" action of $G$ on the first two, and for the trivial action on the third set. In particular when $X=\ast$ this gives
\[\check C(G/\ast)/G\simeq {\rm E}_G(G)/G\simeq {\rm E}_G(\ast)={\rm B}G.\]
\end{remark}

\begin{remark}
  \label{rmk:relation_with_Breen_GF1}
  Our notational convention is the reverse of that of Breen in \cite{GF1}. Whenever $X$ is a left $G$-set, the left action of $G$ on $X$ defines tautologically a right action of $G^{\op o}$ on $X$. The action groupoids $\mathbf E_G(X)$, $\mathbf E_{G^{\op o}}(X)$ for the left action of $G$ and the right action of $G^{\op o}$ respectively are equal, and so ${\rm E}_G(X) ={\rm E}_{G^{\op o}}(X)$. Definition \ref{def:action_groupoid} (with respect to $G\looparrowright X$) and formula (3.4.2) of \cite{GF1} (with respect to $ X\looparrowleft G^{\op o}$) thus define isomorphic simplicial objects, once the reversed choice of order of factors is taken into account.
\end{remark}

For the relation between the simplicial Borel construction, i.e., the nerve of the action groupoid, and the quotient stack, there is the following comparison statement, cf.{}~\cite{krishna}:

\begin{proposition}
  \label{prop:simplicial-comparison}
  Let $G\looparrowright X$ be a variety with action over the field $F$. Then there is a natural morphism of simplicial presheaves ${\rm E}_G(X)\to[G\backslash X]$ exhibiting $[G\backslash X]$ as the \'etale stackification of ${\rm E}_G(X)$. The induced morphism ${\rm E}_G(X)\to[G\backslash X]$ in the homotopy category $\mathbf{H}(F)$ is an isomorphism if $G$ is special, i.e., if all \'etale torsors over smooth schemes are already Zariski-locally trivial.
\end{proposition}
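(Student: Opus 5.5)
The plan is to construct the comparison morphism explicitly, identify $[G\backslash X]$ as the étale stackification of ${\rm E}_G(X)$ by checking that the map is fully faithful and étale-locally essentially surjective on groupoids of sections, and then use special-ness of $G$ to conclude in $\mathbf{H}(F)$.

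\emph{The comparison morphism.} Both sides are presheaves of groupoids on ${\bf Sm}_F$, and we compare them after taking nerves. For a smooth scheme $S$, the groupoid ${\mathbf E}_G(X)(S)$ has
\begin{itemize}
\item objects the morphisms $x\colon S\to X$, and
\item morphisms $x\to y$ the elements $g\in G(S)$ with $y=g\cdot x$.
\end{itemize}
We send $x$ to the span $S\xleftarrow{{\rm pr}} G\times S\xrightarrow{f_x} X$, where $f_x(h,s)=h\cdot x(s)$; this is the trivial torsor with its orbit map. We send $g$ to the torsor isomorphism $G\times S\to G\times S$, $(h,s)\mapsto (hg^{-1},s)$, which is compatible with the maps to $X$, exactly as in the proof of Proposition~\ref{prop:quotient-induction-stacks}. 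This is functorial in $S$ and compatible with composition, so it defines a morphism of presheaves of groupoids and hence, after taking nerves, of simplicial presheaves ${\rm E}_G(X)\to[G\backslash X]$.

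\emph{Full faithfulness.} A morphism of trivial torsors compatible with orbit maps is a $G$-equivariant automorphism of $G\times S$ over $S$. Any such automorphism is right multiplication by a unique $g^{-1}$ with $g\in G(S)$, and compatibility with the maps to $X$ forces $y=g\cdot x$. So the functor is fully faithful on every $S$.

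\emph{Stackification.} Every object of $[G\backslash X](S)$ is an étale torsor with an equivariant map, so it becomes trivial on an étale cover of $S$. A trivialized torsor is isomorphic to one in the image of the functor. Hence the map is fully faithful and étale-locally essentially surjective. Since $[G\backslash X]$ is an étale stack, the universal property of stackification identifies it with the étale stackification of the prestack ${\mathbf E}_G(X)$. On nerves, this means the map is an étale-local weak equivalence of simplicial presheaves.

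\emph{The special case.} Suppose $G$ is special. Then every étale torsor over a smooth scheme is already Zariski-locally trivial, so the argument above runs verbatim with Zariski (hence Nisnevich) covers: the map is fully faithful and Nisnevich-locally essentially surjective. It follows that the map induces isomorphisms on Nisnevich sheafified $\pi_0$ and $\pi_1$ at all basepoints, and both sides have vanishing higher homotopy sheaves. It is therefore a Nisnevich-local weak equivalence, and so an isomorphism in $\mathbf{H}(F)$.

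The main obstacle is the last step: making precise that a morphism of presheaves of groupoids which is fully faithful and locally essentially surjective gives a local weak equivalence of nerves. I would handle this by checking on stalks, using that nerves of groupoids are $1$-types. One should also confirm that $[G\backslash X]$ satisfies Nisnevich descent, so that its nerve really is the object of Definition~\ref{def:stack-motivic-nerve}.
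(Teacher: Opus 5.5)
Your proposal is correct and follows essentially the same route as the paper. Both arguments use the orbit-map functor, prove full faithfulness via equivariant automorphisms of the trivial torsor being right multiplications, get étale-local essential surjectivity for the stackification claim, and get Nisnevich-local essential surjectivity when $G$ is special. Your extra step, checking the local weak equivalence on sheafified $\pi_0$ and $\pi_1$ of nerves of groupoids, only makes explicit what the paper leaves to its reference to Krishna's local arguments.
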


\begin{proof}
  For every $F$-scheme $T$ there is always a functor
  \[\Phi_T\colon\mathbf E_G(X)(T)\rightarrow [G\backslash X](T)\]
  mapping a point $x\in X(T)$ to the corresponding orbit map $G_T\to X_T\colon h\mapsto h\cdot x$, and a morphism $(g,x)\colon x \rightarrow g\cdot x$ to the commutative diagram
  \begin{equation*}
    \begin{tikzcd}
      G_{T} \arrow[rr,"(-)\cdot g^{-1}"] \arrow[dr,"\cdot x"'] & & G_{T} \arrow[dl,"\cdot g\cdot x"]\\
      & X_T. & 
    \end{tikzcd}
  \end{equation*}
  These functors $\Phi_T$ are easily checked to be natural in $T$. The morphism ${\rm E}_G(X)\to[G\backslash X]$ is obtained from this by taking sectionwise nerves, and the associated natural transformation $\Phi$ exhibits $[G\backslash X]$ as the stackification of the functor $\mathbf E_G(X)(-)$. 

  The functor $\Phi$ is fully faithful. Indeed giving an automorphism $\varphi$ of the trivial $G$-torsor $G_T$ over $T$ is equivalent to giving the element $\varphi(e_T)\in G(T)$, where $e_T$ denotes the identity element of the group $G(T)$, and in fact $\varphi=(-)\cdot \varphi(e_T)$. Thus the condition that the diagram
  \begin{equation}
    \label{eq:morphism_in_X/G}
    \begin{tikzcd}
      G_{T} \arrow[rr,"\varphi"] \arrow[dr,"\cdot x"'] & & G_{T} \arrow[dl,"\cdot g\cdot x"]\\
      & X_T. & 
    \end{tikzcd}
  \end{equation}
  be a morphism in $[G\backslash X](T)$ is translated into the condition that $x=\varphi(e)\cdot g\cdot x$ in $X(T)$, in other words the arrow $\varphi(e_T)^{-1}\colon x \ra g\cdot x$ is the unique morphism in $\mathbf E_G(X)(T)$ with the property that $\Phi_T(\varphi(e_T)^{-1})$ be the diagram \eqref{eq:morphism_in_X/G}.

  The functor $\Phi_T$ is essentially surjective, if all $G$-torsors over $T$ are trivial, but is not generally an equivalence. However, if $G$ is special, then the functor $\Phi$ is an equivalence locally in the Nisnevich topology, which implies that ${\rm E}_G(X)\to [G\backslash X]$ induces an isomorphism in $\mathbf{H}(F)$. Compare the local arguments in the proof of \cite{krishna}*{Proposition~3.2}. 
\end{proof}

\begin{remark}
One application of this is that for special groups $G$, the realization (complex or real) of the quotient stack can be computed from the simplicial scheme ${\rm E}_G(X)$ by taking levelwise realization. We will use this in Section~\ref{sec:real-pts-hc2}.
\end{remark}
  
\begin{example}
  \label{ex:X/G_vs_action_groupoid}
  The arguments in the proof of Proposition~\ref{prop:simplicial-comparison} allow us to describe the values of $[G\backslash X]$ over $T=\op{Spec}(k)$ for $k=\bar{k}$ an algebraically closed field. In this case, all $G$-torsors over $T$ are trivial, so $\Phi_T$ is essentially surjective.   In particular, for any algebraically closed extension $\bar k$ of the base field $F$, the connected components of $[G\backslash X](\bar k)$ are the $G(\bar k)$-orbits of $X(\bar k)$, and the automorphism group of an object $f\colon G_{\bar k}\to X$ is given by the stabilizer group of the corresponding point $f(e)$.
\end{example}

\begin{example}
  \label{ex:real-points}
  Let $G\looparrowright X$ be a group action in varieties over $\mathbb{R}$. 
  We next describe the groupoid $[G\backslash X](\mathbb{R})$  of real points of the quotient stack $[G\backslash X]$. The objects of $[G\backslash X](\mathbb{R})$ are spans $\op{Spec}\mathbb{R}\xleftarrow{p}P\xrightarrow{f} X$, consisting of a $G$-torsor $p\colon P\to \op{Spec}\mathbb{R}$ and an equivariant morphism $f\colon P\to X$. Under the natural morphism
  \[
    {\rm H}^1({\rm C}_2,G)\to {\rm H}^1({\rm C}_2,G^{\rm ad})\to {\rm H}^1({\rm C}_2,\op{Aut}(G)),
  \]
  the $G$-torsor $P$ corresponds to a strong inner form of $G$, cf. also the discussion in Section~\ref{sec:strong-real-forms}. Note that the ${\rm C}_2$-action appearing in the Galois cohomology sets above is the complex conjugation action on $G(\mathbb{C})$ for the real group $G$.

  Essentially, the objects of $[G\backslash X](\mathbb{R})$ are orbit maps from strong inner forms of $G$ to $X$. The morphisms are commutative triangles of orbit maps, so that the fundamental group of $[G\backslash X](\mathbb{R})$ at the base point $\op{Spec}\mathbb{R}\xleftarrow{p}P\xrightarrow{f} X$ is the stabilizer group of the corresponding orbit in the strong inner form $\op{Aut}(P)$. We will discuss this in more detail in Section~\ref{sec:hc2-galois}.
\end{example}

\begin{remark}
  Using this example, we can now explain how ${\rm E}_G(X)\to[G\backslash X]$ is not an equivalence in $\mathbf{H}(F)$ in general. For example, in the case $G={\rm O}(n)$, the action groupoid ${\rm E}_G(\ast)$ over $\mathbb{R}$ is connected, only containing the orbit map $G\to \ast$ for the trivial $G$-torsor. On the other hand, the groupoid $[{\rm O}(n)\backslash *](\mathbb{R})$ is not connected, rather its connected components are in bijective correspondence with the real forms of ${\rm O}(n)$. In the unstable motivic homotopy category, ${\rm E}_G(\ast)$ is equivalent to ${\rm B}_{\rm Nis}G$, while $[G\backslash *]$ is equivalent to ${\rm B}_\et G$.
\end{remark}

\begin{example}
  \label{ex:complexification-functor}
  To prepare our discussion of homotopy fixed points in Section~\ref{sec:hc2-galois}, we give a brief description of the complexification functor $[G\backslash X](\mathbb{R})\to [G\backslash X](\mathbb{C})$, i.e., the restriction along $\op{Spec}\mathbb{C}\to \op{Spec}\mathbb{R}$. Recall from Example~\ref{ex:real-points} that the objects of $[G\backslash X](\mathbb{R})$ are $G$-equivariant morphisms $f\colon P\to X$ where $p\colon P\to\op{Spec}\mathbb{R}$ is a $G$-torsor over $\op{Spec}\mathbb{R}$, and morphisms are $G$-equivariant maps $P\to P'$ commuting with the morphisms to $X$. The restriction $[G\backslash X](\mathbb{R})\to [G\backslash X](\mathbb{C})$ is given by base-change along $\op{Spec}\mathbb{C}\to\op{Spec}\mathbb{R}$. Therefore, as expected, the restriction is a complexification functor, mapping an object $f\colon P\to X$ to the complexification $P_{\mathbb{C}}\cong G_{\mathbb{C}}\to X_{\mathbb{C}}$. Similarly, a $G$-equivariant morphism $P\to P'$ is mapped to its complexification $P_{\mathbb{C}}\to P'_{\mathbb{C}}$ which will then commute with the complexified morphisms to $X_{\mathbb{C}}$.
\end{example}

\begin{remark}
  There are other ways to relate the quotient stack $[G\backslash X]$ with a simplicial scheme, given e.g.{}~by \cite{choudhury:deshmukh:hogadi:stacks}*{Lemma~2.6}. Choosing a local epimorphism $p\colon Y\to\mathcal{X}$ of simplicial presheaves induces a local equivalence $\check{C}(Y)\to \mathcal{X}$, allowing to identify stacks as simplicial schemes in motivic homotopy. For our purposes, we want to work with the simplicial Borel constructions (even if some identifications are restricted to special groups) because they provide the connection to the homotopy fixed point picture. 
\end{remark}

\subsection{Stacks in motivic homotopy: approximation by smooth schemes}
\label{sec:stacks-motivic-approx}

Another way to understand a quotient stack $[G\backslash X]$ associated to a variety with action $G\looparrowright X$ is in terms of algebraic approximations of Borel constructions, the approach originally used by Totaro \cite{totaro:bg} and Edidin--Graham \cite{edidin:graham:equivariant}. Their ideas were expanded into the notion of admissible gadgets by Morel and Voevodsky \cite{MorelVoevodsky1999}, and used in various papers to define motives of stacks, e.g.{}~\cite{hoskins:pepinlehalleur}. The key feature of the approximation approach is that it allows to write the stack as a colimit of smooth schemes (taken in motivic spaces), which is a good way to compute (complex and real) realization and to define the equivariant real cycle class maps later. This description also makes more apparent that the quotient stacks are homotopy orbit spaces in the $\infty$-topos of \'etale sheaves of spaces on smooth schemes.

We also discuss the equivalence of all the various possibilities to define the unstable homotopy type of a (quotient) stack, building on \cite{choudhury:deshmukh:hogadi:stacks} and previous work of Krishna \cite{krishna}. Some comparison between approximations and simplicial Borel constructions for equivariant motives can also be found in \cite{BGK} (for group actions, without stack language).

Approximation of the Borel construction ${\rm E}G\times_{/G} X$ by smooth schemes can be done using $G$-representations, as in \cite{totaro:bg} or \cite{edidin:graham:equivariant}. To fix a specific colimit description, one can use admissible gadgets, originating in \cite{MorelVoevodsky1999}, see also \cite{krishna}*{Definition~2.1} or \cite{choudhury:deshmukh:hogadi:stacks}*{Definition~3.2}.

\begin{definition}
  \label{def:admissible-gadget}
  Let $G$ be a linear algebraic group over a field $F$. A pair $(V,U)$ of smooth $F$-schemes is said to be a \emph{good pair} for $G$ if $V$ is a right $G$-representation defined over $F$ and $U\subset V$ is a $G$-stable open subset on which $G$ acts freely, and the quotient $U/G$ exists as a smooth quasi-projective scheme.

  A sequence of pairs $\rho=(V_i,U_i)_{i\geq 1}$ is said to be an \emph{admissible gadget} for $G$ if there exists a good pair $(V,U)$ for $G$ such that $V_i=V^{\oplus i}$ and $U_i\subset V_i$ is a $G$-stable open subscheme on which $G$ acts freely and the quotient exists as a smooth quasi-projective scheme such that the following conditions are satisfied:
  \begin{itemize}
  \item $(U_i\oplus V)\cup(V\oplus U_i)\subseteq U_{i+1}$
  \item $\op{codim}_{U_{i+2}}(U_{i+2}\setminus (U_{i+1}\oplus V))>\op{codim}_{U_{i+1}}(U_{i+1}\setminus (U_i\oplus V))$
  \item $\op{codim}_{V_{i+1}}(V_{i+1}\setminus U_{i+1})>\op{codim}_{V_i}(V_i\setminus U_i)$
  \end{itemize}
\end{definition}

\begin{definition}
  Let $G\looparrowright X$ be a smooth quasi-projective variety with action over $F$. We define the "approximations"
  \[B^i_G(X,\rho)=X^i_G(\rho):=U_i\times_{/G}X \]
  as (the \'etale sheaf associated to) the quotient of $U_i\times X$ with respect to the left action
  \[G\times U_i\times X \ra U_i\times X\colon \;(g,u,x)\mapsto (ug^{-1},gx).\]
  Similarly we denote 
  \[E^i_G(X,\rho)=U_i\times X\simeq B_G(G\times X,\rho).\]
\end{definition}

For a variety with action $G\looparrowright X$ and an admissible gadget $\rho$, the colimit
\[
X_G(\rho)=\op{colim}_i\left(U_i\times_{/G}X\right),
\]
taken in the category of motivic spaces, is an algebraic version of the Borel construction, cf. also \cite{choudhury:deshmukh:hogadi:stacks}*{p.~256}. We can compare this to the quotient stack $[G\backslash X]$ via the natural morphism $U_i\times_{/G}X\to [G\backslash X]$ sending $U_i\times_{/G}X$ to the span
\[
U_i\times_{/G}X\xleftarrow{q} U_i\times X\xrightarrow{{\rm pr}_X} X.
\]
Here $q\colon U_i\times X\to U_i\times_{/G}X$ is the quotient projection for the balanced product, and the span itself is viewed as an element in $[G\backslash X](U_i\times_{/G}X)$, i.e., a morphism $U_i\times_{/G}X\to[G\backslash X]$. The naturality of these morphisms implies a well-defined morphism
\[
X_G(\rho)=\op{colim}_i\left(U_i\times_{/G}X\right)\to [G\backslash X].
\]

The comparison result is then the following:

\begin{proposition}
  \label{prop:approx-comparison}
  For a variety with action $G\looparrowright X$ and an admissible gadget $\rho$, the natural morphism $X_G(\rho)\to [G\backslash X]$ is an equivalence in $\mathbf{H}(F)$. 
\end{proposition}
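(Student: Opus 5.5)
We plan to factor the comparison through the Nisnevich-local intermediate object $\op{colim}_i [G\backslash (U_i\times X)]$. Since $G$ acts freely on $U_i$, the diagonal action on $U_i\times X$ is free and the balanced product $U_i\times_{/G}X$ exists as a smooth quasi-projective scheme under Convention~\ref{conventions-actions}. First we show that the canonical map of stacks $[G\backslash(U_i\times X)]\to U_i\times_{/G}X$ is an isomorphism. This is the case $N=G$ of Proposition~\ref{prop:quotient-induction-stacks}(1), applied to the free action $G\looparrowright U_i\times X$. The morphism $U_i\times_{/G}X\to[G\backslash X]$ considered above then becomes the morphism of quotient stacks $[G\backslash(U_i\times X)]\to[G\backslash X]$ induced by the equivariant projection ${\rm pr}_X$. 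So it suffices to show that
\[
\op{colim}_i\,[G\backslash(U_i\times X)]\longrightarrow [G\backslash X]
\]
is an equivalence in $\mathbf{H}(F)$.

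Next we analyze the projection $[G\backslash(V_i\times X)]\to[G\backslash X]$. For a smooth scheme $S$ with a map $S\to[G\backslash X]$, given by a torsor $P\to S$ with $P\to X$, the pullback is the vector bundle $P\times_{/G}V_i\to S$. It is Zariski-locally trivial, since étale descent for vector bundles is effective by Hilbert 90. Hence the pullback is an $\mathbb{A}^1$-equivalence, and we want to conclude that $[G\backslash(V_i\times X)]\to[G\backslash X]$ is an $\mathbb{A}^1$-equivalence. For the open part, the pullback of $[G\backslash(U_i\times X)]$ is the open complement $P\times_{/G}U_i$ of a closed subset whose codimension tends to infinity with $i$. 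The standard Morel--Voevodsky argument then applies: in the colimit over $i$, the complement of a closed subset of codimension $c$ in a vector bundle over a smooth $S$ is $(c-2)$-$\mathbb{A}^1$-connected relative to the bundle, by purity or homotopy purity. This is exactly where the codimension conditions of Definition~\ref{def:admissible-gadget} enter. Consequently the fibres of $\op{colim}_i P\times_{/G}U_i\to S$ become $\mathbb{A}^1$-contractible.

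The main obstacle is the passage from fibrewise statements over each $S\to[G\backslash X]$ to a global equivalence. The stack is not a scheme, and the relevant covers trivializing $P$ are étale, not Nisnevich. I would handle this with the simplicial resolution from \cite{choudhury:deshmukh:hogadi:stacks}*{Lemma~2.6}. Choose a smooth atlas $Y\to[G\backslash X]$ by a scheme, for instance $Y=X$ itself via the trivial torsor, and replace the étale cover by a Nisnevich-local epimorphism of simplicial presheaves $Y'\to[G\backslash X]$, for example by taking $Y'$ to be the disjoint union of all $U_j\times_{/G}X$, which maps surjectively on Nisnevich points. Since colimits in $\mathbf{H}(F)$ are universal, the map $\op{colim}_i[G\backslash(U_i\times X)]\to[G\backslash X]$ is an equivalence as soon as its base change to each level $\check{C}(Y')_n$ is an equivalence. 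Those levels are smooth schemes, so the fibrewise statement of the previous paragraph applies. Here we need the vector-bundle description to be Nisnevich-local, which holds because pullbacks of $P\times_{/G}V_i$ are vector bundles. Finally, the construction is independent of the gadget, and it matches Proposition~\ref{prop:simplicial-comparison} for special $G$, where $[G\backslash X]\simeq{\rm E}_G(X)$ and the result reduces to Morel--Voevodsky's original comparison. I would mostly defer to \cite{krishna} and \cite{choudhury:deshmukh:hogadi:stacks} for the detailed connectivity estimate.
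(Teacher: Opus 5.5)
Your argument is essentially sound, but it is not the route the paper writes out. The paper cites Krishna and Choudhury--Deshmukh--Hogadi, and then adds a reduction to a special group. For a faithful representation $G\hookrightarrow{\rm GL}_n$, a gadget for ${\rm GL}_n$ is also one for $G$. The map $X_G(\rho)\to[G\backslash X]$ then sits in a square over $({\rm GL}_n\times_{/G}X)_{{\rm GL}_n}(\rho)\to[{\rm GL}_n\backslash({\rm GL}_n\times_{/G}X)]$. The vertical maps of that square are induction isomorphisms, and the bottom map is Krishna's equivalence for the special group ${\rm GL}_n$.

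You instead run the Morel--Voevodsky ${\rm B}_{\rm gm}G\simeq{\rm B}_\et G$ argument directly for $[G\backslash X]$, which is in effect what the cited proofs do. Base change along a smooth $T\to[G\backslash X]$, given by $(P\to T,\,P\to X)$, yields $\op{colim}_iP\times_{/G}U_i\to T$. This is an open subscheme of a vector bundle whose complement has codimension tending to infinity.

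What your route buys:
\begin{itemize}
\item It is uniform in $G$, and needs neither a faithful representation nor the special case.
\item It shows transparently why \'etale torsors, and hence the \'etale stack rather than ${\rm E}_G(X)$, appear.
\item It works even for singular $X$, since $P\times_{/G}U_i$ is smooth over $T$ whatever $X$ is.
\end{itemize}
The price is a genuine geometric input. You need that $P\times_{/G}U_i\to P\times_{/G}V_i$ has $\mathbb{A}^1$-connectivity growing with the codimension. This follows from Morel's connectivity theorem plus homotopy purity over a perfect field, or alternatively from Morel--Voevodsky's explicit homotopies using $(U_i\oplus V)\cup(V\oplus U_i)\subseteq U_{i+1}$. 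You also need that $\mathbb{A}^1$-homotopy sheaves commute with the sequential colimit.

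Two points in your globalization step need repair.

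First, colimits are \emph{not} universal in $\mathbf{H}(F)$, because motivic localization does not preserve pullbacks. You must form the base change in Nisnevich sheaves of spaces, where the homotopy pullback of $[G\backslash(U_i\times X)]\to[G\backslash X]$ along $T$ is the stack fibre product $P\times_{/G}U_i$. Only afterwards do you use that motivic localization preserves colimits.

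Second, the \v{C}ech nerve of $\coprod_jU_j\times_{/G}X$ is unnecessary, and you assert without proof that it is a Nisnevich-local epimorphism. Over a henselian local ring $R$ you would need an $R$-point of some $P\times_{/G}U_j$, i.e.\ a rational point on the closed fibre, which is not automatic for finite residue fields. It is cleaner to write $[G\backslash X]\simeq\op{colim}_{T\to[G\backslash X]}T$ over smooth schemes mapping to it. Universality in the sheaf topos then exhibits $\op{colim}_i[G\backslash(U_i\times X)]\to[G\backslash X]$ as the colimit of the maps $\op{colim}_iP_T\times_{/G}U_i\to T$, each of which is an $\mathbb{A}^1$-equivalence.
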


\begin{proof}
  This is essentially \cite{krishna}*{Proposition~3.2} or \cite{choudhury:deshmukh:hogadi:stacks}*{Lemma~3.3}. We want to note, however, that the results cited rather state the equivalence between $X_G(\rho)$ and the simplicial Borel construction $X_G^\bullet={\rm E}_G(X)$ discussed in the previous section, under the assumption that $G$ is special. The proofs of these results, however, actually go through the zig-zag $X_G(\rho)\rightarrow [G\backslash X]\leftarrow X_G^\bullet$ and show that the first arrow is an equivalence in general, while the second arrow is an equivalence for special groups. In any case, knowing that $X_G(\rho)\to[G\backslash X]$ is an equivalence for special groups implies the general case: we choose a faithful representation $G\hookrightarrow {\rm GL}_n$ and consider the induced action ${\rm GL}_n\looparrowright ({\rm GL}_n\times_{/G} X)$. For a choice of admissible gadget $\rho$ for ${\rm GL}_n\looparrowright({\rm GL}_n\times_{/G}X)$, we have a commutative diagram
  \[
  \xymatrix{
    X_G(\rho) \ar[r] \ar[d] & [G\backslash X] \ar[d] \\
    \left({\rm GL}_n\times_{/G} X\right)_{{\rm GL}_n}(\rho) \ar[r] & \left[{\rm GL}_n\backslash\left({\rm GL}_n\times_{/G}X\right)\right]
  }
  \]
  Here, the admissible gadget $\rho$ for ${\rm GL}_n$ is also an admissible gadget for the subgroup $G\leq{\rm GL}_n$, the horizontal morphisms are the canonical morphisms discussed before the statement of Proposition~\ref{prop:approx-comparison}. The vertical morphisms are induction equivalences, the left is an isomorphism by \cite{krishna}*{Corollary~2.7}, the right one by Proposition~\ref{prop:quotient-induction-stacks} (2). The lower horizontal morphism is an equivalence, this is Krishna's equivalence \cite{krishna}*{Proposition~3.2} for special groups, therefore the upper horizontal morphism is an equivalence. 
\end{proof}

\begin{remark}   
  Another way to see the equivalence would be to identify the $G$-principal bundles. On the stack side, $[G\backslash (G\times X)]\to[G\backslash X]$ is the stack version of the universal $G$-bundle. We can show that $[G\backslash (G\times X)]$ is equivalent to $X$, and then the natural map $\op{colim}_i(U_i\times X)\to[G\backslash (G\times X)]$ is an equivalence. The claim follows by passing to homotopy $G$-orbits. 
\end{remark}

In any case, the above result shows that the unstable motivic homotopy type of a quotient stack can be described in terms of the algebraic Borel construction and thus is concretely written as a filtered colimit of smooth schemes. The same is true for representable morphisms of quotient stacks, i.e., representable morphisms of stacks can be written as colimits of morphisms of approximations. This can be used to produce relevant examples of proper/smooth/lci representable morphisms of quotient stacks, and will be relevant for our discussion of compatibilites of the equivariant real cycle class map in  Section~\ref{sec:equivariant-cycle-class}:

\begin{example}
  \label{ex:quotient-stack-morphism-properties}
  According to \cite{edidin:graham:equivariant}*{Proposition~2}, if a morphism $({\rm id},f)\colon (G\looparrowright X)\to (G\looparrowright Y)$ has one of the properties (proper, flat, smooth, regular embedding, lci), then so does any approximation of the Borel construction $U\times_{/G}X$. In particular, the corresponding morphism $f\colon [G\backslash X]\to[G\backslash Y]$ will also have the property.
\end{example}

\subsection{Vector bundles and Picard group for stacks}
\label{sec:equivariant-vb}

We recall some basics concerning vector bundles on quotient stacks which we'll need. On the one hand, the cohomology theories we consider can be twisted by line bundles, and so their equivariant versions will be twisted by equivariant line bundles (or more generally by virtual vector bundle classes in $\underline{K}([G\backslash X])$, as in \cite{dilorenzo:mantovani}). On the other hand, we will also discuss real realization of (symmetric) vector bundles in Section~\ref{sec:vb-realization}, which will allow us to compare algebraic characteristic classes to their topological counterparts. 

Let $G\looparrowright X$ be a variety with action. Then a \emph{$G$-vector bundle}, or \emph{equivariant vector bundle}, on $X$ is a morphism $(G\looparrowright E)\to (G\looparrowright X)$ of varieties with action, whose underlying morphism $E\to X$ of varieties is a vector bundle, and such that the $G$-action on the fibers of $E$ is by linear isomorphisms.\footnote{In the GIT literature, this is called $G$-linearized vector bundle.}

Alternatively, for a smooth quotient stack $[G\backslash X]$, equivariant vector bundles over the stack can be described in terms of morphisms of stacks $[G\backslash X]\to[{\rm GL}_n\backslash *]$. Essentially, this means that for each smooth scheme $S$ and any morphism $f\colon S\to[G\backslash X]$, we have an associated vector bundle $E_f\to S$, and these are compatible with pullbacks along morphisms $S\to S'$. For a $G$-equivariant vector bundle $E\to X$, the corresponding morphism $[G\backslash X]\to [{\rm GL}_n\backslash *]$ is locally given by associated bundle constructions as in Example~\ref{ex:vb-bg} below.

More generally, we can also describe $H$-principal bundles on quotient stacks in terms of morphisms $[G\backslash X]\to [H\backslash *]$ from the quotient stack to the classifying stack of $H$. The relevant case for us will be $G$-equivariant symmetric vector bundles, given by morphisms of the form $[G\backslash X]\to [{\rm O}(n)\backslash*]$. We will discuss their real realization in Section~\ref{sec:vb-realization}.

We denote by $\op{Pic}_G(X)=\op{Pic}([G\backslash X])$ the group of $G$-line bundles on $X$. This group can be identified with an equivariant Chow group by \cite{edidin:graham:equivariant}*{Corollary~1 to Theorem~1, p.~602}: if $G\looparrowright X$ is a smooth variety with action, then the map $c_1\colon {\rm Pic}_G(X)\to {\rm CH}^1_G(X)$ is an isomorphism. Due to this identification, we will mostly denote the group of $G$-equivariant line bundles over $X$ by ${\rm CH}^1_G(X)$. 

\begin{example}
  \label{ex:vb-bg}
  For a linear algebraic group $G$, the $G$-vector bundles of rank $n$ on $[G\backslash *]$ are identified with the representations $G\to {\rm GL}_n$. The morphism $[G\backslash *](S)\to [{\rm GL}_n\backslash *](S)$ corresponding to a representation $\rho\colon G\to{\rm GL}_n$ sends a span $S\leftarrow P\to *$ (of a $G$-principal bundle $P\to S$ and a $G$-equivariant morphism $P\to X$) to the span where $P$ is replaced by the vector bundle associated to $P$ and $\rho$.

  In particular, the group ${\rm CH}^1_G(*)$ of equivariant line bundles on the point is identified with the group of characters $G\to\mathbb{G}_{\rm m}$ of $G$.
\end{example}

Vector bundles (or better, symmetric vector bundles) on classifying stacks $[G\backslash *]$ will later be our preferred method of describing elements in the cohomology of $[G\backslash *]$ as characteristic classes. 

\section{Real points of stacks as fixed-point groupoids}
\label{sec:real-pts-fp-gpd}
        
In the following section, we will recall the description of the groupoid of real points of a quotient stack $[G\backslash X]$ as the fixed-point groupoid of its complex points. 
The discussion is based largely on a discussion of homotopy fixed points in Virk's paper \cite{virk:hc2}, discussions of group actions on stacks by Romagny \cite{romagny} and in the context of descent theory by Vistoli \cite{Vistoli:stacks}*{\S~3.8 and \S~4.4}. A similar discussion of descent for vector spaces and homotopy fixed-points of Galois actions can be found in a very clear blog post of annoying precision by Qiaochu Yuan
\begin{center}
  {\small \verb!https://qchu.wordpress.com/2015/11/11/fixed-points-of-group-actions-on-categories/!}
\end{center}
Although most of what we say applies to general (finite) group actions on groupoid, we will not provide the most general discussion possible and only discuss the ${\rm C}_2$-situation in detail.

We first recall the natural ${\rm C}_2$-action (given by complex conjugation) on the complex points of a real quotient stack $[G\backslash X]$. For the general definition of group actions on categories, cf.{}~\cite{romagny}*{Section~1}.

\begin{example}
  \label{ex:cplx-conjugation}
  Let $G\looparrowright X$ be a smooth scheme with action by a (smooth) group scheme $G$, everything defined over $\mathbb R$. We wish to explicitly describe the complex conjugation action of ${\rm C}_2={\rm Gal}(\mathbb{C}/\mathbb{R})$ on $[G\backslash X](\mathbb{C})$.

  Observe that there is a natural Galois action of $\mathrm C_2$ on the set $Y(\mathbb C)$ of complex points of any real variety $Y$, by precomposing a point $y\colon\op{Spec}\mathbb{C}\to Y$  with $\op{Spec}(\sigma)\colon\op{Spec}\mathbb{C}\to\op{Spec}\mathbb{C}$.   This Galois action commutes with all structure maps of the action groupoid $\mathbf E_G(X)(\mathbb C)$ of Definition~\ref{def:action_groupoid}, which thus inherits a strict action of the group $\mathrm C_2$. Explicitly, $\sigma \in \mathrm C_2$ acts on objects of $\mathbf E_G(X)(\mathbb C)$ by $x\mapsto \sigma(x)$ as just explained, and on morphisms by
  \[
  (g\colon x\ra g\cdot x) \mapsto (\sigma(g)\colon \sigma(x)\ra \sigma(g)\cdot \sigma(x)).
  \]
  Using the equivalence $\Phi_{\mathbb C}\colon[G\backslash X](\mathbb C)\cong \mathbf E_G(X)(\mathbb C)$, this equips the complex points $[G\backslash X](\mathbb{C})$ with a Galois action. In explicit terms, the image along $\Phi_{\mathbb C}$ of the object $x\in \mathbf E_G(X)(\mathbb C)$ is $G_{\mathbb C} \overset{\cdot x}{\ra}X_{\mathbb C}$ while the image of the object $\sigma(x)$ is $G_{\mathbb C} \overset{\cdot \sigma(x)}{\ra}X_{\mathbb C}$. If $g\colon x\ra g\cdot x$ is an arrow of $\mathbf E_G(X)(\mathbb C)$, the image of $\sigma(g)$ along $\Phi_{\mathbb C}$ is the diagram 
  \begin{equation*}
    \begin{tikzcd}
          G_{\mathbb C} \arrow[rr,"\cdot \sigma(g)^{-1}"] \arrow[dr,"\cdot \sigma(x)"'] & & G_{\mathbb C} \arrow[dl,"\cdot \sigma(g\cdot x)"]\\
          & X_{\mathbb C}. & 
    \end{tikzcd}
  \end{equation*}
  This can be identified with the induced pullback action of $\sigma\colon \op{Spec}(\mathbb{C})\to\op{Spec}(\mathbb{C})$ on $[G\backslash X](\mathbb{C})$ as discussed in Section~\ref{sec:quot-stacks}, mapping $(\op{Spec}(\mathbb C) \overset{\pi}{\leftarrow} P\overset{f'}{\rightarrow} X)$ to $(\op{Spec}(\mathbb C) \overset{\pi'}{\leftarrow} \sigma. P\overset{f}{\rightarrow} X)$.
  \end{example}

Next, we recall the definition of fixed-point groupoid $\mathcal{C}^{\rm hC_2}$ for a group action of ${\rm C}_2$ on a groupoid $\mathcal{C}$. We will discuss in Section~\ref{sec:fp-gpd-hfp} that the geometric realization of the fixed-point groupoid will be the  ${\rm C}_2$-homotopy-fixed points of the realization of the groupoid $\mathcal{C}$. From this, it will be easy to compute fixed-point groupoids for complex conjugation on classifying stacks, cf.{}~Section~\ref{sec:hc2-galois}.

\begin{definition}
  \label{def:hG-groupoid}
  For a category $\mathcal{C}$ with a left action of a group $\Gamma$, the fixed-point category $\mathcal{C}^{{\rm h}\Gamma}$ is defined as follows. The objects are collections $(X,(\varphi_\gamma)_{\gamma\in \Gamma})$ of an object $X\in\mathcal{C}$ and isomorphisms $\varphi_\gamma\colon X\to F(\gamma)(X)$ satisfying the following compatibility. For each pair $\gamma,\delta\in \Gamma$ we require that the following square commutes:
  \begin{equation*}
    \begin{tikzcd}[column sep=40pt, row sep=15pt]
          X \arrow[r,"\varphi_{\gamma\delta}"] \arrow[d,"\varphi_\gamma"] &  F(\gamma\delta)(X)  \\
           F(\gamma)(X) \arrow[r,"F(\gamma)(\varphi_\delta)"] & F(\gamma)( F(\delta)(X)) \arrow[u,"\eta_{\gamma,\delta}"].
    \end{tikzcd}
  \end{equation*}
Morphisms $(X,(\varphi_\gamma)_{\gamma\in \Gamma})\to (Y,(\psi_\gamma)_{\gamma\in \Gamma})$ in $\mathcal{C}^{{\rm h}\Gamma}$ are morphisms $f\colon X\to Y$ such that for every $\gamma\in \Gamma$ the following diagram commutes:
  \begin{equation*}
    \begin{tikzcd}[column sep=40pt, row sep=15pt]
          X \arrow[r,"\varphi_{\gamma}"] \arrow[d,"f"] &  F(\gamma)(X) \arrow[d,"F(f)"]  \\
          Y \arrow[r,"\psi_\gamma"] & F(\gamma)(Y).
    \end{tikzcd}
  \end{equation*}
  There is a natural forgetful functor $\mathcal{C}^{{\rm h}\Gamma}\to \mathcal{C}$ which maps a homotopy fixed point $(X,(\varphi_\gamma)_\gamma)$ to its underlying object $X$.
\end{definition}

\begin{example}
  \label{ex:hc2-groupoid}
  In the situation of definition \ref{def:hG-groupoid} (where we have a strict action of $\mathrm C_2=\langle\sigma\rangle$ on a category $\mathcal{C}$), a fixed point is simply a pair $(X,\varphi)$ of an object $X\in\mathcal{C}$ equipped with an isomorphism $\varphi\colon X\to F(\sigma)(X)$ such that $F(\sigma)(\varphi)=\varphi^{-1}$. A morphism $(X,\varphi)\to (Y,\psi)$ in $\mathcal{C}^{{\rm hC}_2}$ is a morphism $f\colon X\to Y$ in $\mathcal{C}$ such that the following diagram commutes:
  \[
  \xymatrix{
    X \ar[r]^\varphi \ar[d]_f & F(\sigma)(X) \ar[d]^{F(\sigma)(f)} \\
    Y \ar[r]_\psi & F(\sigma)(Y)
  }
  \]
\end{example}

\begin{comment}
\begin{example}
  \label{ex:hc2-vect}
  We briefly discuss homotopy fixed points in Example~\ref{ex:galois-vect} of Galois actions on vector spaces. For a $K$-vector space $V$, the scalar extension $V\otimes_KL$ can be endowed with a natural structure of homotopy fixed point with respect to the Galois action of $\mathrm{Gal}(L/K)$ on ${\rm Mod}(L)$ as follows. \textcolor{blue}{I'm not sure of the meaning of the following sentence: For any $\gamma\in \Gamma$, $\lambda\in L$ and $v\in V\otimes_KL$, we have $\gamma(v\lambda)=\gamma(v)\gamma(\lambda)$.} The assignment $v\otimes \lambda\mapsto v\otimes\gamma^{-1}(\lambda)$ can then be interpreted either as $\gamma^{-1}$-semilinear map $V\otimes_KL\to V\otimes_KL$ or as $L$-linear isomorphism $V\otimes_KL\to \gamma.(V\otimes_KL)$. We remind the reader that the target the vector space structure is twisted so that $\mu\in L$ acts by scalar multiplication with $\gamma^{-1}(\mu)$   on (the second tensor factor of) $V\otimes_KL$. This is the isomorphism in the datum of homotopy fixed point. Conversely, such an $L$-linear isomorphism provides the semi-linear map giving the descent datum for a $K$-structure on an $L$-vector space.
\end{example}
\end{comment}

\begin{example}
  \label{ex:hc2-orbits}
  We return to Example~\ref{ex:cplx-conjugation} of the ${\rm C}_2$-action on $[G\backslash X](\mathbb{C})$ for a real group action $G\looparrowright X$. From the action groupoid viewpoint in \cite{virk:hc2}, a homotopy fixed point is a pair $(g,x)\in G(\mathbb{C})\times X(\mathbb{C})$ such that $g\cdot x=\sigma(x)$ and $\sigma(g)=g^{-1}$. We can then write a morphism of homotopy fixed points as a commutative diagram
  \[
  \xymatrix{
    x \ar[r]^g \ar[d]_h & \sigma(x) \ar[d]^{\sigma(h)} \\
    y \ar[r]_{g'} & \sigma(y)
  }
  \]
  In other words, a morphism $(g,x)\to (g',y)$ of homotopy fixed points is an element $h\in G(\mathbb C)$ for which the relation $g'=\sigma(h)gh^{-1}$ holds.
\end{example}

Let $G\looparrowright X$ be a scheme with group action over $\mathbb{R}$. We finally come to the identification of the groupoid of real points of the quotient stack $[G\backslash X]$ with the fixed-point groupoid of the complex points. In the interest of exposition, we describe two ways of doing that: one high-level argument using \'etale descent of quotient stacks, and one argument showing more explicitly how to use \'etale descent for torsors to write down an equivalence of groupoids.

\begin{theorem}
  \label{thm:etale-descent}
  Let $\scr X$ be a stack in groupoids over the big \'etale site of smooth $k$-schemes. Let $\Gamma$ be a finite group, and let $\underline \Gamma$ be the associated constant group scheme. If $T'\rightarrow T$ is a $\underline \Gamma$-torsor, then there is a canonical equivalence of groupoids $\scr X(T')^{{\rm h}\Gamma} \simeq \scr X(T)$.
\end{theorem}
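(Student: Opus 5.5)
The plan is to deduce the equivalence from the stack axiom for the single étale covering $T'\to T$, and then to identify the descent groupoid for this covering with the fixed-point groupoid $\scr X(T')^{{\rm h}\Gamma}$ of Definition~\ref{def:hG-groupoid}. First I will make the $\Gamma$-action on $\scr X(T')$ explicit. Each $\gamma\in\Gamma$ acts on $T'$ by a $T$-automorphism $a_\gamma\colon T'\to T'$, and I set $F(\gamma):=a_{\gamma^{-1}}^*$ (or $a_\gamma^*$, whichever makes the action a left action). The structure isomorphisms $\eta_{\gamma,\delta}$ are the canonical isomorphisms $a_\delta^*a_\gamma^*\cong (a_\gamma a_\delta)^*$ that come with the fibered category structure. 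Pulling back along $\pi\colon T'\to T$ gives a functor $\scr X(T)\to\scr X(T')^{{\rm h}\Gamma}$. Indeed, for each $\gamma$ we have $\pi\circ a_\gamma=\pi$, so the canonical isomorphism $\pi^*E\cong a_\gamma^*\pi^*E$ supplies the required $\varphi_\gamma$. The cocycle square of Definition~\ref{def:hG-groupoid} then commutes by the coherence of the pullback isomorphisms.

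The key step is the torsor isomorphism
\[
\Gamma\times T'\xrightarrow{\ \sim\ } T'\times_T T',\qquad (\gamma,t)\mapsto (t,a_\gamma t),
\]
together with its consequence for triple fibre products,
\[
\Gamma\times\Gamma\times T'\cong T'\times_TT'\times_TT'.
\]
Because $\Gamma$ is finite and $\underline\Gamma$ is constant, $\Gamma\times T'=\bigsqcup_{\gamma\in\Gamma}T'$. Since $\scr X$ is a stack, it takes finite disjoint unions to products: $\scr X(\bigsqcup_\gamma T')\simeq\prod_\gamma\scr X(T')$. Using this, a descent datum for $\pi$ becomes concrete. It consists of an object $E'\in\scr X(T')$ and an isomorphism ${\rm pr}_1^*E'\cong{\rm pr}_2^*E'$ on $T'\times_TT'$, satisfying the cocycle condition on the triple product. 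Under the identifications above, the isomorphism is exactly a family $(\varphi_\gamma\colon E'\to a_\gamma^*E')_{\gamma\in\Gamma}$. Restricting the cocycle condition to the component $(\gamma,\delta)$ of $\Gamma\times\Gamma\times T'$ gives exactly the commutative square $\eta_{\gamma,\delta}\circ F(\gamma)(\varphi_\delta)\circ\varphi_\gamma=\varphi_{\gamma\delta}$. The same reasoning shows that morphisms of descent data are precisely morphisms in $\scr X(T')^{{\rm h}\Gamma}$. This produces an isomorphism of groupoids ${\rm Desc}(T'/T)\cong\scr X(T')^{{\rm h}\Gamma}$, compatible with the two functors out of $\scr X(T)$.

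Finally, $\pi\colon T'\to T$ is étale and surjective, since it is a torsor for an étale group scheme. The stack axiom therefore says that $\scr X(T)\to{\rm Desc}(T'/T)$ is an equivalence, and composing with the previous step yields the theorem. Canonicity follows because every map involved is built from canonical pullback isomorphisms.

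I expect the main obstacle to be the bookkeeping of conventions rather than any conceptual difficulty. The dictionary has to send the cocycle condition on the triple product to exactly the square in Definition~\ref{def:hG-groupoid}, including the correct $\eta_{\gamma,\delta}$ and the correct choice between $\gamma$ and $\gamma^{-1}$, which is needed to get a left action. The subtle point is that the action is only pseudo-functorial. For this reason I would first fix a cleavage for $\scr X$ and check the identification component by component. As a safeguard, I would also check the case where $T'\to T$ is the trivial torsor $\Gamma\times T$, where both sides reduce to $\scr X(T)$.
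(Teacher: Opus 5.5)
Your proposal is correct and is essentially the paper's argument made explicit. The paper cites Vistoli for two steps: first it identifies $\scr X(T)$, via descent along the torsor $T'\to T$, with the category $\scr X(T')^{\underline\Gamma}$ of $\underline\Gamma$-equivariant objects, and then it identifies equivariant objects for a constant group with homotopy fixed points. You merge these two steps into one by decomposing $T'\times_T T'\cong\bigsqcup_{\gamma\in\Gamma}T'$ and reading the descent cocycle component by component, which is what the two cited results amount to when combined.
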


\begin{proof}
  The result is a reformulation of descent statements discussed in \cite{Vistoli:stacks}. More precisely, it is a combination of two identifications: the first identification, given by \cite{Vistoli:stacks}*{Theorem~4.46}, is a canonical equivalence between  $\scr X(T)$ and the category of $\underline \Gamma$-equivariant objects in $\scr X(T')$, denoted by $\scr X(T')^{\underline \Gamma}$. The second identification, given in \cite{Vistoli:stacks}*{Proposition~3.49} is a canonical equivalence  $\scr X(T')^{\underline \Gamma}\simeq\scr X(T')^{{\rm h}\Gamma}$ between $\underline{\Gamma}$-equivariant objects and homotopy fixed points.
\end{proof}

More specifically, this implies that for an \'etale stack $\mathscr{X}$ over $\op{Spec}(\mathbb{R})$, the real points $\mathscr{X}(\mathbb{R})$ can be identified as fixed-point data on complex points, as formulated in the following:

\begin{corollary}
  \label{cor:main-hc2}
  Let $G\looparrowright X$ be a smooth scheme with group action over $\mathbb{R}$. The complexification functor induces an equivalence of topological groupoids
\[
[G\backslash X](\mathbb{R})\xrightarrow{\simeq} [G\backslash X](\mathbb{C})^{{\rm hC}_2}
\]
\end{corollary}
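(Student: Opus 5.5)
The plan is to apply Theorem~\ref{thm:etale-descent} with $\mathscr{X}=[G\backslash X]$, $\Gamma={\rm C}_2$, $T=\op{Spec}\mathbb{R}$ and $T'=\op{Spec}\mathbb{C}$. The quotient stack is a stack for the fppf topology, so a fortiori an \'etale stack on smooth schemes. The morphism $\op{Spec}\mathbb{C}\to\op{Spec}\mathbb{R}$ is a $\underline{{\rm C}_2}$-torsor, with ${\rm C}_2={\rm Gal}(\mathbb{C}/\mathbb{R})$ acting through $\op{Spec}(\sigma)$; indeed $\mathbb{C}\otimes_{\mathbb{R}}\mathbb{C}\cong\mathbb{C}\times\mathbb{C}$. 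The theorem then gives an equivalence of abstract groupoids $[G\backslash X](\mathbb{R})\simeq[G\backslash X](\mathbb{C})^{{\rm hC}_2}$.

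The first step is to identify this equivalence concretely. I would check that it is induced by the complexification functor of Example~\ref{ex:complexification-functor}. A real span $\op{Spec}\mathbb{R}\leftarrow P\to X$ goes to $P_{\mathbb{C}}\to X_{\mathbb{C}}$, together with the canonical descent isomorphism $\varphi\colon P_{\mathbb{C}}\to\sigma^*P_{\mathbb{C}}$. This isomorphism comes from the identity $\sigma^*(P\times_{\mathbb{R}}\mathbb{C})=P\times_{\mathbb{R}}\mathbb{C}$, and it satisfies the cocycle condition $\sigma^*(\varphi)=\varphi^{-1}$ of Example~\ref{ex:hc2-groupoid}. The ${\rm C}_2$-action used in Theorem~\ref{thm:etale-descent} is the pullback action along $\op{Spec}(\sigma)$. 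By Example~\ref{ex:cplx-conjugation}, this agrees with the conjugation action transported along $\Phi_{\mathbb{C}}$, so the fixed-point groupoid in the theorem is the one in the statement.

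It is also worth recording the explicit form through Example~\ref{ex:hc2-orbits}. Trivialize $P_{\mathbb{C}}\cong G_{\mathbb{C}}$ and write the orbit map as $\cdot x$. Then $\varphi$ becomes right multiplication by some $g\in G(\mathbb{C})$ with $g\cdot x=\sigma(x)$ and $\sigma(g)g=e$. This gives the comparison with Galois cocycles: essential surjectivity is exactly Galois descent for $G$-torsors together with descent of the equivariant morphism to $X$, since $X$ is quasi-projective and effective descent holds.

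The main obstacle is the word \emph{topological}. Theorem~\ref{thm:etale-descent} is a statement about abstract groupoids, whereas here both sides carry topologies from the analytic topology on $G(\mathbb{C})$, $X(\mathbb{C})$, $G(\mathbb{R})$ and their twisted forms. I would handle this by working with the explicit models.
\begin{itemize}
\item The object space of $[G\backslash X](\mathbb{C})^{{\rm hC}_2}$ is the closed subspace $\{(g,x):g\cdot x=\sigma(x),\ \sigma(g)=g^{-1}\}\subset G(\mathbb{C})\times X(\mathbb{C})$.
\item Its morphism space is this subspace times $G(\mathbb{C})$.
\end{itemize}
Complexification is then continuous on object and morphism spaces. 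On each component it identifies, for a fixed cocycle $g$, the real points of the twisted forms ${}_gG$ and ${}_gX$ with the $\sigma_g$-fixed points $G(\mathbb{C})^{\sigma_g}$ and $X(\mathbb{C})^{\sigma_g}$. These are homeomorphisms, because real points of a real variety are exactly the fixed points of the corresponding real structure, with the subspace topology.

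The resulting continuous functor is fully faithful and essentially surjective. Its morphism spaces are homeomorphic via these fixed-point identifications, so it is an equivalence of topological groupoids, in the sense of a weak equivalence after geometric realization. I would spell out this last point on each orbit component $[g]$ separately.
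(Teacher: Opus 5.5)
Your proposal follows the paper's proof. The paper also derives the statement directly from Theorem~\ref{thm:etale-descent} for the ${\rm C}_2$-torsor $\op{Spec}\mathbb{C}\to\op{Spec}\mathbb{R}$, then unwinds it through the canonical descent datum, transporters for full faithfulness and Galois descent of cocycles for essential surjectivity, treating compatibility with the analytic topologies as evident; if you want your topological step to go further, note that full faithfulness, abstract essential surjectivity and homeomorphic morphism spaces do not alone give a weak equivalence of realizations, and you need that each twisted-conjugacy class is open and closed in ${\rm Z}^1({\rm C}_2,G(\mathbb{C}))$ and that $h\mapsto\sigma(h)gh^{-1}$ has local sections (its differential surjects onto the tangent space of ${\rm Z}^1$), which splits the object space by torsor class and lets you pass from the fixed-cocycle subgroupoid to the whole fixed-point groupoid.
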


This follows directly from Theorem~\ref{thm:etale-descent}, applied to the ${\rm C}_2$-torsor $\op{Spec}(\mathbb{C})\to \op{Spec}(\mathbb{R})$ and the quotient stack $[G\backslash X]$. In the rest of the section, we will spell out more concretely what happens in the proof. The key ingredients are essentially statements on Galois cohomology and Galois descent contained in~\cite{serre:galois-cohom}*{Chapter I, 5.2 and 5.3}.
  
The first thing to note is that the complexification factors through $[G\backslash X](\mathbb{C})^{{\rm hC}_2}$. This involves understanding the homotopy fixed point structure on complexified orbit maps, elaborating on Example~\ref{ex:hc2-orbits}. Given a $G$-torsor $p\colon P\to \op{Spec}\mathbb{R}$ and a $G$-equivariant morphism $f\colon P\to X$, we get the complexified morphism $f_{\mathbb{C}}\colon P_{\mathbb{C}}\to X_{\mathbb{C}}$. This morphism commutes with complex conjugation and is therefore Galois-equivariant. Therefore, we can change the orbit map $f_{\mathbb{C}}$ by precomposition with complex conjugation.\footnote{Alternatively, we could use postcomposition, that wouldn't make a difference because of the Galois equivariance.} We call the resulting orbit map $\overline{f}_{\mathbb{C}}=F(\sigma)(f_{\mathbb{C}}\colon P_{\mathbb{C}}\to X_{\mathbb{C}})$. Complex conjugation induces a natural map
\[
(f_{\mathbb{C}}\colon P_{\mathbb{C}}\to X_{\mathbb{C}})\to F(\sigma)(f_{\mathbb{C}}\colon P_{\mathbb{C}}\to X_{\mathbb{C}}).
\]
Note that complex conjugation doesn't provide a $G$-equivariant automorphism of $f_{\mathbb{C}}\colon P_{\mathbb{C}}\to X_{\mathbb{C}}$ but rather a kind of semilinear action $\sigma(g\cdot x)=\sigma(g)\cdot\sigma(x)$, which we can view as an isomorphism from $f_{\mathbb{C}}$ to the twisted orbit map $\overline{f}_{\mathbb{C}}$. This provides a canonical homotopy fixed point datum. From the construction, we can also see that for a morphism $f\colon P_1\to P_2$ of $G$-torsors over $\mathbb{R}$ and corresponding orbit maps, the complexified morphisms will be compatible with the canonical homotopy fixed point structure. Therefore, we find that the complexification functor $[G\backslash X](\mathbb{R})\to [G\backslash X](\mathbb{C})$ factors through a functor $[G\backslash X](\mathbb{R})\to [G\backslash X](\mathbb{C})^{{\rm hC}_2}$ to the fixed-point groupoid.

At the expense of functoriality, we can make this a bit more explicit (and relate it to the action groupoid) by choosing an isomorphism $\varphi\colon G_{\mathbb{C}}\xrightarrow{\cong}P_{\mathbb{C}}$ and consider the morphism $f\circ\varphi\colon G_{\mathbb{C}}\to X_{\mathbb{C}}$. The isomorphism $\varphi$ doesn't necessarily commute with complex conjugation, in fact, it only does so when $P$ is the trivial $G$-torsor over $\op{Spec}\mathbb{R}$. Denoting by $x$ the image of $1_G$ under $f\circ \varphi$, we know that $\overline{x}$ is also in the image of $f\circ\varphi$ because $f\colon P_{\mathbb{C}}\to X_{\mathbb{C}}$ commutes with complex conjugation. We need to find $g\in G(\mathbb{C})$ with $\overline{g}=g^{-1}$ such that $\overline{x}=g\cdot x$. By
  \[
  G_{\mathbb{C}}\xrightarrow{\varphi}P_{\mathbb{C}}\xrightarrow{\sigma}P_{\mathbb{C}}\xrightarrow{\varphi^{-1}}G_{\mathbb{C}}
  \]
  we get an involution on $G_{\mathbb{C}}$ which differs from complex conjugation $\sigma$ by conjugation with some $g\in G(\mathbb{C})$. Then multiplication by $g$ provides the isomorphism  $f\circ \varphi\cong f\circ \overline{\varphi}$.

  We next observe that the functor $[G\backslash X](\mathbb{R})\to [G\backslash X](\mathbb{C})^{{\rm hC}_2}$ is fully faithful. To see this, suppose we have two $G$-torsors $P_1,P_2$ over $\op{Spec}\mathbb{R}$ and $G$-equivariant morphisms $f_i\colon P_i\to X$. Assume that there is an isomorphism $\varphi\colon P_{1,\mathbb{C}}\xrightarrow{\cong}P_{2,\mathbb{C}}$ compatible with the complexified orbit maps $f_{i,\mathbb{C}}$ and their natural  homotopy fixed points structures as above. This in particular means that $\varphi$ is compatible with complex conjugation, and by taking fixed points we obtain an isomorphism $P_1\to P_2$. In particular, ${\rm Hom}(P_1,P_2)$ is nonempty in $[G\backslash X](\mathbb{R})$ if and only if ${\rm Hom}(P_1\otimes \mathbb{C},P_2\otimes\mathbb{C})$ is nonempty in $[G\backslash X](\mathbb{C})^{{\rm hC}_2}$. To get full faithfulness, we want to show that the morphism of Hom-spaces
  \[
    {\rm Hom}_{[G\backslash X](\mathbb{R})}(P_1,P_2)\to {\rm Hom}_{[G\backslash X](\mathbb{C})^{{\rm hC}_2}}(P_1\otimes \mathbb{C},P_2\otimes\mathbb{C})
  \]
  induced by complexification is a bijection. To see this, we can first identify the morphisms ${\rm Hom}_{[G\backslash X](\mathbb{R})}(P_1,P_2)$ over $\mathbb{R}$ as the set of $g\in G(\mathbb{R})$ such that $g\circ f_2=f_1$. This is a torsor under the stabilizer of either orbit map $f_i\colon P_i\to X$ over $\mathbb{R}$. Similarly, the morphisms ${\rm Hom}_{[G\backslash X](\mathbb{C})}(P_1\otimes \mathbb{C},P_2\otimes\mathbb{C})$ over $\mathbb{C}$ are identified as the transporter from $f_{1,\mathbb{C}}\colon P_{1,\mathbb{C}}\to X_{\mathbb{C}}$ to $f_{2,\mathbb{C}}\colon P_{2,\mathbb{C}}\to X_{\mathbb{C}}$, and that is a torsor under the stabilizer of the orbit map $P_{\mathbb{C}}\to X_{\mathbb{C}}$. The latter group is the complexification of the real stabilizer group. Finally, we identify the morphisms which are compatible with the fixed-point structure of $P_{i,\mathbb{C}}$, i.e., the complex conjugation, with the fixed points of the complexified transporter group, which is ${\rm Hom}_{[G\backslash X](\mathbb{C})^{{\rm hC}_2}}(P_1\otimes \mathbb{C},P_2\otimes\mathbb{C})$. 

In the last step, we need to see that the complexification functor $[G\backslash X](\mathbb{R})\to [G\backslash X](\mathbb{C})^{{\rm hC}_2}$ is essentially surjective. So let $f\colon G_{\mathbb{C}}\to X_{\mathbb{C}}$ be a morphism which is isomorphic to its Galois conjugate by an element $g\in G(\mathbb{C})$ with $\overline{g}=g^{-1}$. Then by definition, $g\in Z^1({\rm C}_2,G(\mathbb{C}))$. By \cite{serre:galois-cohom}*{Chapter I, Section 5.3}, cf. also the discussion of Galois cohomology basics in Section~\ref{sec:fp-gpoid-galois} below, the class of $g$ in ${\rm H}^1({\rm C}_2,G)$ corresponds to a $G$-torsor $P$ over $\mathbb{R}$. Moreover, $P$ is realized as fixed points of the involution of $G$ given by the $g$-conjugated Galois action. With this involution, the morphism $f\colon P_{\mathbb{C}}\to X_{\mathbb{C}}$ is Galois equivariant and thus defines a morphism $P\to X$, which means that the homotopy fixed point $f$ we started with is indeed the complexification of a real orbit map for a $G$-torsor. This establishes the essential surjectivity, and concludes the somewhat more explicit proof of Corollary~\ref{cor:main-hc2}. 

We note that spelling the proof out explicitly as above also makes obvious that these constructions are compatible with the topological structures on the groupoids (obtained from equipping $G(\mathbb{C})$ and $X(\mathbb{C})$ with the analytic topologies, similarly for real points and stabilizer subgroups in $G$).

\section{Equivariant homotopy and realization functors}
\label{sec:equivariant-htpy-realization}

In this section, we discuss the definition and basic properties of equivariant and real realization functors on motivic homotopy theory. For this, we need to recall some basics from equivariant homotopy theory. In particular, we discuss fine and coarse equivariant homotopy and the relevant fixed-point functors. 

\subsection{Equivariant homotopy theory}
\label{sec:equivariant-basics}

Equivariant homotopy theory is the study of $\Gamma$-spaces, i.e., spaces with a group action, up to homotopy. As before, most of the statements apply to a general finite  group $\Gamma$, but for our applications, we only care about the case $\Gamma={\rm C}_2$. This section should simply serve as a reminder of the main facts from equivariant homotopy which are relevant for our paper. 

The first point of note is that there are two possible choices for weak equivalences to set up a homotopy theory of $\Gamma$-spaces. 

\begin{definition}
\begin{itemize}
\item A \emph{coarse (or naive) $\Gamma$-equivalence} is a $\Gamma$-equivariant map $X\to Y$ which is a weak equivalence on the underlying spaces.
  \item A \emph{fine (or genuine) $\Gamma$-equivalence} is a $\Gamma$-equivariant map $X\to Y$ which induces weak equivalences $X^H\to Y^H$ for every subgroup $H\leq \Gamma$. 
\end{itemize}
\end{definition}

Each of these notions of weak equivalences is part of an appropriate model structure on $\Gamma$-spaces, giving rise to coarse and fine equivariant homotopy theories. 

\begin{remark}
  As for terminology, we will use the terms coarse/fine, following for example \cite{MorelVoevodsky1999}*{Section~3.3}. These terms seem less judgemental than naive/genuine, and indeed, the coarse equivariant homotopy has a significant role to play for the real realization of \'etale classifying spaces.
\end{remark}

Various choices of models for equivariant homotopy theory are possible, and unfortunately some of the subtleties around fixed-point functors are quite dependent on which models are used. One useful model for fine $\Gamma$-equivariant homotopy theory, by Elmendorf's theorem, is as the homotopy theory of presheaves of spaces on the orbit category of $\Gamma$.

Somewhat similar, but more adapted to the study of realization functors from motivic homotopy, are the models used in \cite{MorelVoevodsky1999}*{Section~3.3}. Morel and Voevodsky define a category of suitably locally contractible ${\rm C}_2$-spaces they denote by ${\rm C}_2{\rm -T}lc$, equipped with two possible Grothendieck topologies (coarse and fine), and show \cite{MorelVoevodsky1999}*{Proposition~3.3, p.~120} that the associated homotopy theories of sites with interval (given by the unit interval with trivial ${\rm C}_2$-action) are equivalent to the coarse and fine equivariant homotopy theory, respectively.

In the case of fine equivariant homotopy theory, the equivalence established in loc.{}~cit.{}~is induced by sending a ${\rm C}_2$-space to the representable presheaf on ${\rm C}_2{\rm -T}lc$.\footnote{Note that Morel and Voevodsky work with a simplicial model of $G$-spaces.} Once we know that this is an equivalence, it follows that the inverse is given by restricting along the obvious inclusion of the orbit category into ${\rm C}_2{\rm -T}lc$. A similar statement is true for the coarse equivariant homotopy theory, only that we have to consider a restriction of the ``\'etale'' topology on the orbit category, in which the map ${\rm C}_2/\langle e\rangle\to {\rm C}_2/{\rm C}_2$ is a covering.

\begin{remark}
  The relation between fine and coarse equivariant homotopy is absolutely parallel to the relation between Nisnevich and \'etale motivic homotopy. We will also see this in the discussion of coarse and fine realization functors below, as well as in the discussion of principal bundles in equivariant homotopy, cf.{}~Section~\ref{sec:coarse}.
\end{remark} 

\subsection{Fixed-point functors}
\label{sec:fixed-points}

For later use in the description of real realization functors, we briefly discuss fixed-point functors in fine and coarse equivariant homotopy.\footnote{At this point, MW would like to thank Jens Hornbostel and Kirsten Wickelgren for illuminating discussions around fixed points and equivariant/real realization.}

We first discuss fixed-point functors on the fine equivariant homotopy. These are usually called \emph{genuine} fixed points. In the Elmendorf model, or the Morel--Voevodsky model, we can essentially take sections of a (pre)sheaf of spaces over the point (with trivial action, i.e., the orbit ${\rm C}_2/{\rm C}_2$). Since this is essentially a pre-sheaf situation, taking fixed-points is a left-adjoint functor. Indeed, it is a left Quillen functor, therefore descends to the homotopy category, and the left-derived functor preserves homotopy colimits. 

\begin{example}
  We should point out that being a left adjoint is unfortunately a model-dependent thing. In ${\rm C}_2$-spaces, ${\rm S}^1$ with the flip action is a standard example to see that fixed points don't commute with colimits (in this case, the coequalizer of identity and flip map). In the (pre)sheaf models of Elmendorf or Morel--Voevodsky, there are more objects (in particular, the sections over ${\rm C}_2/{\rm C}_2$ are not necessarily the fixed points of the ${\rm C}_2$-action on ${\rm C}_2/\langle e\rangle$), so quotients retain finer information, allowing fixed points to be a left adjoint. 
\end{example}

Turning to the coarse equivariant homotopy, it is clear that taking fixed points is not homotopy invariant; a coarse $\Gamma$-homotopy equivalence induces an equivalence of underlying spaces, but doesn't need to preserve fixed points in any way. So the only functor we can consider is the homotopy fixed-points $X^{\rm hC_2}={\rm Map}^{\rm C_2}({\rm EC_2},X)$. Essentially, in the sheaf model of Morel--Voevodsky, this is the usual right-derived functor of taking global sections. In particular, we cannot expect homotopy fixed points to commute with colimits. 

\begin{example}
  As another unsolicited clarification, we note that fixed points and homotopy fixed points differ in general, even for otherwise nice spaces like complex points of real varieties. A difference already appears for $\mathbb{CP}^1$ with the analytic topology and complex conjugation. The fine/genuine fixed points are the actual fixed points, equivalent to $\mathbb{RP}^1$. The homotopy fixed points $\left(\mathbb{CP}^1\right)^{\rm hC_2}$ seem difficult to describe, but in any case are not equivalent to $\mathbb{RP}^1$; to see this, we can use a theorem of Carlsson \cite{carlsson}*{Theorem~B}. The theorem implies that there is a continuous map $\left(\mathbb{CP}^1\right)^{\rm hC_2}\to \left({\rm S}^1\right)^\wedge_2$ with connected fibers, implying a surjection $\pi_1\left(\mathbb{CP}^1\right)^{\rm hC_2}\to \mathbb{Z}_2$ (to the 2-adic integers), which implies that the homotopy fixed points cannot be $\mathbb{RP}^1$.
\end{example}

\begin{remark}
  In the Morel--Voevodsky model, we can consider the change-of-topology map induced by coarse sheafification, which on fixed-points induces the natural map $X^{\rm C_2}\to X^{\rm hC_2}$. The Sullivan conjecture, as proved in \cite{carlsson}, states that this map is an equivalence after 2-completion, for $X$ a finite ${\rm C}_2$-space (such as the complex points of a smooth real variety). Again, we see the coarse equivariant homotopy as a direct analogue of the \'etale topology on the motivic side. 
\end{remark} 

\subsection{Equivariant realization on motivic homotopy}
\label{sec:prelim-realization}

As a next step, we want to recall equivariant realization functors, which provide the connection between motivic homotopy over $\mathbb{R}$ (or more generally schemes with a real point) and ${\rm C}_2$-equivariant homotopy. We focus on the discussion of the functor from unstable Nisnevich-local motivic homotopy to fine ${\rm C}_2$-equivariant homotopy, but we'll also make some remarks on the corresponding version from \'etale motivic homotopy to coarse ${\rm C}_2$-equivariant homotopy. Most of what we'll say here has been said in other places, cf. for example \cite{wickelgren:williams} or \cite{MorelVoevodsky1999}*{Section~3.3}.

Before we get to the ${\rm C}_2$-equivariant realization over $\mathbb{R}$, we briefly recall the complex case. The complex realization functor ${\rm Re}_{\mathbb{C}}\colon {\bf sPre}({\bf Sm}_{\mathbb{C}})\to {\bf Top}$ is a left Quillen functor (for the motivic projective model structure)  determined by sending a smooth scheme $X/\mathbb{C}$ to the topological space $X(\mathbb{C})$ of complex points with the analytic topology. In particular, it preserves homotopy colimits. Alternatively, we can get a functor of the associated $\infty$-categories as left Kan extension of the functor ${\bf Sm}_{\mathbb{C}}\to{\bf Top}\colon X\mapsto X(\mathbb{C})$. Complex realization is also compatible with products, cf.{}~\cite{panin:pimenov:roendigs}*{Appendix A.4}, which in particular means that the complex realization of a smooth scheme with action $G\looparrowright X$ will be a complex Lie group action $G(\mathbb{C})\looparrowright X(\mathbb{C})$ on a complex manifold. Moreover, the complex realization of the bar construction for $G\looparrowright X$ will be the bar construction of $G(\mathbb{C})\looparrowright X(\mathbb{C})$, cf.{}~\cite{wickelgren:williams}*{Section~3.1}.

Over the real numbers, we have analogous ${\rm C}_2$-equivariant realization functors, which are determined by sending a smooth scheme $X/\mathbb{R}$ to the ${\rm C}_2$-space $X(\mathbb{C})$ with the ${\rm C}_2$-action by complex conjugation. There exist different descriptions of equivariant realization functors. One possibility is to simply take a left Kan extension of the complex-points functor ${\bf Sm}_{\mathbb{R}}\to {\bf Top}\colon X/\mathbb{R}\mapsto (X(\mathbb{C}),\sigma)$ to a functor of $\infty$-categories ${\bf Spc}(\mathbb{R})\to{\bf Spc}^{\rm C_2}$.

Alternatively, a description of the functor in terms of model categories was already given by Morel and Voevodsky. In \cite{MorelVoevodsky1999}*{Lemma~3.6 in Section~3}, they show that the functor
\[
\left({\bf Sm}_{\mathbb{R}}\right)_{\rm Nis}\to \left(\mathbf{Top}_{\rm lc}^{\rm C_2}\right)_{\rm fine}\colon X\mapsto \left(X(\mathbb{C}),\sigma\right)
\]
mapping a smooth $\mathbb{R}$-scheme to its complex points with complex conjugation is a reasonable continuous map of sites. As a consequence, there is an induced left derived functor $\mathcal{H}(\mathbb{R})\to\mathcal{H}^{\rm C_2}_{\rm fine}$ from motivic homotopy to fine ${\rm C}_2$-equivariant homotopy. The key point behind the construction is that the equivariant realization of a Nisnevich covering is a covering in the fine topology.

As yet another alternative, it is also possible to replace the target (fine $I$-local sheaves of spaces on $\mathbf{Top}_{\rm lc}^{\rm C_2}$) by the Elmendorf model of equivariant homotopy (presheaves of spaces on the orbit category): the orbit category can be embedded in the obvious way into locally contractible ${\rm C}_2$-spaces, and restriction of sheaves from $\mathbf{Top}_{\rm lc}^{\rm C_2}$ to the orbit category induces a Quillen equivalence of model categories (or equivalence of the associated $\infty$-categories). 

For us, the most relevant consequence in any of these descriptions is that, as in the case of the complex realization functor, ${\rm C}_2$-equivariant realization commutes with (homotopy) colimits. Since it also commutes with products, it will map the bar construction for a group action $G\looparrowright X$ to the bar construction for the realization $G(\mathbb{C})\looparrowright X(\mathbb{C})$, viewed as action in ${\rm C}_2$-spaces.

As a final note, we want to point out that it is also possible to use the category ${\bf Top}^{\rm C_2}$ of ${\rm C}_2$-spaces as target of equivariant realization functors, with exactly the same properties. We don't use that model because there are some model-dependent issues with fixed-point functors that we want to avoid for our discussion of real realization, cf. the discussion in Section~\ref{sec:fixed-points} above. 

\subsection{Note on \'etale-to-coarse realization functors}

We briefly comment on a variation of equivariant realization, mapping from the \'etale $\mathbb{A}^1$-local motivic homotopy theory to coarse ${\rm C}_2$-equivariant homotopy theory:
\[
{\bf Sh}^{\mathbb{A}^1}_{\et}\left({\bf Sm}_{\mathbb{R}}\right)\to {\bf Sh}^{\rm I}_{\rm coarse}\left(\mathbf{Top}_{\rm lc}^{\rm C_2}\right)\colon X\mapsto \left(X(\mathbb{C}),\sigma\right)
\]
As above, the functor is a left Kan extension of the functor sending a real scheme to its ${\rm C}_2$-space of complex points. The key point is that \'etale hypercovers are mapped to coarse hypercovers, and this point is implicitly made in the discussion of realization functors in \cite{dugger:isaksen:realization}*{Theorem~5.2}. (Since we are using coarse equivalences, we only need to check conditions on the underlying space, and then the local splittings in the proof in loc{}.~cit.{}~imply the coarse covering condition.) It should be noted that we cannot hope for a fine equivariant realization on \'etale motivic homotopy theory because \'etale covers are not necessarily surjective on $\mathbb{R}$-points, hence \'etale hypercovers do not in general induce fine weak equivalences. 

We won't make much use of the \'etale-to-coarse realization functor, but its existence already suggests a possible relation between \'etale homotopy orbit spaces (like geometric classifying spaces ${\rm B}_\et G$) and coarse homotopy orbit spaces. This relation will be made more precise in Sections~\ref{sec:coarse} and \ref{sec:real-pts-hc2}.

\begin{remark}
  As a final remark, we note that we can compose the \'etale-to-coarse equivariant realization with the homotopy fixed-point functor to get a real realization functor from \'etale motivic homotopy to ordinary topology. This realization functor sends a smooth $\mathbb{R}$-scheme $X$ to the homotopy fixed-point space $X(\mathbb{C})^{\rm hC_2}$. By Carlsson's results on the Sullivan conjecture, cf.{}~\cite{carlsson}, this functor agrees  (on smooth schemes) with the ``usual'' real realization functors (composing Nisnevich-to-fine equivariant realization with genuine fixed points) after 2-completion. 
\end{remark}

\subsection{Real realization in unstable motivic homotopy}
\label{sec:unstable-real-realization}

Once we have ${\rm C}_2$-equivariant realization functors as described above, we get a real realization functor ${\rm Re}_{\mathbb{R}}\colon {\bf sPre}({\bf Sm}_{\mathbb{R}})\to {\bf Top}$ by composing the ${\rm C}_2$-equivariant realization functor with taking ${\rm C}_2$-fixed points. 

If we use a presheaf model for ${\rm C}_2$-equivariant homotopy theory (such as the Elmendorf model or the Morel--Voevodsky model discussed in Section~\ref{sec:equivariant-basics} above), then the genuine fixed-point functor is a left adjoint (as discussed in Section~\ref{sec:fixed-points} above). The composite real realization functor is then a left adjoint, the left Kan extension of the functor taking a smooth $\mathbb{R}$-scheme $X$ to its space $X(\mathbb{R})$ of real points with the analytic topology. This is the real realization functor $LR_2$ considered in \cite{bachmann:real-etale}, and it is the one we will use in the rest of the paper.\footnote{Note that if we use ${\rm C}_2$-spaces as model of equivariant homotopy, then the genuine fixed-point functor isn't a left adjoint on the point-set level. The composite real realization then doesn't have specific adjointness properties, as mentioned in \cite{wickelgren:williams}.}

\begin{remark}
  The functor $\varphi_{\mathbb{R}}^{-1}\colon X\mapsto X(\mathbb{R})$ defines a continuous map of sites
  \[
  \varphi_{\mathbb{R}}\colon({\rm T}lc)_{\rm open}\to({\bf Sm}_{\mathbb{R}})_{\rm Nis}.
  \]
  It seems that this map is reasonable in the sense of \cite{MorelVoevodsky1999}*{Definition~1.55 in Section~2.1}, because the functor $\varphi_{\mathbb{R}}^{-1}$ takes Nisnevich squares to a square of local diffeomorphisms which are locally split (obtained as fixed-points of a fine covering of locally contractible ${\rm C}_2$-spaces). The real realization above (as composite of fine ${\rm C}_2$-equivariant realization, followed by genuine fixed-points) could then alternatively be written as ${\bf L}\varphi_{\mathbb{R}}^\ast$, providing another way to see that it is the left Kan extension of taking real points.
\end{remark}

\subsection{Note on real realization in stable motivic homotopy}
\label{sec:stable-realization}

We collect a few remarks concerning stable realization functors. The unstable realization functors described above extend to the \emph{stable} motivic homotopy category. For complex and ${\rm C}_2$-equivariant realization, this is rather unproblematic. Any complex embedding $\sigma\colon F\hookrightarrow \mathbb{C}$ induces a symmetric monoidal left Quillen functor from symmetric motivic $\mathbb{P}^1$-spectra over $F$ to symmetric $\mathbb{CP}^1$-spectra of topological spaces, sending a symmetric spectrum to its topological realization, cf.{}~\cite{panin:pimenov:roendigs}. In particular, it sends suspension spectra $\Sigma^\infty_{\mathbb{P}^1} X_+$ to suspension spectra $\Sigma^\infty X(\mathbb{C})_+$ and is compatible with colimits. Similarly, the stable ${\rm C}_2$-equivariant realization is a symmetric monoidal left adjoint, cf.{}~\cite{heller:ormsby}*{Proposition~4.8}.

The stable real realization functor ${\rm Re}_{\mathbb{R}}\colon \mathbf{SH}(\mathbb{R})\to\mathbf{SH}$ can be described in two ways. One possibility, used by Bachmann in \cite{bachmann:real-etale} uses the real-\'etale topology and establishes an equivalence 
\[
\mathbf{SH}(\mathbb{R})[\rho^{-1}]\cong\mathbf{SH}.
\]
In this description, real realization can be interpreted as the $\rho$-localization functor. The other possibility, as in \cite{heller:ormsby}, is to define the real realization as the composite
\[
\mathbf{SH}(\mathbb{R})\xrightarrow{{\rm Re}_{\rm C_2}} \mathbf{SH}^{\rm C_2}\xrightarrow{\Phi^{\rm C_2}}\mathbf{SH}
\]
of the ${\rm C}_2$-equivariant realization and the geometric ${\rm C}_2$-fixed points. In \cite{bachmann:real-etale}*{Section~10}, Bachmann shows that these two versions of the realization functor coincide. Essentially, once we know that the functors are left adjoints, we can check that they agree on smooth schemes. The second description immediately implies the following statements:
\begin{proposition}\,%Hack
  \begin{enumerate}
  \item Stable real realization is a symmetric monoidal left adjoint.
  \item There is a commutative diagram
    \[
    \xymatrix{
      \mathbf{H}_\bullet(\mathbb{R}) \ar[r]^{\Sigma^\infty_{\mathbb{P}^1}} \ar[d]_{{\rm Re}_{\mathbb{R}}} &\mathbf{SH}(\mathbb{R}) \ar[d]^{{\rm Re}_{\mathbb{R}}} \\
      \mathbf{H}_\bullet \ar[r]_{\Sigma^\infty} &\mathbf{SH}
    }
    \]
    relating unstable and stable real realization (left and right vertical arrows, respectively).
  \end{enumerate}
\end{proposition}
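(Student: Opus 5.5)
The plan is to use the second description of stable real realization, ${\rm Re}_{\mathbb{R}}=\Phi^{\rm C_2}\circ{\rm Re}_{\rm C_2}$, and to obtain both claims by composing the corresponding properties of the two factors.

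For (1), recall from \cite{heller:ormsby}*{Proposition~4.8} that stable ${\rm C}_2$-equivariant realization ${\rm Re}_{\rm C_2}\colon\mathbf{SH}(\mathbb{R})\to\mathbf{SH}^{\rm C_2}$ is a symmetric monoidal left adjoint. The geometric fixed-point functor $\Phi^{\rm C_2}\colon\mathbf{SH}^{\rm C_2}\to\mathbf{SH}$ is a standard example of a symmetric monoidal left adjoint. One model for it is $X\mapsto(\widetilde{\rm EC_2}\wedge X)^{\rm C_2}$, which is a smashing localization followed by categorical fixed points, and both of these preserve colimits. Another model identifies it with inverting $a_\sigma$, i.e.\ $\mathbf{SH}^{\rm C_2}[a_\sigma^{-1}]\simeq\mathbf{SH}$. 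A composite of symmetric monoidal left adjoints is again one, so (1) follows. As a consistency check, the same conclusion follows from Bachmann's description $\mathbf{SH}(\mathbb{R})\to\mathbf{SH}(\mathbb{R})[\rho^{-1}]\simeq\mathbf{SH}$, since $\rho$-localization is a smashing symmetric monoidal localization. By \cite{bachmann:real-etale}*{Section~10} the two descriptions agree.

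For (2), I will split the square into two squares. The first has horizontal arrows $\Sigma^\infty_{\mathbb{P}^1}$ and $\Sigma^\infty_{\rm C_2}$ and vertical arrows the unstable and stable ${\rm Re}_{\rm C_2}$. The second has vertical arrows the genuine fixed points $(-)^{\rm C_2}$ and $\Phi^{\rm C_2}$, with bottom arrow $\Sigma^\infty$.

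The first square commutes because stable equivariant realization is constructed from the unstable one by realizing $\mathbb{P}^1$-spectra levelwise, using ${\rm Re}_{\rm C_2}(\mathbb{P}^1)=\mathbb{CP}^1$ with conjugation, which is the representation sphere $S^{\rho}$. Consequently ${\rm Re}_{\rm C_2}\Sigma^\infty_{\mathbb{P}^1}X\simeq\Sigma^\infty_{\rm C_2}{\rm Re}_{\rm C_2}X$.

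The second square is the standard identity $\Phi^{\rm C_2}\Sigma^\infty_{\rm C_2}Y\simeq\Sigma^\infty(Y^{\rm C_2})$ for pointed ${\rm C}_2$-spaces $Y$. The left vertical composite is exactly the unstable ${\rm Re}_{\mathbb{R}}$ of Section~\ref{sec:unstable-real-realization}.

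Alternatively, all functors in the square are left adjoints, and $\mathbf{H}_\bullet(\mathbb{R})$ is generated under colimits by $X_+$ for smooth $X$. It therefore suffices to produce a natural equivalence $\Sigma^\infty X(\mathbb{R})_+\simeq{\rm Re}_{\mathbb{R}}\Sigma^\infty_{\mathbb{P}^1}X_+$ for smooth $X$, which follows from the two squares above. The only delicate point is the model-dependence of fixed points flagged in Section~\ref{sec:fixed-points}. The unstable fixed points must be taken in the presheaf (Elmendorf/Morel--Voevodsky) model, so that they form a left adjoint and agree with the unstable real realization used before. I expect this bookkeeping, rather than any real mathematical content, to be the main obstacle.
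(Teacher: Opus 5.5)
Your argument is correct and is essentially the paper's: (1) follows by composing the symmetric monoidal left adjoints ${\rm Re}_{\rm C_2}$ and $\Phi^{\rm C_2}$, and (2) follows from how geometric fixed points act on suspension spectra, where you additionally make explicit that stable and unstable equivariant realization are compatible with suspension. You also rightly use genuine fixed points in $\Phi^{\rm C_2}\Sigma^\infty_{\rm C_2}Y\simeq\Sigma^\infty(Y^{\rm C_2})$; the paper's proof writes homotopy fixed points at this step, which would not reproduce the unstable real realization $X\mapsto X(\mathbb{R})$ (compare the $\mathbb{CP}^1$ example in Section~\ref{sec:fixed-points}).
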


\begin{proof}
  (1) Both equivariant realization and geometric fixed points are symmetric monoidal left adjoints, so the same is true for their composition.

  (2) One (if not \emph{the}) key characteristic of geometric fixed points is the equivalence
  \[
  \Phi^{\rm C_2}(\Sigma^\infty X_+)\cong \Sigma^\infty X_+^{\rm hC_2},
  \]
  i.e., the geometric fixed points of a suspension spectrum are equivalent to the suspension spectrum of the homotopy fixed points, for any ${\rm C}_2$-space. The diagram is just a restatement of this.
\end{proof}

While we are mostly interested in the unstable real realization of quotient stacks in this paper, our reason for bringing up the stable real realizations is to provide the general setting for realization maps on cohomology theories for quotient stacks. For any motivic spectrum $E$, the \emph{stable} real realization functor ${\rm Re}_{\mathbb{R}}$ induces a map
\[
E^{**}(X)=\left[\Sigma^\infty_{\mathbb{P}^1} X_+, E(*)[*]\right]_{\mathbf{SH}(\mathbb{R})}\to
\left[\Sigma^\infty \left({\rm Re}_{\mathbb{R}} X_+\right), {\rm Re}_{\mathbb{R}} (E)[*]\right]_{\mathbf{SH}}={\rm Re}_{\mathbb{R}}(E)^*({\rm Re}_{\mathbb{R}}X)
\]
from the $E$-cohomology of a space $X$ to the ${\rm Re}_{\mathbb{R}}(E)$-cohomology of the real realization ${\rm Re}_{\mathbb{R}}X$ of $X$. Such maps are generally powerful tools to relate algebraic and topological cohomologies and can be used to transport information from topological to motivic (e.g.{}~detecting non-vanishing of maps) or vice versa (e.g.{}~deducing vanishing of maps from weight reasons). 

In the rest of the paper, we will investigate the realization ${\rm Re}_{\mathbb{R}} [G\backslash X]$ of (the unstable motivic space associated to) the quotient stack for a variety with action $G\looparrowright X$. In general, the target of the realization map for a general motivic spectrum $E$ would be the ${\rm Re}_{\mathbb{R}}(E)$-cohomology of the real points of the quotient stack. For the current paper we are mostly interested in Witt-sheaf cohomology, which is represented by Eilenberg--Mac~Lane spectrum $\mathbb{H}{\bf W}_\bullet$ for the Witt-sheaf homotopy module. Using Jacobson's isomorphism $\op{colim}_n{\bf I}^n\xrightarrow{\cong}a_{\ret}\mathbb{Z}$, cf.{}~\cite{jacobson}*{Theorem~8.5} and \cite{bachmann:real-etale}*{Section~7}, the real realization of this spectrum is the Eilenberg--Mac~Lane spectrum $\mathbb{HZ}$ representing integral singular cohomology. In this case, the equivariant real realization map discussed above takes the form
\[
{\rm H}^p(X,{\bf W})\cong[\Sigma^\infty_{\mathbb{P}^1}X_+,\mathbb{H}{\rm W}_\bullet[p]]_{\mathbf{SH}(\mathbb{R})}\to [\Sigma^\infty({\rm Re}_{\mathbb{R}} X_+),\mathbb{HZ}[p]]\cong{\rm H}^p(X(\mathbb{R}),\mathbb{Z}).
\]
We will give a more explicit definition of this equivariant real cycle class map in Section~\ref{sec:equivariant-cycle-class}.

To end this section, we note that the equivariant real realization map would also be interesting to study for other motivic spectra as well; we list some examples. 

\begin{example}
  Natural examples to consider are spectra related to hermitian K-theory. The real realization of the hermitian K-theory spectrum ${\rm KO}$ is ${\rm KO}^{\rm top}[1/2]$, cf.{}~\cite{bachmann:hopkins}*{Lemma~3.9}. The corresponding equivariant real realization map would be 
\[
{\rm KO}^{**}([G\backslash X])\to {\rm KO}^{\rm top}[1/2]^*({\rm Re}_{\mathbb{R}}[G\backslash X]),
\]
in particular the non-connective version would lose some 2-torsion information in the real realization. Note that the real realization map is not interesting for non-connective algebraic K-theory, as ${\rm Re}_{\mathbb{R}}{\rm KGL}=0$, cf.{}~\cite{bachmann:hopkins}*{Lemma~3.9}, essentially since the real realization of the Bott element is nilpotent. It could be interesting for the effective cover kgl, though, which should be related to connective real topological K-theory ko, as fixed points of the infinite Grassmannian. In this context, it would be interesting to compare this to the Bruner--Greenlees computations of connective real K-theory of classifying spaces of finite groups, cf.{}~\cite{bruner:greenlees}. 
\end{example}

\begin{example}
  Similarly, it would be interesting to consider real realization maps for cobordism spectra. By~\cite{bachmann:hopkins}*{Lemma~4.4}, we have ${\rm Re}_{\mathbb{R}}{\rm MSp}\cong{\rm MU}$ and ${\rm Re}_{\mathbb{R}}{\rm MSL}\cong{\rm MSO}$. The real realization map in the special linear case would then have the form
  \[
  {\rm MSL}^{**}([G\backslash X])\to {\rm MSO}^*({\rm Re}_{\mathbb{R}}[G\backslash X]),
  \]
  which could be a way to better understand algebraic cobordism of quotient stacks using our description of the real realization in this paper.
\end{example}

\section{Coarse homotopy orbits in equivariant homotopy, and homotopy fixed points}
\label{sec:coarse}

In this section, we want to offer a somewhat different perspective on equivariant principal bundles and their classifying spaces. We will reinterpret equivariant principal bundles as torsors for the coarse topology on ${\rm C_2-T}lc$; from this perspective, the classifying spaces for equivariant principal bundles can then be constructed in direct analogy with the \'etale classifying spaces in the motivic setting. This description of equivariant principal bundle theory will be used in Section~\ref{sec:real-pts-hc2} to show that the ${\rm C}_2$-equivariant realization of quotient stacks in motivic homotopy will be quotient stacks in equivariant homotopy. We also discuss the relation of coarse Borel constructions with homotopy fixed points. What is written below is a sketch of the main ideas, details might appear in some other place.

\subsection{Equivariant principal bundles and coarse torsors}

We briefly recall from \cite{TomDieck}*{Chapters~I.6--8} the definition of a principal bundle, specialized to the situation where $\Gamma={\rm C}_2$ acts as an anti-holomorphic involution on a complex Lie group $G=G(\mathbb{C})$. Denote by $S=G\rtimes {\rm C}_2$ the semidirect product. 

\begin{definition}
  \label{def:principal-bundle}
A \emph{${\rm C}_2$-equivariant principal $G$-bundle}, also called $({\rm C}_2,G)$-bundle, is an $S$-space $E$, such that the $G$-action restricted from $S$ makes it a principal $G$-bundle. This is equivalent to saying that \emph{${\rm C}_2$ acts compatibly with $G$} on the principal $G$-bundle $E$, i.e.,\ for $\gamma\in {\rm C}_2$, $x\in E$ and $g\in G$:
$$ \ga.(x.g)=(\ga.x).(\ga.g).$$
This induces a unique ${\rm C}_2$-action on the quotient $B=E/G$, such that the projection $p\colon E\to B$ is ${\rm C}_2$-equivariant. A morphism of ${\rm C}_2$-equivariant principal $G$-bundles is an $S$-equivariant map between them.

Following the terminology of \cite{TomDieck}*{p.58}, a $({\rm C}_2,G)$-bundle is \emph{locally trivial}, if there exists a covering of $B$ by ${\rm C}_2$-stable open subsets $U_j$ such that $E|_{U_j}$ admits a bundle map to some local object, i.e., a $({\rm C}_2,G)$-bundle over an orbit ${\rm C}_2/\Lambda$ for some subgroup $\Lambda\leq{\rm C}_2$. The bundle is called \emph{numerable} if the covering $\{U_j\}$ of $B$ admits a subordinate partition of unity. 
\end{definition}

\begin{remark}
  As usual, classification results only work for locally trivial numerable bundles. In some places in the literature (such as \cite{TomDieck}*{Proposition~8.10} or \cite{GuillouMayMerling2017}), the group $G$ is assumed to be compact, to ensure automatic local triviality for principal bundles with completely regular total space. In our setting we consider nice spaces, which are locally contractible paracompact Hausdorff, so the compactness assumptions play no role and can be avoided.
\end{remark}

\begin{proposition}
  Let $G$ be a topological group with ${\rm C}_2$-action, and let $B$ be a ${\rm C}_2$-space in ${\rm C_2-T}lc$. Then a locally trivial ${\rm C}_2$-equivariant principal $G$-bundle over $B$ (in the sense of Definition~\ref{def:principal-bundle} above) is the same thing as a (representable) $G$-torsor over $B$ (in the sense of Definition~\ref{def:torsors}) in ${\rm C}_2$-spaces which is locally trivial in the coarse topology. 
\end{proposition}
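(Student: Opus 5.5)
The plan is to compare the two definitions directly and check that local triviality in the sense of tom~Dieck matches local triviality in the coarse topology on ${\rm C_2-T}lc$. First, I would make the definitions match on the level of objects. The group $G$ with its ${\rm C}_2$-action is a group object in ${\rm C}_2$-spaces. A ${\rm C}_2$-equivariant action $G\times E\to E$ in ${\rm C}_2$-spaces is then exactly an action satisfying the compatibility $\gamma.(x.g)=(\gamma.x).(\gamma.g)$, that is, an action of $S=G\rtimes{\rm C}_2$. The torsor condition of Definition~\ref{def:torsors}, namely that $G\times E\to E\times_BE$ is an isomorphism, is a statement about underlying spaces: the action is free and transitive on fibres, with a continuous inverse (the translation function). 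This is precisely the condition that $E\to B$ is a principal $G$-bundle once we forget the ${\rm C}_2$-action. Morphisms on both sides are $S$-equivariant maps over $B$. So it remains to compare the local triviality conditions.

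Next, I would recall that the coarse topology on ${\rm C_2-T}lc$ has as coverings those families of equivariant maps $\{V_i\to B\}$ that become open coverings (or local homeomorphism coverings) after forgetting the ${\rm C}_2$-action. Typical examples are ${\rm C}_2\times U\to B$ for $U\subseteq B$ open and not necessarily stable, together with stable open subsets.

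For the direction from tom~Dieck local triviality to coarse local triviality, let $E|_{U_j}$ admit a bundle map to a $({\rm C}_2,G)$-bundle over an orbit ${\rm C}_2/\Lambda$. Then $U_j\to {\rm C}_2/\Lambda$ is equivariant. Pulling back along ${\rm C}_2\times U_j\to U_j$, which is a coarse cover, turns the orbit into the free orbit ${\rm C}_2$. Over the free orbit every $({\rm C}_2,G)$-bundle is induced from a $G$-bundle over a point, and is therefore trivial as a $G$-torsor in ${\rm C}_2$-spaces. Here I would use the map $({\rm C}_2\times G)\to E$, $(\gamma,g)\mapsto \gamma.(e_0.g)$, which is an $S$-isomorphism. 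Hence $E$ is trivialized on a coarse cover.

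For the converse, suppose $E$ is trivialized over a coarse cover $\{V_i\to B\}$. Since the underlying maps form an ordinary cover, the underlying bundle is locally trivial, and $E\to B$ is a numerable principal $G$-bundle on a paracompact base. Around a point $b$ I would produce an equivariant chart. If $b$ is not fixed, take a small open $U\ni b$ with $U\cap\sigma U=\emptyset$ on which $E$ is trivial, and use ${\rm C}_2\times U$ as a local object over ${\rm C}_2/e$. If $b$ is fixed, take a ${\rm C}_2$-stable neighbourhood $U$ that is equivariantly contractible to $b$; locally contractible spaces in ${\rm C_2-T}lc$ give this. An equivariant homotopy then identifies $E|_U$ with the pullback of $E_b$, which is a $({\rm C}_2,G)$-bundle over ${\rm C}_2/{\rm C}_2$, using homotopy invariance of numerable $({\rm C}_2,G)$-bundles from tom~Dieck.

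I expect the main obstacle to be this last step: homotopy invariance has to be applied before we know that the bundle is tom~Dieck locally trivial. I would try to avoid the circularity by first shrinking to a stable neighbourhood $U$ over which the underlying bundle is trivial. Over such $U$ the ${\rm C}_2$-structure is encoded by a cocycle $c\colon U\to G$ satisfying $c\,\sigma(c)=1$. Then the stable slice neighbourhood lets one contract $c$ equivariantly to the constant cocycle $c(b)$, giving the required bundle map to the local object over the orbit.
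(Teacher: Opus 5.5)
Your identification of the data and your forward direction agree with the paper, which likewise pulls back along ${\rm C}_2/1\to{\rm C}_2/{\rm C}_2$. One small slip there: the $S$-isomorphism from ${\rm C}_2\times G$ (with diagonal ${\rm C}_2$-action) should be $(\gamma,g)\mapsto(\gamma.e_0).g$. Your formula $\gamma.(e_0.g)$ is not right $G$-equivariant.

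The genuine gap is the last step at a fixed point $b$. Write the ${\rm C}_2$-structure in a section $s$ of $E|_U$ as $\sigma.s(u)=s(\sigma u).c(u)$. Then $c(\sigma u)\sigma(c(u))=1$, and replacing $s$ by $s\cdot h$ changes $c(u)$ to $h(\sigma u)^{-1}c(u)\sigma(h(u))$. What you need is an $h\colon U\to G$ making this constantly equal to $c(b)$. An equivariant contraction $H$ only produces the path of cocycles $c\circ H_t$ from $c$ to $c(b)$. Turning that path into such an $h$ is exactly homotopy invariance for the bundle $H^*(E|_U)$ over $U\times I$. So the circularity you noticed is moved, not removed. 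The paper's sketch also just asserts this step via good covers.

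No argument using only contractibility can work for an arbitrary topological group. Take $G$ to be the unitary group of $L^2[0,1]$ with the strong operator topology and trivial ${\rm C}_2$-action, and $B=\mathbb{R}$ with trivial action. Set $c(t)=1$ for $t\le 0$, and for $t>0$ let $c(t)$ be multiplication by the function equal to $-1$ on $[0,\min(t,1)]$ and $1$ elsewhere. This is a strongly continuous path of involutions. Hence $E=B\times G$ with $\sigma.(t,g)=(t,c(t)g)$ is a $({\rm C}_2,G)$-bundle with trivial underlying bundle, so it is coarse-locally trivial, and $B$ is equivariantly contractible to $0$. But any local model near $0$ must lie over ${\rm C}_2/{\rm C}_2$, which forces all fibres near $0$ to be $S$-isomorphic. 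That fails: $E_0$ has ${\rm C}_2$-fixed points, while $E_t$ for $t>0$ has none.

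So you need a hypothesis on $G$, for instance that $G$ is a Lie group, as in the paper's setting of complex Lie groups with antiholomorphic involution. With it you can close the gap directly, without any contraction:
\begin{itemize}
\item Let $U$ be a stable neighbourhood of $b$ over which the underlying bundle is trivial.
\item Put $c_0=c(b)$ and $\tau(g)=c_0\sigma(g)c_0^{-1}$, which is an involution since $c_0\sigma(c_0)=1$.
\item Put $a(u)=c(u)c_0^{-1}$, so that $a(\sigma u)=\tau(a(u))^{-1}$.
\item On the stable neighbourhood where $a$ takes values in $\exp(W)$, with $W\subset\mathfrak{g}$ a small symmetric $d\tau$-stable ball, set $h(u)=\exp\bigl(\tfrac12\log a(\sigma u)\bigr)$.
\item Then $h(\sigma u)\tau(h(u))^{-1}=a(u)$, which is equivalent to $h(\sigma u)^{-1}c(u)\sigma(h(u))=c_0$.
\item Now $s(u).h(u).g\mapsto g$ is the required bundle map to the local object $G$, with ${\rm C}_2$-action $g\mapsto c_0\sigma(g)$, over ${\rm C}_2/{\rm C}_2$.
\end{itemize}
Alternatively, apply Palais' slice theorem to the proper action of the Lie group $G\rtimes{\rm C}_2$ on $E$.
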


\begin{proof}
  Let $p\colon E\to B$ be a ${\rm C}_2$-equivariant principal bundle. This means that $p$ is a morphism of ${\rm C}_2$-spaces, there is an action $\rho\colon G\times E\to E$ in ${\rm C}_2$-spaces, and $p$ is a locally trivial principal $G$-bundle. On the other hand, a (representable) $G$-torsor in ${\rm C}_2$-spaces is a ${\rm C}_2$-equivariant morphism $p\colon E\to B$ with an action $\rho\colon G\times E\to E$ such that $(\rho,{\rm pr}_2)\colon G\times E\to E\times_B E$ is an isomorphism. So the data in both situations are the same, and the isomorphism condition in the torsor setting encodes exactly that $p\colon E\to B$ is a principal $G$-bundle (non-equivariantly).

  To identify the local triviality conditions, we first note that all local objects over ${\rm C}_2/{\rm C}_2$ become isomorphic after pullback along the projection map ${\rm C}_2/1\to{\rm C}_2/{\rm C}_2$, which is a coarse covering. In particular, for a locally trivial equivariant principal bundle, we can take the trivializing open covering $\{U_j\}$ and refine it to a coarse covering which trivializes the corresponding $G$-torsor.\footnote{Note that the definition of local triviality of $({\rm C}_2,G)$-bundles in Definition~\ref{def:principal-bundle} actually doesn't contain a condition on the fixed-point sets, relating it more to the coarse than the fine topology.}
 Conversely, recall that the definition of coarse covering $X\to Y$ in \cite{MorelVoevodsky1999} is a ${\rm C}_2$-equivariant morphism such that any point $y\in Y$ has an open neighbourhood $U$ such that the projection $X\times_YU \to U$ splits as morphism of topological spaces. In particular, this provides an open covering of $Y$ (which we can assume is good in the sense of \cite{MorelVoevodsky1999}*{Definition~3.3.1}). This provides the trivializing open cover for the equivariant principal bundle, which can then be refined to a ${\rm C}_2$-stable open cover. The local models are obtained from the fiber over an orbit, and the coarse-local triviality and the open cover being good ensure the equivariant trivialization condition.
\end{proof}

\begin{remark}
  The intuition here is that the coarse topology adds the ${\rm C}_2$-equivariant versions of \'etale coverings like $\op{Spec}(\mathbb{C})\to\op{Spec}(\mathbb{R})$ which allow to identify different real forms, in other words, different local models of the ${\rm C}_2$-group $G$.
\end{remark}

As a consequence of this identification, we have descent of ${\rm C}_2$-equivariant $G$-principal bundles along coarse coverings. 

\begin{corollary}
  Coarse-locally trivial $G$-torsors satisfy descent in the coarse topology, i.e., they can be glued in coarse coverings. More precisely, let $f\colon X\to Y$ be a coarse covering, and denote by ${\rm pr}_1,{\rm pr}_2\colon X\times_YX\to X$ the two projections. If $p\colon E\to X$ is a coarse-locally trivial $G$-torsor with an isomorphism $\alpha\colon{\rm pr}_1^\ast E\cong{\rm pr}_2^* E$, then there exists a unique coarse-locally trivial $G$-torsor $\overline{E}\to Y$ with $f^*\overline{E}\cong E$.
\end{corollary}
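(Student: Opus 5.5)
The plan is to reduce descent for coarse-locally trivial $G$-torsors to ordinary descent for principal $G$-bundles of underlying spaces, plus a descent argument for the ${\rm C}_2$-action. By the preceding proposition, a coarse-locally trivial $G$-torsor over $X$ is the same as a locally trivial ${\rm C}_2$-equivariant principal $G$-bundle, i.e., a principal $G$-bundle with a compatible ${\rm C}_2$-action. So the data $(E,\alpha)$ consist of two pieces: an underlying (non-equivariant) principal $G$-bundle with a descent isomorphism, and a ${\rm C}_2$-action commuting with $\alpha$.

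The first step is to descend the underlying bundle. By the definition of a coarse covering in \cite{MorelVoevodsky1999}, every $y\in Y$ has an open neighbourhood $U$ over which $X\times_Y U\to U$ admits a continuous section $s_U$. Define $\overline{E}|_U:=s_U^*E$. On overlaps $U\cap U'$, the isomorphism $\alpha$ pulled back along $(s_U,s_{U'})\colon U\cap U'\to X\times_YX$ gives transition isomorphisms $s_U^*E\cong s_{U'}^*E$. The cocycle condition for $\alpha$ on $X\times_YX\times_YX$ then gives the cocycle condition on triple overlaps, so these pieces glue to a principal $G$-bundle $\overline{E}\to Y$. A standard argument (pulling back $\alpha$ along $(x,s_U(f(x)))$) produces $f^*\overline{E}\cong E$ compatibly with $\alpha$. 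Independence of the chosen sections, and hence uniqueness up to unique isomorphism, follows the same way, since $\alpha$ canonically identifies any two local choices. This is just the usual statement that principal bundles form a stack for the topology of maps admitting local sections.

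The second step is to put the ${\rm C}_2$-structure on $\overline{E}$. Since $f$, the projections and $\alpha$ are ${\rm C}_2$-equivariant, the ${\rm C}_2$-action on $E$ transported to $\gamma^*E$ is compatible with the descent datum. I would therefore argue functorially rather than locally. The action of $\gamma\in{\rm C}_2$ is a ${\rm C}_2$-semilinear isomorphism $E\to\gamma^*E$ of descent data, covering $\gamma$ on $X$ and twisting the $G$-action by $\gamma$ on $G$. By the uniqueness and functoriality of descent from the first step, it descends to an isomorphism $\overline{E}\to\gamma^*\overline{E}$, and the group law descends as well by uniqueness. Local triviality of $\overline{E}$ in the coarse sense holds because $f^*\overline{E}\cong E$ is coarse-locally trivial and a composite of coarse coverings is a coarse covering. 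By the proposition, $\overline{E}$ is then again a locally trivial equivariant principal bundle.

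I expect the main obstacle to be this second step. The local sections $s_U$ are not ${\rm C}_2$-equivariant, and the opens $U$ need not be ${\rm C}_2$-stable, so one cannot simply glue equivariant local pieces. The functorial argument above avoids this, but one has to check carefully that the semilinear twist ($\gamma$ acting on $G$) is compatible with descent of morphisms. The first thing I would try is to pass to the semidirect product $S=G\rtimes{\rm C}_2$ and descend $E$ as an $S$-space over the $S$-space $X$, where $S$ acts on $X$ through ${\rm C}_2$; equivariance then becomes plain functoriality.
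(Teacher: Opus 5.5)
Your argument is correct, but you construct the descended torsor differently from the paper.

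\textbf{The paper's route.} The paper defines $\overline{E}$ fibrewise: $\overline{E}$ is $E$ modulo the identifications $e\sim\alpha(e)$ between fibres over points of $X\times_YX$. It then gets coarse local triviality exactly as in your last step, by composing a trivializing coarse cover of $E$ with $f$.

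\textbf{What each route buys.} In the paper's description the ${\rm C}_2$-action comes for free. Since $f$ and $\alpha$ are equivariant, the relation $\sim$ is ${\rm C}_2$-stable, so the action on $E$ passes to the quotient. The obstacle you anticipate in your second step (non-equivariant sections $s_U$, opens $U$ that are not ${\rm C}_2$-stable) therefore never arises. Your functorial descent of the semilinear isomorphism $E\to\gamma^*E$, or the $G\rtimes{\rm C}_2$ reformulation, is valid but a detour. Conversely, the paper leaves implicit why the fibrewise quotient has the right topology and is a locally trivial principal bundle. Your local sections supply exactly this, since over $U$ the quotient is identified with $s_U^*E$.

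\textbf{Reading of the statement.} You are right to assume the cocycle condition on $\alpha$. You are also right to read uniqueness as uniqueness of $\overline{E}$ together with an isomorphism $f^*\overline{E}\cong E$ compatible with $\alpha$. Read literally, the uniqueness claim fails: for the coarse covering ${\rm C}_2\to\pt$, every coarse-locally trivial $G$-torsor over ${\rm C}_2$ is trivial, while ${\rm H}^1_{\rm cs}(\pt,G)$ may have several elements.
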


\begin{proof}
  For a point $y\in Y$, the descent isomorphism $\alpha$ fixes how the different fibers of $E$ over points $x\in f^{-1}(y)$ have to be identified to define the $G$-torsor $\overline{E}$ over $Y$ with $f^*\overline{E}\cong E$. By definition, this $G$-torsor is going to be locally trivial in the coarse topology (compose a trivializing cover for $E$ with $f$ to a trivializing cover for $\overline{E}$). 
\end{proof}

\begin{proposition}
  Equivariant principal bundles satisfy homotopy invariance, in the sense that, for $X$ in ${\rm C_2-T}lc$, bundles over $X\times{\rm I}$ (interval with trivial ${\rm C}_2$-action) are extended from $X$.
\end{proposition}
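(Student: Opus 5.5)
We will prove homotopy invariance by reducing to the classical non-equivariant statement, using the identification (proposition above) of locally trivial $({\rm C}_2,G)$-bundles with coarse-locally trivial $G$-torsors in ${\rm C}_2$-spaces. The classical result we want to transport is: for a paracompact (numerable) base $B$, every numerable principal bundle over $B\times{\rm I}$ is isomorphic to the pullback of its restriction to $B\times\{0\}$ along the projection. For equivariant principal bundles this is exactly the form of the covering homotopy theorem in tom Dieck \cite{TomDieck}*{Chapter~I.8}, applied to the group $S=G\rtimes{\rm C}_2$ acting on total spaces. Concretely, given a $({\rm C}_2,G)$-bundle $E\to X\times{\rm I}$, we want an $S$-equivariant isomorphism $E\cong {\rm pr}^*(E|_{X\times\{0\}})$.

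The steps, in order, are as follows. First, note that since ${\rm I}$ carries the trivial action, $X\times{\rm I}$ lies again in ${\rm C_2-T}lc$: it is paracompact Hausdorff and locally contractible. Hence a locally trivial bundle over it is numerable, with a ${\rm C}_2$-stable trivializing cover admitting a ${\rm C}_2$-invariant partition of unity; we get invariance by averaging over the finite group ${\rm C}_2$. Second, run the standard argument for numerable bundles with every construction made equivariant. Using the Lebesgue-number argument on each compact fiber $\{x\}\times{\rm I}$, refine the cover to ${\rm C}_2$-stable sets of the form $U\times[t_{i},t_{i+1}]$ over which the bundle is trivial as a $({\rm C}_2,G)$-bundle over a local object. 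On such pieces, an $S$-map $E|_{U\times[t_i,t_{i+1}]}\to E|_{U\times\{t_i\}}$ exists, since the local models are pulled back from ${\rm C}_2$-orbits and are therefore constant in the ${\rm I}$-direction. Third, glue these retractions using the invariant partition of unity: we build the map $r\colon X\times{\rm I}\times{\rm I}\to X\times{\rm I}$ interpolating as in Dold's proof, lift it to an $S$-equivariant bundle map, and conclude that $E$ is isomorphic to ${\rm pr}^*(E|_{X\times\{0\}})$.

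An alternative route is through the coarse-torsor description: $G$-torsors over $X\times{\rm I}$ are classified by maps into the coarse classifying object. Since ${\rm I}$ is contracted to a point in the ${\rm I}$-local homotopy theory of \cite{MorelVoevodsky1999}, the classifying set is ${\rm I}$-invariant. This route, however, presupposes the classification theorem, so I would use the direct route above and keep the torsor viewpoint only as motivation.

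The main obstacle is equivariance of the local trivializations near fixed points. A local object over ${\rm C}_2/{\rm C}_2$ involves a choice of twisted involution $\sigma_g$ on $G$, and that choice must be the same along the whole segment $\{x\}\times[t_i,t_{i+1}]$. We handle this by noting that over a ${\rm C}_2$-stable contractible neighbourhood $U\times J$, the restriction to any slice $U\times\{t\}$ determines the local model up to isomorphism. This works because the set of isomorphism classes of local models over ${\rm C}_2/{\rm C}_2$ is the discrete set ${\rm H}^1({\rm C}_2,G)$, so it cannot vary continuously in $t$. With this constancy established, the remaining gluing is identical to the non-equivariant proof.
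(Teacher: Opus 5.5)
Your proposal is correct and follows the same route as the paper. The paper's proof consists entirely of citing tom Dieck's equivariant covering homotopy results for numerable $({\rm C}_2,G)$-bundles (Proposition~8.13 and Theorem~8.15 of Chapter~I), and your equivariant Dold-style sketch unpacks that argument. The constancy issue in your last paragraph is automatic and needs no appeal to ${\rm H}^1({\rm C}_2,G)$: on each trivializing piece $U\times[t_i,t_{i+1}]$ the bundle is pulled back from a fixed local object along an equivariant map to the discrete orbit ${\rm C}_2/\Lambda$, and such a map is constant along the connected interval factor.
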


\begin{proof}
  This is \cite{TomDieck}*{Proposition~8.13, Theorem~8.15}.
\end{proof}

\subsection{Universal equivariant principal bundle as coarse homotopy orbit space}

Having reinterpreted equivariant principal bundles as torsors in ${\rm C}_2$-spaces which are locally trivial for the coarse topology, the classification theory of equivariant principal bundles can be developed closely along the lines of the algebraic setting described in Sections~\ref{sec:prelims} and \ref{sec:stacks-motivic}. We outline the main points.

For a ${\rm C}_2$-space with action $G\looparrowright X$, we can consider a quotient stack $[G\backslash X]$ whose groupoid of points $[G\backslash X](S)$ over a space $S\in{\rm C_2-T}lc$ is the groupoid of spans $S\xleftarrow{p} P\xrightarrow{f}X$ where $p\colon P\to S$ is a coarse-locally trivial $G$-torsor and $f\colon P\to X$ is $G$-equivariant. The resulting quotient stack is a sheaf of groupoids for the coarse topology on ${\rm C_2-T}lc$ (essentially by the coarse descent property for equivariant principal bundles). In this setting, we also have versions of the quotient and induction equivalences of Proposition~\ref{prop:quotient-induction-stacks}.

We can also view the equivariant quotient stack $[G\backslash X]$ as an object in equivariant homotopy in several ways, following the material in Section~\ref{sec:stacks-motivic}. Via an analogue of Definition~\ref{def:stack-motivic-nerve}, we can take the geometric realization of the nerve of the sheaf of groupoids, viewed as an object in the homotopy category of fine I-local sheaves.\footnote{Note that by the discussion of Section~\ref{sec:equivariant-basics}, this is equivalent to the usual genuine ${\rm C}_2$-equivariant homotopy category.} In complete analogy to the \'etale-vs-Nisnevich story in Section~\ref{sec:stacks-motivic}, this realization of the quotient stack $[G\backslash X]$ will be the coarse homotopy orbit space, which we denote here by $\mathcal{X}_{{\rm h}G}$; it can be constructed as homotopy orbits in coarse equivariant homotopy, followed by a derived pushforward along the change-of-topology morphism from the coarse to the fine equivariant homotopy theory. 

Coarse descent plus homotopy invariance (as discussed in the previous section) imply the following representability result for the realization $\mathcal{X}_{{\rm h}G}$ of the quotient stack, using the techniques in and around \cite{affine-representability}:

\begin{theorem}
  \label{thm:equivariant-representability}
  Let $G$ be a complex Lie group with antiholomorphic ${\rm C}_2$-action, and let $G\looparrowright X$ be a ${\rm C}_2$-space with action. We view the coarse homotopy orbit space $\mathcal{X}_{{\rm h}G}$ as fine sheaf of spaces on ${\rm C_2-T}lc$. Then for any $S\in{\rm C_2-T}lc$, there are equivalences 
  \[
  {\rm Map}_{\rm fine, I}\left(S,\mathcal{X}_{{\rm h}G}\right)\simeq\left|{\rm Sing}_*^{\rm I}\left([G\backslash X](S)\right)\right|
  \]
  which are natural in $S\in{\rm C_2-T}lc$. 
  Spelling it out, this means that (the singular resolution of) $[G\backslash X](S)$ is a representative of the coarse homotopy orbits in the fine equivariant homotopy category. Consequently, the sections of $\mathcal{X}_{{\rm h}G}$ over a space $S$ can be computed as geometric realization of the topological groupoid $[G\backslash X](S)$.

  In particular, in the case where ${\rm B}_{\rm cs}G$ is the coarse homotopy orbit space/realization of the quotient stack for the trivial action $G\looparrowright \pt$, we get
  \[
  \left[X,{\rm B}_{\rm cs}G\right]_{\rm fine, I}\cong {\rm H}^1_{\rm cs}(X,G).
  \]
  This means that the coarse homotopy orbit space ${\rm B}_{\rm sc}G$ is a classifying space for ${\rm C}_2$-equivariant principal $G$-bundles, and the naive ${\rm I}$-homotopy classes of maps into $[G\backslash X]$ compute isomorphism classes of coarse $G$-torsors.
\end{theorem}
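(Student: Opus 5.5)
We will prove Theorem~\ref{thm:equivariant-representability} by the affine-representability strategy of \cite{affine-representability}, transported from smooth affine schemes with the Nisnevich topology to ${\rm C_2-T}lc$ with the fine topology and interval ${\rm I}$. The plan is to show that the presheaf of spaces $\mathcal{F}\colon S\mapsto\left|{\rm Sing}_*^{\rm I}([G\backslash X](S))\right|$ is already fibrant in the fine ${\rm I}$-local structure, and that it is a model of $\mathcal{X}_{{\rm h}G}$. Fibrancy should come from two properties. First, $\mathcal{F}$ must be ${\rm I}$-invariant: $\mathcal{F}(S)\to\mathcal{F}(S\times{\rm I})$ has to be an equivalence. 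Second, $\mathcal{F}$ must satisfy fine descent, i.e.\ send fine (${\rm C}_2$-stable open, good) covers and their \v{C}ech nerves to homotopy limits.

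For ${\rm I}$-invariance, we use that ${\rm Sing}^{\rm I}_*$ is applied sectionwise. It therefore suffices, as in the affine representability set-up, to know that $\pi_0$ and the automorphism groups of $[G\backslash X](-)$ are ${\rm I}$-invariant up to ${\rm Sing}$. For $\pi_0$ this is the homotopy invariance of equivariant principal bundles (the proposition quoted from \cite{TomDieck}). For automorphism groups, these are spaces of equivariant sections of a bundle of groups. Their ${\rm Sing}^{\rm I}$-resolutions are automatically ${\rm I}$-invariant, because $\mathrm{Map}(S\times\Delta^\bullet_{\rm I},-)$ is.

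For descent, we use that $[G\backslash X]$ is a stack for the coarse topology, by the coarse descent corollary above. The fine topology is coarser than the coarse topology, so $[G\backslash X]$ is in particular a fine stack of topological groupoids. We then need a ``${\rm Sing}$ commutes with homotopy limits over covers'' statement, which is the Mayer--Vietoris/excision argument: for an elementary fine square $U\cup V=S$ (equivariant, with partitions of unity available since spaces are paracompact), the square of ${\rm Sing}^{\rm I}$-groupoids is homotopy cartesian. Here we follow the \cite{affine-representability} criterion: ${\rm I}$-invariance plus Nisnevich-type excision plus a lifting/patching property for bundles on $S\times{\rm I}$.

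Given fibrancy, the natural map from the coarse homotopy orbit space, i.e.\ the fine sheafification of the nerve of the action groupoid ${\rm E}_G(X)$, to $\mathcal{F}$ is a coarse-local equivalence. This holds because coarse locally every torsor is trivial, exactly as in Proposition~\ref{prop:simplicial-comparison}. Hence, after the derived pushforward from coarse to fine, $\mathcal{X}_{{\rm h}G}\simeq\mathcal{F}$, which is the displayed equivalence. Naturality in $S$ is clear since everything is built sectionwise. Taking $X=\pt$ and $\pi_0$ then gives $[S,{\rm B}_{\rm cs}G]_{\rm fine,I}\cong\pi_0\mathcal{F}(S)$, the set of coarse-locally trivial $G$-torsors modulo ${\rm I}$-homotopy. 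By homotopy invariance this is ${\rm H}^1_{\rm cs}(S,G)$.

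The main obstacle is the descent step. We must show that the sectionwise ${\rm Sing}^{\rm I}$-realized groupoid is fine-excisive, which requires gluing bundles and homotopies of bundle maps over fine squares in families parametrized by $\Delta^n_{\rm I}$. We would first try to reduce to the non-equivariant statement on underlying spaces and on fixed points separately, using that fine equivalences are detected on $S^{H}$ for $H\le{\rm C}_2$ and that $[G\backslash X](S)$ over orbits ${\rm C}_2/H$ is computed by Corollary~\ref{cor:main-hc2}-type fixed-point groupoids. Then we would invoke classical numerable-bundle gluing.
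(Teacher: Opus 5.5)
The overall route you take is the one the paper indicates. The paper gives no real proof, only the sentence that ``coarse descent plus homotopy invariance'' yield the result via the techniques of \cite{affine-representability}, and you follow that: show that $\mathcal{F}\colon S\mapsto|{\rm Sing}^{\rm I}_*([G\backslash X](S))|$ is ${\rm I}$-invariant and satisfies descent, using tom Dieck's homotopy invariance and coarse descent for torsors.

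\textbf{The gap: your descent step proves the wrong property.} You explicitly downgrade the coarse stack to a fine stack and then check excision only for ${\rm C}_2$-stable open squares. At best this shows that $\mathcal{F}$ is fine-${\rm I}$-fibrant. But $\mathcal{X}_{{\rm h}G}=R\pi_*L_{\rm coarse}{\rm E}_G(X)$, so by adjunction ${\rm Map}_{\rm fine,I}(S,\mathcal{X}_{{\rm h}G})\simeq{\rm Map}_{\rm coarse,I}(S,L_{\rm coarse}{\rm E}_G(X))$. To identify this with $\mathcal{F}(S)$ using the coarse-local equivalence ${\rm E}_G(X)\to\mathcal{F}$, you need $\mathcal{F}$ to be \emph{coarse}-${\rm I}$-fibrant.

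Fine fibrancy together with a coarse-local equivalence from ${\rm E}_G(X)$ does not pin the object down, because the fine localization of ${\rm E}_G(X)$ has both properties. Take $G=\{\pm1\}$ with trivial involution and $X=\pt$. The fine localization has connected fixed points ${\rm B}G$. In contrast, $\mathcal{F}(\pt)\simeq{\rm B}G\sqcup{\rm B}G$, since ${\rm H}^1({\rm C}_2,G)$ has two elements. Your phrase ``the coarse homotopy orbit space, i.e.\ the fine sheafification of the nerve of ${\rm E}_G(X)$'' is exactly this conflation: the fine homotopy orbit space has fixed points ${\rm E}_{G^{{\rm C}_2}}(X^{{\rm C}_2})$.

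\textbf{What is missing.} You need descent of $\mathcal{F}$ along the coarse covers that are not fine, essentially ${\rm C}_2\times S\to S$. That is, you need an equivalence $\mathcal{F}(S)\simeq\mathcal{F}({\rm C}_2\times S)^{{\rm hC}_2}$.
\begin{itemize}
\item At the level of groupoids this is coarse descent for torsors, $[G\backslash X](S)\simeq[G\backslash X]({\rm C}_2\times S)^{{\rm hC}_2}$ as fixed-point groupoids (cf.\ Corollary~\ref{cor:equivariant-bg-fp}).
\item To pass from groupoids to $\mathcal{F}$, you need that nerves of fixed-point groupoids compute homotopy fixed points (the Thomason-type statement the paper cites).
\item You also need that ${\rm Sing}^{\rm I}_*$ followed by realization commutes with this homotopy limit. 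This is where homotopy invariance of $\pi_0$ (tom Dieck) genuinely enters, as in the affine representability argument.
\end{itemize}

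\textbf{Two smaller points.} First, ${\rm I}$-invariance of $\mathcal{F}$ needs no input, since ${\rm Sing}^{\rm I}_*$ of any presheaf is ${\rm I}$-invariant. Homotopy invariance of bundles is needed instead for descent and for the identification $\pi_0\mathcal{F}(S)\cong{\rm H}^1_{\rm cs}(S,G)$. Second, your suggested reduction to underlying spaces and fixed points separately only concerns fine squares, so it cannot supply the missing coarse descent.
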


Following the path of Section~\ref{sec:stacks-motivic} further, we have alternative descriptions of the realizations of equivariant quotient stacks.

On the one hand, we can build a simplicial object, realizing a homotopy orbit space for the fine equivariant homotopy, along the lines of Section~\ref{sec:stacks-motivic-simplicial}. The result will be a ${\rm C}_2$-space ${\rm E}_G(X)$ whose underlying space is the Borel construction of the underlying action $G\looparrowright X$, and whose fixed-point space is the Borel construction of the fixed-point action $G^{\rm C_2}\looparrowright X^{\rm C_2}$. As an analogue of Proposition~\ref{prop:simplicial-comparison}, the coarse stackification of this fine homotopy orbit space will be the realization of the quotient stack $[G\backslash X]$. 

In the equivariant setting, we can call a ${\rm C}_2$-group $G$ \emph{special}, if its coarse cohomology set
\[
  {\rm H}^1_{\rm cs}(\pt,G)\cong\pi_0([G\backslash\pt](\pt))
\]
(of isomorphism classes of $G$-torsors over the point $\pt$ with trivial ${\rm C}_2$-action) has only one element. For actions $G\looparrowright X$ of a special group $G$, the fine homotopy orbit space ${\rm E}_G(X)$, given by a simplicial construction, will be equivalent to the realization of the quotient stack. This is the equivariant analogue of Krishna's equivalence in \cite{krishna}, see also the second part of Proposition~\ref{prop:simplicial-comparison}. 

On the other hand, it is also possible to approximate the coarse homotopy orbit space $\mathcal{X}_{{\rm h}G}$ by quotients of open subsets in linear $G$-representations; this is the equivariant analogue of the Totaro--Edidin--Graham construction in Section~\ref{sec:stacks-motivic-approx}. We will not have opportunity in this paper to use this. 

\begin{remark}
  The outline above can be seen as a different take on the categorical models of classifying spaces of equivariant principal bundles in \cite{GuillouMayMerling2017}, in particular Theorems~3.10 and 3.11. It is not clear to us if the connection of equivariant principal bundles and coarse-locally trivial torsors has been noted in the equivariant literature before. We found it useful and natural, in particular for the study of equivariant realizations of motivic quotient stacks. It also seems to us that the approach via sheaves of spaces on ${\rm C_2-T}lc$ avoids some of the point-set or compactness conditions in \cite{GuillouMayMerling2017} and similar places. In any case, whenever the point-set conditions are satisfied (and we believe that they should be for complexifications of reductive real Lie groups), the classifying spaces in \cite{GuillouMayMerling2017} will be equivalent to the coarse homotopy orbit space in Theorem~\ref{thm:equivariant-representability} above.
\end{remark}

\subsection{Fixed-point groupoids and homotopy fixed points}
\label{sec:fp-gpd-hfp}

From Theorem~\ref{thm:equivariant-representability}, we can quickly read off a description of the genuine fixed-point space of the coarse homotopy orbit space $\mathcal{X}_{{\rm h}G}$ for a ${\rm C}_2$-space with action $G\looparrowright X$:

\begin{corollary}
  \label{cor:equivariant-bg-fp}
  Let $G$ be a complex Lie group with antiholomorphic ${\rm C}_2$-action, and let $G\looparrowright X$ be a ${\rm C}_2$-space with action. We view the coarse homotopy orbit space $\mathcal{X}_{{\rm h}G}$ as a fine sheaf of spaces on ${\rm C_2-T}lc$. Then we have the following:
  \begin{enumerate}
  \item The genuine fixed-point space $\mathcal{X}_{{\rm h}G}^{\rm C_2}$ is equivalent to the geometric realization of topological groupoid $[G\backslash X](\pt)$.
  \item The natural map ${\rm C}_2\to\pt$ of ${\rm C}_2$-spaces induces an equivalence of topological groupoids
    \[
    [G\backslash X](\pt)\simeq [G\backslash X]({\rm C}_2)^{\rm hC_2},
    \]
    i.e., $[G\backslash X](\pt)$ is equivalent to the fixed-point groupoid for the natural ${\rm C}_2$-action on $[G\backslash X]({\rm C}_2)$.
  \item The realization of $[G\backslash X](\pt)$ is the homotopy fixed-point space of the ${\rm C}_2$-action on the realization of $[G\backslash X]({\rm C}_2)$. 
  \end{enumerate}
\end{corollary}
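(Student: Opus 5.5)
The plan is to derive all three parts from Theorem~\ref{thm:equivariant-representability}, combined with the coarse descent property of equivariant principal bundles and the fixed-point groupoid formalism of Section~\ref{sec:real-pts-fp-gpd}.

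\emph{Part (1).} In the fine (Elmendorf or Morel--Voevodsky) model, the genuine ${\rm C}_2$-fixed points of a fine sheaf of spaces $\mathcal{F}$ are its sections over the orbit ${\rm C}_2/{\rm C}_2=\pt$. In other words, $\mathcal{F}^{\rm C_2}\simeq{\rm Map}_{\rm fine,I}(\pt,\mathcal{F})$. Applying Theorem~\ref{thm:equivariant-representability} with $S=\pt$ then gives
\[
\mathcal{X}_{{\rm h}G}^{\rm C_2}\simeq\left|{\rm Sing}^{\rm I}_*\left([G\backslash X](\pt)\right)\right|.
\]
The right-hand side is the geometric realization of the topological groupoid $[G\backslash X](\pt)$, since the singular resolution of a levelwise nice simplicial space is equivalent to its realization.

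\emph{Part (2).} This is the topological analogue of Theorem~\ref{thm:etale-descent}, and I would prove it the same way, replacing \'etale descent with coarse descent. The map ${\rm C}_2\to\pt$ is a coarse covering; it is nonequivariantly split, indeed trivially so. It is also a ${\rm C}_2$-torsor, since ${\rm C}_2\times_\pt{\rm C}_2\cong{\rm C}_2\times{\rm C}_2$ via the action. The groupoid $[G\backslash X](-)$ is a stack for the coarse topology, by the descent corollary for coarse-locally trivial $G$-torsors together with descent for equivariant maps to $X$. Therefore $[G\backslash X](\pt)$ is equivalent to the groupoid of descent data on $[G\backslash X]({\rm C}_2)$ along ${\rm C}_2\to\pt$. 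Using the identification ${\rm C}_2\times_\pt{\rm C}_2\cong\coprod_{{\rm C}_2}{\rm C}_2$, a descent datum reduces to an isomorphism $\varphi\colon P\to\sigma^*P$ satisfying the cocycle condition $\sigma^*(\varphi)=\varphi^{-1}$. This is exactly an object of $[G\backslash X]({\rm C}_2)^{\rm hC_2}$ as described in Example~\ref{ex:hc2-groupoid}, and the morphisms match likewise; this is the Vistoli comparison of $\underline{\Gamma}$-equivariant objects with homotopy fixed points. For explicitness, I would also note that $[G\backslash X]({\rm C}_2)$ is just the action groupoid of the underlying nonequivariant action $G\looparrowright X$. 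Every $G$-torsor over ${\rm C}_2$ is trivial, since ${\rm C}_2$ is a free orbit. The ${\rm C}_2$-action on this groupoid is the one of Example~\ref{ex:cplx-conjugation}, and the fixed-point description is that of Example~\ref{ex:hc2-orbits}. Compatibility with the topologies is then checked directly, as at the end of Section~\ref{sec:real-pts-fp-gpd}.

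\emph{Part (3).} This is the main obstacle: we must show that geometric realization of a topological groupoid carries the fixed-point groupoid $\mathcal{C}^{\rm hC_2}$ to the homotopy fixed points $|\mathcal{C}|^{\rm hC_2}$. I would first try to identify $\mathcal{C}^{\rm hC_2}$ with the groupoid of ${\rm C}_2$-equivariant functors ${\rm E}{\rm C}_2\to\mathcal{C}$, where ${\rm E}{\rm C}_2$ is the contractible groupoid with object set ${\rm C}_2$. A functor of this kind is determined by the image $X$ of $e$ together with the image $\varphi$ of the arrow $e\to\sigma$, and the relation $\sigma(\varphi)=\varphi^{-1}$ is precisely the fixed-point condition. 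One then passes to nerves, using $|N{\rm E}{\rm C}_2|\simeq{\rm EC}_2$. The issue is that a mapping groupoid realizes to the correct mapping space only after a cofibrancy or fibrancy argument. Since $\mathcal{C}$ is a groupoid, its nerve is a Kan complex (levelwise, in the topological case), and ${\rm E}{\rm C}_2$ is a free ${\rm C}_2$-category, so ${\rm Fun}^{{\rm C}_2}({\rm E}{\rm C}_2,\mathcal{C})$ computes the derived mapping object. This uses the fact that the nerve is fully faithful and preserves mapping objects between groupoids, which is the Guillou--May--Merling argument. If the topology on $\mathcal{C}$ makes this awkward, I would fall back to the route through parts (1) and (2). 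Genuine fixed points of the fine sheaf $\mathcal{X}_{{\rm h}G}$ can be compared with sections over ${\rm EC}_2$, since ${\rm EC}_2\to\pt$ is a coarse hypercover and $\mathcal{X}_{{\rm h}G}$ is a coarse-local object. Those sections are ${\rm Map}^{\rm C_2}({\rm EC}_2,\mathcal{X}_{{\rm h}G}({\rm C}_2))$, which is the homotopy fixed-point space of the realization of $[G\backslash X]({\rm C}_2)$.
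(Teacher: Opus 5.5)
Your arguments for (1) and (2) match the paper's. For (1), you apply Theorem~\ref{thm:equivariant-representability} at $S=\pt$ and read genuine fixed points as sections over ${\rm C}_2/{\rm C}_2$. For (2), you use coarse descent along the ${\rm C}_2$-torsor ${\rm C}_2\to\pt$, mirroring the \'etale argument behind Corollary~\ref{cor:main-hc2}.

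For (3), the paper's route is your fallback, not your primary plan. It does not use (2) or any mapping-groupoid comparison. It observes that $\mathcal{X}_{{\rm h}G}$ is by construction the change-of-topology pushforward of coarse homotopy orbits, hence coarse-local. So its sections over $\pt$ are the homotopy fixed points of its sections over ${\rm C}_2$. Both sides are then identified via Theorem~\ref{thm:equivariant-representability} at $S=\pt$ and $S={\rm C}_2$.

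Your primary route is a genuinely different argument. You prove $|\mathcal{C}^{{\rm hC}_2}|\simeq|\mathcal{C}|^{{\rm hC}_2}$ for a ${\rm C}_2$-groupoid $\mathcal{C}$, via equivariant functors out of the contractible groupoid on ${\rm C}_2$, and then combine it with (2). The paper only cites this Thomason/Guillou--May--Merling fact in the remark after the corollary, as a consistency check.

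Your route buys a self-contained statement about topological groupoids. It is independent of the coarse sheaf model, whose representability theorem the paper states only in sketch form. The price is the point-set step you already flag. For noncompact automorphism groups, commuting realization with the equivariant functor groupoid needs local-triviality or cofibrancy hypotheses, of the kind Guillou--May--Merling get from compactness. Sidestepping those hypotheses is exactly why the paper works in the sheaf model.
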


\begin{proof}
  (1) is a direct consequence of Theorem~\ref{thm:equivariant-representability}, in the special case where $S=\pt$.

  (2) The natural ${\rm C}_2$-action on $[G\backslash X]({\rm C}_2)$ is induced from the ${\rm C}_2$-action on ${\rm C}_2$ switching the two points. The identification of the fixed-point groupoid with $[G\backslash X](\pt)$ follows from coarse descent, similar to the arguments in Section~\ref{sec:real-pts-fp-gpd}.

  (3) As remarked above, we can construct $\mathcal{X}_{{\rm h}G}$ (as a sheaf of spaces on ${\rm C_2-T}lc$) as homotopy orbits in the coarse model structure, followed by change-of-topology pushforward. By construction, this means that the sections of $\mathcal{X}_{{\rm h}G}$ over $\pt$ are identified as homotopy fixed-points of the natural ${\rm C}_2$-action on $\mathcal{X}_{{\rm h}G}({\rm C}_2)$, see also our discussion on fixed-point functors in Section~\ref{sec:fixed-points}. 
\end{proof}

\begin{remark}
  This provides a different perspective on the computation of the fixed-points of an equivariant classifying space in \cite{GuillouMayMerling2017}, in particular Theorems~4.18 and 4.23. After identifying the fixed-point space with the realization of the topological groupoid $[G\backslash X](\pt)$, the description of the connected components in terms of ${\rm H}^1_{\rm cs}(\pt,G)$ (as an equivariant-topology version of Galois cohomology) is immediate.
\end{remark}

\begin{remark}
  That the realization of fixed-point groupoids compute homotopy fixed-points is also a standard result found in many places in the literature. For discrete groups, it is in Thomason's homotopy limit paper \cite{thomason:holim}, Section~3 and in particular (3.3) on page 410, see also the discussion in \cite{virk:hc2}*{Example~2.0.1 and Proposition~2.1.4}. For compact groups, it also appears in \cite{GuillouMayMerling2017}, Section~4, in the discussion of the fixed-point groupoids $\mathscr{C}at(\mathcal{E}G,\Pi)^G$. 
\end{remark}

\section{Equivariant and real realization for quotient stacks}
\label{sec:real-pts-hc2}

In this section, we will compute the equivariant and real realization of quotient stacks, thus proving Theorem~\ref{thm:main1} in the introduction. The main result is the following: for a group action $G\looparrowright X$ over $\mathbb{R}$, the equivariant realization of the associated quotient stack $[G\backslash X]$ is the quotient stack for the equivariant realization  $G(\mathbb{C})\looparrowright X(\mathbb{C})$ of the action (in ${\rm C}_2$-spaces). Put differently, the equivariant realization of \'etale homotopy orbits are coarse-equivariant homotopy orbits. In particular, the equivariant realization of the geometric classifying space ${\rm B}_\et G$ of a linear group $G$ is the ${\rm C}_2$-space classifying equivariant principal $({\rm C}_2,G(\mathbb{C}))$-bundles, as discussed in Section~\ref{sec:coarse}. Taking fixed points, this means that the real realization of a real quotient stack $[G\backslash X]$ is given by the homotopy fixed points of the complex realization. Combining this with the results from Section~\ref{sec:hc2-galois} we see that the (unstable) real realization can also be computed as the geometric realization of the topological groupoid $[G\backslash X](\mathbb{R})$ of real points of the quotient stack; see also the explicit formula in Proposition~\ref{prop:bg-formula}. 

To prove the equivariant results, we first note that Nisnevich homotopy orbits are mapped to fine equivariant homotopy orbits because Nisnevich-to-fine realization is a left Kan extension. In particular, for a group action $G\looparrowright X$ of a special group $G$, the equivariant realization ${\rm Re}_{\rm C_2}[G\backslash X]$ is the action groupoid of $G(\mathbb{C})\looparrowright X(\mathbb{C})$ (viewed as group action in ${\rm C_2}$-spaces). For the general case, we can then use the induction equivalence $[G\backslash X]\cong[{\rm GL}_n\backslash({\rm GL}_n\times_{/G}X)]$ of Proposition~\ref{prop:quotient-induction-stacks} to give a ${\rm GL}_n$-presentation. This shows generally that equivariant realizations of quotient stacks are coarse homotopy orbit spaces.

We want to point out that, for our purposes in this paper, both descriptions of real points of a quotient stack are important. On the one hand, identifying the real points of the quotient stack with the homotopy fixed points of the complex conjugation on the complex points allows for very direct computations of the real realization of classifying spaces, see the formula in Proposition~\ref{prop:bg-formula} as well as the various examples discussed in Section~\ref{sec:examples}. On the other hand, the scheme approximation of the algebraic Borel construction is the key for our definition of equivariant real cycle class map in Section~\ref{sec:equivariant-cycle-class}.

\begin{convention}
  For the purposes of this section, whenever we want to specifically distinguish between the real points of a quotient stack and its real realization, we will use the following notation: $[G\backslash X](\mathbb{R})$ denotes the (topological) groupoid of real points of the quotient stack (as in Example~\ref{ex:real-points} or Section~\ref{sec:hc2-galois}), and ${\rm Re}_{\mathbb{R}}[G\backslash X]$ denotes the real realization (as in Section~\ref{sec:unstable-real-realization}). After having shown that these are equivalent, the notational distinction might be blurred somewhat in later sections.
\end{convention}

\subsection{Complex realization} 

We begin with an aside concerning the description of the complex realization of a quotient stack. As we recalled in Section~\ref{sec:prelims}, there are several possible (but equivalent) ways to consider a quotient stack as an object in motivic homotopy. Consequently there are also different possible ways to describe its complex realization: on the more simplicial side, we can take the realization of the action groupoid, while on a more geometric side, we could use an admissible gadget and take the complex realization of the finite-dimensional approximations of the Borel construction, in the style of Totaro~\cite{totaro:bg} and Edidin--Graham~\cite{edidin:graham:equivariant}. 

We first consider the complex realization of an admissible gadget $X_G(\rho)$ for $G\looparrowright X$, cf.{}~Section~\ref{sec:stacks-motivic-approx}. The result is a Borel construction for $G(\mathbb{C})\looparrowright X(\mathbb{C})$ since complex realization is a left Kan extension. 

\begin{proposition}
  \label{prop:cplx-admissible-gadget}
  Let $G\looparrowright X$ be a smooth quasi-projective variety with action over $\mathbb{C}$, and let $\rho=(U_i,V_i)_{i\geq 1}$ be an admissible gadget for $G$. Then the complex realization of $X_G(\rho)$ is a Borel construction for $G(\mathbb{C})\looparrowright X(\mathbb{C})$. More precisely,
  \[
  {\rm Re}_{\mathbb{C}}\left(X_G(\rho)\right)\simeq \op{colim}_{i\geq 1} \left(U_i(\mathbb{C})\times_{/G(\mathbb{C})} X(\mathbb{C})\right)\simeq{\rm E}G(\mathbb{C})\times_{/G(\mathbb{C})}X(\mathbb{C}).
  \]
  In particular, the complex realization can be identified as homotopy colimit of the complex points of scheme approximations (in the sense of Totaro and Edidin--Graham) of $X_G(\rho)$.
\end{proposition}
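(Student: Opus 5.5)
The plan has three steps: push the realization inside the colimit, identify each term with a finite-dimensional topological Borel space, and pass to the limit using the growing codimensions in the admissible gadget.

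\emph{Step 1: realization and the colimit.} The complex realization ${\rm Re}_{\mathbb{C}}$ is a left Quillen functor, the left Kan extension of $Y\mapsto Y(\mathbb{C})$. Since $X_G(\rho)=\op{colim}_i(U_i\times_{/G}X)$ is a filtered (sequential) homotopy colimit in motivic spaces, we get
\[
{\rm Re}_{\mathbb{C}}(X_G(\rho))\simeq\op{hocolim}_i\,(U_i\times_{/G}X)(\mathbb{C}).
\]
The transition maps come from the closed embeddings $U_i\hookrightarrow U_i\oplus V\subseteq U_{i+1}$ and open inclusions. Realizing them gives cofibrations, or at least maps for which the sequential colimit computes the homotopy colimit. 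This yields the first claimed equivalence, once we identify $(U_i\times_{/G}X)(\mathbb{C})$ with $U_i(\mathbb{C})\times_{/G(\mathbb{C})}X(\mathbb{C})$.

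\emph{Step 2: the identification for each $i$.} The morphism $U_i\times X\to U_i\times_{/G}X$ is an \'etale $G$-torsor between smooth varieties. By Proposition~\ref{prop:approx-comparison} it is moreover Zariski-locally trivial after inducing to ${\rm GL}_n$, which is special. On complex points it therefore becomes a holomorphic principal $G(\mathbb{C})$-bundle: \'etale local sections give analytic local sections by the implicit function theorem. Hence $(U_i\times_{/G}X)(\mathbb{C})$ is the orbit space $U_i(\mathbb{C})\times_{/G(\mathbb{C})}X(\mathbb{C})$, and the quotient map is a locally trivial numerable principal bundle.

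\emph{Step 3: identifying the colimit with the Borel construction.} Set $U_\infty(\mathbb{C})=\op{colim}_iU_i(\mathbb{C})$. It carries a free $G(\mathbb{C})$-action, and I claim it is contractible. By the codimension conditions in Definition~\ref{def:admissible-gadget}, the complement $V_i\setminus U_i$ has complex codimension $c_i\to\infty$. So $U_i(\mathbb{C})$ is $(2c_i-2)$-connected, because removing a closed subvariety of complex codimension $c$ from $\mathbb{C}^N$ leaves a $(2c-2)$-connected space, by transversality or general position. Hence the colimit is weakly contractible, and as a CW-type colimit of manifolds it is contractible.

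Then $U_\infty(\mathbb{C})\to U_\infty(\mathbb{C})/G(\mathbb{C})$ is a universal principal $G(\mathbb{C})$-bundle. The comparison map
\[
{\rm E}G(\mathbb{C})\times_{/G(\mathbb{C})}X(\mathbb{C})\leftarrow ({\rm E}G(\mathbb{C})\times U_\infty(\mathbb{C}))\times_{/G(\mathbb{C})}X(\mathbb{C})\to U_\infty(\mathbb{C})\times_{/G(\mathbb{C})}X(\mathbb{C})
\]
consists of fiber bundles with contractible fibers, hence both arrows are weak equivalences. Alternatively, one can argue levelwise: the map on the $i$-th stage is a fibration with $(2c_i-2)$-connected fiber, so the connectivity of the comparison tends to infinity. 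Finally, commuting the sequential colimit with $\times_{/G(\mathbb{C})}X(\mathbb{C})$ is harmless for quotients by free actions along closed inclusions.

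\emph{Expected obstacle.} The main obstacle is the point-set care needed in Steps 1 and 3. One must ensure that the sequential colimit of the realized approximations really computes the homotopy colimit, and that the quotient topology commutes with the colimit. I would handle this by working with homotopy colimits throughout and applying the levelwise connectivity estimate, so that no strict colimit topology is ever needed.
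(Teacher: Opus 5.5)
Your proof is correct, but it reaches the per-stage identification and the comparison with ${\rm E}G(\mathbb{C})\times_{/G(\mathbb{C})}X(\mathbb{C})$ by a different route than the paper.

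\textbf{The paper's route.} After commuting ${\rm Re}_{\mathbb{C}}$ with the filtered colimit, as you also do, the paper argues formally.
\begin{enumerate}
\item It identifies each stage by saying that realization commutes with products and with the coequalizer $G\times U_i\times X\rightrightarrows U_i\times X$.
\item It pulls the colimit inside the balanced product.
\item It invokes the $\mathbb{A}^1$-contractibility of the motivic space $\op{colim}_iU_i$ (Morel--Voevodsky, \S 4.2, Proposition~2.3). Hence ${\rm Re}_{\mathbb{C}}(\op{colim}_iU_i)$ is a contractible free $G(\mathbb{C})$-space, i.e.\ a model of ${\rm E}G(\mathbb{C})$.
\end{enumerate}

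\textbf{Your route.} You argue geometrically and topologically instead.
\begin{enumerate}
\item You prove the homeomorphism $(U_i\times_{/G}X)(\mathbb{C})\cong U_i(\mathbb{C})\times_{/G(\mathbb{C})}X(\mathbb{C})$ directly. The \'etale-locally trivial torsor $U_i\times X\to U_i\times_{/G}X$ becomes an analytically locally trivial principal $G(\mathbb{C})$-bundle.
\item You replace motivic contractibility by the general-position estimate that $U_i(\mathbb{C})$ is $(2c_i-2)$-connected.
\item You compare levelwise through $({\rm E}G(\mathbb{C})\times U_i(\mathbb{C}))\times_{/G(\mathbb{C})}X(\mathbb{C})$.
\end{enumerate}

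\textbf{What each buys.} The paper's route is shorter and stays inside motivic homotopy, which is what lets it transfer almost verbatim to the ${\rm C}_2$-equivariant and real realizations later.

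Yours is self-contained and makes explicit a point the paper passes over quickly. For non-special $G$, the coequalizer of the action groupoid taken in presheaves or Nisnevich sheaves is not the scheme $U_i\times_{/G}X$; it only becomes so after \'etale sheafification. So the per-stage homeomorphism really rests on an analytic local-triviality argument like yours. Your levelwise connectivity argument also avoids needing the strict colimit $U_\infty(\mathbb{C})\to U_\infty(\mathbb{C})/G(\mathbb{C})$ to be locally trivial.

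\textbf{A small correction.} In Step~2, the appeal to Proposition~\ref{prop:approx-comparison} and to induction to ${\rm GL}_n$ is beside the point; that proposition says nothing about Zariski local triviality. \'Etale local triviality already gives local holomorphic sections, and so does smoothness plus surjectivity of the torsor map.
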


\begin{proof}
  Recall from Section~\ref{sec:stacks-motivic-approx} that by definition $X_G(\rho)=\op{colim}_i \left(U_i\times_{/G} X\right)$, and this colimit is indeed a homotopy colimit since it is taken over a filtered index category. Since complex realization is a left Quillen functor, ${\rm Re}_{\mathbb{C}}(X_G(\rho))$ is the (homotopy) colimit of ${\rm Re}_{\mathbb{C}}(U_i\times_{/G}X)$.

  As we recalled above, e.g.{}~from \cite{wickelgren:williams}*{Section~3.1}, complex realization commutes with products, and therefore ${\rm Re}_{\mathbb{C}}(U_i\times X)\cong U_i(\mathbb{C})\times X(\mathbb{C})$. Moreover, the diagonal action $G\looparrowright U_i\times X$ realizes to a complex Lie group action  $G(\mathbb{C})\looparrowright U_i(\mathbb{C})\times X(\mathbb{C})$. Finally, we can again use that complex realization commutes with colimits (more specifically, the coequalizer of the action groupoid $G\times U_i\times X\rightrightarrows U_i\times X$) to see that ${\rm Re}_{\mathbb{C}}(U_i\times_{/G}X)\cong U_i(\mathbb{C})\times_{/G(\mathbb{C})} X(\mathbb{C})$. This proves the first equivalence in the proposition.
  
  The second equivalence is (also) standard. We have:
  \[
  \op{colim}_{i\geq 1} \left(U_i(\mathbb C)\times_{/G(\mathbb C)} X(\mathbb C)\right)\simeq \left(\op{colim}_{i\geq 1} U_i(\mathbb C) \right)\times_{/G(\mathbb C)} X(\mathbb C)\simeq \op{Re}_{\mathbb C}(\op{colim}_{i\geq 1} U_i)\times_{G(\mathbb C)} X(\mathbb C).
  \]
  But then the motivic space $\op{colim}_i U_i$ is contractible, cf. Proposition~2.3 in Section~4.2 of \cite{MorelVoevodsky1999}, so its realisation is also contractible. The action of $G(\mathbb C)$ on $\op{colim}_{i\geq 1} U_i(\mathbb C)$ is free since it was so before taking realisations. We thus conclude that $\op{Re}_{\mathbb C}(\op{colim}_{i\geq 1} U_i)$ is a model for ${\rm E} G(\mathbb C)$.
\end{proof}

We next discuss the complex realization using the action groupoid viewpoint. We use the notation ${\rm E}_G(X)$ for the bar construction associated to a variety with action $G\looparrowright X$, cf. also the discussion in Section~\ref{sec:stacks-motivic-simplicial}. 

\begin{proposition}
  \label{prop:cplx-simplicial}
  Let $G\looparrowright X$ be a smooth variety with action over $\mathbb{C}$. Then the complex realization  ${\rm Re}_{\mathbb{C}}(E_G(X))$ of the action groupoid ${\rm E}_G(X)$ is the bar construction of the complex Lie group action $G(\mathbb{C})\looparrowright X(\mathbb{C})$. 
\end{proposition}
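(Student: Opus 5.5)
The plan is to compute the realization levelwise and use that complex realization is a left Quillen functor commuting with homotopy colimits and finite products. The bar construction ${\rm E}_G(X)$ is a simplicial object in ${\bf Sm}_{\mathbb{C}}$ with $n$-simplices $G^{\times n}\times X$; viewed as a simplicial presheaf it is the homotopy colimit (diagonal, equivalently geometric realization) of the simplicial diagram $[n]\mapsto G^{\times n}\times X$ of representables. So the first step is to write
\[
{\rm Re}_{\mathbb{C}}({\rm E}_G(X))\simeq \op{hocolim}_{[n]\in\Delta^{\rm op}}{\rm Re}_{\mathbb{C}}(G^{\times n}\times X),
\]
which holds because ${\rm Re}_{\mathbb{C}}$ is the left Kan extension of $Y\mapsto Y(\mathbb{C})$ and so preserves homotopy colimits (in the projective model, the levelwise representable simplicial presheaf is cofibrant, so the derived functor is computed by applying the functor levelwise and realizing).

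The second step is to identify the resulting simplicial space. By compatibility with products, cf.{}~\cite{panin:pimenov:roendigs}*{Appendix~A.4}, we have ${\rm Re}_{\mathbb{C}}(G^{\times n}\times X)\cong G(\mathbb{C})^{\times n}\times X(\mathbb{C})$, and since realization is a functor on ${\bf Sm}_{\mathbb{C}}$, the face maps (built from multiplication $G\times G\to G$, action $G\times X\to X$ and projections) and degeneracies (built from the unit $e\colon\pt\to G$) are sent to the corresponding maps of complex points, i.e., the multiplication and action maps of the complex Lie group action $G(\mathbb{C})\looparrowright X(\mathbb{C})$. Hence the simplicial space obtained is precisely the nerve ${\rm E}_{G(\mathbb{C})}(X(\mathbb{C}))$ of the topological action groupoid of Definition~\ref{def:action_groupoid}, and its realization is the bar construction.

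The only point needing care, and what I expect to be the main obstacle, is to justify that the homotopy colimit over $\Delta^{\rm op}$ in topological spaces agrees with the honest (fat or ordinary) geometric realization of the simplicial space $G(\mathbb{C})^{\times\bullet}\times X(\mathbb{C})$. For this I would check that this simplicial space is Reedy cofibrant (good in Segal's sense): the degeneracies are induced by the inclusion of the identity $e\in G(\mathbb{C})$, a closed submanifold inclusion and hence a closed cofibration since everything is a smooth manifold (an analytic subvariety of a complex analytic space admits a neighbourhood deformation retract). Given goodness, the fat and thin realizations are equivalent and both compute the homotopy colimit, yielding the bar construction ${\rm B}(\pt,G(\mathbb{C}),X(\mathbb{C}))\simeq {\rm E}G(\mathbb{C})\times_{/G(\mathbb{C})}X(\mathbb{C})$, consistent with Proposition~\ref{prop:cplx-admissible-gadget}. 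Alternatively one can simply cite \cite{wickelgren:williams}*{Section~3.1}, where this levelwise identification of the realization of bar constructions is carried out.
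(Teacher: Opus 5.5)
Your proposal is correct and follows essentially the paper's argument: realize ${\rm E}_G(X)$ levelwise, use compatibility of ${\rm Re}_{\mathbb{C}}$ with finite products to match levels, face maps and degeneracies with the nerve of the action groupoid of $G(\mathbb{C})\looparrowright X(\mathbb{C})$, and identify the realization of the motivic space with the geometric realization of that simplicial space. The paper simply asserts this last step, and your goodness check (degeneracies are closed cofibrations, so the thin realization computes the homotopy colimit) justifies it properly.

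One small correction: your parenthetical claim that the simplicial presheaf ${\rm E}_G(X)$ is projectively cofibrant is false for nontrivial $G$. The degenerate part $s_0(h_X)$ is not a summand of the representable $h_{G\times X}$, and a short Yoneda argument with ${\rm id}_{G\times X}$ shows ${\rm E}_G(X)$ is not even a retract of a split simplicial presheaf. You never need that claim, though: the homotopy-colimit argument only uses cofibrancy of the individual levels $h_{G^{\times n}\times X}$.
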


\begin{proof}
  Recall from Definition~\ref{def:action_groupoid} that ${\rm E}_G(X)$ is the simplicial scheme
  \[\begin{tikzcd}
  \cdots \ar[r,yshift=4pt,->]\ar[r,yshift=1.5pt,->]\ar[r,yshift=-1.5pt,->]\ar[r,yshift=-4pt,->] &
  G\times G\times X \ar[r,yshift=3pt,->]\ar[r,yshift=0pt,->]\ar[r,yshift=-3pt,->] & 
  G\times X \ar[r,yshift=2pt,->]\ar[r,yshift=-2pt,->] &X
  \end{tikzcd}\]
  with the face maps given by group multiplication, group action and projection.

  There are two ways we can think of the complex realization of ${\rm E}_G(X)$. On the one hand, we can consider it as a simplicial scheme, and apply the functor ${\rm Re}_{\mathbb{C}}$ termwise to get a simplicial space. On the other hand, we can think of ${\rm E}_G(X)$ as an object in the motivic homotopy category and apply ${\rm Re}_{\mathbb{C}}$ to get a space. The latter space is then the geometric realization of the former simplicial space. In particular, it suffices to show that the termwise realization is the bar construction for the action $G(\mathbb{C})\looparrowright X(\mathbb{C})$ as claimed in the proposition.

  Since complex realization is compatible with finite products, ${\rm Re}_{\mathbb{C}}({\rm E}_G(X))$ is a simplicial object built from $G(\mathbb{C})$ and $X(\mathbb{C})$, with face maps given by the complex realization of the group multiplication, group action and projections. But this simplicial space is exactly the bar construction for the complex Lie group action $G(\mathbb{C})\looparrowright X(\mathbb{C})={\rm Re}_{\mathbb{C}}(G\looparrowright X)$, cf.{}~also the discussion of realization of bar constructions in \cite{wickelgren:williams}*{Section~3.1}. 
\end{proof}

Combining the above results, we get the following description of complex realization of a variety with action: 

\begin{corollary}
  Let $G\looparrowright X$ be a smooth quasi-projective variety with action.
  \begin{enumerate}
  \item The natural morphism ${\rm E}_G(X)\to [G\backslash X]$ of Proposition~\ref{prop:simplicial-comparison} induces an equivalence after complex realization:
    \[
      {\rm Re}_{\mathbb{C}}{\rm E}_G(X)\xrightarrow{\cong} {\rm Re}_{\mathbb{C}}[G\backslash X].
    \]
  \item The natural morphism $X_G(\rho)\to [G\backslash X]$ of Proposition~\ref{prop:approx-comparison} induces an equivalence after complex realization:
    \[
    {\rm Re}_{\mathbb{C}}X_G(\rho)\xrightarrow{\cong}{\rm Re}_{\mathbb{C}}[G\backslash X].
    \]
  \item In case $G$ is special, the equivalences in (1) and (2) are the realizations of Krishna's equivalence ${\rm E}_G(X)\cong X_G(\rho)$, cf. \cite{krishna}*{Proposition~3.2} or Sections~\ref{sec:stacks-motivic-simplicial} and \ref{sec:stacks-motivic-approx}. 
  \end{enumerate}
  In particular, the complex realization of a quotient stack can be computed either using a Borel construction for the action $G(\mathbb{C})\looparrowright X(\mathbb{C})$, or as the nerve of the topological action groupoid $(G\times X)(\mathbb{C})\rightrightarrows X(\mathbb{C})$.\footnote{For emphasis, the objects $G(\mathbb{C})$ and $X(\mathbb{C})$ are equipped with the analytic topologies.}
\end{corollary}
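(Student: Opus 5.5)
The plan is to identify both sides with the nerve of the topological action groupoid and the Borel construction, using the two propositions just proved, and then deduce (3) from the naturality of everything. The main input is that the complex realization of the motivic space $[G\backslash X]$ is computed by any motivic model of it. By Proposition~\ref{prop:approx-comparison}, $X_G(\rho)\to[G\backslash X]$ is an equivalence in $\mathbf{H}(\mathbb{C})$. Since ${\rm Re}_{\mathbb{C}}$ is a left Quillen functor and hence descends to the homotopy category, statement (2) follows immediately, with no assumption on $G$.

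For (1), I will not use Proposition~\ref{prop:simplicial-comparison} directly, since that proposition only gives an equivalence for special $G$. Over $\mathbb{C}$, however, the base is algebraically closed. The idea is therefore to compare both sides after realization. By Proposition~\ref{prop:cplx-simplicial}, ${\rm Re}_{\mathbb{C}}{\rm E}_G(X)$ is the bar construction of $G(\mathbb{C})\looparrowright X(\mathbb{C})$. By Proposition~\ref{prop:cplx-admissible-gadget} and (2), ${\rm Re}_{\mathbb{C}}[G\backslash X]\simeq {\rm E}G(\mathbb{C})\times_{/G(\mathbb{C})}X(\mathbb{C})$.

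The comparison of these two spaces goes through a zig-zag. Consider the simplicial scheme $\operatorname{colim}_i {\rm E}_G(U_i\times X)$, which maps to ${\rm E}_G(X)$ via projection and to $X_G(\rho)$ levelwise, since $G$ acts freely on $U_i\times X$. After realization, the first map has fibers $\operatorname{colim}_i U_i(\mathbb{C})$, which is contractible. The second map is the standard equivalence between the bar construction of a free action and its quotient. Naturality of the maps to $[G\backslash X]$ then shows that the composite ${\rm Re}_{\mathbb{C}}{\rm E}_G(X)\to{\rm Re}_{\mathbb{C}}[G\backslash X]$ is an equivalence.

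The step I expect to be the main obstacle is exactly this one. The map ${\rm E}_G(X)\to[G\backslash X]$ is \emph{not} an equivalence in $\mathbf{H}(\mathbb{C})$ for non-special $G$, so a purely formal argument is unavailable. One must check that the triangle
\[
\operatorname{colim}_i{\rm E}_G(U_i\times X)\to {\rm E}_G(X)\to[G\backslash X]
\]
commutes, up to homotopy, with the route through $X_G(\rho)$. One must also check that the relevant realized maps are equivalences, using that Borel constructions of free actions with contractible total space are equivalent to the quotients. An alternative I would try first, if the zig-zag becomes awkward, is the following. The map ${\rm E}_G(X)\to[G\backslash X]$ is an étale-local equivalence, since it is a stackification. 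Complex realization factors through étale motivic homotopy, just as in the étale-to-coarse realization discussed earlier, with trivial ${\rm C}_2$. This is because étale hypercovers realize to hypercovers of spaces, as in \cite{dugger:isaksen:realization}.

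For (3), when $G$ is special, Krishna's equivalence ${\rm E}_G(X)\simeq X_G(\rho)$ is constructed through the same zig-zag ${\rm E}_G(X)\to[G\backslash X]\leftarrow X_G(\rho)$ (cf. the proof of Proposition~\ref{prop:approx-comparison}). Realizing it therefore gives the composite of the equivalences in (1) and (2). The final sentence of the statement then just restates Propositions~\ref{prop:cplx-admissible-gadget} and \ref{prop:cplx-simplicial}.
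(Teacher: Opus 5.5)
Your proposal is correct and follows essentially the paper's route. The paper obtains the corollary by ``combining'' Propositions~\ref{prop:approx-comparison}, \ref{prop:cplx-admissible-gadget} and \ref{prop:cplx-simplicial}: (2) by realizing the $\mathbf{H}$-equivalence, and (1) by identifying both realizations with the bar and Borel constructions of $G(\mathbb{C})\looparrowright X(\mathbb{C})$. Your zig-zag through $\operatorname{colim}_i{\rm E}_G(U_i\times X)$ makes explicit the standard bar-versus-Borel comparison that the paper takes for granted. The homotopy commutativity you flag does hold: the two composites to $[G\backslash X]$ differ by the canonical isomorphism between the trivial torsor with orbit map $h\mapsto h\cdot x$ and the pulled-back torsor trivialized by the section $(u,x)$.
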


\begin{remark}
  Note that (1) in the corollary does not depend on $G$ being special. The natural morphism ${\rm E}_G(X)\to [G\backslash X]$ on the motivic side is not generally an equivalence, but its complex realization always is. 
\end{remark}

\subsection{Equivariant and real realization of Nisnevich homotopy orbits}

We now turn to ${\rm C}_2$-equivariant and real realization of quotient stacks for real group actions. We first consider the case of special groups where we can use Krishna's equivalence ${\rm E}_G(X)\cong[G\backslash X]$, cf.{}~Proposition~\ref{prop:simplicial-comparison}, to identify the realization of the quotient stack. More generally, we also consider the equivariant realization of Nisnevich homotopy orbit spaces. The following proves the Nisnevich homotopy orbits part of Theorem~\ref{thm:main1} in the introduction:

\begin{proposition}
  \label{prop:equiv-realization-bar}
  Let $G\looparrowright X$ be variety with action over $\mathbb{R}$. Then the equivariant realization ${\rm Re}_{\rm C_2}({\rm E}_G(X))$ of the (motivic) bar construction ${\rm E}_G(X)$ is the (fine equivariant) bar construction ${\rm E}_{G(\mathbb{C})}(X(\mathbb{C}))$ of the ${\rm C}_2$-equivariant group action $G(\mathbb{C})\looparrowright X(\mathbb{C})$.
  \end{proposition}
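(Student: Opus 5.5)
The plan is to follow the proof of Proposition~\ref{prop:cplx-simplicial} verbatim, replacing complex realization by ${\rm C}_2$-equivariant realization and checking at each step that the properties used still hold. First, view ${\rm E}_G(X)$ in two ways: as a simplicial object in ${\bf Sm}_{\mathbb{R}}$ with $n$-simplices $G^{\times n}\times X$, and as the motivic space given by its homotopy colimit, i.e.\ the diagonal, or equivalently the geometric realization of the simplicial presheaf. By Section~\ref{sec:prelim-realization}, ${\rm Re}_{\rm C_2}$ is a left Quillen functor, or a left Kan extension on $\infty$-categories, and hence commutes with homotopy colimits. In particular, it commutes with geometric realization of simplicial objects. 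So ${\rm Re}_{\rm C_2}({\rm E}_G(X))$ is the geometric realization, in fine ${\rm C}_2$-equivariant homotopy, of the simplicial ${\rm C}_2$-space obtained by applying ${\rm Re}_{\rm C_2}$ levelwise. It therefore suffices to identify this levelwise realization with the simplicial ${\rm C}_2$-space ${\rm E}_{G(\mathbb{C})}(X(\mathbb{C}))$.

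Second, each level is a smooth real scheme $G^{\times n}\times X$. Its equivariant realization is $(G^{\times n}\times X)(\mathbb{C})=G(\mathbb{C})^{\times n}\times X(\mathbb{C})$ with complex conjugation acting diagonally, because taking complex points with conjugation preserves finite products of schemes. The face and degeneracy maps of Definition~\ref{def:action_groupoid} are built from multiplication, action, projections and the unit. These are morphisms of real schemes, so on complex points they are ${\rm C}_2$-equivariant maps, and they are given by the same formulas as the bar construction of the action $G(\mathbb{C})\looparrowright X(\mathbb{C})$ in ${\rm C}_2$-spaces (this is the ${\rm C}_2$-action on the action groupoid described in Example~\ref{ex:cplx-conjugation}). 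Hence the levelwise realization is exactly ${\rm E}_{G(\mathbb{C})}(X(\mathbb{C}))$ as a simplicial ${\rm C}_2$-space.

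The main obstacle is making sure that this identification holds in the \emph{fine} homotopy theory, and not only on underlying spaces. Concretely, one has to check that geometric realization of simplicial ${\rm C}_2$-spaces computes the homotopy colimit in the fine model (in the Elmendorf or Morel--Voevodsky presheaf model), and that the comparison of products holds in the fine sense. For products, I would argue via fixed points: the fine realization of a smooth scheme $Y$ has fixed points $Y(\mathbb{R})$, and $(Y\times Z)(\mathbb{R})=Y(\mathbb{R})\times Z(\mathbb{R})$, so the product map is a fine equivalence, indeed a homeomorphism on both underlying spaces and fixed points. For the colimit, levelwise genuine fixed points commute with geometric realization of simplicial spaces, as fixed points are a left adjoint in the presheaf model. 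Consequently, the fixed points of the result are $|{\rm E}_{G(\mathbb{R})}(X(\mathbb{R}))|$, as expected for the fine bar construction. To make the Reedy cofibrancy issue harmless, I would pass to cofibrant replacements levelwise or use the $\infty$-categorical left Kan extension.
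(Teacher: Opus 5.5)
Your proposal is correct and follows essentially the same route as the paper. ${\rm C}_2$-equivariant realization is a left Kan extension (a left adjoint), so it takes the geometric realization of the simplicial scheme ${\rm E}_G(X)$ to the geometric realization of its levelwise realization, and that levelwise realization is the bar construction of $G(\mathbb{C})\looparrowright X(\mathbb{C})$ with complex conjugation. Your extra fixed-point verification is not strictly needed once you work in a presheaf model where realization is left Quillen; it essentially anticipates the content of Proposition~\ref{prop:krishna-equiv-realization}.
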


\begin{proof}
  The proof is essentially the same as in the complex case, cf.{}~Proposition~\ref{prop:cplx-simplicial}: ${\rm C}_2$-equivariant realization is a left Kan extension, and therefore maps bar constructions to bar constructions, or alternatively (Nisnevich-motivic) homotopy orbits to (fine-equivariant) homotopy orbits. In slightly more and model-dependent detail, equivariant realization of the simplicial scheme ${\rm E}_G(X)$ is given as the geometric realization of degreewise equivariant realization; the geometric realization of a simplicial object is the homotopy colimit of the simplicial diagram, and then we can simply use that equivariant realization is a left adjoint. 
\end{proof}

\begin{remark}
  The result shows, in particular, that the ${\rm C}_2$-equivariant realization of a Nisnevich classifying space ${\rm B}_{\rm Nis}G$ is the ${\rm C}_2$-space with underlying space ${\rm B}G(\mathbb{C})$ and fixed-point space ${\rm B}G(\mathbb{R})$. 
\end{remark}

\begin{corollary}
  Assume now that $G$ is special in the sense of Serre, i.e., \'etale-locally trivial $G$-torsors are Nisnevich-locally trivial. Then we have a fine ${\rm C_2}$-equivariant equivalence
  \[
  {\rm Re}_{\rm C_2}({\rm E}_G(X))\xrightarrow{\simeq}{\rm Re}_{\rm C_2}([G\backslash X]).
  \]
\end{corollary}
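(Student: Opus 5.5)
The plan is to deduce the statement formally from two earlier results. The first is Krishna's comparison in Proposition~\ref{prop:simplicial-comparison}. The second is the equivariant realization of bar constructions in Proposition~\ref{prop:equiv-realization-bar}. The map in question is the equivariant realization ${\rm Re}_{\rm C_2}$ applied to the natural morphism ${\rm E}_G(X)\to[G\backslash X]$ of simplicial presheaves from Proposition~\ref{prop:simplicial-comparison}.

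\textbf{Key step.} For $G$ special, Proposition~\ref{prop:simplicial-comparison} says this morphism is an isomorphism in $\mathbf{H}(\mathbb{R})$, that is, a Nisnevich-local $\mathbb{A}^1$-weak equivalence. The equivariant realization ${\rm Re}_{\rm C_2}\colon\mathcal{H}(\mathbb{R})\to\mathcal{H}^{\rm C_2}_{\rm fine}$ of Section~\ref{sec:prelim-realization} is a left derived functor. It comes from the reasonable continuous map of sites $({\bf Sm}_{\mathbb{R}})_{\rm Nis}\to(\mathbf{Top}^{\rm C_2}_{\rm lc})_{\rm fine}$ of Morel--Voevodsky. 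Being defined on the homotopy category, it sends isomorphisms in $\mathbf{H}(\mathbb{R})$ to isomorphisms in the fine ${\rm C}_2$-equivariant homotopy category, i.e.\ to fine equivalences.

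\textbf{Points to check.} Two things need care.
\begin{enumerate}
\item The equivalence in Proposition~\ref{prop:simplicial-comparison} must be a Nisnevich-local statement and not merely an étale one. The proof there only uses that Nisnevich-locally every $G$-torsor is trivial, so that $\Phi_T$ is essentially surjective on Nisnevich stalks (henselian local schemes). It is therefore a Nisnevich-local weak equivalence of simplicial presheaves, which is what the Nisnevich-to-fine realization inverts. Here the fact that Nisnevich covers realize to fine covers is essential; the same argument would fail for étale covers.
\item The source should be identified with the fine bar construction ${\rm E}_{G(\mathbb{C})}(X(\mathbb{C}))$ via Proposition~\ref{prop:equiv-realization-bar}. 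This makes the statement concrete: its fixed points are the Borel construction of $G(\mathbb{R})\looparrowright X(\mathbb{R})$.
\end{enumerate}

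\textbf{Expected obstacle.} The main difficulty is model-theoretic: making sure that $[G\backslash X]$, as the nerve of a sheaf of groupoids, is a legitimate object to which the left derived functor applies. One must also check that the derived realization of the zig-zag agrees with the realization of the natural map. I would handle this by working with the projective motivic model structure, cofibrantly replacing both sides, and using that left Quillen functors preserve weak equivalences between cofibrant objects.
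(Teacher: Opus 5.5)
Your proposal is correct and matches the paper's proof, which simply applies the equivariant realization functor to Krishna's equivalence ${\rm E}_G(X)\xrightarrow{\simeq}[G\backslash X]$ from Proposition~\ref{prop:simplicial-comparison}. Your extra checks (that this equivalence is Nisnevich-local rather than merely \'etale-local, and the cofibrant-replacement bookkeeping) are sensible elaborations of that one step, while the identification of the source via Proposition~\ref{prop:equiv-realization-bar} is not needed for the statement itself.
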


\begin{proof}
  This follows from Krishna's equivalence ${\rm E}_G(X)\xrightarrow{\simeq}[G\backslash X]$, cf.{}~Proposition~\ref{prop:simplicial-comparison}, by applying equivariant realization. 
\end{proof}

\begin{proposition}
  \label{prop:krishna-equiv-realization}
  Let $G\looparrowright X$ be a variety with action over $\mathbb{R}$. Then the real realization of the (motivic) bar construction ${\rm E}_G(X)$ is the (genuine) fixed-point space of the equivariant bar construction: 
  \begin{align}
    \label{eq:real-realization-hfp}
  {\rm Re}_{\mathbb{R}}({\rm E}_G(X))\cong \left({\rm E}_{G(\mathbb{C})}(X(\mathbb{C})\right)^{{\rm C}_2}\cong {\rm E}_{G(\mathbb{R})}\left(X(\mathbb{R})\right).
  \end{align}
  If $G$ is special in the sense of Serre, then the real realization can be identified with the nerve of the groupoid $[G\backslash X](\mathbb{R})$ of real points of the quotient stack $[G\backslash X]$. In particular, the real realization can also be computed as geometric realization of the simplicial space ${\rm E}_G(X)(\mathbb{R})\cong\mathcal{N}\left([G\backslash X](\mathbb{R})\right)$.
\end{proposition}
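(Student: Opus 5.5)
The plan is to combine Proposition~\ref{prop:equiv-realization-bar} with the description of real realization in Section~\ref{sec:unstable-real-realization} as genuine ${\rm C}_2$-fixed points of equivariant realization. The first isomorphism in \eqref{eq:real-realization-hfp} then follows at once: ${\rm Re}_{\mathbb{R}}={\rm C}_2$-fixed points $\circ\,{\rm Re}_{\rm C_2}$, and Proposition~\ref{prop:equiv-realization-bar} identifies ${\rm Re}_{\rm C_2}({\rm E}_G(X))$ with ${\rm E}_{G(\mathbb{C})}(X(\mathbb{C}))$.

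For the second isomorphism, I would work degreewise. In a presheaf model (Elmendorf or Morel--Voevodsky), genuine fixed points are a left adjoint and commute with homotopy colimits. In particular they commute with geometric realization of simplicial objects, so
\[
\bigl(|{\rm E}_{G(\mathbb{C})}(X(\mathbb{C}))_\bullet|\bigr)^{{\rm C}_2}\simeq |{\rm E}_{G(\mathbb{C})}(X(\mathbb{C}))_\bullet^{{\rm C}_2}|.
\]
In simplicial degree $n$, the space is $G(\mathbb{C})^n\times X(\mathbb{C})$ with diagonal conjugation. Its fixed points are $G(\mathbb{R})^n\times X(\mathbb{R})$, since fixed points commute with finite products and realizations of representables are genuine. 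The face and degeneracy maps restrict to multiplication, action and projection, so the fixed-point simplicial space is exactly ${\rm E}_{G(\mathbb{R})}(X(\mathbb{R}))$. Equivalently, real realization is the left Kan extension of $X\mapsto X(\mathbb{R})$ and commutes with finite products, so one can apply it levelwise to the simplicial scheme ${\rm E}_G(X)$ directly.

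The point needing care is that the geometric realization is really a homotopy colimit. This requires Reedy cofibrancy, which holds because the degeneracies are inclusions of closed submanifolds given by the identity section. It also requires using the presheaf model, so that fixed points are homotopically meaningful and commute with the colimit; I expect this model-dependence to be the main (mild) obstacle.

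For the special case, Krishna's equivalence (Proposition~\ref{prop:simplicial-comparison}) gives ${\rm E}_G(X)\simeq[G\backslash X]$ in $\mathbf{H}(\mathbb{R})$, so both have the same real realization. It remains to identify ${\rm E}_G(X)(\mathbb{R})$ with $\mathcal{N}([G\backslash X](\mathbb{R}))$. The functor $\Phi_{\mathbb{R}}$ from the proof of Proposition~\ref{prop:simplicial-comparison} is fully faithful. It is essentially surjective because ${\rm H}^1({\rm C}_2,G)$ is trivial for special $G$: every torsor over $\op{Spec}\mathbb{R}$ is Zariski-locally, hence globally, trivial. So the action groupoid of $G(\mathbb{R})\looparrowright X(\mathbb{R})$ is equivalent, compatibly with the analytic topologies, to $[G\backslash X](\mathbb{R})$, and the nerves agree.
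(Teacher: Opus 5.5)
Your proposal is correct and follows essentially the same route as the paper. The paper obtains the first isomorphism by taking genuine ${\rm C}_2$-fixed points of Proposition~\ref{prop:equiv-realization-bar}, or alternatively by using that real realization is a left Kan extension. It handles the special case via the comparison functor of Proposition~\ref{prop:simplicial-comparison}, followed by taking nerves. Your degreewise computation $(G(\mathbb{C})^n\times X(\mathbb{C}))^{{\rm C}_2}=G(\mathbb{R})^n\times X(\mathbb{R})$, and your argument that $\Phi_{\mathbb{R}}$ is essentially surjective because ${\rm H}^1({\rm C}_2,G)$ is trivial for special $G$, make explicit what the paper leaves implicit when it says Krishna's equivalence ``implies'' the groupoid equivalence.
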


\begin{proof}
  This follows from Proposition~\ref{prop:equiv-realization-bar} by taking (genuine) fixed points. Alternatively, we can make an argument as in the proof of Proposition~\ref{prop:equiv-realization-bar}, using that real realization is a left Kan extension, see the discussions in Section~\ref{sec:fixed-points} and Section~\ref{sec:unstable-real-realization}.

  For the assertion about the case that $G$ is special, note that Krishna's equivalence ${\rm E}_G(X)\xrightarrow{\simeq}[G\backslash X]$, cf.{}~Proposition~\ref{prop:simplicial-comparison}, implies that the groupoid of real points $[G\backslash X](\mathbb{R})$ is equivalent (as a topological groupoid) to the action groupoid of the real action $G(\mathbb{R})\looparrowright X(\mathbb{R})$. The claim follows by taking nerves (of topological groupoids).
\end{proof}

In particular, for a special group $G$, this already provides the computation of the realization of quotient stacks for $G$-actions. In the next section, we will deduce the general case from this special case by using ${\rm GL}_n$-presentations for quotient stacks. 

\subsection{Equivariant and real realization of quotient stacks}
\label{sec:real-points}

Now we turn to the ${\rm C}_2$-equivariant and real realization of general quotient stacks, proving Theorem~\ref{thm:main1} in the introduction. We will see that several different ways of describing the realizations (via admissible gadgets, as coarse homotopy orbits, or via the nerve of the groupoid of real points) are all equivalent. 

We first consider the realization of a quotient stack $[G\backslash X]$ using an admissible gadget, cf.{}~the discussion in Section~\ref{sec:stacks-motivic-approx}. As in the previous discussions of complex realization, the admissible gadget viewpoint allows to present the stack as a (homotopy) colimit $[G\backslash X]\cong\op{colim}_i (U_i\times_{/G}X)$, and then take equivariant or real realization. 

\begin{proposition}
  \label{prop:real-admissible-gadget}
  Let $G\looparrowright X$ be a smooth quasi-projective variety with action over $\mathbb{R}$, and let $\rho=(U_i,V_i)_{i\geq 1}$ be an admissible gadget for $G$. Then the ${\rm C}_2$-equivariant realization of $X_G(\rho)$ is the complex realization of Proposition~\ref{prop:cplx-admissible-gadget}, equipped with the involution induced from  complex conjugation. The real realization of $X_G(\rho)$ is computed as the real points, i.e., (genuine) ${\rm C}_2$-fixed points, of the Borel construction ${\rm E}G(\mathbb{C})\times_{/G(\mathbb{C})}X(\mathbb{C})$:
  \[
  {\rm Re}_{\mathbb{R}}(X_G(\rho))\simeq \op{colim}_{i\geq 1} \left(U_i\times_{/G} X\right)(\mathbb{R})\simeq \left({\rm E}G(\mathbb{C})\times_{/G(\mathbb{C})}X(\mathbb{C})\right)^{{\rm C}_2}.
  \]
 In particular, the real realization can be identified as homotopy colimit of the real points of scheme approximations (in the sense of Totaro and Edidin--Graham) of $X_G(\rho)$. 
\end{proposition}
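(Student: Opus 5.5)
The plan follows the proof of Proposition~\ref{prop:cplx-admissible-gadget}, with the complex realization replaced by the ${\rm C}_2$-equivariant and the real realization.

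\textbf{Step 1: equivariant realization.} By definition $X_G(\rho)=\op{colim}_i(U_i\times_{/G}X)$ is a filtered, hence homotopy, colimit of smooth $\mathbb{R}$-schemes. The ${\rm C}_2$-equivariant realization of Section~\ref{sec:prelim-realization} is a left Kan extension, so it commutes with this colimit. On each smooth scheme it returns the ${\rm C}_2$-space $\left(U_i\times_{/G}X\right)(\mathbb{C})$ with complex conjugation. Since $G$ acts freely on $U_i\times X$ and the quotient is a smooth scheme, Galois-equivariantly $\left(U_i\times_{/G}X\right)(\mathbb{C})\cong U_i(\mathbb{C})\times_{/G(\mathbb{C})}X(\mathbb{C})$, with conjugation induced from that on $U_i(\mathbb{C})\times X(\mathbb{C})$. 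Taking the colimit and reusing the argument of Proposition~\ref{prop:cplx-admissible-gadget} gives the first assertion: the underlying space is the complex realization ${\rm E}G(\mathbb{C})\times_{/G(\mathbb{C})}X(\mathbb{C})$, with the involution induced by conjugation.

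\textbf{Step 2: real realization.} Real realization is equivariant realization followed by genuine fixed points. As in Section~\ref{sec:unstable-real-realization}, in the presheaf model this is the left Kan extension of $Y\mapsto Y(\mathbb{R})$. Hence it commutes with the filtered colimit, giving
\[
{\rm Re}_{\mathbb{R}}(X_G(\rho))\simeq\op{colim}_i\left(U_i\times_{/G}X\right)(\mathbb{R}).
\]
For the last equivalence I compare this colimit with the genuine fixed points of the Step~1 colimit. Two facts are needed:
\begin{itemize}
\item For the smooth schemes $U_i\times_{/G}X$, the fixed points of conjugation on complex points are exactly the real points.
\item Genuine fixed points commute with filtered colimits along closed inclusions.
\end{itemize}
The transition maps $U_i\times_{/G}X\to U_{i+1}\times_{/G}X$ realize to closed embeddings, so the second fact applies in the point-set model. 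This gives $\left(\op{colim}_iU_i(\mathbb{C})\times_{/G(\mathbb{C})}X(\mathbb{C})\right)^{{\rm C}_2}$, which I then identify with $\left({\rm E}G(\mathbb{C})\times_{/G(\mathbb{C})}X(\mathbb{C})\right)^{{\rm C}_2}$.

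\textbf{Main obstacle.} The identification just made is the delicate point. The phrase ``${\rm E}G(\mathbb{C})\times_{/G(\mathbb{C})}X(\mathbb{C})$'' has to mean the specific ${\rm C}_2$-model coming from the gadget: fixed points are not invariant under coarse equivalences, and this model is generally not the fine bar construction of Proposition~\ref{prop:equiv-realization-bar}. Real points of $U_i\times_{/G}X$ also include orbits of non-real points, which correspond to twisted real forms. I would therefore state this equivalence as a definition of the model and only check that it is well defined up to fine equivalence. Independence of the choice of gadget follows from Proposition~\ref{prop:approx-comparison}, since any two gadgets give motivically equivalent colimits and realization preserves equivalences. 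The comparison with homotopy fixed points is left to the later results. The final remark, that the real realization is a homotopy colimit of the real points of the approximations, is then just the displayed colimit formula.
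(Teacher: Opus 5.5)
Your proposal is correct and follows the paper's proof. Both ${\rm C}_2$-equivariant and real realization are left Kan extensions, so they commute with the filtered colimit $\op{colim}_i(U_i\times_{/G}X)$; each term realizes to $(U_i\times_{/G}X)(\mathbb{C})\cong U_i(\mathbb{C})\times_{/G(\mathbb{C})}X(\mathbb{C})$ with conjugation; and $\op{colim}_i(U_i\times_{/G}X)(\mathbb{R})$ is matched with the fixed points of the colimit by the same argument as the paper's footnote, namely that fixed points commute with sequential colimits along the closed inclusions coming from the transition maps. Your caveat that ``${\rm E}G(\mathbb{C})\times_{/G(\mathbb{C})}X(\mathbb{C})$'' must mean the gadget's ${\rm C}_2$-model, not the fine bar construction, is also the paper's intended reading; the remark following the proposition makes it explicit by identifying ${\rm Re}_{\rm C_2}(\op{colim}_iU_i)$ with ${\rm E}G^{\rm C_2}$.
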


\begin{proof}
  We again use the notation $X_G(\rho)=\op{colim}_u(U_i\times_{/G}X)$ from Section~\ref{sec:stacks-motivic-approx}. As for complex realization, ${\rm C}_2$-equivariant realization is a left Kan extension, cf.{}~\cite{wickelgren:williams}*{Section~3.2}, implying that ${\rm Re}_{\rm C_2}(X_G(\rho))$ is the (homotopy) colimit of ${\rm Re}_{\rm C_2}(U_i\times_{/G}X)$. Also, as before, the colimits appearing here are in fact homotopy colimits.   
  
  The claim on ${\rm C}_2$-equivariant realization follows easily, similar to the proof of Proposition~\ref{prop:cplx-admissible-gadget}: for the smooth $\mathbb{R}$-schemes $U_i$ and $X$, the involution on  ${\rm Re}_{\rm C_2}(U_i\times_{/G}X)$ is the natural complex conjugation on $U_i(\mathbb{C})\times_{/G(\mathbb{C})}X(\mathbb{C})\cong (U_i\times_{/G}X)(\mathbb{C})$, coming from the real structure. The transition maps between $U_i\times_{/G}X$ are also compatible with complex conjugation. This induces an involution on ${\rm Re}_{\mathbb{C}}(X_G(\rho))\cong\op{colim}_{i\geq 1}\left(U_i(\mathbb{C})\times_{/G(\mathbb{C})}X(\mathbb{C})\right)$, and this is the involution for the ${\rm C}_2$-equivariant realization.

  The statement identifying the real realization as ${\rm C}_2$-fixed points on the Borel construction follows immediately from this by taking ${\rm C}_2$-fixed points. The alternative description of real realization as colimit of real points of the scheme approximations in the admissible gadget follows, since real realization is a left Kan extension, cf. the discussions in Section~\ref{sec:fixed-points} and Section~\ref{sec:unstable-real-realization}.\footnote{If we were using ${\rm C}_2$-spaces as our model of equivariant topology, the claim would still be true, using that taking ${\rm C}_2$-fixed points commutes with pushouts along closed inclusions and sequential colimits along closed inclusions, cf.{}~\cite{schwede}*{Proposition~B.1}. Note that the maps $U_i\times_{/G} X\to U_{i+1}\times_{/G}X$ are closed immersions on the algebraic side, and therefore induce closed inclusions in ${\rm C}_2$-equivariant realization.}
\end{proof}

\begin{remark}
  Note that $\left(U_i(\mathbb{C})\times_{/G(\mathbb{C})} X(\mathbb{C})\right)^{{\rm C}_2}$ is not necessarily equivalent to the quotient construction $U_i(\mathbb{R})\times_{/G(\mathbb{R})}X(\mathbb{R})$ on real points. The simplest example is the quotient $(\mathbb{C}^n\setminus\{0\})/\mu_2$ for the scalar multiplication action of $\mu_2=\{\pm 1\}$. The quotient has two components of real points (coming from points in $\mathbb{C}^n\setminus\{0\}$ with all coordinates real, or all coordinates imaginary). The quotient $\mathbb{R}^n\setminus\{0\}$ of points with all coordinates real is only one of these components. In the end, the whole point of the present discussion is to better understand the real points of the admissible gadget in terms of homotopy fixed points.
\end{remark}

\begin{remark}
  As in the complex case, see the last paragraph of the proof of Proposition~\ref{prop:cplx-admissible-gadget}, the ${\rm C}_2$-equivariant realization of $\op{colim}_iU_i$ is a (coarse-equivariant) contractible ${\rm C}_2$-space. Therefore, ${\rm Re}_{\rm C_2}(\op{colim}_iU_i)$ is a model for ${\rm E}G^{\rm C_2}$ (the contractible ${\rm C}_2$-space which is the total space of the universal principal $({\rm C}_2,G(\mathbb{C}))$-bundle). Consequently, the equivariant realization ${\rm Re}_{\rm C_2}X_G(\rho)$ is an equivariant Borel construction for the action $G(\mathbb{C})\looparrowright X(\mathbb{C})$ in ${\rm C}_2$-spaces. 
\end{remark}

We next turn to the more simplicial viewpoint on equivariant and real realization of the quotient stack $[G\backslash X]$. From the case of special groups in Propositions~\ref{prop:equiv-realization-bar} and \ref{prop:krishna-equiv-realization}, we can now compute the real realization in general, using a ${\rm GL}_n$-presentation of the quotient stack $[G\backslash X]$. 

\begin{proposition}
  \label{prop:real-simplicial}
  Let $G\looparrowright X$ be a variety with action over  $\mathbb{R}$. Then the ${\rm C}_2$-equivariant realization of the quotient stack $[G\backslash X]$ is the (coarse-equivariant) homotopy orbit space for the ${\rm C}_2$-action $G(\mathbb{C})\looparrowright X(\mathbb{C})$. 

  For the real realization, we have equivalences
  \[
    {\rm Re}_{\mathbb{R}}([G\backslash X])\cong \left({\rm E}_{G(\mathbb{C})}(X(\mathbb{C})\right)^{{\rm hC}_2} \cong \mathcal{N}\left([G\backslash X](\mathbb{R})\right).
  \]
  This means that the real realization of the quotient stack $[G\backslash X]$ can be computed as homotopy fixed-points of the bar construction of $G(\mathbb{C})\looparrowright X(\mathbb{C})$, or equivalently as the realization of the nerve of the (topological) groupoid $[G\backslash X](\mathbb{R})$ of real points.   
\end{proposition}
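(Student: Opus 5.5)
We reduce to the special case via a ${\rm GL}_n$-presentation. Choose a faithful representation $G\hookrightarrow {\rm GL}_n$ over $\mathbb{R}$. By Proposition~\ref{prop:quotient-induction-stacks}~(2) we have an isomorphism of quotient stacks $[G\backslash X]\cong[{\rm GL}_n\backslash({\rm GL}_n\times_{/G}X)]$. Since ${\rm GL}_n$ is special, Proposition~\ref{prop:simplicial-comparison} identifies the right-hand side in $\mathbf{H}(\mathbb{R})$ with the bar construction ${\rm E}_{{\rm GL}_n}({\rm GL}_n\times_{/G}X)$. Proposition~\ref{prop:equiv-realization-bar} then computes the ${\rm C}_2$-equivariant realization of $[G\backslash X]$ as the fine bar construction for ${\rm GL}_n(\mathbb{C})\looparrowright ({\rm GL}_n\times_{/G}X)(\mathbb{C})$ in ${\rm C}_2$-spaces. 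Because ${\rm GL}_n(\mathbb{C})$ with complex conjugation is special in the equivariant sense (Hilbert~90, i.e.\ ${\rm H}^1({\rm C}_2,{\rm GL}_n)=\ast$), this fine bar construction agrees with the coarse homotopy orbit space, by the equivariant analogue of Krishna's equivalence discussed in Section~\ref{sec:coarse}.

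Next we undo the induction on the topological side. The equivariant version of Proposition~\ref{prop:quotient-induction-stacks}~(2) for coarse torsors, noted in Section~\ref{sec:coarse}, identifies the coarse quotient stack $[{\rm GL}_n(\mathbb{C})\backslash ({\rm GL}_n(\mathbb{C})\times_{/G(\mathbb{C})}X(\mathbb{C}))]$ with $[G(\mathbb{C})\backslash X(\mathbb{C})]$. Since realization commutes with products and quotients by free actions, $({\rm GL}_n\times_{/G}X)(\mathbb{C})\cong {\rm GL}_n(\mathbb{C})\times_{/G(\mathbb{C})}X(\mathbb{C})$ equivariantly. Hence ${\rm Re}_{\rm C_2}[G\backslash X]$ is the coarse homotopy orbit space $\mathcal{X}_{{\rm h}G(\mathbb{C})}$, which proves the first claim.

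For the real realization we take genuine fixed points. Corollary~\ref{cor:equivariant-bg-fp}~(1) identifies $\mathcal{X}_{{\rm h}G}^{\rm C_2}$ with the realization of the topological groupoid $[G(\mathbb{C})\backslash X(\mathbb{C})](\pt)$. Part~(3) identifies it with the homotopy fixed points of the ${\rm C}_2$-action on the underlying space, which is ${\rm E}_{G(\mathbb{C})}(X(\mathbb{C}))$. To compare with $\mathcal{N}([G\backslash X](\mathbb{R}))$, we use Corollary~\ref{cor:main-hc2}, which gives $[G\backslash X](\mathbb{R})\simeq[G\backslash X](\mathbb{C})^{\rm hC_2}$. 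Combined with Example~\ref{ex:hc2-orbits} and the standard fact that realizations of fixed-point groupoids compute homotopy fixed points (Thomason), this gives the second equivalence. Equivalently, one can apply Proposition~\ref{prop:krishna-equiv-realization} to the ${\rm GL}_n$-presentation, which yields the nerve of $[{\rm GL}_n\backslash({\rm GL}_n\times_{/G}X)](\mathbb{R})\simeq[G\backslash X](\mathbb{R})$ directly.

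The main obstacle is the middle step: checking that the equivariant realization of the algebraic induction equivalence is compatible with the topological coarse induction equivalence, and that ${\rm GL}_n(\mathbb{C})$-actions give no difference between fine and coarse orbits. Here the genuine fixed points (the nerve of ${\rm E}_{{\rm GL}_n(\mathbb{R})}$ of the real points) must match the homotopy fixed points. We would argue this on groupoids of points, via the explicit descent description in Section~\ref{sec:real-pts-fp-gpd}, rather than on spaces, so that the topological issues only enter through the topologized versions of these equivalences of groupoids.
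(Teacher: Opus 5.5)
Your proposal is correct and follows essentially the same route as the paper: a ${\rm GL}_n$-presentation, agreement of fine and coarse homotopy orbits for the special group ${\rm GL}_n$ (the paper phrases this as fine-contractibility of ${\rm E}G^{\rm C_2}$), the coarse induction equivalence, and finally Corollaries~\ref{cor:equivariant-bg-fp} and \ref{cor:main-hc2} to pass to homotopy fixed points. The only cosmetic difference is that your ``alternative'' is the paper's main argument for the real realization: it deduces $\mathcal{N}([G\backslash X](\mathbb{R}))\simeq{\rm Re}_{\mathbb{R}}[G\backslash X]$ from a commutative square of induction equivalences, with Proposition~\ref{prop:krishna-equiv-realization} supplying the ${\rm GL}_n$ side.
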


\begin{proof}
  For the equivariant realization, assume first that $G$ is special. Consider the universal $({\rm C}_2,G(\mathbb{C}))$-space ${\rm E}G^{\rm C_2}$, cf. the discussion of equivariant principal bundles in Section~\ref{sec:coarse}. This space is coarse-contractible; in the case that $G$ is special, it is even fine-contractible (i.e., has contractible ${\rm C}_2$-fixed points). It follows that for $G$ special, fine and coarse homotopy orbits are equivalent (in the fine ${\rm C}_2$-equivariant homotopy theory). Therefore, our claim follows from Proposition~\ref{prop:equiv-realization-bar}, which implies that ${\rm Re}_{\rm C_2}({\rm E}_G(X))$ is the (fine) homotopy orbit space of the ${\rm C}_2$-action $G(\mathbb{C})\looparrowright X(\mathbb{C})$. 

  The general case for equivariant realization now follows from the case of special $G$. Choose a faithful representation $G\hookrightarrow{\rm GL}_n$, and consider the induced action ${\rm GL}_n\looparrowright\left({\rm GL}_n\times_{/G}X\right)$. By the induction equivalence, cf.{}~Proposition~\ref{prop:quotient-induction-stacks} (2),
  \begin{equation}
    {\rm Re}_{\rm C_2}\left(\left[{\rm GL}_n\backslash\left({\rm GL}_n\times_{/G}X\right)\right]\right)\simeq {\rm Re}_{\rm C_2}\left([G\backslash X]\right).
  \end{equation}
  By the arguments in the previous paragraph, the left-hand side is the coarse homotopy orbit space for ${\rm GL}_n(\mathbb{C})\looparrowright \left({\rm GL}_n(\mathbb{C})\times_{/G(\mathbb{C})}X(\mathbb{C})\right)$. Finally, by a version of the induction equivalence for ${\rm C}_2$-equivariant homotopy, as discussed in Section~\ref{sec:coarse}, this coarse homotopy orbit space is equivalent to the coarse homotopy orbit space for $G(\mathbb{C})\looparrowright X(\mathbb{C})$. 

  For the real realization, the case of special groups is addressed in Proposition~\ref{prop:krishna-equiv-realization}; we deduce the general case from that. As above, we consider the induced action ${\rm GL}_n\looparrowright\left({\rm GL}_n\times_{/G}X\right)$ for a faithful representation $G\hookrightarrow {\rm GL}_n$. We get an induced commutative diagram of realizations
  \[
  \xymatrix{
    \mathcal{N}\left([G\backslash X](\mathbb{R})\right) \ar[r] \ar[d] & {\rm Re}_{\mathbb{R}}\left([G\backslash X]\right) \ar[d] \\
    \mathcal{N}\left(\left[{\rm GL}_n\backslash\left({\rm GL}_n\times_{/G}X\right)\right](\mathbb{R})\right) \ar[r] & {\rm Re}_{\mathbb{R}}\left(\left[{\rm GL}_n\backslash \left({\rm GL}_n\times_{/G}X\right)\right]\right).
  }
  \]
  The horizontal morphisms are induced from the natural inclusions of real points. The left and right vertical morphisms are equivalences induced from the induction equivalence of Proposition~\ref{prop:quotient-induction-stacks} (2) and its equivariant version. The lower horizontal morphism is an equivalence from Proposition~\ref{prop:krishna-equiv-realization}.

  Finally, it remains to relate homotopy fixed points and coarse homotopy orbits. By the results in Sections~\ref{sec:real-pts-fp-gpd} and \ref{sec:coarse}, in particular Corollary~\ref{cor:equivariant-bg-fp} and Corollary~\ref{cor:main-hc2}, the geometric realization of the nerve $\mathcal{N}\left([G\backslash X](\mathbb{R})\right)$ of the groupoid of real points is also naturally identified with the homotopy fixed-points space of complex conjugation on the complex points $\mathcal{N}\left([G\backslash X](\mathbb{C})\right)$. This proves the remaining claims. 
\end{proof}

We can combine the above results to the following description of the real realization of a quotient stack:
  
\begin{corollary}
  Let $G\looparrowright X$ be a smooth quasi-projective variety with action over $\mathbb{R}$.
  \begin{enumerate}
  \item The natural morphism ${\rm E}_G(X)\to [G\backslash X]$ of Proposition~\ref{prop:simplicial-comparison} induces an equivalence after real realization:
    \[
    \mathcal{N}\left([G\backslash X](\mathbb{R})\right)\cong {\rm Re}_{\mathbb{R}}{\rm E}_G(X)^{\rm hC_2}\xrightarrow{\cong} {\rm Re}_{\mathbb{R}}[G\backslash X].
    \]
  \item The natural morphism $X_G(\rho)\to [G\backslash X]$ of Proposition~\ref{prop:approx-comparison} induces an equivalence after real realization:
    \[
    {\rm Re}_{\mathbb{R}}X_G(\rho)\xrightarrow{\cong}{\rm Re}_{\mathbb{R}}[G\backslash X].
    \]
  \item In case $G$ is special, the equivalences in (1) and (2) are the real realizations of Krishna's equivalence ${\rm E}_G(X)\cong X_G(\rho)$, cf. \cite{krishna}*{Proposition~3.2} or Sections~\ref{sec:stacks-motivic-simplicial} and \ref{sec:stacks-motivic-approx}. 
  \end{enumerate}
  In particular, the real realization of a quotient stack can be computed either using the realization of an algebraic Borel construction for the action $G\looparrowright X$, or as the nerve of the topological  groupoid $[G\backslash X](\mathbb{R})$.\footnote{As before, we emphasize that this is a topological groupoid, inherited from the analytic topologies on $G(\mathbb{C})$ and $X(\mathbb{C})$ via inclusions of fixed points.}
\end{corollary}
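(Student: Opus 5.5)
The plan is to assemble the corollary from the three ingredients already established: Proposition~\ref{prop:krishna-equiv-realization} for the bar construction, Proposition~\ref{prop:real-admissible-gadget} for admissible gadgets, and Proposition~\ref{prop:real-simplicial} for the quotient stack itself. Each statement is then reduced to checking that the comparison maps of Propositions~\ref{prop:simplicial-comparison} and \ref{prop:approx-comparison}, after real realization, are the maps that appear in those results.

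For (2) no new argument is needed. Proposition~\ref{prop:approx-comparison} says that $X_G(\rho)\to[G\backslash X]$ is already an isomorphism in $\mathbf{H}(\mathbb{R})$. Real realization ${\rm Re}_{\mathbb{R}}$ is a left derived functor on $\mathbf{H}(\mathbb{R})$, as explained in Section~\ref{sec:unstable-real-realization}, so it preserves isomorphisms. The identification with the colimit of real points of the approximations is Proposition~\ref{prop:real-admissible-gadget}.

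For (1) I would first fix the meaning of the notation: ${\rm Re}_{\mathbb{R}}{\rm E}_G(X)^{\rm hC_2}$ should be read as the homotopy fixed points $({\rm Re}_{\rm C_2}{\rm E}_G(X))^{\rm hC_2}$ of the ${\rm C}_2$-equivariant realization of the bar construction. By Proposition~\ref{prop:equiv-realization-bar}, this equivariant realization is ${\rm E}_{G(\mathbb{C})}(X(\mathbb{C}))$ with complex conjugation. Proposition~\ref{prop:real-simplicial} then identifies its homotopy fixed points both with $\mathcal{N}([G\backslash X](\mathbb{R}))$ and with ${\rm Re}_{\mathbb{R}}[G\backslash X]$.

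The key point is to check that the arrow in (1) is the one induced by ${\rm E}_G(X)\to[G\backslash X]$. I would do this by comparing groupoids. On complex points, the map ${\rm E}_G(X)(\mathbb{C})\to[G\backslash X](\mathbb{C})$ is the functor $\Phi_{\mathbb{C}}$, which is an equivalence by Example~\ref{ex:X/G_vs_action_groupoid}. It is ${\rm C}_2$-equivariant by Example~\ref{ex:cplx-conjugation}. An equivariant equivalence of groupoids induces an equivalence of fixed-point groupoids, and the realizations of these compute the homotopy fixed points (Section~\ref{sec:fp-gpd-hfp}). Combining this with Corollary~\ref{cor:main-hc2} gives the chain
\[
\mathcal{N}\left([G\backslash X](\mathbb{R})\right)\simeq \mathcal{N}\left([G\backslash X](\mathbb{C})^{\rm hC_2}\right)\simeq \mathcal{N}\left({\rm E}_G(X)(\mathbb{C})^{\rm hC_2}\right),
\]
and naturality of the comparison maps in Proposition~\ref{prop:real-simplicial} finishes (1).

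For (3), assume $G$ is special. Krishna's equivalence factors through the zig-zag $X_G(\rho)\to[G\backslash X]\leftarrow{\rm E}_G(X)$ recalled in the proof of Proposition~\ref{prop:approx-comparison}. In this case both arrows are isomorphisms in $\mathbf{H}(\mathbb{R})$, and applying ${\rm Re}_{\mathbb{R}}$ to this triangle gives the claim. Here (1) holds without homotopy fixed points, via Proposition~\ref{prop:krishna-equiv-realization}. The final sentence of the corollary is just a summary of (1) and (2).

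The main obstacle I expect is the compatibility in (1). One must make sure that the natural map from genuine fixed points ${\rm E}_{G(\mathbb{R})}(X(\mathbb{R}))$ to homotopy fixed points is not being used as the equivalence, since that map fails to be an equivalence for non-special $G$. Instead, the argument must go through the $\Phi$-induced equivalence on fixed-point groupoids, and the topologies on these groupoids need to be tracked.
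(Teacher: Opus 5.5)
Your proposal is correct and follows essentially the route the paper intends. The paper states the corollary without a separate proof, as a direct combination of Propositions~\ref{prop:krishna-equiv-realization}, \ref{prop:real-admissible-gadget} and \ref{prop:real-simplicial} together with Corollary~\ref{cor:main-hc2}. Your reading of ${\rm Re}_{\mathbb{R}}{\rm E}_G(X)^{\rm hC_2}$ as the homotopy fixed points of the ${\rm C}_2$-equivariant realization is the right one, and your check via $\Phi_{\mathbb{C}}$ that the arrow in (1) is induced by ${\rm E}_G(X)\to[G\backslash X]$ spells out what the paper leaves implicit: ${\rm Re}_{\rm C_2}{\rm E}_G(X)\to{\rm Re}_{\rm C_2}[G\backslash X]$ is a coarse equivalence, so it induces an equivalence on homotopy fixed points, and the coarse homotopy orbit space ${\rm Re}_{\rm C_2}[G\backslash X]$ has the same genuine and homotopy fixed points.
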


\begin{remark}
  We point out again that Krishna's equivalence ${\rm E}_G(X)\cong X_G(\rho)$ only works for groups $G$ which are special in the sense of Serre, i.e., where all \'etale-locally trivial torsors are already Nisnevich-locally trivial. For non-special groups, the natural map
  \[
  {\rm E}_G(X)\to {\rm E}_{{\rm GL}_n}({\rm GL}_n\times_{/G}X)\cong [{\rm GL}_n\backslash ({\rm GL}_n\times_{/G}X)]\cong[G\backslash X]
  \]
  will not be an equivalence in general. For example, if $X=\pt$ we have ${\rm E}_G(\pt)\cong {\rm B}_{\rm Nis} G$ while $[G\backslash\pt]\cong{\rm B}_\et G$. Similarly, the action groupoid $G(\mathbb{R})\times X(\mathbb{R})\rightrightarrows X(\mathbb{R})$ is generally not equivalent to the groupoid  $[G\backslash X](\mathbb{R})$ of real points of the quotient stack.
\end{remark}

\begin{remark}
  We would like to take this opportunity to point out that a lot of the statements and ideas in the identification of real realization in terms of homotopy fixed points is inspired by a paper of Acosta \cite{acosta}, even though homotopy fixed points don't figure in that paper. Still, in some ways, there is some homotopy fixed point space of real points of a quotient stack somewhere in the background of the constructions in \cite{acosta}. In particular, changing to strong real forms of the group $G$ relates the GIT-quotients to different components of the fixed-point groupoid of real points of the quotient stack $[G\backslash X]$. In this spirit, we wonder if there is a Kirwan-style surjective map from the cohomology of $[G\backslash X](\mathbb{R})$ (or from the Witt-sheaf cohomology of $[G\backslash X]$) to the cohomology of Richardson--Slodowy real GIT quotients. Part of this is probably related to the papers \cites{realDM1,realDM2} on real points of Deligne--Mumford stacks.
\end{remark}

\section{Fixed-point groupoids, Galois cohomology and strong real forms}
\label{sec:hc2-galois}

In this section, we will survey results describing the real points of classifying spaces in terms of Galois cohomology and strong real forms, which will be useful for our example computations in Section~\ref{sec:examples}. All we will say in this section can be found in the literature.
% , and appears to have been rediscovered a number of times in varying settings.
% reminiscent of Gian-Carlo Rota's quip about symmetric functions

We recall the relation between fixed-point groupoids and Galois cohomology in Section~\ref{sec:fp-gpoid-galois}, discuss the notion of strong real forms and its relevance for real points in Section~\ref{sec:strong-real-forms}, and finally state the main formula describing real points of classifying spaces in Section~\ref{sec:bg-formula}. 

To fix the setting, let $G$ be a linear algebraic group over $\mathbb{R}$; we will denote by $\sigma$ the corresponding complex conjugation on the group $G(\mathbb{C})$. Every Galois cohomology set ${\rm H}^1({\rm C}_2,G(\mathbb{C}))$ appearing below will always be with respect to the complex conjugation for the given real structure on $G$.

\subsection{Fixed-point groupoid vs Galois cohomology}
\label{sec:fp-gpoid-galois}

For the real group $G$, we want to consider the quotient stack $[G\backslash \pt]$. Recall from Example~\ref{ex:X/G_vs_action_groupoid} that the groupoid of complex points $[G\backslash\pt](\mathbb{C})$ has a single object given by the trivial $G_{\mathbb{C}}$-torsor on the point $\op{Spec}(\mathbb{C})$,  while the morphisms are given by $G(\mathbb{C})$. Complex conjugation $\sigma$ induces a ${\rm C}_2$-action on $[G\backslash\pt](\mathbb{C})$. 

We recall, e.g.{}~from Serre's book \cite{serre:galois-cohom}, the definition of Galois cohomology in the case at hand, for the group $G(\mathbb{C})$ with its ${\rm C}_2$-action via complex conjugation $\sigma$ (corresponding to the real structure we started with). The definition of the 1-cocycle space reduces to
\begin{align}
  {\rm Z}^1({\rm C}_2,G(\mathbb{C}))=\left\{g\in G(\mathbb{C})\mid g\sigma(g)=1 \right\},
\end{align}
and two such 1-cocycles $g,g'$ are cohomologous if there exists $h\in G(\mathbb{C})$ such that $g'=\sigma(h)gh^{-1}$. 

With this, it is now immediate to identify the components of the fixed-point groupoid with elements in the Galois cohomology set. 

\begin{proposition}
  \label{prop:fp-gpoid-galois}
  Let $G$ be a linear algebraic group over $\mathbb{R}$, with corresponding complex conjugation $\si$ on $G(\mathbb{C})$. Then there is a natural correspondence between the connected components of $[G\backslash\pt](\mathbb{C})^{\rm hC_2}$ and the Galois cohomology set ${\rm H}^1({\rm C}_2,G(\C))$.
\end{proposition}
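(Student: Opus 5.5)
We compute the fixed-point groupoid $[G\backslash\pt](\mathbb{C})^{\rm hC_2}$ directly from Definition~\ref{def:hG-groupoid}, in the strict form of Example~\ref{ex:hc2-groupoid}. First we replace $[G\backslash\pt](\mathbb{C})$ by the equivalent action groupoid $\mathbf{E}_G(\pt)(\mathbb{C})$ via $\Phi_{\mathbb{C}}$ (Example~\ref{ex:X/G_vs_action_groupoid}). This groupoid has one object $\ast$ and automorphism group $G(\mathbb{C})$. As in Example~\ref{ex:cplx-conjugation}, complex conjugation acts strictly: it fixes the object and acts on morphisms by $g\mapsto\sigma(g)$. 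Since $\Phi_{\mathbb{C}}$ is ${\rm C}_2$-equivariant, it induces an equivalence of fixed-point groupoids. So it suffices to compute $\pi_0$ of $\mathbf{E}_G(\pt)(\mathbb{C})^{\rm hC_2}$.

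Specializing Example~\ref{ex:hc2-orbits} to $X=\pt$, an object of the fixed-point groupoid is a pair $(\ast,g)$ with $g\colon\ast\to F(\sigma)(\ast)=\ast$ an element of $G(\mathbb{C})$. The cocycle condition $F(\sigma)(\varphi)=\varphi^{-1}$ says $\sigma(g)=g^{-1}$, that is, $g\sigma(g)=1$. Hence objects are exactly elements of ${\rm Z}^1({\rm C}_2,G(\mathbb{C}))$. By the same example, a morphism $(\ast,g)\to(\ast,g')$ is an element $h\in G(\mathbb{C})$ making the square commute, namely $g'=\sigma(h)gh^{-1}$. This is precisely the relation of being cohomologous recalled just above the proposition. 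Two objects therefore lie in the same connected component if and only if their cocycles define the same class. This gives a bijection $\pi_0\bigl([G\backslash\pt](\mathbb{C})^{\rm hC_2}\bigr)\cong{\rm H}^1({\rm C}_2,G(\mathbb{C}))$, sending $(\ast,g)$ to $[g]$.

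The only real care needed is at two points.
\begin{enumerate}
\item Orientation conventions. We must check that the direction of $\varphi$ and the placement of $h$ versus $\sigma(h)$ in the commuting square match Serre's convention, and not, say, $g'=hg\sigma(h)^{-1}$. Either convention gives the same equivalence relation after replacing $h$ by $h^{-1}$ and using $\sigma^2=1$, so the identification is insensitive to this choice.
\item Naturality. A homomorphism $G\to G'$ of real groups induces a ${\rm C}_2$-equivariant functor on action groupoids, and it sends cocycles to cocycles compatibly with the induced map on ${\rm H}^1$.
\end{enumerate}

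As a consistency check, Corollary~\ref{cor:main-hc2} identifies this $\pi_0$ with isomorphism classes of real $G$-torsors. This matches Serre's torsor interpretation of ${\rm H}^1({\rm C}_2,G(\mathbb{C}))$, and the explicit descent construction there already produces the same map $g\mapsto$ ($g$-twisted torsor).
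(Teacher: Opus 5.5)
Your proposal is correct and follows the paper's proof. Like the paper, you specialize Example~\ref{ex:hc2-orbits} to $X=\pt$, read off objects as cocycles with $g\sigma(g)=1$ and morphisms as elements $h$ with $g'=\sigma(h)gh^{-1}$, and conclude that components correspond to cohomology classes; your extra remarks on transporting the action through $\Phi_{\mathbb{C}}$ and on naturality are harmless elaborations. One small slip in your point (1): the substitution turning $g'=hg\sigma(h)^{-1}$ into $g'=\sigma(h)gh^{-1}$ is $h\mapsto\sigma(h)$ (using $\sigma^2=1$), not $h\mapsto h^{-1}$, though your conclusion that the two equivalence relations coincide is right.
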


\begin{proof}
  Recall the description of the fixed-point groupoid from Example~\ref{ex:hc2-orbits}. In the situation of complex points of the classifying stack $[G\backslash \pt](\mathbb{C})$, a homotopy fixed-point in $[G\backslash\pt](\mathbb{C})^{\rm hC_2}$ is an element $g\in G(\mathbb{C})$ with $g\sigma(g)=1$, i.e., the homotopy fixed-points are by definition the Galois 1-cocycles in ${\rm Z}^1({\rm C}_2,G(\mathbb{C}))$. Morphisms between such homotopy fixed-points $g,g'\in G(\mathbb{C})$ are elements $h\in G(\mathbb{C})$ with $\sigma(h)gh^{-1}=g'$, and by definition such morphisms exist if and only if the corresponding Galois 1-cocycles are cohomologous. 
\end{proof}

Using the identification of the fixed-point groupoid with the real points $[G\backslash\pt](\mathbb{R})$ from Corollary~\ref{cor:main-hc2}, this provides the Galois-cohomological description of the connected components of real points already mentioned in Example~\ref{ex:real-points}. 

The other piece of information in the fixed-point groupoid $[G\backslash\pt](\mathbb{C})^{\rm hC_2}$ are the fundamental groups of the individual components, i.e., the automorphism groups of objects. From the description in Example~\ref{ex:hc2-orbits}, we find that, for an element $g\in{\rm Z}^1({\rm C}_2,G(\mathbb{C}))$, its automorphism group in the fixed-point groupoid is the fixed group of the twisted complex conjugation $\si_g=\intt(g)\circ \si$:
\begin{align}
  \label{eq:automorphism-group}
  \Aut(g)=G(\C)^{\si_g}=\{h\in G\mid \si(h)g=gh\}.
\end{align}
Via its inclusion in $G(\mathbb{C})$, the automorphism group has a natural topology, making the fixed-point groupoid (or equivalently, the groupoid of real points) a topological groupoid.

In the next subsection, we will recall how the automorphism groups can be interpreted as strong real forms of $G(\mathbb{R})$.

\begin{remark}
  As is apparent from the above discussion, Galois cohomology will also play a role in the description of fixed-point groupoids for more general quotient stacks $[G\backslash X]$. Unwinding the definition of fixed-point groupoids, we get an identification of ${\rm H}^1({\rm C}_2,\op{Aut}(g))$ with the isomorphism classes of homotopy fixed point data on $g\in[G\backslash X](\mathbb{C})$. This is related to the Galois-cohomological characterization of real orbits for GIT quotients in \cite{acosta}*{Section~3.3.2}. 
\end{remark}
 
\subsection{Recollection on real and strong real forms}
\label{sec:strong-real-forms}

  We recall the notions of strong real forms  and their relation to Galois cohomology from \cite{adams:galois}*{Example 4.16, Chapter 6}, \cite{adams:taibi}. For emphasis of the varying involutions appearing in Galois cohomology groups, we use the notation ${\rm H}^1(\sigma,G)$ from loc.{}~cit., which means that we consider the Galois cohomology set of the involution $\sigma$ acting on $G$. We start by recalling terminology of forms from \cite{serre:galois-cohom}.

  \begin{definition}
    Given an algebraic group $G$ over a field $F$, a \emph{form} of $G$ is an algebraic group $G'$ over $F$ which becomes isomorphic to $G$ over the algebraic closure $\overline{F}$. These are classified by the Galois cohomology set ${\rm H}^1_\et(F,{\rm Aut}(G))$, and the various forms can be obtained from $G$ by twisting with an ${\rm Aut}(G)$-valued 1-cocycle. A form of $G$, corresponding to a 1-cocycle $\gamma\in{\rm H}^1_\et(F,{\rm Aut}(G))$, is called \emph{inner} if the cocycle is in the image of the natural map
  \begin{equation}
  {\rm H}^1_\et(F,G_{\rm ad})\to {\rm H}^1_\et(F,{\rm Aut}(G)),
  \end{equation}
  where $G_{\rm ad}=\Int(G)$ is the adjoint group, alternatively, the group of inner automorphisms of $G$.
  \end{definition}

  Following this, a \emph{real form} of a real linear algebraic group $G$ is another real linear algebraic group $G'$ such that $G_{\mathbb{C}}\cong G'_{\mathbb{C}}$ (as complex linear algebraic groups). 

  A real group $G$ gives rise to an antiholomorphic involution $\sigma$ on its complexification $G(\mathbb{C})$. One can then alternatively view real forms as ${\rm Aut}(G(\mathbb{C}))$-conjugacy classes of antiholomorphic involutions. This is the point of view taken in \cite{adams:taibi}: a \emph{real form} is an antiholomorphic involution $\si\in \Aut(G)$, and two real forms $\si,\si'$ are equivalent, if $\si'=\intt(g)\circ \si\circ \intt(g^{-1})$; in this case $G^{\si'}=gG^\si g^{-1}$. There is a short exact sequence of inner and outer automorphism groups:
\begin{equation}\label{eq:int_aut_out}
	\xymatrix@R-2pc{
		1\ar[r]&\Int(G)\ar[r]&\Aut(G)\ar[r]^-{p}&\Out(G)\ar[r]&1
	}	
\end{equation}
where $\Int(G)\iso G_{\rm ad}=G/{\rm Z}(G)$. An element $\si'\in \Aut(G)$ is said to be \emph{in the same inner class as $\si\in \Aut(G)$} (or, an \emph{inner form} of $\sigma$), if $p(\si)=p(\si')$, i.e. if $\si'=\intt(g)\circ \si$ for some $g\in G$. 

The set of equivalence classes of inner real forms of $\si$ are parametrized by ${\rm H}^1(\si,G_{\rm ad})$. There is an exact sequence
\begin{equation}\label{eq:Galois_exact}
	\xymatrix{
		{\rm H}^1(\si,G)\ar[r]^-{\rho}&{\rm H}^1(\si,G_{\rm ad})\ar[r]^-{\de}&{\rm H}^2(\si,{\rm Z}(G)),
	}
\end{equation}
in which $\rho$ is typically not surjective. The \emph{central invariant of a real form $\tau$ (relative to $\si$)} is $\de(\tau)\in {\rm H}^2(\si,Z)$. To describe ${\rm H}^1(\si,G)$ (as we will need for the components of fixed-point groupoids) as well as the morphism $\rho$ and its image, Adams and Ta\"ibi \cite{adams:taibi} use the notion of strong real forms. 

\begin{remark}\label{rmk:innerclassvariance}
  If $G$ has nontrivial center, then even though $\si$ and $\si'={\rm int}(g)\circ \si$ are involutions in the same inner class, their corresponding Galois cohomology can be different, i.e., different elements in ${\rm H}^1(\sigma,G_{\rm ad})$ might have preimages (under $\rho$ in \eqref{eq:Galois_exact}) of different cardinalities. For example, $(\si,{\rm SU}(2))$ and $(\si', {\rm SL}_2(\R))$ are two (inequivalent) real forms of ${\rm SL}_2(\C)$ in the same inner class; ${\rm H}^1(\si',G)$ has one element by Hilbert 90, while ${\rm H}^1(\si,G)$ has two elements, see \cite{adams:taibi}*{Example 4.9} and also Example \ref{ex:sl2} below. 
\end{remark}

Strong real forms enable discussing the exact sequences \eqref{eq:Galois_exact} simultaneously as $\si$ varies within its inner class: strong real forms enlarge the set ${\rm H}^1(\si,G)$, such that $\rho$ from \eqref{eq:Galois_exact} becomes surjective. In the discussion below, the relation to the fixed-point groupoid should be apparent.

\begin{lemma}\label{lemma:Galois_surjective}
	Let $\si\in \Aut(G)$ be an involution. Then ${\rm int}(g)\circ \si$ is an involution iff $g\sigma(g)\in Z(G)^\si$. All involutions in the same inner class as $\si$ are obtained in this manner, i.e.\ the map
	\[
\xymatrix@R-2pc{
	\{g\in G\mid g\sigma(g)\in {\rm Z}(G)\}\ar[r]^-{\rho}& \II_\si(G)\\
	g\ar@{|->}[r]& {\rm int}(g)\circ\si=(h\mapsto g\sigma(h)g^{-1})
}
\]
mapping $g\mapsto {\rm int}(g)\circ \si$ is surjective.
\end{lemma}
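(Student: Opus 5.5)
The plan is a direct computation in $\Aut(G)$, using only the identities $\sigma\circ\intt(g)=\intt(\sigma(g))\circ\sigma$ and $\intt(a)\circ\intt(b)=\intt(ab)$, together with the fact that $\intt(z)=\mathrm{id}$ if and only if $z\in{\rm Z}(G)$.

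For the first claim, I would compute
\[
(\intt(g)\circ\si)^2=\intt(g)\circ\intt(\si(g))\circ\si^2=\intt(g\si(g)),
\]
using that $\si$ is an involution. Hence $\intt(g)\circ\si$ is an involution if and only if $\intt(g\si(g))=\mathrm{id}$, that is, if and only if $z:=g\si(g)$ lies in ${\rm Z}(G)$.

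It remains to see that such a $z$ is automatically $\si$-fixed, which is what upgrades ${\rm Z}(G)$ to ${\rm Z}(G)^\si$. For this I would write $\si(z)=\si(g)g$ and note that
\[
\si(g)g=g^{-1}(g\si(g))g=g^{-1}zg=z,
\]
since $z$ is central. So $z\in{\rm Z}(G)^\si$, and the converse direction is trivial. This is the only mildly nonobvious step.

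For surjectivity, I would read $\II_\si(G)$ as the set of involutions in the inner class of $\si$. Any element $\si'$ of this set satisfies $p(\si')=p(\si)$ in $\Out(G)$. By exactness of \eqref{eq:int_aut_out}, we then have $\si'\circ\si^{-1}=\intt(g)$ for some $g\in G$, so $\si'=\intt(g)\circ\si$. Since $\si'$ is an involution, the first part gives $g\si(g)\in{\rm Z}(G)$, so $g$ lies in the domain of $\rho$. Finally, $\rho(g)$ is evaluated on $h$ as $g\si(h)g^{-1}$, which matches the formula in the statement.

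I expect no real obstacle here. The only care needed is keeping track of the convention $\intt(g)(h)=ghg^{-1}$ and of the direction of composition when commuting $\si$ past $\intt(g)$.
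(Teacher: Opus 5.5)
Your proof is correct and is essentially the paper's argument: the same computation $(\intt(g)\circ\si)^2=\intt(g\si(g))$, the same use of centrality of $z=g\si(g)$ to get $\si(z)=\si(g)g=z$, and surjectivity read off from exactness of \eqref{eq:int_aut_out}. Your version is, if anything, slightly more explicit than the paper's about why centrality yields $\si$-invariance.
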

\begin{proof}
  First, ${\rm int}(g)\circ \si$ is an involution iff $g\sigma(g)\in Z(G)$:
  \[
    {\rm int}(g)\circ\si\circ	{\rm int}(g)\circ \si(x)
    =g\sigma(g)x(g\sigma(g))^{-1}.
  \]
  If $g\sigma(g)\in Z(G)$, $(g\sigma(g))g^{-1}=\sigma(g)$, so $g\sigma(g)\in Z(G)^\si$. The short exact sequence \eqref{eq:int_aut_out} proves the claim about the inner class.
\end{proof}

For a fixed real form $\si$, such elements $g\in G$ with $g\sigma(g)\in {\rm Z}(G)^\sigma$ are called \emph{strong involutions of $G$ in the inner class of $\si$}, with \emph{central invariant $z=\inv(g)=g\sigma(g)\in {\rm Z}(G)^\si$}. Two strong involutions are equivalent if they are conjugate under $G\rtimes_\si {\rm C}_2$, i.e.\ $g\sim hg\si(h^{-1})$, and then they have the same central invariant. 

A \emph{strong real form} is a conjugacy class of strong involutions; their set is denoted $\mathcal{S}_{\si}(G)$ and those with central invariant $z$ are denoted $\mathcal{S}_{\si, z}(G)$. Strong real forms refine the notion of a real form: by Lemma \ref{lemma:Galois_surjective}, the map $\mathcal{S}_\si(G)\to \I_\si(G)\iso {\rm H}^1(\si,G_{\rm ad})$ maps strong real forms in the inner class of $\si$ surjectively to real forms in the inner class of $\si$. The set $\mathcal{S}_{\sigma,[z]}(G)$ is mapped to the set of inner forms whose image under $\delta\colon {\rm H}^1(\sigma,G_{\rm ad})\to{\rm H}^2(\sigma,{\rm Z}(G))$ corresponds to $z$ under the Tate duality isomorphism ${\rm H}^2(\sigma,{\rm Z}(G))\cong {\rm Z}(G)^\sigma/(1+\sigma){\rm Z}(G)$. It follows that the Galois cohomology set ${\rm H}^1(\si,G)$ is in bijection with the set $\mathcal{S}_{\si,1}(G)$ of strong real forms of central invariant 1, and the image of $\rho$ consists of inner forms with trivial central invariant (naturally, by the exactness of \eqref{eq:Galois_exact}).

Summing up, there are various interpretations for the automorphism groups $G(\mathbb{C})^{\sigma_g}$ in \eqref{eq:automorphism-group}: we can view them as strong real forms of $G(\mathbb{C})$ with central invariant 1 (relative to the given complex conjugation on $G(\mathbb{C})$) -- this is the interpretation in the context of \cite{adams:taibi}; alternatively, they are the real points of automorphism groups of $G$-torsors over ${\rm Spec}(\mathbb{R})$ -- this is the interpretation in the Galois cohomology context where ${\rm H}^1(\sigma,G)\cong{\rm H}^1_{\et}(\op{Spec}(\mathbb{R}),G)$ classifies $G$-torsors.

We end the section by discussing the related notion of strong inner forms,  terminology used by Tits in \cite{tits}*{p.~653}. This will be relevant for our discussion of invariance properties of classifying stacks in Section~\ref{sec:bitorsors}. 

\begin{definition}
  \label{def:strong-inner-forms}
  Let $G$ be an algebraic group over a field $F$. A form of $G$, corresponding to a cocycle $\gamma\in{\rm H}^1_\et(F,{\rm Aut}(G))$, is called \emph{strongly inner} if the cocycle is in the image of the natural map
  \begin{equation}
    \label{eq:strong-underlying}
  {\rm H}^1_\et(F,G)\to {\rm H}^1_\et(F,{\rm Aut}(G)).
  \end{equation}
\end{definition}

\begin{remark}
  \label{rem:comparison-strong-inner}
  To compare this to the notion of strong real form with given central invariant of \cite{adams:taibi}, note that the map ${\rm H}^1_\et(F,G)\to {\rm H}^1_\et(F,{\rm Aut}(G))$ (which is the composition of $\rho$ in \eqref{eq:Galois_exact} and the inclusion of inner automorphisms in \eqref{eq:int_aut_out}) is not necessarily injective. In particular, strong real forms can be isomorphic as algebraic groups but different as strong real forms. In any case, for a real algebraic group $G$, a strong real form of $G$ with the same invariant as $G$ (in the sense of \cite{adams:taibi}) will in particular be a strongly inner form (in the sense of \cite{tits}, or Definition~\ref{def:strong-inner-forms} above).\footnote{Bezrukavnikov and Vilonen \cite{bezrukavnikov:vilonen} call the image of a strong real form under the morphism \eqref{eq:strong-underlying}  the \emph{underlying real form of a strong real form}.} The notion of strongly inner form of \cite{tits} as above is a property of a form of an algebraic group, whereas the notion of strong real form of \cite{adams:taibi} includes additional data. 
\end{remark}

\subsection{Description of homotopy fixed points}
\label{sec:bg-formula}

In this section, we finally state the explicit formula for the groupoid of real points of a classifying stack $[G\backslash \pt]$ for a real linear group $G$. This formula allows to easily compute the real realization of the \'etale classifying space ${\rm B}_\et G$, some example computations can be found in Section~\ref{sec:examples}.

\begin{proposition}
  \label{prop:bg-formula} 
  Let $G$ be a linear algebraic group over $\mathbb{R}$, with corresponding complex conjugation $\si$ on $G(\mathbb{C})$. Then there is an equivalence
  \begin{equation}\label{hom_fix_pt_of_BG}
    {\rm B}G(\mathbb{C})^{{\rm hC_2}}\simeq\mathcal{N}([G\backslash \pt](\mathbb{R}))\simeq\bigsqcup_{[g]\in{\rm H}^1({\rm C_2},G)}{\rm B}(G(\C)^{\si_g}).
  \end{equation}
  On the right-hand side, $\sigma_g=\op{int}(g)\circ\sigma$ denotes the corresponding twisted complex conjugation on $G(\mathbb{C})$, for a Galois cocycle $[g]\in{\rm H}^1({\rm C}_2,G(\mathbb{C}))$, as discussed in Section~\ref{sec:fp-gpoid-galois} above. 
\end{proposition}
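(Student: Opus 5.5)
The first equivalence in \eqref{hom_fix_pt_of_BG} should be read off from results already proved, and the second from an explicit analysis of the fixed-point groupoid. For the first, Example~\ref{ex:X/G_vs_action_groupoid} identifies $[G\backslash\pt](\mathbb{C})$ with the one-object topological groupoid with automorphism group $G(\mathbb{C})$, whose nerve is ${\rm B}G(\mathbb{C})$, with the ${\rm C}_2$-action induced by $\sigma$ as in Example~\ref{ex:cplx-conjugation}. Proposition~\ref{prop:real-simplicial}, applied to $X=\pt$, then gives
\[
{\rm Re}_{\mathbb{R}}[G\backslash\pt]\simeq {\rm B}G(\mathbb{C})^{\rm hC_2}\simeq \mathcal{N}([G\backslash\pt](\mathbb{R})).
\]
Alternatively, combine Corollary~\ref{cor:main-hc2}, which gives $[G\backslash\pt](\mathbb{R})\simeq[G\backslash\pt](\mathbb{C})^{\rm hC_2}$ as topological groupoids, with the fact, recalled in Section~\ref{sec:fp-gpd-hfp}, that the nerve of a fixed-point groupoid computes homotopy fixed points of the nerve.

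For the second equivalence, the plan is to decompose the groupoid $\mathcal{C}=[G\backslash\pt](\mathbb{C})^{\rm hC_2}$ into its connected components.
\begin{enumerate}
\item By Example~\ref{ex:hc2-orbits}, the objects of $\mathcal{C}$ are the cocycles $g\in{\rm Z}^1({\rm C}_2,G(\mathbb{C}))$. A morphism $g\to g'$ is an element $h$ with $g'=\sigma(h)gh^{-1}$.
\item By Proposition~\ref{prop:fp-gpoid-galois}, $\pi_0(\mathcal{C})\cong{\rm H}^1({\rm C}_2,G)$.
\item Choose a representative $g$ of each class. By \eqref{eq:automorphism-group}, the automorphism group of $g$ is $\Aut(g)=G(\mathbb{C})^{\sigma_g}$.
\item A groupoid is equivalent to the disjoint union over its components of the one-object groupoids on the automorphism groups of chosen representatives. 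Taking nerves then gives $\bigsqcup_{[g]}{\rm B}(G(\mathbb{C})^{\sigma_g})$.
\end{enumerate}

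The main obstacle is that $\mathcal{C}$ is a topological groupoid, so the skeletal equivalence in step~4 must respect topology. The object space ${\rm Z}^1$ is a closed subset of $G(\mathbb{C})$ and need not be discrete. I would argue as follows. The component of $g$ is the orbit of $g$ under the twisted action $h\cdot g=\sigma(h)gh^{-1}$ of $G(\mathbb{C})$, which is homogeneous, $G(\mathbb{C})/G(\mathbb{C})^{\sigma_g}$. By Galois-cohomology finiteness for real groups, ${\rm H}^1$ is finite, so there are finitely many orbits, and they are open and closed in ${\rm Z}^1$. The full subgroupoid on one orbit is the action groupoid of $G(\mathbb{C})$ acting on that orbit. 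Its nerve is therefore the Borel construction ${\rm E}G(\mathbb{C})\times_{G(\mathbb{C})}G(\mathbb{C})/G(\mathbb{C})^{\sigma_g}\simeq{\rm B}(G(\mathbb{C})^{\sigma_g})$; this uses that the orbit map is a locally trivial principal bundle, since $G(\mathbb{C})^{\sigma_g}$ is a closed Lie subgroup. The identification is compatible with the inclusion of the one-object subgroupoid at $g$, which completes the proof.
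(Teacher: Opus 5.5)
Your route is the paper's. The first equivalence comes from Corollary~\ref{cor:main-hc2} together with Section~\ref{sec:fp-gpd-hfp}; Proposition~\ref{prop:real-simplicial}, which you also cite, rests on these. The second comes from Proposition~\ref{prop:fp-gpoid-galois} and \eqref{eq:automorphism-group}. The paper simply says the splitting ``follows directly''. You correctly notice that ${\rm Z}^1$ is not discrete, so one must know that the twisted-conjugacy orbits are open and closed in ${\rm Z}^1$. Otherwise the nerve need not split: for $\mathbb{R}^\times$ acting on $\mathbb{R}$ by multiplication there are three orbits, yet the homotopy orbit space is ${\rm B}\mathbb{R}^\times$, which is connected.

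\textbf{The gap.} Your justification of that step does not work. Finiteness of ${\rm H}^1({\rm C}_2,G)$ gives finitely many orbits, but finitely many orbits need not be open or closed, as the same example shows. What you need is that every orbit is \emph{open} in ${\rm Z}^1$. Closedness then follows because the complement is a union of orbits, and finiteness plays no role at all.

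\textbf{A repair.}
\begin{enumerate}
\item Send $g\in{\rm Z}^1$ to the involution $(g,\sigma)$ of the real Lie group $L=G(\mathbb{C})\rtimes{\rm C}_2$. Conjugation by $(h,1)$ sends it to $(hg\sigma(h)^{-1},\sigma)$, which is the twisted action of $\sigma(h)$ on $g$. So the twisted orbits are exactly the $G(\mathbb{C})$-conjugacy classes of involutions in the non-identity coset of $L$.
\item By Montgomery--Zippin rigidity of compact subgroups of Lie groups, an involution of $L$ close enough to $(g,\sigma)$ is conjugate to it by an element close to $1$. Such an element lies in the open subgroup $G(\mathbb{C})$, so each orbit is open in ${\rm Z}^1$.
\item Openness also makes the orbit locally compact. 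Hence $G(\mathbb{C})/G(\mathbb{C})^{\sigma_g}\to$ orbit is a homeomorphism by the open mapping theorem for homogeneous spaces. Your Borel-construction step uses this tacitly.
\end{enumerate}
With this repair your argument is complete, and more detailed than the paper's.
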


\begin{proof}
  By the results in Section~\ref{sec:real-pts-fp-gpd}, specifically Corollary~\ref{cor:main-hc2}, we have an identification of $[G\backslash\pt](\mathbb{R})$ as the fixed-point groupoid $[G\backslash\pt](\mathbb{C})^{\rm hC_2}$ (of the complex conjugation $\sigma$). The discussion in Section~\ref{sec:fp-gpd-hfp} then implies the identification of homotopy fixed points ${\rm B}G(\mathbb{C})^{{\rm hC}_2}$ as the realization of $[G\backslash\pt](\mathbb{R})$. The other identification, the main point of the proposition, follows from the explicit description of the fixed-point groupoid $[G\backslash\pt](\mathbb{R})$ in Proposition~\ref{prop:fp-gpoid-galois}: the components of the fixed-point groupoid are in bijection with the Galois cohomology set ${\rm H}^1({\rm C}_2,G(\mathbb{C}))$, and the automorphism groups of the components are given by the corresponding strong real forms, i.e., the fixed groups of the twisted complex conjugations on $G(\mathbb{C})$. The description in \eqref{hom_fix_pt_of_BG} above follows directly from that.
\end{proof}

\begin{remark}
  As mentioned several times, this formula has of course appeared before, in varying forms. It appears for example in \cite{virk:hc2}, although the discussion there is focused on discrete $G$. It appears in \cite{GuillouMayMerling2017}, although their focus is on the case where is $G$ discrete or a compact Lie group.\footnote{It should also be noted that the first cohomology set appearing in \cite{GuillouMayMerling2017} is actually an equivariant-topological version, closer to the coarse cohomology set ${\rm H}^1_{\rm sc}(\pt,G)$ that we have used in Section~\ref{sec:coarse}. For a real reductive group $G$, it can naturally be identified with Galois cohomology, essentially since the definition of 1-cocycles doesn't make use of the topology.} For finite groups, it also appears in \cites{realDM1,realDM2}. And probably in a number of other places, too.
\end{remark}

\begin{remark}
  In Proposition~\ref{prop:bg-formula}, the set ${\rm H}^1({\rm C}_2,G(\mathbb C))$ denotes the set of isomorphism classes of $G$-torsors over $\mathbb R$. For a cocycle $[g]$ representing a $G$-torsor $P$ over $\mathbb{R}$, the automorphism group $G(\mathbb{C})^{\sigma_g}$ in the groupoid of real points is literally the automorphism group $\mathrm{Aut}_G(P)$ of $G$-equivariant automorphisms of $P$, those are strong inner forms of $G$. In particular, we can interpret the formula~\eqref{hom_fix_pt_of_BG} as saying that the real realization of a classifying space is equivalent to the disjoint union of classifying spaces of automorphism groups of real $G$-torsors.

  On the other hand, we will see in Theorem~\ref{thm:strong-inner--eq_formulations} that for any strong inner form $H$ of $G$ we can construct a $(G,H)$-bitorsor $P$, and thus $H\cong \mathrm{Aut}_G(P)$. Our discussion of strong inner forms in Section~\ref{sec:bitorsors} will also explain the apparent invariance of the formula in \eqref{hom_fix_pt_of_BG} under strong inner forms. We can then interpret formula~\eqref{hom_fix_pt_of_BG} as saying that the real realization of a classifying space is a disjoint union of the classifying spaces of all strong real forms of $G$, where the strong real forms appear "with multiplicity" in the sense that different $G$-torsors can have automorphism groups which are isomorphic (as groups, but not as torsors). We will discuss particular cases of this, e.g.{}~in the cases of orthogonal groups, in Section~\ref{sec:examples}. 
\end{remark}

\section{Equivariant real cycle class map}
\label{sec:equivariant-cycle-class}

In this section, we will now define the equivariant real cycle class map, using approximations of the Borel-construction for a group action $G\looparrowright X$. We also establish compatibility of the real cycle class map with pullbacks, pushforwards and intersection products. 

\subsection{Equivariant cohomology for homotopy modules}
\label{sec:equivariant-cohomology}

We begin with a brief recollection on equivariant cohomology, as defined and discussed by di~Lorenzo and Mantovani in \cite{dilorenzo:mantovani}*{Section~2.2}. Their setting includes many theories relevant for our purposes, such as equivariant Witt-sheaf cohomology, equivariant Chow--Witt groups and equivariant mod 2 Milnor K-cohomology.

Recall from Section~\ref{sec:stacks-motivic-approx} that we can use admissible gadgets $\rho=(V_i,U_i)_{i\geq 1}$ to obtain approximations of a quotient stack $[G\backslash X]$ via the quotients $X_G^i(\rho)=X\times_{/G}U_i$. With these spaces, we can then define equivariant Borel--Moore homology and equivariant cohomology, following \cite{dilorenzo:mantovani}*{Definitions~2.2.6, 2.2.7}. 

\begin{definition}
  Let $G\looparrowright X$ be a variety with action over a field $F$, satisfying the assumptions of our Conventions~\ref{conventions-actions}. Let $M_*$ be a homotopy module and let $\mathscr{L}\in{\rm Ch}^1_G(X)$ be a $G$-equivariant line bundle. Let $\rho$ be an admissible gadget for $G$, and let $(V,U)$ be one of the equivariant scheme approximations in $\rho$. 

  Then the \emph{equivariant Borel--Moore homology with $M_*$-coefficients} is defined as 
  \[
  A_i^G(X,M_*,\mathscr{L}):=A_{i+l-g}(X\times_{/G}U,M_*,\omega_{X\times_{/G}U}\otimes\mathscr{L})
  \]
  where $l=\dim(V)$, $g=\dim G$, and $\op{codim}(V\setminus U)\leq \dim X-i+1$. The tautological bundle $\omega_{X\times_{/G}U}$ is (the dual of) the determinant of the relative cotangent complex of $X\times_{/G}U/[G\backslash X]$, induced by $V^\vee$ on $X\times U$, cf. the discussion in \cite{dilorenzo:mantovani}*{Remark~2.2.8}.

  Similarly, if $G\looparrowright X$ is smooth, then \emph{equivariant cohomology with $M_*$-coefficients} is defined as
  \[
  A^i_G(X,M_*,\mathscr{L}):=A^i(X\times_{/G}U,M_*,\mathscr{L})
  \]
  where again $\op{codim}(V\setminus U)\leq \dim X-i+1$.
\end{definition}

\begin{remark}
  \begin{itemize}
  \item 
    There are also versions with supports but we won't need those. For now. Similarly, the twisting works more generally for virtual vector bundles $e\in\underline{K}([G\backslash X])$, but for now the line bundle situation is sufficient for our needs. For vector bundles on stacks resp. equivariant vector bundles, cf. Section~\ref{sec:equivariant-vb}.
  \item For the homotopy module $M_*=({\bf I}^q)_{q\in\mathbb{Z}}$, we will use the following notation ${\rm H}_p^G(X,{\bf I}^q(\mathscr{L}))$ and ${\rm H}^p_G(X,{\bf I}^q(\mathscr{L}))$ instead of the notation from \cite{dilorenzo:mantovani}.
  \item For more precise information on how to use twists by line bundles, and how $G$-equivariant line bundles induce line bundles on the smooth scheme approximations, cf.{}~\cite{dilorenzo:mantovani}*{Section~2.2.3 and Remark~2.2.8}. 
  \end{itemize}
\end{remark}

In the following, we will list some of the properties of equivariant (co)homology which will be most relevant for us. The information below is essentially contained in \cite{dilorenzo:mantovani}*{Theorem~2.2.12}. Some of the properties are specialized to the ${\bf I}$-cohomology setting most relevant for us; some of the structure can also be formulated in greater generality for morphisms of (quotient) stacks as in \cite{dilorenzo:mantovani}*{Theorem~2.3.3}. 

\begin{description}
\item[pullbacks] Equivariant $M_*$-homology has pullbacks along $G$-equivariant maps $f\colon X\to Y$ which are lci, i.e., compositions of smooth morphisms and regular embeddings. The pullback maps are of the form
  \[
  f^*\colon A^G_i(X,M_*,\mathscr{L})\to A_{i+d}^G(Y,M_*,f^*\mathscr{L}\otimes\omega_f)
  \]
  where $d$ is the relative dimension. Note that this means a positive $d$ for smooth maps of relative dimension $d$ and negative $d$ for a regular immersion of codimension $-d$. In case $X$ and $Y$ are smooth, there is also a version for cohomology, without any shifts or twists by relative canonical bundles:
  \[
  f^*\colon A_G^i(X,M_*,\mathscr{L})\to A^{i}_G(Y,M_*,f^*\mathscr{L})
  \]
  These pullback morphisms in equivariant (co)homology are functorial
  \begin{itemize}
  \item in $f$, i.e., we have a functor from $G$-varieties to abelian groups,
  \item in the coefficients $M_*$, i.e., pullbacks are compatible with change-of-coefficients maps between (co)homology groups,
  \item in the line bundle $\mathscr{L}$, i.e., pullbacks are compatible with the maps induced by isomorphisms $\mathscr{L}\cong\mathscr{L}'$ of (graded) line bundles.
    \end{itemize}

  More generally, there are pullback morphisms along representable lci morphisms of quotient stacks $[H\backslash Y]\to [G\backslash X]$.
  These have the form
  \[
  f^*\colon A_G^i(X,M_*,\mathscr{L})\to A^{i}_H(Y,M_*,f^*\mathscr{L})
  \]
  As particular examples, these provide restrictions to subgroups and forgetful morphisms from $G$-equivariant to non-equivariant cohomology.\footnote{Actually, one might expect these to exist \emph{in cohomology} for arbitrary morphisms of varieties with action, basically using compatible resolutions as in \cite{bernstein:lunts}*{Section~6} also used in \cite{BGK} to define general pullbacks for equivariant motives. In homology, this only works for representable morphisms, due to the shifts and twists involved.} 
\item[intersection products] For a smooth variety with action  $G\looparrowright X$, equivariant ${\bf I}$-cohomology has an intersection product
  \[
    {\rm H}^{p_1}_G(X,{\bf I}^{q_1}(\mathscr{L}_1))\otimes {\rm H}^{p_2}_G(X,{\bf I}^{q_2}(\mathscr{L}_2))\to {\rm H}^{p_1+p_2}_G(X,{\bf I}^{q_1+q_2}(\mathscr{L}_1\boxtimes\mathscr{L}_2)),
  \]
  where $\mathscr{L}_1,\mathscr{L}_2\in{\rm Ch}^1_G(X)$ are $G$-equivariant line bundles on $X$. As in \cite{dilorenzo:mantovani}, these intersection products can be viewed as providing an algebra structure over the monoidal functor ${\bf K}^{\rm MW}_*(F,-)$ on the category of graded line bundles over the base field $F$. The intersection products are compatible with pullbacks along equivariant maps between smooth schemes. 
\item[pushforwards] Equivariant $M_*$-homology has pushforwards along proper  $G$-equivariant maps $f\colon X\to  Y$. The pushforward maps in equivariant $M_*$-homology have the form
  \[
  f_*\colon A^G_i(X,M_*,f^*\mathscr{L})\to A_i^G(Y,M_*,\mathscr{L}).
  \]
  In case $X$ and $Y$ are smooth, there is also a version in cohomology, involving additional shifts by the relative dimension $d$ and twists by the relative canonical bundle $\omega_f$ (or better, the [determinant of the] cotangent complex of the morphism of stacks)
  \[
  f_*\colon A_G^i(X,M_*,f^*\mathscr{L}\otimes\omega_f)\to A^{i+d}_G(Y,M_*,\mathscr{L}).
  \]
  As for the pullbacks, the pushforwards are functorial in $f$, $M_*$ and $\mathscr{L}$, in the sense discussed before.

  More generally, there are pushforward maps along representable proper morphisms of quotient stacks $[H\backslash Y]\to [G\backslash X]$, which would have the form 
  \[
  f_*\colon A_H^i(Y,M_*,f^*\mathscr{L}\otimes\omega_f)\to A^{i+d}_G(X,M_*,\mathscr{L}).
  \]
\item[localization sequence] Let $X$ be a smooth scheme and let $\mathscr{L}\in{\rm Ch}^1(X)$ be a $G$-equivariant line bundle. For an equivariant closed embedding $\iota\colon Z\hookrightarrow X$ of a smooth closed subscheme of codimension $c$, with open complement $j\colon U=X\setminus Z\hookrightarrow X$ and normal bundle $\mathscr{N}=\omega_\iota\in{\rm Ch}^1_G(Z)$, there is a long exact sequence
  \[
  \cdots\to{\rm H}_G^p(X,{\bf I}^q(\mathscr{L}))\xrightarrow{j^\ast}{\rm H}^p_G(U,{\bf I}^q(\mathscr{L}))\xrightarrow{\partial}{\rm H}^{p-c+1}_G(Z,{\bf I}^{q-c}(\mathscr{L}\otimes\mathscr{N}))\xrightarrow{\iota_\ast}\cdots
  \]
\end{description}

A number of additional properties, like homotopy invariance, base-change, projection formulas etc.~ can be found in \cite{dilorenzo:mantovani}*{Theorem~2.2.12}. 

We list two further well-known equivalences which are often very useful in computations, for motivic versions of these, cf. \cite{BGK}*{Chapter~I.7}. 

\begin{proposition}[quotient equivalence]
  \label{prop:quotient-equiv}
  Let $G\looparrowright X$ be a variety with action and let $N\subset G$ be a closed normal subgroup such that the restricted action $N\looparrowright X$ is free. Then the isomorphism $[G\backslash X]\cong \left[\left(G/N\right)\backslash\left(N\backslash X\right)\right]$ of Proposition~\ref{prop:quotient-induction-stacks} (1) induces an isomorphism of equivariant homology groups
  \[
  {\rm H}_*^{G/N}(N\backslash X,M_*,\mathscr{L})\xrightarrow{\cong} {\rm H}_*^{G}(X,M_*,p^*\mathscr{L})
  \]
  for any $G/N$-equivariant line bundle $\mathscr{L}\in{\rm Ch}^1_{G/N}(N\backslash X)$. If $X$ is smooth, then the stacks involved are smooth, and consequently there is also a multiplicative isomorphism in equivariant cohomology:\footnote{Multiplicativity has the following forms: On the untwisted cohomology, this will be a ring isomorphism. Cohomology with line bundle twists will be a module over the untwisted cohomology ring. One can also consider a total cohomology ring, involving all possible twists, graded over the mod 2 Picard group. The usual caveats on choices of representatives of classes in the mod 2 Picard group detailed in Balmer--Calm\`es \cite{balmer:calmes} apply.}
  \[
  {\rm H}^*_{G/N}(N\backslash X,M_*,\mathscr{L})\xrightarrow{\cong} {\rm H}^*_{G}(X,M_*,p^*\mathscr{L})
  \]
  These isomorphisms are functorial and compatible with pullbacks.
\end{proposition}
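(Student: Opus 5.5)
The plan is to reduce the statement to a non-equivariant isomorphism between smooth scheme approximations. Homotopy-module (co)homology satisfies homotopy invariance, smooth pullbacks and pushforwards, and these are the only inputs needed.

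First fix an admissible gadget $\rho=(V_i,U_i)$ for $G$ such that $\rho$ also yields an admissible gadget $\bar\rho$ for $G/N$. Concretely, choose a faithful representation $W$ of $G/N$ and take pairs of the form $(V_i\oplus W_i, U_i\times W_i^{\circ})$, where $G$ acts on $W_i$ through $G\to G/N$. The key geometric comparison is between
\[
\left(U_i\times W^\circ_i\right)\times_{/G}X
\quad\text{and}\quad
\left(N\backslash(U_i\times X)\right)\times_{/(G/N)}W^\circ_i .
\]
Both are quotients of $U_i\times W_i^\circ\times X$ by $G$. Quotienting first by $N$, which acts freely on $X$, identifies the first space with $W_i^\circ\times_{/(G/N)}(U_i\times_{/N}X)$. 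This space fibres in two ways:
\begin{itemize}
\item over $W_i^\circ\times_{/(G/N)}(N\backslash X)$, with fibres open in vector spaces, namely the $U_i$;
\item over $U_i\times_{/G}X$, with fibres $W_i^\circ$.
\end{itemize}
Both projections are smooth, with fibres open subsets of affine space whose complements have large codimension. Homotopy invariance, together with the codimension estimates, therefore says that pullback along each projection is an isomorphism on $A^j$ and on $A_{j}$ (after the appropriate shift) in the stable range. This is the same argument as for independence of the gadget in \cite{dilorenzo:mantovani}*{Section~2.2}. Composing the first pullback with the inverse of the second gives the isomorphism ${\rm H}_*^{G/N}(N\backslash X,M_*,\mathscr{L})\to{\rm H}^G_*(X,M_*,p^*\mathscr{L})$. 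By Proposition~\ref{prop:quotient-induction-stacks}~(1), this is exactly the map induced by $[G\backslash X]\cong[(G/N)\backslash(N\backslash X)]$.

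Next we track the twists. The line bundle on the left approximation, pulled back along the projections, is the pullback of $\mathscr{L}$ along $X\to N\backslash X$, which is $p^*\mathscr{L}$. For homology we also have to compare the tautological bundles $\omega$ and the dimension shifts $l-g$. The relative cotangent complexes of the two projections are induced by the representations $V_i$ and $W_i$ respectively. Here $\dim G=\dim N+\dim G/N$, and the $N$-quotient contributes the determinant of ${\rm Lie}(N)$. This bookkeeping is the main obstacle: one has to check that the canonical identifications of determinants are compatible and give no sign ambiguity. I would handle it with the canonical isomorphism for the exact sequence $0\to{\rm Lie}\,N\to{\rm Lie}\,G\to{\rm Lie}(G/N)\to0$, and by observing that a different choice only changes the map by a ${\bf K}^{\rm MW}_0$-unit, which is fixed by the choice of graded line bundle isomorphisms as in \cite{dilorenzo:mantovani}.

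When $X$ is smooth, all approximations are smooth. Pullbacks along smooth morphisms are ring homomorphisms for the intersection product, so the cohomology isomorphism is multiplicative, including the module structure on twisted groups. Finally, we show independence of choices and compatibility with pullbacks. For a $G$-equivariant lci map $X'\to X$ that is $N$-free, we use the same gadget on both sides. The resulting diagrams of approximations commute up to the canonical fibre squares, and base change for smooth pullbacks gives the required commutativity.
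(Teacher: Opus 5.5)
Your argument is correct, but it does not follow the paper's route. The paper writes out no proof of this proposition. It treats the statement as a formal consequence of two ingredients:
\begin{itemize}
\item the stack isomorphism of Proposition~\ref{prop:quotient-induction-stacks}~(1), proved by checking \'etale-locally that $(P\to X)\mapsto(P/N\to N\backslash X)$ is fully faithful and essentially surjective;
\item the imported fact that equivariant (co)homology with homotopy-module coefficients depends only on the quotient stack, cf.\ \cite{dilorenzo:mantovani}*{Proposition~2.3.1}, \cite{edidin:graham:equivariant}*{Proposition~16}, or \cite{krishna}*{Corollary~2.6}.
\end{itemize}
The isomorphism is then just pullback along $(\pi,p)\colon(G\looparrowright X)\to(G/N\looparrowright N\backslash X)$, which is an isomorphism of stacks. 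Multiplicativity, functoriality and compatibility with pullbacks are therefore automatic, and this is exactly how Corollary~\ref{cor:quotient-equiv} uses it.

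You instead reprove this instance of stack invariance by hand. You use the Totaro--Edidin--Graham mixing space $Z=(U_i\times W_i^\circ\times X)/G$ and its two projections, with fibres $U_i$ and $W_i^\circ$. This is essentially the argument behind \cite{edidin:graham:equivariant}*{Proposition~16}, specialized to this case.

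Your route gives an explicit description of the map on approximations, with explicit degrees and twists. The price is that you must verify by hand what the paper gets formally: independence of choices, compatibility with pullbacks, and agreement with the stack-induced map. You assert the last one without proof. The missing check is short: the two composites $Z\to[(G/N)\backslash(N\backslash X)]$ are $2$-isomorphic, because both classify the $G/N$-torsor $N\backslash(U_i\times W_i^\circ\times X)\to Z$ together with its map to $N\backslash X$.

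Your worry about $\det{\rm Lie}(N)$ is unnecessary with the paper's convention, where the twist is by the relative cotangent of the approximation over the stack. The relative tangent bundles of your two projections are induced by $V_i$ and $W_i$. So both twists on $Z$ are the line bundle induced by $\det(V_i\oplus W_i)^\vee$, up to the canonical swap of graded line bundles. The dimension of $N$ enters only through the degree shift
\[
{\rm H}^{G/N}_j(N\backslash X,M_*,\mathscr{L})\cong{\rm H}^G_{j+\dim N}(X,M_*,p^*\mathscr{L}),
\]
which the statement suppresses.
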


\begin{proposition}[induction equivalence]
  \label{prop:induction-equiv}
  Let $i\colon H\hookrightarrow G$ be the inclusion of a smooth closed subgroup into a smooth affine algebraic group $G$ and let $H\looparrowright X$ be a variety with action such that the quotient $G\times_{/H}X$ of the diagonal $H$-action on $G\times X$ exists as a smooth quasi-projective variety. Then the isomorphism of quotient stacks $[H\backslash X]\cong [G\backslash (G\times_{/H}X)]$ of Proposition~\ref{prop:quotient-induction-stacks} (2) induces an isomorphism of equivariant homology groups
  \[
  {\rm H}_*^{G}(G\times_{/H}X,M_*,\mathscr{L})\xrightarrow{\cong} {\rm H}_*^{H}(X,M_*,p^*\mathscr{L})
  \]
  for any $G$-equivariant line bundle $\mathscr{L}\in{\rm Ch}^1_G(G\times_{/H}X)$. If $X$ is smooth, then the stacks involved are smooth, and consequently there is also a multiplicative isomorphism in equivariant cohomology:
  \[
  {\rm H}^*_{G}(G\times_{/H}X,M_*,\mathscr{L})\xrightarrow{\cong} {\rm H}^*_{H}(X,M_*,p^*\mathscr{L})
  \]
  These isomorphisms are functorial and compatible with pullbacks. 
\end{proposition}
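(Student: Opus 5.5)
The plan is to reduce the induction equivalence to the corresponding isomorphism of quotient stacks, Proposition~\ref{prop:quotient-induction-stacks}~(2), by realizing it at the level of scheme approximations. First fix an admissible gadget $\rho=(V_i,U_i)$ for $G$. Since $H\leq G$ is a closed subgroup, $H$ acts freely on each $U_i$ by restriction, and the quotients $U_i/H$ exist as smooth quasi-projective schemes: $U_i\to U_i/G$ is a $G$-torsor, and $U_i/H=U_i\times_{/G}(G/H)$ is a $G/H$-bundle over it. So $\rho$ is also an admissible gadget for $H$, which is the same device used in the proof of Proposition~\ref{prop:approx-comparison}. The key geometric identification is then
\[
U_i\times_{/G}\left(G\times_{/H}X\right)\cong U_i\times_{/H}X,
\]
given by $[u,[g,x]]\mapsto[ug,x]$, with inverse $[u,x]\mapsto[u,[1,x]]$. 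I will check that this is well defined and an isomorphism \'etale-locally, by trivializing the $G$-torsor $U_i\times(G\times_{/H}X)\to U_i\times_{/G}(G\times_{/H}X)$. These isomorphisms are compatible with the transition maps of the gadget.

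Next, the equivariant groups are defined, with the codimension condition, as the $M_*$-homology or cohomology of these approximations. The isomorphism above therefore induces the claimed isomorphisms once three bookkeeping points are settled.

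\textbf{Dimension shifts.} On the $G$ side the index is $i+l-g$ on an approximation of dimension $\dim X+\dim G/H+l-g$. On the $H$ side it is $i+l-h$ on the same scheme. The grading conventions have to be matched: the natural grading of $G\times_{/H}X$ has dimension $\dim X+g-h$, and this accounts for the shift.

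\textbf{Twists.} Under the isomorphism, $\omega_{U_i\times_{/G}(G\times_{/H}X)}\otimes\mathscr{L}$ must correspond to $\omega_{U_i\times_{/H}X}\otimes p^*\mathscr{L}$. Both tautological bundles are induced by $V^\vee$, and $p^*\mathscr{L}$ is the pullback of $\mathscr{L}$ along $X\to G\times_{/H}X$, $x\mapsto[1,x]$. This identification is compatible with the way equivariant line bundles descend to approximations, as in \cite{dilorenzo:mantovani}*{Remark~2.2.8}.

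\textbf{Independence of choices.} The groups are independent of the gadget and of the approximation level by the homotopy invariance and localization arguments of \cite{dilorenzo:mantovani}*{Theorem~2.2.12}, so the isomorphism is well defined on equivariant groups.

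For smooth $X$, both approximations are smooth and the map on approximations is an isomorphism of smooth schemes. Pullback along it is therefore a ring isomorphism in cohomology and respects the module structures over untwisted cohomology for twisted groups. Functoriality and compatibility with pullbacks follow from the naturality of the isomorphism on approximations with respect to equivariant maps $X\to X'$ and the induced maps $G\times_{/H}X\to G\times_{/H}X'$. Alternatively, one can invoke the stack-level statement \cite{dilorenzo:mantovani}*{Theorem~2.3.3}: the groups depend only on the quotient stack, and the isomorphism is an isomorphism of stacks.

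I expect the main obstacle to be the careful matching of the twisting bundles $\omega$ and of the degree shifts in Borel--Moore homology. That is where sign and orientation issues for ${\bf I}$-cohomology could enter. I would first handle it by comparing relative cotangent complexes over the common stack $[H\backslash X]\cong[G\backslash(G\times_{/H}X)]$.
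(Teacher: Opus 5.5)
Your proposal is correct and is essentially the paper's approach. The paper gives no separate proof: it derives the statement from the stack isomorphism of Proposition~\ref{prop:quotient-induction-stacks}~(2) together with the fact that the equivariant groups of \cite{dilorenzo:mantovani} depend only on the quotient stack, which is the alternative you name at the end. Your explicit identification $U_i\times_{/G}(G\times_{/H}X)\cong U_i\times_{/H}X$, for a gadget for $G$ restricted to $H$, is the approximation-level content of that invariance (cf.\ \cite{krishna}*{Corollary~2.7}, used the same way in the proof of Proposition~\ref{prop:approx-comparison}); since both approximations map compatibly to the common stack, the tautological twists are canonically identified and the sign/orientation issue you flag does not arise.
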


\subsection{Real cycle class map}

The natural way to relate the Witt-sheaf or ${\bf I}$-cohomology of a real scheme $X/\mathbb{R}$ with the singular cohomology of its space of real points $X(\mathbb{R})$ is Jacobson's real cycle class map \cite{jacobson}. The key observation is that the colimit of multiplication by $\langle\!\langle-1\rangle\!\rangle$ can be identified with $a_{\ret}\mathbb{Z}$, the real-\'etale constant sheaf $\mathbb{Z}$. We can view the real cycle class map ${\rm H}^p_{\rm Zar}(X,{\bf I}^q)\to{\rm H}^p_{\rm sing}(X(\mathbb{R}),\mathbb{Z})$ as induced by the signature map ${\rm sgn}\colon{\bf I}^q\to a_{\ret}\mathbb{Z}$. The real cycle class map is compatible with pullbacks, pushforwards (along closed immersions) and intersection products, cf.{}~\cite{4real}. 

We now want to describe an equivariant version of the real cycle class map. The basic idea of the construction is to use the ordinary real cycle class map on smooth scheme approximations to the quotient stack, and check that this provides a well-defined morphism.

\begin{theorem}
  \label{thm:real-cycle-class}
  Let $G\looparrowright X$ be a smooth scheme with action by a linear algebraic group over $\mathbb{R}$, and let $\mathscr{L}\in{\rm CH}^1_G(X)$ be a $G$-equivariant line bundle on $X$. Then the signature map ${\rm sgn}\colon{\bf I}^q\to a_{\ret}\mathbb{Z}$ induces a well-defined equivariant real cycle class map
  \[
  {\rm cyc}_{\mathbb{R}}\colon {\rm H}^p_G(X,\mathbf{I}^q(\mathscr{L}))={\rm H}^p([G\backslash X],\mathbf{I}^q(\mathscr{L}))\to {\rm H}^p([G\backslash X](\mathbb{R}),\mathbb{Z}(\mathscr{L}))
  \]
  which satisfies the following properties:
  \begin{enumerate}
  \item The equivariant real cycle class map only depends on the stack $[G\backslash X]$.
  \item The case of the trivial group $G=\{1\}$ recovers the non-equivariant real cycle class map of Jacobson~\cite{jacobson}, cf. also \cite{4real}. 
  \item The equivariant real cycle class map is an isomorphism for big enough $q$.
  \item The equivariant real cycle class map is an isomorphism after inverting 2.
  \end{enumerate}
\end{theorem}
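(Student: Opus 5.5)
The map will be defined on scheme approximations and then passed to the colimit. Fix an admissible gadget $\rho=(V_i,U_i)_{i\geq1}$ for $G$ and write $X^i_G=U_i\times_{/G}X$; these are smooth quasi-projective real schemes. The equivariant line bundle $\mathscr{L}$ descends to a line bundle $\mathscr{L}_i$ on each $X^i_G$. For each $i$ we have Jacobson's twisted real cycle class map from \cite{4real},
\[
{\rm H}^p(X^i_G,{\bf I}^q(\mathscr{L}_i))\to{\rm H}^p_{\rm sing}(X^i_G(\mathbb{R}),\mathbb{Z}(\mathscr{L}_i)).
\]
For $i$ large relative to $p$, the source is ${\rm H}^p_G(X,{\bf I}^q(\mathscr{L}))$ by definition of equivariant cohomology, since the codimension of $V_i\setminus U_i$ grows. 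On the target side, the real points $X^i_G(\mathbb{R})$ form a filtered system of closed inclusions. Their colimit is ${\rm Re}_{\mathbb{R}}X_G(\rho)\simeq{\rm Re}_{\mathbb{R}}[G\backslash X]\simeq\mathcal{N}([G\backslash X](\mathbb{R}))$, by Proposition~\ref{prop:real-admissible-gadget}, Proposition~\ref{prop:approx-comparison} and Proposition~\ref{prop:real-simplicial}.

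To identify singular cohomology of the colimit with the limit over $i$, I would use the Milnor sequence, which requires the connectivity of $U_i(\mathbb{R})$ to grow. This is where I expect the main obstacle. Real points of $V_i\setminus U_i$ have real codimension at least the algebraic codimension, so $U_i(\mathbb{R})\to\op{colim}U_i(\mathbb{R})$ should become highly connected. However, $U_i(\mathbb{R})\times_{/G(\mathbb{R})}X(\mathbb{R})$ is not $X^i_G(\mathbb{R})$, so the connectivity has to be argued for $X^i_G(\mathbb{R})\to{\rm Re}_{\mathbb{R}}[G\backslash X]$ directly. My first attempt would reduce to $G={\rm GL}_n$, using the induction equivalence of Proposition~\ref{prop:quotient-induction-stacks}(2). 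There ${\rm GL}_n$ is special, so $X^i_G(\mathbb{R})=U_i(\mathbb{R})\times_{/{\rm GL}_n(\mathbb{R})}X'(\mathbb{R})$, and the codimension estimate gives that the transition maps are isomorphisms on ${\rm H}^p$ for large $i$. Compatibility of the cycle class maps with the transition maps, which are open immersions composed with vector-bundle projections, follows from the pullback compatibility in \cite{4real}. This yields a compatible system and hence ${\rm cyc}_{\mathbb{R}}$. The twist $\mathbb{Z}(\mathscr{L})$ is handled the same way via the mod~2 classes of $\mathscr{L}_i$.

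For (1), independence of the gadget and dependence only on the stack would follow by the standard two-gadget trick. Given gadgets $\rho,\rho'$, compare both to the product gadget $(V_i\oplus V'_i,\ U_i\times V'_i\cup V_i\times U'_i)$, so that the approximations are related by vector bundle projections and open immersions of large-codimension complement. On both sides these induce isomorphisms compatible with the cycle class map. An isomorphism of stacks $[G\backslash X]\cong[H\backslash Y]$ would be handled by the same device, taking gadgets for $G\times H$ on the common torsor. For (2), take $G$ trivial and the trivial gadget, so that $X^i_G=X\times U_i$ and the map agrees with Jacobson's by homotopy invariance.

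For (3), apply Jacobson's theorem that ${\rm sgn}\colon{\bf I}^q\to a_\ret\mathbb{Z}$ induces isomorphisms on ${\rm H}^p$ for $q>\dim$ levelwise. The bound depends on $\dim X^i_G$, which grows with $i$, so I would fix $p$, choose $i=i(p)$ stabilizing both sides, and require $q\geq\dim X^{i(p)}_G+1$. For (4), Jacobson's result that ${\bf I}^q[1/2]\to a_\ret\mathbb{Z}[1/2]$ is an isomorphism of sheaves gives the levelwise isomorphism, and passing to the stable range gives the claim.
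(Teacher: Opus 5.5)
Your proposal is correct and follows essentially the same route as the paper. Both arguments use Jacobson's twisted cycle class map (via \cite{4real}) on the smooth approximations $U_i\times_{/G}X$, stabilization on both the algebraic and topological sides, pullback compatibility from \cite{4real} for well-definedness and (1), and Jacobson's Corollaries~8.9 and 8.11 at a fixed stable level $i=i(p)$ for (3) and (4).

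The only real variation is the topological stabilization. The paper argues directly that the real points of the removed locus have real codimension at least the algebraic codimension. Hence inclusions of real points of approximations are ${\rm H}^p$-isomorphisms, either by excision on a triangulation or via Jacobson's algebraic description of singular cohomology. This works for arbitrary gadgets, including the mixed ones in your two-gadget comparison, so your detour through ${\rm GL}_n$ and the connectivity of $U_i(\mathbb{R})$ is valid but not needed.
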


\begin{proof}
  Fix an admissible gadget $\rho=(V_j,U_j)_{j\geq 1}$ for $G$, cf.{}~Definition~\ref{def:admissible-gadget}. Recall from Section~\ref{sec:equivariant-cohomology} that the $p$-th equivariant cohomology group for $G\looparrowright X$ can be defined as follows. Let $(V,U)$ be an approximation pair in $\rho$ where $\op{codim}(V\setminus U)\geq p+2$. The quotient $U\times_{/G}X$ is a smooth scheme and the $G$-equivariant line bundle $\mathscr{L}$ induces a well-defined line bundle $\overline{\mathscr{L}}=(\mathscr{L}\times U\times X)/G$ on $U\times_{/G}X$. In this situation, the $\overline{\mathscr{L}}$-twisted cohomology of $U\times_{/G}X$ is independent of $V$ and $S$ and we have an isomorphism
  \[
  {\rm H}^p_G(X,{\bf I}^q(\mathscr{L}))\cong {\rm H}^p(U\times_{/G}X,{\bf I}^q(\overline{\mathscr{L}}))
  \]
  induced by the morphism $f\colon U\times_{/G}X\to [G\backslash X]$.

  Now we consider the real realization side. Recall from Proposition~\ref{prop:real-admissible-gadget} that ${\rm Re}_{\mathbb{R}}([G\backslash X])\simeq \op{colim}_{j\geq 1}(U_j\times_{/G} X)(\mathbb{R})$ for an admissible gadget $\rho=(U_j,V_j)_{j \geq 1}$. We also have stabilization results on the topological side: if $\op{codim}(V_j\setminus U_j)\geq p+2$, then we get an isomorphism
\[
{\rm H}^p((U_j\times_{/G}X)(\mathbb{R}),\mathbb{Z}(\overline{\mathscr{L}}))\to {\rm H}^p([G\backslash X](\mathbb{R}),\mathbb{Z}(\mathscr{L}))
\]
which is the inverse of the map induced by $f_{\mathbb{R}}\colon (U\times_{/G}X)(\mathbb{R})\to [G\backslash X](\mathbb{R})$. In particular, as long as $\op{codim}(V_j\setminus U_j)\geq p+2$ then $U_j\times_{/G}X$ can be used to compute equivariant cohomology.

This can be proved exactly as in the algebraic case, e.g.{}~\cite{totaro:bg} or \cite{dilorenzo:mantovani}*{Proposition~2.2.10}, with the following modifications. We use that $\op{codim}(V\setminus U)\geq p+2$ implies that the real codimension of $(X\times(V\setminus U))(\mathbb{R})$ in $(X\times V)(\mathbb{R})$ is also $\geq p+2$. Moreover, if we have $G$-stable $U\subseteq U'\subseteq V$ such that the $G$-action on $U'$ is free, then the complement of $(X\times_{/G} U)(\mathbb{R})$ in $(X\times_{/G}U')(\mathbb{R})$ also has codimension $\geq p+2$. In this situation, the inclusion $(X\times_{/G}U)(\mathbb{R})\hookrightarrow(X\times_{/G}U')(\mathbb{R})$ induces an isomorphism in singular cohomology in degree $p$.

The algebraic way to see this is to identify singular cohomology as the Zariski cohomology of $\op{colim}{\bf I}^n$ as in \cite{jacobson} and use the algebraic excision results. The topological way to see this is to note that for a quasi-projective real variety $X$ with a closed subvariety $Z$, there is a finite-dimensional CW-structure on $X(\mathbb{R})$ such that $Z(\mathbb{R})$ is a subcomplex; moreover, the cells of the subcomplex will have dimension bounded by the dimension of $Z$. In particular, if $X$ is smooth equi-dimensional (so that codimension makes sense), all the cells in $Z(\mathbb{R})$ will have codimension at least the codimension of $Z$ in $X$. We then get an excision result for singular cohomology by comparison to cellular cohomology. One possible reference for the triangulation is the Real Triangulation Theorem formulated in \cite{karoubi:weibel} (and also references in there).\footnote{Likely there are other references, in particular in the semi-algebraic geometry literature or in the direction of Goresky--MacPherson stratified Morse theory.} 

  After this preparation,  we can now define the equivariant real cycle class map as the following composition
  \[
  {\rm H}^p_G(X,{\bf I}^q(\mathscr{L})) \xrightarrow{\cong} {\rm H}^p(U\times_{/G}X,{\bf I}^q(\overline{\mathscr{L}}))\xrightarrow{{\rm cyc}_{\mathbb{R}}} {\rm H}^p((U\times_{/G}X)(\mathbb{R}),\mathbb{Z}(\overline{\mathscr{L}}))\xrightarrow{\cong}{\rm H}^p([G\backslash X](\mathbb{R}),\mathbb{Z}(\mathscr{L}))
  \]
  where the map in the middle is the real cycle class map of Jacobson, cf.{}~\cite{jacobson}. 
  
  In the next step, we need to check that this is independent of choices. This is covered by the commutativity of the next diagram which follows from compatibilities of the real cycle class map with pullbacks, cf.{}~\cite{4real}*{Proposition~4.1}:
  \[
  \xymatrix{
    {\rm H}^p(U_{i+1}\times_{/G}X,{\bf I}^q(\overline{\mathscr{L}}))\ar[r]^-{{\rm cyc}_{\mathbb{R}}} \ar[d]  & {\rm H}^p((U_{i+1}\times_{/G}X)(\mathbb{R}),\mathbb{Z}(\overline{\mathscr{L}})) \ar[d] \\ 
    {\rm H}^p(U_i\times_{/G}X,{\bf I}^q(\overline{\mathscr{L}}))\ar[r]_-{{\rm cyc}_{\mathbb{R}}}  & {\rm H}^p((U_i\times_{/G}X)(\mathbb{R}),\mathbb{Z}(\overline{\mathscr{L}})) 
  }
  \]
  Essentially, this means that we get a morphism ${\rm cyc}_{\mathbb{R}}$ between the limit diagrams
  \[
  \left({\rm H}^p(U_i\times_{/G}X,{\bf I}^q(\overline{\mathscr{L}}))\right)_i \qquad\textrm{ and }\qquad \left({\rm H}^p((U_i\times_{/G}X)(\mathbb{R}),\mathbb{Z}(\overline{\mathscr{L}}))\right)_i
  \]
  whose limits are ${\rm H}^p_G(X,{\bf I}^q(\mathscr{L}))$ and ${\rm H}^p([G\backslash X](\mathbb{R}),\mathbb{Z}(\mathscr{L}))$, respectively. The equivariant real cycle class map is the limit of this morphism of diagrams.

  Now that we have established the existence of a well-defined equivariant real cycle class map, we can prove all the claims. For claim (1), the independence of equivariant cohomology is proven in \cite{dilorenzo:mantovani}*{Proposition~2.2.10}. The same argument, combined with the compatibility of real cycle class maps with pullbacks from \cite{4real}*{Proposition~4.1} shows that the equivariant real cycle class map is also independent of the choices of approximations, i.e., it only depends on the quotient stack $[G\backslash X]$. Claim (2) also follows immediately, since for the trivial group, we get $U\times_{/G}X=X$, and then the definition of equivariant real cycle class map reduces to the non-equivariant one. Finally, the two claims (3) and (4) follow from Corollaries 8.9 and 8.11 of \cite{jacobson}. For a discussion of twisted versions of Jacobson's cycle class map and these corollaries, cf. the discussion in \cite{4real} and in particular Theorem 3.12. 
\end{proof}

\begin{remark}
  \label{rem:big-enough}
  Which $q$ is big enough depends on the group. If a group $G$ has a representation $V$ such that $G$ acts freely outside a $G$-stable closed subset $S\subseteq V$ of codimension $\geq p+2$, then there is an isomorphism
  \[
  {\rm H}^p((V\backslash S)/G,\mathbf{I}^q)\cong {\rm H}^p({\rm B}_{\et}G,\mathbf{I}^q),
  \]
  see stabilization results for ${\bf I}$-cohomology in \cite{chowwitt}*{Proposition~3.1} or \cite{dilorenzo:mantovani}*{Proposition~2.2.10} for a more general statement. In this case, the real cycle class map will be an isomorphism for $q>\dim V-\dim G$. The case $[G\backslash X]$ requires an additional summand $\dim X$ on the right-hand side. A more daring conjecture would posit that it suffices to take $q>p+d$, where $d$ is the dimension of a faithful $G$-representation. 
\end{remark}

\begin{remark}
  \begin{itemize}
  \item 
    We note that the equivariant real cycle class map is also uniquely determined by conditions (1) and (2) in Theorem~\ref{thm:real-cycle-class}. Essentially, we can write $[G\backslash X]$ as a colimit of smooth schemes using an admissible gadget as in Section~\ref{sec:stacks-motivic-approx}, and then the equivariant real cycle class map is left-Kan extended from the non-equivariant one.
  \item
    There is also a version of the equivariant real cycle class map for equivariant (Borel--Moore) homology which has the following form
    \[
    {\rm cyc}_{\mathbb{R}}\colon {\rm H}_p([G\backslash X],{\bf I}^q(\mathscr{L}))\to {\rm H}_p^{\rm BM}([G\backslash X](\mathbb{R}),\mathbb{Z}(\mathscr{L}))
    \]
    This applies more generally to singular schemes with action $G\looparrowright X$. The argument is much the same as for Theorem~\ref{thm:real-cycle-class}, but using compatibility of real cycle class maps with the proper pushforwards along the closed immersions $X\times_{/G}U_i\hookrightarrow X\times_{/G}U_{i+1}$ from the admissible gadget. This requires adjusting a couple of arguments from \cites{jacobson,4real}. 
  \end{itemize}
\end{remark}

\subsection{Compatibilities}

In this section, we briefly formulate various compatibilities of the equivariant real cycle class map with the natural structures in equivariant cohomology discussed in Section~\ref{sec:equivariant-cohomology}: lci pullbacks, proper pushforwards (along closed immersions) and intersection products. We also discuss the localization sequence as well as quotient and induction equivalences. As a consequence, the equivariant real cycle class map is compatible with all the tools necessary for computations using the stratification method as employed e.g.{}~in the computations of Chow rings of classifying spaces, cf.{}~\cite{RojasVistoli}.

\begin{proposition}
  \label{prop:compat-pullback}
  Let $(\varphi,f)\colon (H\looparrowright Y)\to(G\looparrowright X)$ be a morphisms of smooth schemes with actions over $\mathbb{R}$, and let $\mathscr{L}\in{\rm Ch}^1_G(X)$ be a $G$-equivariant line bundle on $X$. Then the real cycle class map is compatible with the natural pullback morphisms in equivariant cohomology, i.e., the following diagram commutes:
  \[
  \xymatrix{
    {\rm H}^p_G(X,{\bf I}^q(\mathscr{L})) \ar[r]^-{{\rm cyc}_{\mathbb{R}}} \ar[d]_{f^\ast} & {\rm H}^p([G\backslash X](\mathbb{R}),\mathbb{Z}(\mathscr{L})) \ar[d]^{f^\ast_{\mathbb{R}}} \\
    {\rm H}^p_H(Y,{\bf I}^q(f^\ast \mathscr{L})) \ar[r]_-{{\rm cyc}_{\mathbb{R}}}  & {\rm H}^p([H\backslash Y](\mathbb{R}),\mathbb{Z}(f^\ast \mathscr{L}))  
  }
  \]
  In particular, the equivariant real cycle class map is compatible with restriction functors ${\rm Res}_G^H=(\varphi,{\rm id})^*$ for a group homomorphism $\varphi\colon H\to G$, and forgetful functors ${\rm Res}_G^1$.
\end{proposition}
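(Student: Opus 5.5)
The plan is to reduce the statement to the non-equivariant compatibility of Jacobson's real cycle class map with pullbacks, \cite{4real}*{Proposition~4.1}, by choosing compatible scheme approximations of the two Borel constructions.

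First fix admissible gadgets so that the morphism of stacks $[H\backslash Y]\to[G\backslash X]$ is realized by morphisms of approximations. Choose an admissible gadget $\rho=(V_i,U_i)$ for $G$ and an admissible gadget $\rho'=(V'_i,U'_i)$ for $H$. Then $(V_i\oplus V'_i, U_i\oplus V'_i\cup V_i\oplus U'_i)$, with $H$ acting on $V_i$ through $\varphi$, is again an admissible gadget for $H$; when $\varphi$ is injective one can even take $\rho$ itself restricted to $H$. The maps $(u,u',y)\mapsto (u,f(y))$ then descend to morphisms
\[
f_i\colon (U_i\oplus V'_i\cup V_i\oplus U'_i)\times_{/H}Y\to U_i\times_{/G}X ,
\]
after shrinking to the open locus where $u\in U_i$. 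This locus has complement of codimension growing with $i$, so it computes the same cohomology. These $f_i$ commute with the transition maps and with the canonical maps to the quotient stacks. The pullback of $\overline{\mathscr{L}}$ along $f_i$ is $\overline{f^*\mathscr{L}}$, and by the construction in \cite{dilorenzo:mantovani} the equivariant pullback $f^*$ is computed as $f_i^*$ for $i\gg0$, using independence of the gadget (Theorem~\ref{thm:real-cycle-class}(1)).

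Second, on each level $i$ (with codimension $\geq p+2$), the square formed by $f_i^*$, $(f_i)_{\mathbb{R}}^*$ and the two non-equivariant cycle class maps commutes by \cite{4real}*{Proposition~4.1}. This uses that $f_i$ is a morphism of smooth schemes and that the twists match. By Proposition~\ref{prop:real-admissible-gadget}, the real points of the approximations compute ${\rm Re}_{\mathbb{R}}$ of the stacks, compatibly with the map induced by $f$. Hence the topological stabilization isomorphisms in the proof of Theorem~\ref{thm:real-cycle-class} intertwine $(f_i)^*_{\mathbb{R}}$ with $f^*_{\mathbb{R}}$. Passing to the limit over $i$ gives the desired square. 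The restriction and forgetful statements are the special cases $(\varphi,{\rm id})$ and $H=1$.

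The main obstacle is the first step when $\varphi$ is not injective, so that $[H\backslash Y]\to[G\backslash X]$ is not representable. In that case one has to argue that the mixed gadget really is admissible and that cohomology on the open locus where the $G$-component is free agrees with the stack cohomology. This is a codimension estimate for $V_i\times U'_i\setminus U_i\times U'_i$. I would try this estimate first, and fall back on factoring $(\varphi,f)$ as the graph-type composite $(H\looparrowright Y)\to(H\times G\looparrowright G\times Y)$ (a quotient/induction equivalence, handled by Corollaries \ref{cor:quotient-equiv}, \ref{cor:induction-equiv}) followed by a morphism with injective group map.
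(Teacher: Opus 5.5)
Your argument is correct in substance, but it is organized differently from the paper's.

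\textbf{How the paper argues.} The paper's primary argument is formal. It regards ${\rm cyc}_{\mathbb{R}}$ as the map on represented cohomology induced by stable real realization, with twists handled via Thom spaces of $\mathscr{L}$. The square then commutes by functoriality of ${\rm Re}_{\mathbb{R}}$ applied to the morphism of motivic spaces $[H\backslash Y]\to[G\backslash X]$. This covers arbitrary $(\varphi,f)$ and any representable theory at once. The paper's second, concrete argument is exactly your level-$i$ comparison: a morphism of approximations, the algebraic and topological stabilization isomorphisms, and \cite{4real}*{Proposition~4.1}. However, it is stated only for representable morphisms of stacks.

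\textbf{What your route buys.} Your proposal extends this concrete argument to non-representable morphisms by a product gadget. The advantage is that it works directly with the definition of ${\rm cyc}_{\mathbb{R}}$ in Theorem~\ref{thm:real-cycle-class}. The formal argument instead tacitly identifies that map with the one induced by realizing the spectrum representing twisted ${\bf I}^q$-cohomology, which the paper takes for granted.

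\textbf{Repair 1: the free locus.} If $\ker\varphi\neq 1$, the open set $(U_i\oplus V'_i)\cup(V_i\oplus U'_i)$ is not $H$-free: for example, $(u,0)$ is fixed by all of $\ker\varphi$. Restricting to $u\in U_i$ does not help. You must use $U_i\times U'_i$, as your codimension estimate implicitly does. No admissibility check is needed, for three reasons.
\begin{enumerate}
\item The map $(V_i\times U'_i)\times_{/H}Y\to U'_i\times_{/H}Y$ is a vector bundle, so homotopy invariance holds both for ${\bf I}^q$-cohomology and for real points.
\item The closed complement $((V_i\setminus U_i)\times U'_i)\times_{/H}Y$ has codimension at least $\op{codim}(V_i\setminus U_i)$.
\item Hence the excision arguments from the proof of Theorem~\ref{thm:real-cycle-class} apply on both sides for $i\gg 0$, compatibly with the maps to $[H\backslash Y]$.
\end{enumerate}

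\textbf{Repair 2: drop the fallback.} Quotient/induction equivalences and morphisms with injective group homomorphism both induce injections on automorphism groups of geometric points. By contrast, $[H\backslash Y]\to[G\backslash X]$ kills $\ker\varphi\cap{\rm Stab}_H(y)$; for instance, $[H\backslash\pt]\to\pt$ kills everything. So no such factorization exists in general. Indeed, in your graph construction the second map uses the projection $H\times G\to G$, which is not injective.
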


\begin{remark}
  To clarify the notation $f^\ast\mathscr{L}$, note that there are also pullback maps on equivariant Chow groups, in particular, we have $f^\ast\colon{\rm Ch}^1_G(X)\to {\rm Ch}^1_H(Y)$. The $f^\ast\mathscr{L}$ in the proposition denotes the image of the class of $\mathscr{L}$ under this pullback map, i.e., $f^\ast\mathscr{L}\in{\rm Ch}^1_H(Y)$ is an $H$-equivariant line bundle on $Y$. 
\end{remark}

\begin{proof}
On the one hand, there is a rather formal argument for real realization (which indeed works for general $\mathbb{A}^1$-representable cohomology theories). Recall that we have a stable real realization functor ${\rm Re}_{\mathbb{R}}\colon \mathbf{SH}(\mathbb{R})\to\mathbf{SH}$, by taking geometric fixed points of the ${\rm C}_2$-equivariant realization. For a motivic spectrum $E\in\mathbf{SH}(\mathbb{R})$ and a motivic space $X$, this induces a morphism
\[
E^{**}(X)={\rm Hom}_{\mathbf{SH}(\mathbb{R})}(\Sigma^\infty_{\mathbb{P}^1}X_+, E(*)[*])\to {\rm Hom}_{\mathbf{SH}}(\Sigma^\infty({\rm Re}_{\mathbb{R}}X_+),{\rm Re}_{\mathbb{R}}(E)[*])={\rm Re}_{\mathbb{R}}(E)^*({\rm Re}_{\mathbb{R}}(X)).
\]
The contravariant functoriality for $E^{**}(X)$ is obtained by pre-composition with a morphism $f\colon Y\to X$ to get a morphism $E^{**}(X)\to E^{**}(Y)$. The same is true on the real-realization side, where we pre-compose with the morphism ${\rm Re}_{\mathbb{R}}(f)\colon {\rm Re}_{\mathbb{R}} Y\to {\rm Re}_{\mathbb{R}} X$. The claim is then really just functoriality of real realization, applied to the morphism $(\varphi,f)\colon [H\backslash Y]\to[G\backslash X]$ of quotient stacks, viewed as motivic spaces, as discussed in Section~\ref{sec:stacks-motivic-simplicial}. The line bundle twists are accounted for by taking the motivic space to be ${\rm Th}(\mathscr{L})$ instead of $[G\backslash X]$, for $\mathscr{L}$ a $G$-equivariant line bundle on $X$, and similarly for $[H\backslash Y]$. 

We can also give a more specific argument which works for equivariant cohomology with homotopy module coefficients. Restrict to the case where the morphism of quotient stacks $[H\backslash Y]\to [G\backslash X]$ is representable. In this case, for given $p$, there is a morphism of smooth varieties $g\colon U\to V$ and a commutative diagram
\[
\xymatrix{
  {\rm H}^p_G(X,{\bf I}^q(\mathscr{L}))\ar[r]^\cong \ar[d]_{f^\ast} & {\rm H}^p(V,{\bf I}^q(\mathscr{L})) \ar[d]^{g^\ast} \\
  {\rm H}^p_H(Y,{\bf I}^q(\mathscr{L}))\ar[r]_\cong & {\rm H}^p(U,{\bf I}^q(\mathscr{L}))
}
\]

We have a similar diagram on the topological side, using the stabilization for singular cohomology of realizations of admissible gadgets, as discussed in the proof of Theorem~\ref{thm:real-cycle-class}:
\[
\xymatrix{
  {\rm H}^p(V(\mathbb{R}),\mathbb{Z}(\mathscr{L})) \ar[r]^\cong \ar[d]_{g_{\mathbb{R}}^\ast} &   {\rm H}^p([G\backslash X](\mathbb{R}),\mathbb{Z}(\mathscr{L})) \ar[d]^{f_{\mathbb{R}}^\ast} \\
  {\rm H}^p(U(\mathbb{R}),\mathbb{Z}(\mathscr{L})) \ar[r]_\cong &  {\rm H}^p([H\backslash Y](\mathbb{R}),\mathbb{Z}(\mathscr{L}))
}
\]

Combining these diagrams with the pullback compatibility diagram for the non-equivariant real cycle class map in \cite{4real}*{Proposition~4.1}, the claim follows.
\end{proof}

\begin{remark}
  \begin{itemize}
    \item The formal argument in the proof shows that the compatibility with restriction functors indeed is true for any representable cohomology theory, in particular also in the more general situations discussed in Section~\ref{sec:stable-realization}.
    \item There is also a version of pullback compatibility for equivariant homology. This works more generally for singular varieties with action. However, it is restricted to morphisms $[H\backslash Y]\to [G\backslash X]$ which are representable and local complete intersection. 
  \end{itemize}
\end{remark}

\begin{proposition}
  \label{prop:compat-product}
  Let $G\looparrowright X$ be a smooth scheme with action over $\mathbb{R}$, and let $\mathscr{L}_1,\mathscr{L}_2\in{\rm Ch}^1_G(X)$ be $G$-equivariant line bundles on $X$. Then the real cycle class map is compatible with the equivariant intersection product, in the sense that the following diagram commutes:
  \[
  \xymatrix{
    {\rm H}^{p_1}_G(X,{\bf I}^{q_1}(\mathscr{L}_1))\otimes {\rm H}^{p_2}_G(X,{\bf I}^{q_2}(\mathscr{L}_2)) \ar[r]^-{{\rm cyc}_{\mathbb{R}}} \ar[d]_\cup & {\rm H}^{p_1}_{\rm sing}([G\backslash X](\mathbb{R}),\mathbb{Z}(\mathscr{L}_1))\otimes    {\rm H}^{p_2}_{\rm sing}([G\backslash X](\mathbb{R}),\mathbb{Z}(\mathscr{L}_2))  \ar[d]^\cup \\
    {\rm H}^{p_1+p_2}_G(X,{\bf I}^{q_1+q_2}(\mathscr{L}_1\otimes\mathscr{L}_2))\ar[r]_-{{\rm cyc}_{\mathbb{R}}} & {\rm H}^{p_1+p_2}_{\rm sing}([G\backslash X](\mathbb{R}),\mathbb{Z}(\mathscr{L}_1\otimes \mathscr{L}_2))
  }
  \]
\end{proposition}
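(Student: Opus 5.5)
The plan is to reduce the statement to the non-equivariant compatibility of Jacobson's real cycle class map with intersection products from \cite{4real}, by passing to a single smooth scheme approximation of the Borel construction. This is the same strategy as in the proof of Proposition~\ref{prop:compat-pullback}.

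Fix an admissible gadget $\rho=(V_j,U_j)_{j\geq1}$ for $G$. Choose $j$ large enough that $\op{codim}(V_j\setminus U_j)\geq p_1+p_2+2$, and write $U=U_j$ and $Y=U\times_{/G}X$. By the stabilization statements used in the proof of Theorem~\ref{thm:real-cycle-class}, pullback along $f\colon Y\to[G\backslash X]$ gives isomorphisms in all three relevant degrees $p_1$, $p_2$, $p_1+p_2$. This holds both on the ${\bf I}$-cohomology side, with the descended twists $\overline{\mathscr{L}_1}$, $\overline{\mathscr{L}_2}$ and $\overline{\mathscr{L}_1\otimes\mathscr{L}_2}\cong\overline{\mathscr{L}_1}\otimes\overline{\mathscr{L}_2}$, and on the singular cohomology side of $Y(\mathbb{R})$ versus $[G\backslash X](\mathbb{R})$, with the corresponding local systems. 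By definition, the equivariant ${\rm cyc}_{\mathbb{R}}$ is the composite of the first isomorphism, Jacobson's map on $Y$, and the inverse of the second isomorphism.

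The square then splits into three squares. First, the equivariant intersection product of \cite{dilorenzo:mantovani} is defined on approximations. So $f^\ast$ is multiplicative, i.e.\ it intertwines the equivariant cup product with the intersection product on $Y$. This is part of the construction in loc.~cit.{}, and it is also independent of the choice of approximation. Second, the cup product on singular cohomology is natural under pullback along the map $Y(\mathbb{R})\to{\rm Re}_{\mathbb{R}}[G\backslash X]\simeq\op{colim}_j(U_j\times_{/G}X)(\mathbb{R})$, compatibly with the pairing of local systems $\mathbb{Z}(\mathscr{L}_1)\otimes\mathbb{Z}(\mathscr{L}_2)\cong\mathbb{Z}(\mathscr{L}_1\otimes\mathscr{L}_2)$. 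Third, on the smooth real scheme $Y$ the non-equivariant compatibility of ${\rm cyc}_{\mathbb{R}}$ with intersection products holds with line bundle twists, by \cite{4real}. Pasting these three squares gives the claim.

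The main obstacle is bookkeeping of twists and signs. One has to check that the isomorphism $\overline{\mathscr{L}_1}\otimes\overline{\mathscr{L}_2}\cong\overline{\mathscr{L}_1\otimes\mathscr{L}_2}$ on $Y$ realizes to the standard identification of local systems. One also has to check that the convention for multiplying twisted ${\bf I}$-cohomology (the ${\bf K}^{\rm MW}$-algebra structure over graded line bundles) matches the convention used in \cite{4real}, so that no sign discrepancy arises. I would first verify this on the mod 2 level via the map $\mathscr{L}\mapsto\mathbb{Z}(\mathscr{L})$, which is multiplicative since ${\rm Ch}^1$ is additive under tensor product. After that I would appeal to the twisted product compatibility theorem of \cite{4real}.
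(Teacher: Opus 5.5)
Your proposal is correct and is essentially the paper's second (``more specific'') argument: pass to a single approximation $U\times_{/G}X$ that computes all degrees $\leq p_1+p_2$ compatibly with products on both sides, then invoke the twisted product compatibility of Proposition~4.3 in \cite{4real}. The paper also gives a purely formal argument, valid for any ring spectrum, using that stable real realization $\mathbf{SH}(\mathbb{R})\to\mathbf{SH}$ is symmetric monoidal; it does not go into the twist and sign bookkeeping you flag.
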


\begin{proof}
  Again, there is a formal argument which applies to any cohomology representable in $\mathbf{SH}(\mathbb{R})$ by a ring spectrum. For this, recall from Section~\ref{sec:stable-realization} that the real realization functor ${\rm Re}_{\mathbb{R}}\colon\mathbf{SH}(\mathbb{R})\to\mathbf{SH}$ is symmetric monoidal, i.e., compatible with the monoidal structures. For a motivic spectrum $E\in\mathbf{SH}(\mathbb{R})$ and a motivic space $X$, we can represent cohomology classes $\sigma,\tau$ by maps $\Sigma^\infty X_+\to E(*)[*]$, and then their product is represented by the map
  \[
  \Sigma_{\mathbb{P}^1}^\infty X_+\xrightarrow{\Delta}\Sigma_{\mathbb{P}^1}^\infty X_+\wedge\Sigma_{\mathbb{P}^1}^\infty X_+\xrightarrow{\sigma\wedge\tau} E(p)[q]\wedge E(p')[q']\xrightarrow{\mu} E(p+p')[q+q'].
  \]
  Since ${\rm Re}_{\mathbb{R}}$ is symmetric monoidal, the realization of this map is the composition
  \[
X_{\mathbb{R}}\xrightarrow{{\rm Re}_{\mathbb{R}}(\Delta)}X_{\mathbb{R}}\wedge X_{\mathbb{R}} \xrightarrow{{\rm Re}_{\mathbb{R}}(\sigma)\wedge{\rm Re}_{\mathbb{R}}(\tau)} E_{\mathbb{R}}[q]\wedge E_{\mathbb{R}}[q']\xrightarrow{{\rm Re}_{\mathbb{R}}(\mu)} E_{\mathbb{R}}[q+q'],
  \]
  where we denote $X_{\mathbb{R}}=\Sigma^\infty{\rm Re}_{\mathbb{R}}(X)_+\cong {\rm Re}_{\mathbb{R}}\left(\Sigma^\infty_{\mathbb{P}^1}X_+\right)$ and $E_{\mathbb{R}}={\rm Re}_{\mathbb{R}}(E)$.\footnote{As before, we blur notational distinction between unstable and stable real realization here.} The resulting composition represents the product of the realizations of $\sigma$ and $\tau$, using the structure of ring spectrum on ${\rm Re}_{\mathbb{R}}(E)$ given by ${\rm Re}_{\mathbb{R}}(\mu)$. 
  
  As for Proposition~\ref{prop:compat-pullback}, we also have a more specific argument, using the approximation of the quotient stack by smooth schemes. Using an admissible gadget $\rho$ for $G$, as in Section~\ref{sec:stacks-motivic-approx}, there is a smooth scheme $X_G^i(\rho)$ and a morphism $X_G^i(\rho)\to [G\backslash X]$ which induces isomorphisms
  \[
  {\rm H}^r_G(X,{\bf I}^s(\mathscr{L}_i))\xrightarrow{\cong} {\rm H}^r(X_G^i(\rho), {\bf I}^s(\mathscr{L}_i))
  \]
  in degrees $r\leq p_1+p_2$, and is compatible with the intersection product in these degrees. By the definition of the equivariant real cycle class map, the commutativity of the diagram claimed in the proposition statement is equivalent to the commutativity of the following diagram:
  \[
  \xymatrix{
    {\rm H}^{p_1}(X_G^i(\rho),{\bf I}^{q_1}(\mathscr{L}_1))\otimes {\rm H}^{p_2}(X_G^i(\rho),{\bf I}^{q_2}(\mathscr{L}_2)) \ar[d]_-{{\rm cyc}_{\mathbb{R}}} \ar[r]^-\cup &
 {\rm H}^{p_1+p_2}(X_G^i(\rho),{\bf I}^{q_1+q_2}(\mathscr{L}_1\otimes\mathscr{L}_2))\ar[d]^-{{\rm cyc}_{\mathbb{R}}}    \\ {\rm H}^{p_1}_{\rm sing}(X_G^i(\rho)(\mathbb{R}),\mathbb{Z}(\mathscr{L}_1))\otimes    {\rm H}^{p_2}_{\rm sing}(X_G^i(\rho)(\mathbb{R}),\mathbb{Z}(\mathscr{L}_2))  \ar[r]_-\cup  & {\rm H}^{p_1+p_2}_{\rm sing}(X_G^i(\rho)(\mathbb{R}),\mathbb{Z}(\mathscr{L}_1\otimes \mathscr{L}_2))
  }
  \]
  But the commutativity of this diagram follows from \cite{4real}*{Proposition~4.3}.
\end{proof}

\begin{remark}
  \begin{itemize}
  \item 
    Again, the formal argument works for other cohomology theories as well, such as variants of hermitian K-theory or cobordism spectra like ${\rm MSL}$ and ${\rm MSp}$.
  \item There is a similar version for external/cross products of cohomology classes:
    \[
      {\rm H}^{p_1}_G(X,{\bf I}^{q_1}(\mathscr{L}_1))\otimes {\rm H}^{p_2}_G(Y,{\bf I}^{q_2}(\mathscr{L}_2))\to {\rm H}^{p_1+p_2}_G(X\times Y,{\bf I}^{q_1+q_2}(\mathscr{L}_1\boxtimes\mathscr{L}_2))
    \]
    As is well-known, the cup product is the pullback along the diagonal $\Delta\colon X\to X\times X$ of the cross product. Similarly, the cross product of cohomology classes on $X$ and $Y$ can be defined as cup product of the pullbacks along the projections $X\xleftarrow{{\rm pr}_X}X\times Y\xrightarrow{{\rm pr}_Y}Y$. In particular, the statement of Proposition~\ref{prop:compat-product} is equivalent to the corresponding statement for the cross product, using the compatibility with pullbacks from Proposition~\ref{prop:compat-pullback}. 
  \end{itemize}
\end{remark}

\begin{proposition}
  \label{prop:compat-pushforward}
  Let $(\varphi,f)\colon (H\looparrowright Y)\to (G\looparrowright X)$ be a morphism of smooth schemes with actions over $\mathbb{R}$ such that the induced morphism $f\colon [H\backslash Y]\to [G\backslash X]$ of quotient stacks is a representable closed immersion. Let $\mathscr{L}\in{\rm Ch}^1_G(X)$ be a $G$-equivariant line bundle on $X$. Then the real cycle class map is compatible with proper pushforward in the sense that the following diagram commutes:
  \[
  \xymatrix{
    {\rm H}^p_H(Y,{\bf I}^q(f^*\mathscr{L}\otimes\omega_f))\ar[r]^-{{\rm cyc}_{\mathbb{R}}} \ar[d]_{f_*} & {\rm H}^p([H\backslash Y](\mathbb{R}),\mathbb{Z}(f^*\mathscr{L}\otimes\omega_f)) \ar[d]^{f_*}\\
   {\rm H}^{p+d}_G(X,{\bf I}^q(\mathscr{L}))\ar[r]_-{{\rm cyc}_{\mathbb{R}}}  & {\rm H}^{p+d}([G\backslash X](\mathbb{R}),\mathbb{Z}(\mathscr{L})) 
  }
  \]
\end{proposition}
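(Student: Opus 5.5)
The plan is to follow the pattern of Propositions~\ref{prop:compat-pullback} and \ref{prop:compat-product}: reduce to the non-equivariant compatibility of the real cycle class map with pushforwards along closed immersions, \cite{4real}, by passing to smooth scheme approximations of the two quotient stacks. The purely formal stable-homotopy argument is less convenient here, because pushforwards are not given by precomposition. So I will work with an admissible gadget throughout.

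\textbf{Step 1: compatible approximations.} Representability of $f\colon[H\backslash Y]\to[G\backslash X]$ lets me realize $f$ on approximations. Fix an admissible gadget $\rho=(V_i,U_i)$ for $G$ and set $B_i=U_i\times_{/G}X\to[G\backslash X]$. The fiber product $B_i\times_{[G\backslash X]}[H\backslash Y]=:C_i$ is then a scheme, and $C_i\to B_i$ is a closed immersion of smooth schemes of codimension $d$. Concretely, I expect $C_i\cong U_i\times_{/H}Y$ via the induction equivalence of Proposition~\ref{prop:quotient-induction-stacks}~(2), so that the $C_i$ form an admissible-gadget approximation of $[H\backslash Y]$. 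The normal bundle of $C_i$ in $B_i$ should be the descent of the equivariant normal bundle, so the twist $f^*\mathscr{L}\otimes\omega_f$ on $[H\backslash Y]$ pulls back to $\overline{f^*\mathscr{L}}\otimes\omega_{C_i/B_i}$. This uses Example~\ref{ex:quotient-stack-morphism-properties}, together with the fact that everything is base-changed from the stack.

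\textbf{Step 2: the two pushforwards on approximations.} For $i$ large, so that the codimension of $V_i\setminus U_i$ is at least $p+d+2$, the following two identifications hold.
\begin{itemize}
\item The equivariant pushforward $f_*$ is identified, under the stabilization isomorphisms, with the ordinary pushforward ${\rm H}^p(C_i,{\bf I}^q(\cdots))\to{\rm H}^{p+d}(B_i,{\bf I}^q(\overline{\mathscr{L}}))$. This is how it is defined in \cite{dilorenzo:mantovani}.
\item The topological pushforward (Gysin map) $f_*$ on ${\rm H}^*([H\backslash Y](\mathbb{R}))$ is compatible with the Gysin map for the closed embedding $C_i(\mathbb{R})\hookrightarrow B_i(\mathbb{R})$. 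To see this I would take it as the definition, noting that the real points of the fiber square over $B_i$ are again a fiber square. I also need these Gysin maps to be compatible with the transition maps $B_i\hookrightarrow B_{i+1}$, which follows from base change for Gysin maps along transverse squares; the squares are transverse because $C_{i}=C_{i+1}\times_{B_{i+1}}B_i$.
\end{itemize}

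\textbf{Step 3: conclude.} Stacking the two squares with the non-equivariant pushforward compatibility of \cite{4real} (the twisted proper pushforward along a closed immersion) gives the claimed commutative diagram.

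The main obstacle is the bookkeeping of twists: I must check that the local system $\mathbb{Z}(\omega_{C_i/B_i})$ used in the topological Gysin map for $C_i(\mathbb{R})\subset B_i(\mathbb{R})$ is the real-realization orientation local system of the normal bundle. It must match the convention of \cite{4real}, and it must descend compatibly to the stack level, independently of $i$. I would handle this by first proving the mod~2 equivariant line bundle realization ${\rm Ch}^1_G\to{\rm H}^1(-,\mathbb{F}_2)$ is compatible with pullback to approximations. Then I would use the Thom isomorphism compatibility of \cite{4real} on a tubular neighbourhood of $C_i(\mathbb{R})$.
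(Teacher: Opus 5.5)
Your proposal is correct and follows essentially the same route as the paper: approximate the representable closed immersion by $g\colon Y\times_{/H}U_j\hookrightarrow X\times_{/G}U_j$, identify both the algebraic pushforward and the topological pushforward with those of $g$ via the stabilization isomorphisms, and conclude from the non-equivariant pushforward compatibility of \cite{4real}*{Theorem~4.7}. Your extra care about matching the twist $\omega_f$ with the normal bundle of $g$, and about compatibility with the transition maps, fills in details that the paper leaves implicit.
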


\begin{proof}
  As in the proof of Proposition~\ref{prop:compat-pullback}, we use the representability of the morphism of $f$ to approximate $f$ with the help of an admissible gadgets $\rho=(U_j,V_j)_{j\geq 1}$. For the given $p$, there is a closed immersion of smooth varieties $g\colon (Y\times_{/H}U_j)\to (X\times_{/G}U_j)$ and a commutative diagram
  \[
  \xymatrix{
    {\rm H}^p_H(Y,{\bf I}^q(f^\ast(\mathscr{L}\otimes \omega_f))) \ar[r]^-\cong \ar[d]_{f_*} & {\rm H}^p(Y\times_{/H}U_j,{\bf I}^q(f^\ast(\mathscr{L}\otimes \omega_g))) \ar[d]^{g_*} \\
    {\rm H}^{p+d}_G(X,{\bf I}^q(\mathscr{L})) \ar[r]_-\cong & {\rm H}^{p+d}(X\times_{/G}U_j,{\bf I}^q(\mathscr{L}))
  }
  \]
  There is a similar diagram on the topological side, using the stabilization for singular cohomology of realizations of admissible gadgets, as discussed in the proof of Theorem~\ref{thm:real-cycle-class}. 
  \[
  \xymatrix{
    {\rm H}^p((Y\times_{/H}U_j)(\mathbb{R}),\mathbb{Z}(f^\ast\mathscr{L}\otimes \omega_g)) \ar[r]^-\cong \ar[d]_{g^{\mathbb{R}}_*} & {\rm H}^p([H\backslash Y](\mathbb{R}),\mathbb{Z}(f^\ast\mathscr{L}\otimes \omega_f)) \ar[d]^{f^{\mathbb{R}}_*} \\
    {\rm H}^{p+d}((X\times_{/G}U_j)(\mathbb{R}),\mathbb{Z}(\mathscr{L})) \ar[r]_-\cong & {\rm H}^{p+d}([G\backslash X](\mathbb{R}),\mathbb{Z}(\mathscr{L})).
  }
  \]
  Combining these diagrams with the pushforward compatibility diagram for the non-equivariant real cycle class map in \cite{4real}*{Theorem~4.7}, the claim follows.
\end{proof}

\begin{remark}
 For the compatibility with proper pushforwards, it is not so clear if there is similarly general formal argument as for pullbacks and intersection products. Essentially, this is a question of whether the real realization functors of Section~\ref{sec:unstable-real-realization} are compatible with Becker--Gottlieb transfers. We are not aware of such a result in the available literature.
\end{remark}

With the above compatibility results at hand, several further consequences can be deduced. We list the compatibility with quotient and induction equivalence. 

\begin{corollary}
  \label{cor:quotient-equiv}
  Let $G\looparrowright X$ be a smooth variety with action and let $N\subset G$ be a closed normal subgroup such that the restricted action $N\looparrowright X$ is free. Then the equivariant cycle class map is compatible with the quotient equivalence in the sense that the following diagram commutes:
  \[
  \xymatrix{
    {\rm H}^*_{G/N}(N\backslash X,{\bf I}^q(\mathscr{L}))\ar[r]^-{{\rm cyc}_{\mathbb{R}}} \ar[d]_\cong & {\rm H}^*_{\rm sing}([(G/N)\backslash(N\backslash X)](\mathbb{R}),\mathbb{Z}(\mathscr{L})) \ar[d]^\cong \\
    {\rm H}^*_{G}(X,{\bf I}^q(p^*\mathscr{L})) \ar[r]_-{{\rm cyc}_{\mathbb{R}}} & {\rm H}^*_{\rm sing}([G\backslash X](\mathbb{R}),\mathbb{Z}(p^*\mathscr{L}))
  }
  \]
\end{corollary}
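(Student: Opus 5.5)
The plan is to reduce the statement to the pullback compatibility of Proposition~\ref{prop:compat-pullback}. The key point is that the quotient equivalence is itself a pullback. The morphism of varieties with action $(\pi,p)\colon (G\looparrowright X)\to (G/N\looparrowright N\backslash X)$ induces the isomorphism of quotient stacks $[G\backslash X]\cong[(G/N)\backslash(N\backslash X)]$ of Proposition~\ref{prop:quotient-induction-stacks}~(1). The isomorphism of Proposition~\ref{prop:quotient-equiv} on ${\bf I}$-cohomology is, by construction, the pullback $(\pi,p)^*$ along this morphism. Likewise, the right vertical arrow in the diagram is the pullback in singular cohomology along the induced map of real points $[G\backslash X](\mathbb{R})\to[(G/N)\backslash(N\backslash X)](\mathbb{R})$, with the local system $\mathbb{Z}(\mathscr{L})$ pulling back to $\mathbb{Z}(p^*\mathscr{L})$.

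With both vertical arrows identified as pullbacks along $(\pi,p)$, commutativity of the square is exactly the instance of Proposition~\ref{prop:compat-pullback} for the morphism $(\varphi,f)=(\pi,p)$. Two small checks are needed here. First, $N\backslash X$ must be smooth, so that the proposition applies. This follows because $X\to N\backslash X$ is an $N$-torsor, which is smooth and surjective since $N$ is smooth, and $X$ is smooth. Second, the equivariant line bundle $p^*\mathscr{L}\in{\rm Ch}^1_G(X)$ must be the pullback of $\mathscr{L}$ in the sense of the remark following Proposition~\ref{prop:compat-pullback}.

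The step needing most care is that the right vertical map really is an isomorphism and agrees with the topological pullback. Since the map of stacks is an isomorphism, its real realization is a homotopy equivalence, which can be seen through the nerve description of Proposition~\ref{prop:real-simplicial}: an equivalence of topological groupoids of real points. The pullback on singular cohomology is therefore an isomorphism. One also has to check that the identification ${\rm H}^p([G\backslash X](\mathbb{R}),-)\cong{\rm H}^p((U\times_{/G}X)(\mathbb{R}),-)$ used to define ${\rm cyc}_{\mathbb{R}}$ is compatible with this map.

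For that compatibility, the plan is to argue at the level of approximations. Take an admissible gadget $\rho$ for $G$. The pair $(U_j/N, V_j)$ is not itself a gadget, but $U_j\times_{/G}X\cong (U_j/N)\times_{/(G/N)}(N\backslash X)$ in general fails only up to choosing gadgets. The cleanest route is to bypass this entirely: use independence of the cycle class map from the choice of approximation, Theorem~\ref{thm:real-cycle-class}~(1), together with the formal real-realization argument in the proof of Proposition~\ref{prop:compat-pullback}. That argument applies to any morphism of motivic spaces, in particular to this isomorphism of stacks. If a direct check is preferred, one can instead use the scheme $U_j\times_{/G}X$ for both stacks, since by Proposition~\ref{prop:quotient-induction-stacks} it maps to both compatibly. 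Then both rows compute ${\rm cyc}_{\mathbb{R}}$ on the same smooth scheme and the square commutes trivially.
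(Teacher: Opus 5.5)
Your proposal is correct and is the same argument as the paper's: the quotient equivalence is the pullback along $(\pi,p)\colon (G\looparrowright X)\to (G/N\looparrowright N\backslash X)$, so commutativity of the square is an instance of Proposition~\ref{prop:compat-pullback}. Your extra checks on smoothness of $N\backslash X$ and on compatibility with approximations go beyond what the paper spells out. They are harmless but not needed, since that proposition's proof already handles the approximations; the muddled side remark about $(U_j/N,V_j)$ can simply be dropped.
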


\begin{proof}
  The isomorphism of stacks in the quotient equivalence of Proposition~\ref{prop:quotient-equiv} is induced by pullback along $(\pi,p)\colon (G\looparrowright X)\to (G/N\looparrowright N\backslash X)$. The claim then follows from the pullback compatibility in Proposition~\ref{prop:compat-pullback}. 
\end{proof}

\begin{corollary}
  \label{cor:induction-equiv}
    Let $i\colon H\hookrightarrow G$ be the inclusion of a smooth closed subgroup into a smooth affine algebraic group $G$ and let $H\looparrowright X$ be a smooth variety with action such that the quotient $G\times_{H}X$ of the diagonal $H$-action on $G\times X$ exists as a smooth quasi-projective variety. Then the equivariant real cycle class map is compatible with the induction equivalence in the sense that the following diagram commutes: 
  \[
  \xymatrix{
    {\rm H}^*_{G}(G\times_{/H}X,{\bf I}^q(\mathscr{L}))\ar[r]^-{{\rm cyc}_{\mathbb{R}}} \ar[d]_\cong & {\rm H}^*_{\rm sing}([G\backslash(G\times_{/H} X)](\mathbb{R}),\mathbb{Z}(\mathscr{L})) \ar[d]^\cong \\
    {\rm H}^*_{H}(X,{\bf I}^q(p^*\mathscr{L})) \ar[r]_-{{\rm cyc}_{\mathbb{R}}} & {\rm H}^*_{\rm sing}([H\backslash X](\mathbb{R}),\mathbb{Z}(p^*\mathscr{L}))
  }
  \]
\end{corollary}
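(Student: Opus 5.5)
The plan is to argue exactly as for Corollary~\ref{cor:quotient-equiv}: reduce the statement to the pullback compatibility of Proposition~\ref{prop:compat-pullback}. The vertical isomorphism on the left is the induction equivalence of Proposition~\ref{prop:induction-equiv}. By construction it is the pullback along the morphism of varieties with action
\[
(i,s)\colon (H\looparrowright X)\to (G\looparrowright G\times_{/H}X),\qquad x\mapsto [1,x].
\]
By Proposition~\ref{prop:quotient-induction-stacks}~(2), the induced morphism of quotient stacks $[H\backslash X]\to[G\backslash(G\times_{/H}X)]$ is an isomorphism, and in particular it is representable. The equivariant line bundle $p^*\mathscr{L}$ on $X$ is precisely the pullback of $\mathscr{L}$ along $s$, viewed in ${\rm Ch}^1_H(X)$.

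First I would check that the right vertical isomorphism is ${\rm H}^*_{\rm sing}$ of the real realization of the same stack isomorphism. By functoriality of ${\rm Re}_{\mathbb{R}}$ and the identification ${\rm Re}_{\mathbb{R}}[G\backslash X]\simeq\mathcal{N}([G\backslash X](\mathbb{R}))$ from Proposition~\ref{prop:real-simplicial}, the map $f^*_{\mathbb{R}}$ appearing in Proposition~\ref{prop:compat-pullback} is induced by the equivalence of topological groupoids
\[
[H\backslash X](\mathbb{R})\simeq [G\backslash(G\times_{/H}X)](\mathbb{R}),
\]
so it is itself an isomorphism. The local systems match, since $\mathbb{Z}(p^*\mathscr{L})$ is the pullback of $\mathbb{Z}(\mathscr{L})$: the mod 2 class of a line bundle is natural under pullback.

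With these identifications, the square in the corollary is literally the square of Proposition~\ref{prop:compat-pullback} for $(\varphi,f)=(i,s)$. Commutativity therefore follows directly from that proposition.

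The only point needing care is that both vertical maps really are the pullbacks along $(i,s)$, rather than some inverse or other normalization. On the algebraic side I would check this with an admissible gadget $\rho$ for $G$, which is also one for $H$. The approximation map
\[
U_j\times_{/H}X\to U_j\times_{/G}(G\times_{/H}X)
\]
is then an isomorphism of smooth schemes. Concretely this is \cite{krishna}*{Corollary~2.7}, and it is compatible with the induced line bundles. Applying Jacobson's cycle class map levelwise, together with its naturality under isomorphisms \cite{4real}*{Proposition~4.1}, gives the claim on approximations. The topological side then follows by the stabilization argument in the proof of Theorem~\ref{thm:real-cycle-class}.
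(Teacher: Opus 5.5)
Your proposal is correct and is the paper's argument: the paper also notes that the induction isomorphism is pullback along $(H\looparrowright X)\to(G\looparrowright G\times_{/H}X)$ and then invokes the pullback compatibility of Proposition~\ref{prop:compat-pullback}. Your extra checks fill in details the paper leaves implicit: that the right vertical map is the realization of the same stack isomorphism, that the local systems $\mathbb{Z}(p^*\mathscr{L})$ and $\mathbb{Z}(\mathscr{L})$ match, and the levelwise verification via Krishna's isomorphism of approximations.
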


\begin{proof}
  The isomorphism of stacks in the induction equivalence of Proposition~\ref{prop:induction-equiv} is induced by pullback along $(H\looparrowright X)\to (G\looparrowright G\times_{/H}X)$. The claim then follows from the pullback compatibility in Proposition~\ref{prop:compat-pullback}. 
\end{proof}

The equivariant real cycle class map is also compatible with the localization sequence. The formulations are along the lines of \cite{4real}*{Proposition~4.8 and Corollary~4.13} which the reader will have no difficulties spelling out.

\subsection{Variations: equivariant cycle class map on Chow groups of real schemes}

In this subsection, we will state a mod 2 version of the main results on equivariant real cycle class maps. 

\begin{theorem}
  \label{thm:borel-haefliger-equivariant}
  Let $G\looparrowright X$ be a smooth scheme with action by a smooth affine algebraic group over $\mathbb{R}$. The identification ${\bf K}^{\rm M}_q/2\cong\mathscr{H}_\et^q(\mu_2)$ from Voevodsky's solution of the Milnor conjecture induces an equivariant mod 2 cycle class map
  \[
  {\rm H}^p_G(X,{\bf K}^{\rm M}_q/2)={\rm H}^p([G\backslash X],{\bf K}^{\rm M}_q/2)\to {\rm H}^p({\rm Re}_{\mathbb{R}}[G\backslash X],\mathbb{F}_2)
  \]
  which satisfies the following properties:
  \begin{enumerate}
  \item The equivariant mod 2 cycle class map only depends on the stack $[G\backslash X]$.
  \item The case of the trivial group $G=\{1\}$ recovers the non-equivariant mod 2 cycle class map of Borel--Haefliger \cite{borel:haefliger} and Colliot-Th\'el{\`e}ne--Parimala \cite{colliot-thelene:parimala}.
  \item The equivariant mod 2 cycle class map is an isomorphism for big enough $q$.
  \end{enumerate}
\end{theorem}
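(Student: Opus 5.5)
The plan mirrors the proof of Theorem~\ref{thm:real-cycle-class}, with the signature morphism replaced by the mod 2 comparison map. First I recall the non-equivariant input. For a smooth real scheme $Y$, Voevodsky's identification ${\bf K}^{\rm M}_q/2\cong\mathscr{H}^q_\et(\mu_2)$, composed with restriction to the real-\'etale site, gives a morphism of Zariski sheaves ${\bf K}^{\rm M}_q/2\to \mathscr{H}^q_\et(\mu_2)\to a_\ret\mathbb{F}_2$. On a real closed field $R$ this is the map $k^{\rm M}_q(R)\to\mathbb{F}_2$ sending $\{-1\}^q$ to $1$. Taking Zariski cohomology, and identifying ${\rm H}^p_{\rm Zar}(Y,a_\ret\mathbb{F}_2)\cong{\rm H}^p_\ret(Y,\mathbb{F}_2)\cong{\rm H}^p_{\rm sing}(Y(\mathbb{R}),\mathbb{F}_2)$ by Scheiderer's and Delfs' comparison theorems, gives a map ${\rm H}^p(Y,{\bf K}^{\rm M}_q/2)\to{\rm H}^p(Y(\mathbb{R}),\mathbb{F}_2)$. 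For $p=q$, this is the Borel--Haefliger cycle class map on ${\rm Ch}^p(Y)$, by \cite{colliot-thelene:parimala}. It is also compatible with pullbacks along morphisms of smooth schemes, since it is induced by a morphism of sheaves natural in $Y$. Moreover, $\op{colim}_q$ of multiplication by $\{-1\}$ on ${\bf K}^{\rm M}_*/2$ is identified with $a_\ret\mathbb{F}_2$, so the map is an isomorphism for $q\gg 0$ (mod 2 analogue of Jacobson's theorem, cf.\ \cite{jacobson}). If needed, the bound is $q>\dim Y$, because $\{-1\}$-multiplication ${\rm H}^p(Y,{\bf K}^{\rm M}_q/2)\to{\rm H}^p(Y,{\bf K}^{\rm M}_{q+1}/2)$ is an isomorphism for $q\geq\dim Y+1$ by the Milnor conjecture and the corresponding real-\'etale cohomological dimension bound.

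Next I pass to the equivariant setting. Fix an admissible gadget $\rho=(V_j,U_j)$ with $\op{codim}(V_j\setminus U_j)\geq p+2$. The di~Lorenzo--Mantovani stabilization gives ${\rm H}^p_G(X,{\bf K}^{\rm M}_q/2)\cong{\rm H}^p(U_j\times_{/G}X,{\bf K}^{\rm M}_q/2)$. The topological stabilization from the proof of Theorem~\ref{thm:real-cycle-class} holds verbatim with $\mathbb{F}_2$ coefficients; it is an excision argument using real codimension $\geq p+2$ and a semialgebraic triangulation. It gives ${\rm H}^p((U_j\times_{/G}X)(\mathbb{R}),\mathbb{F}_2)\cong{\rm H}^p({\rm Re}_{\mathbb{R}}[G\backslash X],\mathbb{F}_2)$, using Proposition~\ref{prop:real-admissible-gadget}. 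I then define the equivariant map as the composite of these two isomorphisms with the non-equivariant map on $U_j\times_{/G}X$.

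It remains to show this is well defined and has the stated properties. Naturality under pullback along the transition maps $U_j\times_{/G}X\to U_{j+1}\times_{/G}X$ makes the map compatible across $j$. The standard double-fibration argument, comparing two gadgets via $(U\times U')\times_{/G}X$, then gives independence of the gadget and hence claim (1). Claim (2) is immediate, since $U\times_{/G}X=X$ when $G$ is trivial. Claim (3) follows because on a fixed approximation $Y=U_j\times_{/G}X$, which depends on $p$ but not on $q$, the non-equivariant map is an isomorphism once $q>\dim Y$.

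The main obstacle is step one: making the mod 2 real-\'etale comparison precise and natural at the sheaf level, and getting the stability range in $q$. I would cite Jacobson's treatment together with the Milnor conjecture, and invoke Scheiderer's real-\'etale comparison rather than reprove it.
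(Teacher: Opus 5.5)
Your proposal is correct and follows the paper's own argument. The paper only remarks that the proof of Theorem~\ref{thm:real-cycle-class} carries over with minor modifications, and without line bundle twists since mod 2 Milnor K-cohomology is oriented; that is exactly the construction you give: approximation by admissible gadgets, stabilization on both sides, independence via pullback compatibility, and reduction of (2) and (3) to the non-equivariant case. If anything, you give more detail than the paper on the non-equivariant sheaf-level input and on the range of $q$, and your range agrees with Remark~\ref{rem:big-enough}.
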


\begin{remark}
  \begin{itemize}
  \item For $p=q$, this is the mod 2 cycle class map
    \begin{equation}
      {\rm Ch}^p_G(X)\to {\rm H}^p({\rm Re}_{\mathbb{R}}[G\backslash X],\mathbb{F}_2)
    \end{equation}
    of Borel--Haefliger \cite{borel:haefliger}. The more general setting and the isomorphism for big enough $q$ follows from work of Colliot-Th\'el\`ene and Parimala \cite{colliot-thelene:parimala}, cf. also \cite{jacobson}.
  \item Concerning which $q$ is big enough, Remark~\ref{rem:big-enough} applies.
  \item Again, there is also a Borel--Moore homology version of the result.
  \item The proof of Theorem~\ref{thm:real-cycle-class} carries over with only minor modifications to give a proof of Theorem~\ref{thm:borel-haefliger-equivariant}. There are no line bundles twists since the mod 2 Milnor K-cohomology is oriented.
  \item The mod 2 equivariant cycle class map enjoys the same compatibilities as the real cycle class map. As before, the proofs of Propositions~\ref{prop:compat-pullback}, \ref{prop:compat-product} and \ref{prop:compat-pushforward} carry over with only minor adjustments. 
  \item It seems likely that there is an equivariant version of the Benoist--Wittenberg cycle class map on Chow groups of real schemes \cite{benoist:wittenberg} which could be used to produce an equivariant cycle class map on equivariant Chow--Witt groups as in \cite{4real}. We will not pursue this direction here.
  \end{itemize}
\end{remark}

\section{Bitorsors and equivalences of quotient stacks}
\label{sec:bitorsors}

In this section, we will discuss some invariance properties of classifying stacks $[G\backslash *]$. The results in our paper show that many constructions we are interested in -- such as the equivariant or real realizations, cohomology with homotopy-module coefficients, as well as the real cycle class map -- only depend on the equivalence class of a quotient stack. In the special case of a classifying stack $[G\backslash *]$, one can then wonder if it is possible to change the group $G$ without changing the equivalence class of the stack.

One particular instance is the formula for the real realization ${\rm Re}_{\mathbb{R}}[G\backslash *]$ of the classifying stack of a real algebraic group $G$ in Proposition~\ref{prop:bg-formula}: the components of ${\rm Re}_{\mathbb{R}}[G\backslash *]\cong{\rm B}G(\mathbb{C})^{\rm hC_2}$ are classifying spaces of strong real forms of $G(\mathbb{R})$ with the same central invariant as $G$. As a consequence, these strong real forms will all have equivalent real realizations. In this section, we will show that this is not a coincidence; in fact, different strong real forms with the same central invariant actually have equivalent classifying stacks, thus explaining the observation from the formula in Proposition~\ref{prop:bg-formula}. This will also imply that the Witt-sheaf cohomology rings of ${\rm B}_\et G$ for different strong real forms with the same central invariant will be isomorphic, cf. the statement of Corollary~\ref{cor:cohom-inv-strong-forms}.

Essentially, what we want to show in this section is that groups $G,G'$ have equivalent quotient stacks if and only if they are strongly inner forms of each other. For the definition of strongly inner forms, see Definition~\ref{def:strong-inner-forms} and the discussion in Remark~\ref{rem:comparison-strong-inner}; for background on the notion of bitorsors which appears in the proof, cf.{}~\cite{GF1} and \cite{giraud:coh_non_ab}. 

\begin{theorem}[Giraud]
  \label{thm:strong-inner--eq_of_groupoids} 
  Let $G$ and $H$ be smooth affine algebraic groups over a field $F$. There is natural equivalence between the groupoid $\mathrm{Fun}([G\backslash\pt], [H\backslash\pt])^{\simeq}$ of invertible morphisms of stacks and natural transformations (automatically isomorphisms) between them, and the groupoid of $(H,G)$-bitorsors\footnote{Note that a $(H,G)$-bitorsor is a left $H$-torsor and a right $G$-torsor.} over the field $F$ and isomorphisms between them. This natural equivalence is given on objects by mapping an equivalence $\Phi\colon[G\backslash\pt] \ra[H\backslash\pt]$ to its evaluation $\Phi_F(G_l)$ on the trivial left $G$-torsor $G_l$ over $\spec F$. A quasi-inverse is given on objects by mapping a $(H,G)$-bitorsor $P$ to the functor $\Phi_P:=P\times_{/G}(-)$.
\end{theorem}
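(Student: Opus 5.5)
We construct the two functors explicitly and check that both composites are naturally isomorphic to the identity. Bitorsors are handled throughout with the descent formalism already used in Section~\ref{sec:real-pts-fp-gpd} (cf.\ Theorem~\ref{thm:etale-descent}). The key observation is that $[G\backslash\pt]$ is the stackification of the one-object groupoid with automorphism group sheaf $G$, namely the prestack ${\rm E}_G(\pt)$ of Proposition~\ref{prop:simplicial-comparison}. Its distinguished object is the trivial torsor $G_l$ over $\spec F$, and $\underline{\rm Aut}(G_l)\cong G$ acts by right multiplication.

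\textbf{From equivalences to bitorsors.} Let $\Phi\colon[G\backslash\pt]\to[H\backslash\pt]$ be an equivalence. Then $P:=\Phi_F(G_l)$ is an $H$-torsor over $F$, so it carries a left $H$-action. Functoriality of $\Phi$ on automorphisms gives a homomorphism of sheaves $G\cong\underline{\rm Aut}(G_l)\to\underline{\rm Aut}_H(P)$. This defines a right $G$-action on $P$ commuting with the left $H$-action. Because $\Phi$ is fully faithful after base change to every $T$, this homomorphism is an isomorphism. Essential surjectivity holds étale locally, so $P$ is also étale locally trivial as a right $G$-torsor; the right-torsor condition $P\times G\cong P\times_F P$ can then be checked after passing to a separable closure. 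Hence $P$ is an $(H,G)$-bitorsor. A natural isomorphism $\Phi\Rightarrow\Phi'$ evaluated at $G_l$ gives an $H$-equivariant isomorphism $P\to P'$. Naturality with respect to $\underline{\rm Aut}(G_l)$ shows that this isomorphism is also $G$-equivariant. So we obtain a functor $\mathrm{Fun}([G\backslash\pt],[H\backslash\pt])^{\simeq}\to\mathrm{Bitors}(H,G)$.

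\textbf{From bitorsors to equivalences.} Given a bitorsor $P$, define $\Phi_P(Q)=P\times_{/G}Q$, where $Q$ is a left $G$-torsor over $T$ and $G$ acts on $P_T\times_T Q$ by $g\cdot(p,q)=(pg^{-1},gq)$. The result is a left $H$-torsor. To see this, use the étale descent of Theorem~\ref{thm:etale-descent}: locally $Q\cong G_T$, and then $P\times_{/G}G\cong P$. Functoriality in $T$ holds because balanced products commute with base change. The opposite bitorsor $P^{\rm op}$, which is a $(G,H)$-bitorsor, provides a quasi-inverse, via the canonical isomorphisms $P^{\rm op}\times_{/H}P\cong G$ and $P\times_{/G}P^{\rm op}\cong H$ as bitorsors. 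These isomorphisms come from the map $(p,p')\mapsto$ the unique $g$ with $p'=pg$. Hence $\Phi_P$ is an equivalence, and bitorsor isomorphisms induce natural isomorphisms.

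\textbf{Comparing the composites.} In one direction, $\Phi_P(G_l)=P\times_{/G}G_l\cong P$ canonically as bitorsors, since the right $G$-action on $P\times_{/G}G_l$ through $\underline{\rm Aut}(G_l)$ matches the one on $P$. In the other direction, given $\Phi$ with $P=\Phi_F(G_l)$, we build a natural isomorphism $\Phi_P\Rightarrow\Phi$ by sending $[p,q]\mapsto\Phi(q)(p)$. Here a section $q$ of $Q$ is viewed as a morphism $q\colon G_l\to Q$ of $G$-torsors, $g\mapsto gq$, and $\Phi(q)$ is the induced map $P\to\Phi(Q)$. This map is $G$-balanced because $\Phi$ respects composition with automorphisms of $G_l$. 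Since $Q$ only has sections étale locally, the map is first defined locally and then glued by descent; compatibility on overlaps is exactly naturality of $\Phi$. The map is an isomorphism locally, and therefore globally.

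\textbf{Main obstacle.} I expect the main obstacle to be this last step: making $[p,q]\mapsto\Phi(q)(p)$ well defined and natural in $T$ and $Q$ when $Q$ has no global section. This needs a careful use of the stack (descent) property, together with the coherence of $\Phi$ with pullbacks. The rest is formal bookkeeping of left versus right actions, using the conventions of Remark~\ref{rmk:relation_with_Breen_GF1}.
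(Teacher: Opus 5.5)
Your proposal is correct, but it takes a more self-contained route than the paper. The paper delegates the two substantive steps to Giraud:
\begin{itemize}
\item It quotes Chapter~III, Theorem~2.2.2 of Giraud's book. This identifies \emph{all} morphisms $[G\backslash{\rm pt}]\to\mathcal{X}$, into an arbitrary stack, with objects of $\mathcal{X}(F)$ carrying a right $G$-action, via $\Phi\mapsto\Phi_F(G_l)$ with the action $\gamma\circ\mu$.
\item It then applies the local criterion of Chapter~II, Proposition~1.4.5 (local full faithfulness on Isom-sheaves plus local essential surjectivity). This shows that $\Phi$ is an equivalence exactly when $\gamma\colon\underline{\rm Aut}(G_l)\to\underline{\rm Aut}(\Phi(G_l))$ is an isomorphism, i.e.\ when $\Phi_F(G_l)$ is a bitorsor.
\end{itemize}
The statement about $\Phi_P$ then follows formally from the bitorsor isomorphism $P\cong\Phi_P(G_l)$, $p\mapsto(p,1)$.

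You instead prove the needed case of Giraud's identification by hand, through the natural isomorphism $[p,q]\mapsto\Phi(q)(p)$. That construction never uses that $\Phi$ is an equivalence, so it actually recovers Giraud's theorem for the target $[H\backslash{\rm pt}]$. You also replace the local criterion by the inverse bitorsor: $\Phi_{P^{\rm op}}$ is an explicit quasi-inverse of $\Phi_P$, via associativity of contracted products and $P^{\rm op}\times_{/H}P\cong G$. Your route costs some bookkeeping with contracted products. In exchange it gives explicit inverse equivalences and shows that composing equivalences corresponds to taking contracted products of bitorsors. The paper's route is shorter and directly characterizes which morphisms of classifying stacks are equivalences.

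Two small remarks.

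First, the right-torsor property of $P=\Phi_F(G_l)$ does not come from essential surjectivity, as you suggest. It comes from the fact that a left $H$-torsor $P$ is a torsor under $\underline{\rm Aut}_H(P)$: locally $P\cong H$, and $\underline{\rm Aut}_H(H)$ is $H$ acting by right multiplication. Combine this with your isomorphism $G\cong\underline{\rm Aut}_H(P)$.

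Second, the step you flag as the main obstacle is milder than you expect. For local sections $p\in P(U)$ and $q\in Q(U)$, the element $\Phi_U(q)(p)\in\Phi_T(Q)(U)$ is defined using only the coherence isomorphisms of $\Phi$ with pullback. So the formula is already a morphism of sheaves $P_T\times_T Q\to\Phi_T(Q)$, and no separate gluing is needed. The identity $\Phi(q\circ\mu(g))=\Phi(q)\circ\gamma(\mu(g))$ shows the map is $G$-balanced, so it factors through $P\times_{/G}Q$. It is an isomorphism because it is an $H$-equivariant map of $H$-torsors.
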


\begin{proof}
  Recall that the classifying stack $[G\backslash\pt]$ is a gerbe over $\mathrm{Sch}_F$ which is trivialised by its global section which assigns to a scheme $X$ the trivial left $G$-torsor $G_l=G\times X$. For a general stack $\mathcal X$, Theorem~2.2.2 in Chapter~III of \cite{giraud:coh_non_ab} identifies the groupoid $\rm{Fun}([G\backslash\pt], \mathcal X)$ of all morphisms of stacks (not necessarily equivalences) out of $[G\backslash\pt]$ with the groupoid of objects with a right $G$-action in $\mathcal X(F)$.

  We briefly recap this identification. On objects, it is given by mapping a morphism $\Phi\colon[G\backslash\pt]\to\mathcal{X}$ to the object $\Phi_{F}(G_l)\in \mathcal X(F)$ with right $G$-action given by the composition $\gamma\circ \mu$ of the following two maps of sheaves of groups
  \begin{align}
    \label{eq:gamma-isom}
  \mu\colon G\ra \underline{\rm{Isom}}_G(G_l, G_l) \quad \gamma\colon\underline{\rm{Isom}}_G(G_l, G_l)\ra \underline{\rm{Isom}}_{\mathcal X}(\Phi(G_l),\Phi(G_l)),
  \end{align}
  where $\mu(g)=(-)\cdot g$ is the right action of $G$ on the trivial $G$-torsor, and $\gamma$ is the map induced by the functor $\Phi$. On morphisms, Giraud's equivalence is defined in the obvious way, mapping a natural transformation $f\colon\Phi\Rightarrow \Psi$ of stack morphisms $\Phi,\Psi\colon[G\backslash\pt]\to\mathcal{X}$ to  the $G$-equivariant map $f_F(G_l)\colon\Phi_F(G_l)\ra \Psi_F(G_l)$ of objects with $G$-action in $\mathcal X(F)$.
  
  In the case where $\mathcal X=[H\backslash\pt]$ is a classifying stack, the morphism $\gamma\circ \mu$ endows the left $H$-torsor $\Phi_F(G_l)$ (over $\spec F$) with a right $G$-action, commuting with that of $H$. To prove the first claim in the theorem, it thus remains to show that a morphism of classifying stacks $\Phi\colon[G\backslash\pt] \ra[H\backslash\pt]$ is an equivalence if and only if the actions of $G$ and $H$ on $\Phi_F(G_l)$ described above make it into a $(H,G)$-bitorsor, or equivalently, that the map $\gamma$ in \eqref{eq:gamma-isom} above is an isomorphism. The second claim, on the precise form of the identification, follows by construction.

  Let thus $\Phi\colon[G\backslash\pt] \ra[H\backslash\pt]$ be a morphism of stacks. It is clear that $\gamma$ is an isomorphism if $\Phi$ is an equivalence of stacks. For the converse, assume that $\gamma$ is an isomorphism, and we want to show that $\Phi$ is an equivalence of stacks. This is essentially a consequence of the fact that every torsor is locally trivial. More precisely, we work as follows. Thanks to Proposition~1.4.5 in Chapter~II of \cite{giraud:coh_non_ab}, in order to prove that $\Phi$ is an equivalence, it is enough to choose a cleavage and check that:
  \begin{enumerate}[label=\roman*)]
  \item For every $F$-scheme $S$ and every pair of objects $X,Y\in [G\backslash\pt](S)$, the following natural map (of sheaves of isomorphisms, induced by $\Phi$) is an isomorphism:
    \[\begin{tikzcd}[row sep=5pt, column sep=10pt]
    \underline{\rm{Isom}}_G|_S(X,Y) \arrow[r,"\Phi_{X,Y}"] &  \underline{\rm{Isom}}_H|_S(\Phi X,\Phi Y)\\
    \left(\varphi\colon X_{|U}\ra Y_{|U}\right) \arrow[r,mapsto] & \left(\Phi_{X,Y}(\varphi)\colon(\Phi X)_{|U}\ra(\Phi Y)_{|U}\right)
    \end{tikzcd}\]
    where $U$ is an $S$-scheme, and $\Phi_{X,Y}(\varphi)$ is the following composition of isomorphisms of $H$-torsors over $U$:\footnote{Here, we use the functoriality of sheaves of isomorphisms for the identifications $\Phi(X)_{|U}\simeq\Phi(X_{|U})$, and similarly for $Y$.}
    \[
    \Phi(X)_{|U}\simeq \Phi(X_{|U})\overset{\Phi(\varphi)}{\ra} \Phi(Y_{|U})\simeq \Phi(Y)_{|U}.
    \]
  \item For every $F$-scheme $S$ and every object $Z\in[H\backslash\pt](S)$ there is a cover $S'\ra S$ such that $Z_{|S'}$ is isomorphic to $\Phi(X)$ for some left $G$-torsor $X$.
  \end{enumerate}
  The second point is clearly verified (as a consequence of local triviality of torsors). To show that $\Phi$ is an equivalence (assuming $\gamma$ is an isomorphism), it remains to show the first point. Let $p \colon S'\ra S$ be a cover that simultaneously trivialises $X$ and $Y$. Since $[G\backslash\pt]$ and $[H\backslash\pt]$ are stacks, the map $\Phi_{X,Y}$ is obtained by descent from its restriction $(\Phi_{X,Y})_{|S'}$. In particular, it suffices to check that $(\Phi_{X,Y})_{|S'}$ is an isomorphism. 

  By the functoriality of sheaves of isomorphisms, the map $(\Phi_{X,Y})_{|S'}$ is identified with the map $\Phi_{X_{|S'},Y_{|S'}}$. Since $S'$ trivialises $X$ and $Y$ simultaneously, the latter map is identified with $(\Phi_{{G_l},{G_l}})_{|S'}=\gamma_{|S'}$ through the chosen trivialisations of $X$ and $Y$. 
  More precisely, the various identifications made in this argument fit in a commutative square\footnote{  Our notation here differs slightly from previous usage in that the index $\underline{\rm Isom}_S$ denotes the scheme on which we consider the relevant isomorphism sheaf. All the isomorphism sheaves on the left are isomorphisms of $G$-torsors in $[G\backslash\pt]$, while all the isomorphism sheaves on the right concern $H$-torsors. 
}
  \[
  \begin{tikzcd}[column sep=40pt]
    \underline{\rm{Isom}}_S(X,Y)_{|S'} \arrow[r,"(\Phi_{X,Y})_{|S'}"] \arrow[d,"\simeq"]& \underline{\rm{Isom}}_S(\Phi (X),\Phi (Y))_{|S'} \arrow[d,"\simeq"]\\
    \underline{\rm{Isom}}_{S'}(X_{|S'},Y_{|S'}) \arrow[r, "\Phi_{X_{S'},Y_{S'}}"] \arrow[d,"\simeq"]& \underline{\rm{Isom}}_{S'}(\Phi (X_{|S'}),\Phi (Y_{|S'})) \arrow[d,"\simeq"]\\
    \underline {\rm{Isom}}_{S'}({G_l}_{|S'},{G_l}_{|S'}) \arrow[r, "\Phi_{{G_l}_{|S'},{G_l}_{|S'}}"] & \underline{\rm{Isom}}_{S'}(\Phi ({G_l}_{|S'}),\Phi ({G_l}_{|S'}))  \\
    \underline {\rm{Isom}}_{F}({G_l},{G_l})_{|S'} \arrow[r,"(\Phi_{{G_l},{G_l}})_{|S'}"]\arrow[u,"\simeq"']& \underline{\rm{Isom}}_{F}(\Phi ({G_l}),\Phi ({G_l}))_{|S'}\arrow[u,"\simeq"']
  \end{tikzcd}
  \]
  From top to bottom, on the left hand side, we have the following maps: the canonical identification of the pull-back of the sheaf of isomorphisms over $S$ to the sheaf of isomorphisms over $S'$, pre- and post-composition of an isomorphism of torsors with the chosen trivialisations of $X$ and $Y$, and finally another canonical identification of the pull-back of an isomorphism sheaf. On the right hand site, top to bottom, we have the same maps, combined with the tautological identifications $\Phi(X_{|S'})\simeq\Phi(X)_{|S'}$ from the functoriality of sheaves of isomorphisms, and similarly for $Y$. 
  Finally, the bottom morphism is just the base-change of $\gamma=\Phi_{{G_l},{G_l}}$ by definition, and so all horizontal arrows in the above diagram are isomorphisms. We conclude that $\Phi$ is an equivalence. 

  It remains to prove the last statement, concerning the quasi-inverse. For this, let now $P$ be any $(H,G)$-bitorsor and let $\Phi_P:= P\times_{/G} (-)\colon[G\backslash\pt] \ra[H\backslash\pt]$ be the associated morphism of stacks. Since we have an isomorphism
  \[
  \chi\colon  P \ra P\times_{/G} G_l=\Phi_P(G_l)\colon p\mapsto (p,1)
  \]
  of $(H,G)$-bitorsors, we deduce that $\Phi_P$ is actually an equivalence. Moreover the identification $P\ra\Phi_P(G_l)$ is natural in $P$, and thus $\Phi_P$ is a quasi-inverse to the restriction of Giraud's equivalence.
\end{proof} 

\begin{theorem}[Giraud]
  \label{thm:strong-inner--eq_formulations} 
  Let $G$ and $H$ be smooth affine algebraic groups over a field $F$. Then the following are equivalent:
  \begin{enumerate}
  \item There exists an equivalence of stacks $[G\backslash\pt]\simeq[H\backslash\pt]$.
  \item There exists a $(H,G)$-bitorsor.
  \item The groups $G$ and $H$ are strongly inner forms of each other, in the sense of Definition~\ref{def:strong-inner-forms}.
  \end{enumerate}
\end{theorem}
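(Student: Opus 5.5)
The equivalence (1)$\Leftrightarrow$(2) is immediate from Theorem~\ref{thm:strong-inner--eq_of_groupoids}. An equivalence $\Phi\colon[G\backslash\pt]\to[H\backslash\pt]$ gives the $(H,G)$-bitorsor $\Phi_F(G_l)$. Conversely, a bitorsor $P$ gives the equivalence $\Phi_P=P\times_{/G}(-)$. So the real content lies in (2)$\Leftrightarrow$(3), and I will prove it by relating bitorsors to twisting by $G$-torsors.

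For (2)$\Rightarrow$(3), let $P$ be an $(H,G)$-bitorsor over $F$. Forgetting the left $H$-action, $P$ is a right $G$-torsor, hence defines a class in ${\rm H}^1_\et(F,G)$. I then claim that $H\cong\underline{\rm Aut}_G(P)$, the sheaf of $G$-equivariant automorphisms of $P$. The left action gives a homomorphism $H\to\underline{\rm Aut}_G(P)$, and I check it is an isomorphism \'etale-locally. After choosing a point $p\in P(\bar F)$, the bitorsor condition identifies $P_{\bar F}$ with $G_{\bar F}$ via $g\mapsto pg$, and also with $H_{\bar F}$ via $h\mapsto hp$. Under these identifications, left $H$-multiplication corresponds to left $G$-multiplication, so the map is an isomorphism, as needed.

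It remains to recall that $\underline{\rm Aut}_G(P)$ is the twist of $G$ by the image of $[P]$ under ${\rm H}^1_\et(F,G)\to{\rm H}^1_\et(F,\Aut(G))$, which is induced by conjugation; this is the standard twisting formalism in \cite{serre:galois-cohom}*{I.5.3}. Concretely, if $P$ is given by a cocycle $(a_s)$, then $\underline{\rm Aut}_G(P)$ is $G$ with twisted Galois action $s\ast g=a_s\,{}^s g\,a_s^{-1}$, and this is exactly the inner twist by the image cocycle. Hence $H$ is a strongly inner form of $G$. Symmetry is also clear: the opposite bitorsor is a $(G,H)$-bitorsor, so $G$ is a strongly inner form of $H$.

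For (3)$\Rightarrow$(2), suppose $H$ is the twist of $G$ by the image of a class $[P]\in{\rm H}^1_\et(F,G)$, with $P$ a right $G$-torsor. By the computation above, $H\cong\underline{\rm Aut}_G(P)$ as forms of $G$. Then $P$, with its tautological left $\underline{\rm Aut}_G(P)$-action and its right $G$-action, is an $(\underline{\rm Aut}_G(P),G)$-bitorsor. The left action is free and transitive on fibers, which can be checked \'etale-locally where $P$ is trivial and $\underline{\rm Aut}_G(G_r)=G$ acting by left multiplication. Transporting the structure along $H\cong\underline{\rm Aut}_G(P)$ gives an $(H,G)$-bitorsor. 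The main obstacle is bookkeeping rather than content. One has to make sure that ``the form corresponding to the cocycle'' really is $\underline{\rm Aut}_G(P)$ and not its opposite or a twist by the inverse cocycle, which means keeping the left/right and $\sigma$ versus $\sigma^{-1}$ conventions consistent with Remark~\ref{rmk:relation_with_Breen_GF1}. An isomorphism $H\cong{}_aG$ of forms of $G$ suffices here, and no compatibility with a specified identification over $\bar F$ is needed, which avoids most of the subtlety.
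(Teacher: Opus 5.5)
Your proof is correct and follows essentially the same route as the paper: (1)$\Leftrightarrow$(2) via Theorem~\ref{thm:strong-inner--eq_of_groupoids}, and (2)$\Leftrightarrow$(3) by identifying one group with the automorphism group of the bitorsor viewed as a torsor under the other group, which is the inner twist by that torsor's class. The differences are cosmetic: you use $H\cong\underline{\rm Aut}_G(P)$ where the paper uses $G\cong\mathrm{Aut}_H(P)$, you use Galois cocycles and Serre's twisting formalism where the paper proves the converse by gluing trivial torsors along a \v{C}ech cocycle, and you make the symmetry of ``strongly inner forms of each other'' explicit via the opposite bitorsor, which the paper leaves implicit.
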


\begin{proof}
  The equivalence of the first two points is a consequence of Theorem~\ref{thm:strong-inner--eq_of_groupoids}, so it suffices to show the equivalence of the last two points. This equivalence is essentially again a result of Giraud, contained in Section~2.5 in Chapter~III of \cite{giraud:coh_non_ab}. For convenience, we explicitly spell out the translation between Galois cocycles for strong inner forms and bitorsors.

  Recall that, by definition, an (\'etale) form of an algebraic group $H$ is a sheaf of groups \'etale-locally isomorphic to $H$. There is a canonical bijection 
   \[I\colon\op H^1_{\textrm{\'et}}(F,\mathrm{Aut}(H))\simeq \{F\textrm{-forms of } H\}/F\textrm{-Isomorphism}\]constructed as follows. Given a form $H'$ of $H$, choose local isomorphisms $\varphi_i\colon H'_{|S_i} \ra H_{|S_i}$ on some \'etale cover $(S_i \ra \spec F)_{i\in I}$, and define $I(H')$ to be the class of the cocycle $(\varphi_{i,j})_{i,j \in I}$, where $\varphi_{i,j}:=\varphi_i\circ\varphi_j^{-1}\in \mathrm{Aut}(H)(S_i\times_FS_j)$ for $i,j \in I$. 

   Let $P$ be a right $H$-torsor and choose local trivialisations $\sigma_i\colon P_{|S_i}\ra H_{|S_i}$ on an \'etale cover $(S_i\ra\spec F)_{i\in I}$. We can then use the $\sigma_i$'s to cook up the cohomology class representing $P$, namely the class $[\sigma] \in \op H^1_{\textrm{\'et}}(F,H)$ of the cocycle $\sigma_{i,j}:=\sigma_i\circ\sigma_j^{-1}\in \mathrm{Aut}_H(H_r)(S_{i,j})\simeq H(S_{i,j})$. On the other hand we can use the local trivialisations $\sigma_i$ to provide locally defined group isomorphisms 
   \[\varphi_i\colon\mathrm{Aut}_{H_{|S}}(P_{|S})\simeq\mathrm{Aut}_{H_{|S_i}}({H_{|S_i}}_r)\simeq H_{|S_i}, \quad f\mapsto \sigma_i\circ f\circ \sigma_i^{-1}.\]
   This shows that $\mathrm{Aut}_{H}(P)$ is a form of $H$ with associated cohomology class $[\varphi]\in \op H^1_{\textrm{\'et}}(F,\mathrm{Aut}(H))$ given by $\varphi_{i,j}=\varphi_i\circ\varphi_j^{-1}=c_{\sigma_{i,j}}\in\mathrm{Aut}(H)(S_{i,j})$, where $c\colon H \ra\mathrm{Aut}(H), h\mapsto c_h=h(-)h^{-1}$ is the usual conjugation. In particular, if $P$ is a $(H,G)$-bitorsor, the isomorphism $G\ra\mathrm{Aut}_H(P)$ defining the right action of $G$ shows that $G$ is a form of $H$ whose classifying cocycle is in the image of the map
   \[\op {H}^1_{\textrm{\'et}}(F,H)\ra \op {H}^1_{\textrm{\'et}}(F,\mathrm{Aut}(H))\] 
   induced by conjugation. 

   We now assume that $G$ is a strong inner form of $H$, so that $G$ is classified by a cohomology class $\gamma \in {H}^1_{\textrm{\'et}}(F,\mathrm{Aut}(H))$ which is the image of a class $\beta\in {H}^1_{\textrm{\'et}}(F,H)$. We can then produce a $(H,G)$-bitorsor $P$ as follows. Pick a cover $(S_i\ra\spec F)_{i\in I}$ where $\beta$ is represented by a cocycle $\sigma_{i,j} \in H(S_{i,j})$, and thus $\gamma$ is represented by the cocycle $c_{\sigma_{i,j}}\in \mathrm{Aut}(H))(S_{i,j})$. We define $P$ to be the left $H$-torsor  obtained by gluing the trivial torsors $H_{|S_i\;r}$ on $S_i$ according to the cocycle $\sigma_{i,j}$. We then notice that we have a commutative digram:
   \[
   \begin{tikzcd}
     H_{|S_i}\times H_{|S_i\;r} \arrow[r] \arrow[d,"(c_{\sigma_{i,j}}{,}\sigma_{i,j}\cdot)"] & H_{|S_i\;r} \arrow[d,"\sigma_{i,j}\cdot"] \\
     H_{|S_i}\times H_{|S_i\;r} \arrow[r] & H_{|S_i\;r}
   \end{tikzcd}\]
   where both the horizontal maps are the multiplication map of $H$. This says that the locally defined right action of $H$ on itself by multiplication glues to a right action of $G$ on $P$. This right action induces an isomorphism $G\ra\mathrm{Aut}_H(P)$ since it does so locally by the very construction. So we proved that $P$ is a $(H,G)$-bitorsor.
\end{proof}

\begin{corollary}
  \label{cor:cohom-inv-strong-forms}
  Let $G$ be a linear algebraic group over a field $F$, and let $E$ be a cohomology theory representable in $\mathbf{SH}(F)$. Then the Borel-equivariant $E$-cohomology $E_G^*(\pt)=E^*([G\backslash\pt])$ is invariant under strongly inner forms in the sense of Definition~\ref{def:strong-inner-forms}. More precisely, for any strongly inner form $G'$ of $G$, any $(G,G')$-bitorsor induces an isomorphism
  \[
  E^*([G\backslash\pt])\cong E^*([G'\backslash\pt]).
  \]

  Similarly, in the case $F=\mathbb{R}$, the equivariant real cycle class map is invariant under strong inner forms, in the sense that any $(G,G')$-bitorsor induces a commutative square
  \[
  \xymatrix{
    {\rm H}^p([G\backslash\pt],{\bf I}^q(\mathscr{L})) \ar[r]^{{\rm cyc}_{\mathbb{R}}} \ar[d]_\cong & {\rm H}^p({\rm Re}_{\mathbb{R}}[G\backslash \pt],\mathbb{Z}(\mathscr{L})) \ar[d]^\cong\\
    {\rm H}^p([G'\backslash\pt],{\bf I}^q(\mathscr{L})) \ar[r]_{{\rm cyc}_{\mathbb{R}}} & {\rm H}^p({\rm Re}_{\mathbb{R}}[G'\backslash \pt],\mathbb{Z}(\mathscr{L})) 
  }
  \]
\end{corollary}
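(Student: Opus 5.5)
The plan is to reduce everything to the equivalence of stacks furnished by Theorem~\ref{thm:strong-inner--eq_of_groupoids}, and then use that all constructions involved are functorial in the stack. Given a $(G,G')$-bitorsor $P$ (or the $(G',G)$-bitorsor obtained by swapping the actions, $P^{\rm op}$, if needed to match the convention of the theorem), the theorem produces an equivalence of stacks $\Phi_P=P\times_{/G'}(-)\colon[G'\backslash\pt]\xrightarrow{\simeq}[G\backslash\pt]$. By Definition~\ref{def:stack-motivic-nerve}, taking sectionwise nerves of these sheaves of groupoids, $\Phi_P$ induces an isomorphism in $\mathbf{H}(F)$. Since a natural isomorphism of functors of groupoids induces a simplicial homotopy of nerves, the resulting isomorphism in $\mathbf{H}(F)$ depends only on the isomorphism class of $P$. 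Applying $\Sigma^\infty_{\mathbb{P}^1}(-)_+$ gives an isomorphism in $\mathbf{SH}(F)$, and mapping into the representing spectrum $E$ yields $E^*([G\backslash\pt])\cong E^*([G'\backslash\pt])$. This proves the first claim; the existence of $P$ for a strongly inner form is Theorem~\ref{thm:strong-inner--eq_formulations}.

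For the second claim, I would first fix what the twist means: a line bundle $\mathscr{L}$ on $[G\backslash\pt]$ (a character of $G$, cf.\ Example~\ref{ex:vb-bg}) is pulled back along $\Phi_P$ to a line bundle on $[G'\backslash\pt]$, and the lower row uses $\Phi_P^*\mathscr{L}$. The left vertical map is the pullback $\Phi_P^*$ on ${\bf I}$-cohomology. For this I would use the description of ${\rm H}^p(-,{\bf I}^q(\mathscr{L}))$ as cohomology represented in $\mathbf{SH}(F)$ by $\mathbb{H}{\bf I}^q$ applied to the Thom space of $\mathscr{L}$, as in the formal argument of Proposition~\ref{prop:compat-pullback}. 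The right vertical map is ${\rm Re}_{\mathbb{R}}(\Phi_P)^*$.

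Commutativity then follows from the formal proof of Proposition~\ref{prop:compat-pullback}. That argument uses only that $\Phi_P$ is a morphism of motivic spaces, and never needs the morphism to come from a morphism of varieties with action. Concretely, the real cycle class map is induced by the map of spectra $\mathbb{H}{\bf I}^q\to{\rm Re}_{\mathbb{R}}$-adjoint of $\mathbb{HZ}$, coming from ${\rm sgn}$. Precomposition with $\Sigma^\infty\Phi_P$ commutes with postcomposition by this map, and ${\rm Re}_{\mathbb{R}}$ is a functor.

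The main obstacle is that Theorem~\ref{thm:real-cycle-class} defines ${\rm cyc}_{\mathbb{R}}$ via admissible gadgets, not via spectra, so one must check the two agree. To do this, I would use property (1) of that theorem (dependence only on the stack) together with the uniqueness remark following it: the map is left Kan extended from smooth schemes, where it agrees with the spectrum-level signature map by \cite{jacobson}.

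Alternatively, one can avoid spectra. Choose a faithful representation $G\hookrightarrow{\rm GL}_n$, so that $[G'\backslash\pt]\simeq[G\backslash\pt]\simeq[{\rm GL}_n\backslash({\rm GL}_n/G)]$. Then $G'$ also acts freely on suitable $U_i$ via the transported structure, and the two gadget approximations can be compared through the common ${\rm GL}_n$-presentation. This reduces commutativity to Corollary~\ref{cor:induction-equiv} applied twice. I would try the formal route first and keep this as a fallback.
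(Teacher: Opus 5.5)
Your proposal is correct and follows the paper's proof: the bitorsor gives an equivalence of stacks via Theorem~\ref{thm:strong-inner--eq_of_groupoids}, hence an isomorphism of motivic homotopy types and so of any representable cohomology theory. For the cycle class square the paper just appeals to property (1) of Theorem~\ref{thm:real-cycle-class} (the map depends only on the stack), whereas you go through the formal realization argument of Proposition~\ref{prop:compat-pullback} and then identify the gadget-defined map with the spectrum-level one; this is somewhat more careful, but it is the same idea.
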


\begin{proof}
  By Theorem~\ref{thm:strong-inner--eq_formulations}, we know that strong inner forms have equivalent classifying stacks, and any bitorsor induces an equivalence of stacks. For cohomology theories representable by homotopy modules, we can use \cite{dilorenzo:mantovani}*{Proposition~2.3.1} (or \cite{edidin:graham:equivariant}*{Proposition~16}) to see that equivalences of stacks induce isomorphisms in cohomology. More generally, viewing the quotient stacks as objects in the motivic homotopy category as discussed in Section~\ref{sec:stacks-motivic-simplicial}, equivalent stacks yield equivalent motivic homotopy types. We get the first claim by applying any representable cohomology theory.\footnote{But it should be pointed out that this means we are taking Borel-equivariant cohomology theory, i.e., $E^*([G\backslash\pt])$ is defined by applying the representable cohomology theory to the object $[G\backslash\pt]$ in $\mathbf{H}(F)$ defined in Section~\ref{sec:stacks-motivic-simplicial}.} The invariance claim for the equivariant real cycle class map follows similarly from Theorem~\ref{thm:real-cycle-class}. 
\end{proof}

\begin{remark}
  The fact that strong inner forms have equivalent real realizations can be observed from Proposition~\ref{prop:bg-formula}, since the right-hand side is the disjoint union of classifying spaces of strong real forms. As discussed in Remark~\ref{rem:comparison-strong-inner}, changing from a given group to a strong inner form essentially only changes the indexing of connected components on the right-hand side. Corollary~\ref{cor:cohom-inv-strong-forms} provides a stronger explanation for this observed invariance: strong inner forms have equivalent classifying stacks, which implies equivalences in real realizations as well as isomorphisms in equivariant cohomology. In the case of Chow rings of classifying spaces of orthogonal groups, this was already observed in \cite{RojasVistoli}*{Remark~4.2}. Actually, the remark in their paper prompted our attempts to formulate a general invariance statement for classifying stacks. 
\end{remark}

\begin{example}
  \label{ex:sl2}
  The strong inner form condition is really necessary. One example can be found in \cite{adams:taibi}*{Example~4.9}: ${\rm SL}_2(\mathbb{R})$ and ${\rm SU}(2)$ are inner forms, but have different first Galois cohomology ${\rm H}^1({\rm C}_2,G)$. From Proposition~\ref{prop:bg-formula}, we see that ${\rm Re}_{\mathbb{R}}{\rm B}{\rm SL}_2$ would be connected, while ${\rm Re}_{\mathbb{R}}{\rm BSU}(2)$ would have two connected components. In particular, the real realizations cannot be equivalent.

  We get similar examples in the settings of orthogonal, special orthogonal and spin groups. In the case of orthogonal groups over a field $F$, any two orthogonal groups (of the same rank) will be strong inner forms, which implies that the cohomology or real realization of ${\rm B}_\et{\rm O}(\varphi)$ will be independent of the quadratic form $\varphi$ over $F$ used in the definition of ${\rm O}(\varphi)$. This is no longer true for special orthogonal groups or spin groups. For special orthogonal groups, strong inner forms agree in rank \emph{and discriminant}, and strong inner forms for spin groups additionally agree in their Hasse invariant (or alternatively, the Brauer class of the positive Clifford algebra, as discussed in the examples on p.~655 of \cite{tits}). This means that cohomology and real realization of ${\rm B}_\et{\rm SO}(\varphi)$ will depend on rank and discriminant, and similarly, cohomology and real realization of ${\rm B}_\et{\rm Spin}(\varphi)$ will depend on rank, discriminant and Hasse invariant, cf. e.g.{}~the discussion in \cite{adams:taibi}*{Example~8.21}. The tables in \cite{adams:taibi}*{Section~10} show how numbers of connected components of the real realization of ${\rm B}_\et{\rm Spin}(\varphi)$ vary with rank, discriminant and Hasse invariant.
\end{example}

\begin{remark}
  In the context of the conjectural discussions in Section~\ref{sec:conjectures}, it seems likely that also the symmetric representation ring as well as the augmentation morphism \eqref{eq:augmentation} would only depend on strongly inner forms. Unfortunately, the arguments above do not immediately apply to (genuinely) equivariant Witt groups, but this might be an interesting question for future investigations.
\end{remark}

\section{Examples}
\label{sec:examples}

In this section, we provide a couple of example computations of real realizations of classifying spaces of groups to showcase some of the phenomena arising from the homotopy fixed point description.

Recall from Proposition~\ref{prop:bg-formula} that, for a group $G$ with involution $\sigma\colon g\mapsto\sigma(g)$, there is a natural bijection $\pi_0({\rm B}G^{{\rm hC}_2})\cong{\rm H}^1({\rm C}_2, G(\mathbb{C}))$ and the geometric realization has the form
\begin{align}
  {\rm B}G^{{\rm hC}_2}\simeq \bigsqcup_{[g]\in{\rm H}^1({\rm C}_2,G(\mathbb{C}))} {\rm B}(G(\mathbb{C})^{\sigma_g})
\end{align}
with the stabilizer group $G(\mathbb{C})^{\sigma_g}=\{h\in G\mid \sigma(h)g=gh\}$.

\subsection{Special groups} 

As a first sanity check, we consider special groups in the sense of Serre, i.e., linear algebraic groups $G$ over a field $F$ for which \'etale-locally trivial torsors are already Zariski-locally trivial; in particular, we have ${\rm H}^1_\et(\op{Spec} k,G)=\{\ast\}$ for any field $k/F$.\footnote{In this case, we have equivalences ${\rm B}_{\rm Zar}G\simeq{\rm B}_{\rm Nis}G\simeq{\rm B}_\et G$ and we can drop notational distinctions between the different classifying spaces.} For such a group $G$ over $\mathbb{R}$, Proposition~\ref{prop:bg-formula} reduces to
\[
{\rm Re}_{\mathbb{R}}({\rm B}_\et G)\simeq[G\backslash *](\mathbb{C})^{{\rm hC}_2}\simeq {\rm B}(G(\mathbb{R})),
\]
i.e., the real points of the classifying space have only one component, which is the classifying space of the group $G(\mathbb{R})$ of real points. This is the case for $G={\rm GL}_n,{\rm SL}_n,{\rm Sp}_{2n}$, and for these groups the Witt-sheaf cohomology and Chow--Witt groups are known, cf. \cite{chowwitt} and \cite{realgrassmannian}. Consequently, all the discussion in the present paper reduces to the observations on the strong relations between ${\bf I}$-cohomology of ${\rm B}G$ and the singular cohomology of classifying spaces of $G(\mathbb{R})$, cf. e.g.{}~the discussion of compatibility of algebraic and topological characteristic classes in \cite{4real}*{Section~6}. 

We briefly discuss the conjectural picture (regarding the Borel character and relation to symmetric representations) as set out in Section~\ref{sec:conjectures}. We focus on the case ${\rm SL}_n$; the Witt-sheaf cohomology can be described as follows, cf. \cite{ananyevskiy} or \cite{chowwitt}: ${\rm H}^*({\rm BSL}_n,{\bf W})$ is a polynomial ${\rm W}(F)$-algebra generated by the Pontryagin classes ${\rm p}_{2i}\in{\rm H}^{4i}({\rm BSL}_n,{\bf W})$ for $i=1,\dots,\lfloor (n-1)/2\rfloor$ and an Euler class ${\rm e}_n\in{\rm H}^n({\rm BSL}_n,{\bf W})$ if $n$ is even. Taking inspiration from the Gersten--Witt spectral sequence of \cite{balmer:walter},\footnote{We mention again that the conditions for the existence of the spectral sequence in \cite{balmer:walter} are not satisfied for classifying spaces.} we would expect some spectral sequence of the form
\[
E^{p,q}_2={\rm H}^p({\rm BSL}_n,{\bf W})\Rightarrow {\rm W}({\rm BSL}_n),
\]
concentrated in the lines with $q\equiv 0\bmod 4$. Only the differentials between lines with $q\equiv 0\bmod 4$ can be nontrivial, and these are of the form $d_{4r+1}\colon {\rm H}^p({\rm BSL}_n,{\bf W})\to {\rm H}^{p+4r+1}({\rm BSL}_n,{\bf W})$. The spectral sequence necessarily degenerates because ${\rm H}^*({\rm BSL}_n,{\bf W})$ is concentrated in even degrees; consequently, up to completion issues, we would expect a strong relation between the symmetric representation ring ${\rm W}({\rm SL}_n)$ and the Witt-sheaf cohomology. We briefly indicate what this should look like.

For ${\rm SL}_2$, the fundamental (i.e., defining) representation $\rho$ is self-dual, which means it preserves a bilinear form. This form is a symplectic form (basically by definition), and there is no compatible symmetric bilinear form on the fundamental representation. All irreducible representations arise as symmetric powers of the fundamental representation. This means, in particular, that the symmetric square $\rho^{\otimes 2}$ preserves a symmetric bilinear form. More concretely, the symmetric square $\rho^{\otimes 2}$ is the direct sum of a trivial one-dimensional representation and the adjoint representation; the symmetric form on $\rho^{\otimes 2}$ can also be viewed as evaluation form on $\rho\otimes \rho^\vee$, and the restriction to the adjoint representation is essentially the Killing form. In terms of Witt rings of representations, we find ${\rm W}^*({\rm SL}_2)\cong {\rm W}(F)[\rho]$ with the polynomial generator $\rho\in{\rm W}^2({\rm SL}_2)$ the fundamental representation. Moreover, ${\rm W}^0({\rm SL}_2)\cong{\rm W}(F)[\rho^{\otimes 2}]$ is a polynomial ring generated by the adjoint representation with the Killing form, and ${\rm W}^2({\rm SL}_2)$ is a free rank 1 module over ${\rm W}^0({\rm SL}_2)$. We find a pattern very similar to the Witt-sheaf cohomology of ${\rm SL}_2$, with the fundamental representation corresponding to the Euler class, and the adjoint representation corresponding to its square.

For ${\rm SL}_3$, we get a similar picture. First of all, we note more generally that only self-dual representations of a group $G$ can possibly appear in ${\rm W}^*(G)$; this already rules out the fundamental representation and its dual.\footnote{As an easy check, self-dual representations have weight diagrams which are point-symmetric around the origin.} But for any semisimple group, the adjoint representation with the Killing form is a symmetric representation. In the case ${\rm SL}_3$, the Killing form is actually a polynomial generator for ${\rm W}^*({\rm SL}_3)$ (i.e., every irreducible symmetric representation of ${\rm SL}_3$ is a symmetric power of the adjoint representation). This corresponds exactly to the Witt-sheaf cohomology of ${\rm BSL}_3$ being a polynomial ${\rm W}(F)$-algebra generated by the Pontryagin class ${\rm p}_2\in{\rm H}^4({\rm BSL}_3,{\bf W})$. 

More generally, for ${\rm SL}_n$ we should expect a correspondence as follows: on the cohomological side, we have the Pontryagin classes and possibly an Euler class as generators of Witt-sheaf cohomology. On the representation-theoretic side, these should correspond to generators of the symmetric representation ring ${\rm W}^0({\rm SL}_n)$ given by irreducible summands of $\bigwedge^k\rho\oplus \bigwedge^k\rho^\vee$, and for even $n=2k$ an additional generator in ${\rm W}^{n\bmod 4}({\rm SL}_n)$ corresponding to the middle exterior power $\bigwedge^{k}\rho$.\footnote{Note that for ${\rm SL}_4$, $\bigwedge^2\rho$ preserves a symmetric bilinear form, related to the exceptional isomorphism ${\rm SL}_4\cong{\rm Spin}(6)$.}

For the symplectic groups ${\rm Sp}_{2n}$, we also get a close correspondence between the Witt-sheaf cohomology and symmetric representation ring. In the symplectic case, all representations are self-dual. The fundamental representation $\rho$ by definition preserves a symplectic form, and consequently the exterior powers $\bigwedge^k\rho$ preserve a symplectic form if $k$ is odd and preserve a symmetric form if $k$ is even. In particular, the Witt ring of representations ${\rm W}^*({\rm Sp}_{2n})$ is a polynomial ${\rm W}(F)$-algebra generated by $\bigwedge^k\rho$ of degrees 0 and 2 for $k$ even and odd, respectively. This corresponds precisely to the structure of the Witt-sheaf cohomology ring which is a polynomial ${\rm W}(F)$-algebra generated by the Borel-classes ${\rm b}_{i}\in{\rm H}^{2i}({\rm BSp}_{2n},{\bf W})$ for $i=1,\dots,n$.

We omit a discussion of the situation for ${\rm GL}_n$ which would involve twisted self-dual representations to account for the Euler class being an element in twisted Witt-sheaf cohomology. 

\subsection{Finite groups}
\label{sec:finite}

We make a brief remark about the classifying spaces of a finite group scheme $G$ over $\mathbb{R}$, i.e., a finite group $G$ equipped with an involution $\iota$. For the case of the trivial involution, Proposition~\ref{prop:bg-formula} provides the following general description. This description was also obtained independently by Linda Carnevale.

\begin{proposition}
  Let $G$ be a split finite real group scheme, i.e., a finite group $G$ with trivial involution $\iota$. Then the real points of the classifying space ${\rm B}_\et G$ are described as follows:
  \[
  \left({\rm B}_\et G\right)(\mathbb{R})\simeq\bigsqcup_{\sigma\in\mathcal{I}}{\rm B}G^\sigma,
  \]
  where $\mathcal{I}$ denotes the set of conjugacy classes of elements $\sigma\in G$ with $\sigma^2=1$, and $G^\sigma=\{g\in G\mid g\sigma=\sigma g\}$ is the fixed group of the involution $\sigma$.
\end{proposition}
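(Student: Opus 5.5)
The plan is to specialize Proposition~\ref{prop:bg-formula} to a constant finite group scheme. We identify $({\rm B}_\et G)(\mathbb{R})$ with the real realization ${\rm Re}_{\mathbb{R}}[G\backslash\pt]$, using Proposition~\ref{prop:real-simplicial}. Proposition~\ref{prop:bg-formula} then gives
\[
{\rm Re}_{\mathbb{R}}[G\backslash\pt]\simeq\bigsqcup_{[g]\in{\rm H}^1({\rm C}_2,G(\mathbb{C}))}{\rm B}\bigl(G(\mathbb{C})^{\sigma_g}\bigr).
\]
Since $G$ is split, $G(\mathbb{C})=G$ as a discrete group, and complex conjugation $\sigma$ acts trivially on it. So everything reduces to computing the Galois cohomology set and the twisted fixed groups for the trivial involution.

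First, the cocycles. By the description in Section~\ref{sec:fp-gpoid-galois}, we have ${\rm Z}^1({\rm C}_2,G)=\{g\in G\mid g\sigma(g)=1\}$. With $\sigma$ trivial, this is exactly the set of elements $g$ with $g^2=1$, which includes $g=1$.

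Next, the equivalence relation. Two cocycles $g,g'$ are cohomologous if $g'=\sigma(h)gh^{-1}$ for some $h\in G$. With $\sigma$ trivial this reads $g'=hgh^{-1}$, which is ordinary conjugation. Hence ${\rm H}^1({\rm C}_2,G)$ is in bijection with the set $\mathcal{I}$ of conjugacy classes of elements of order dividing $2$.

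Then the automorphism groups. By \eqref{eq:automorphism-group}, ${\rm Aut}(g)=\{h\in G\mid \sigma(h)g=gh\}=\{h\mid hg=gh\}$. This is the centralizer of $g$, which is the fixed group of the inner involution $\operatorname{int}(g)$, i.e.\ $G^\sigma$ in the notation of the statement. Since $G$ is discrete, the topological groupoid is just a discrete groupoid, so each component of its nerve is the classifying space of this discrete centralizer. Substituting these identifications into Proposition~\ref{prop:bg-formula} yields the claimed formula.

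The only real care needed is bookkeeping. First, we must confirm that the real structure being trivial means $\sigma$ acts as the identity on $G(\mathbb{C})=G$. Second, we must check that changing the representative $g$ within its conjugacy class changes $G^{\sigma_g}$ only by conjugation, so the answer is well defined up to isomorphism. Neither point is a genuine obstacle. If anything is subtle, it is matching the notation $\sigma\in\mathcal{I}$ in the statement, where $\sigma$ denotes an involution in $G$, with the cocycle $g$ rather than with complex conjugation.
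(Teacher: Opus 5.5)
Your proposal is correct and is essentially the paper's argument: the paper obtains this statement directly by specializing Proposition~\ref{prop:bg-formula} to the trivial involution. As you do, it identifies the cocycles with elements of order dividing $2$, cohomologous cocycles with conjugate elements, and the twisted fixed groups $G(\mathbb{C})^{\sigma_g}$ with centralizers.
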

 
\begin{example}
  In the special case of cyclic groups $G={\rm C}_{2n}$ with trivial involution, we find that there are two conjugacy classes of order 2 elements, namely $\{1\}$ and $\{\gamma^n\}$ for $\gamma\in{\rm C}_{2n}$ a generator. The fixed groups are simply $G$ because $G$ is abelian. In particular,
  \begin{align}
    ({\rm B}_\et G)(\mathbb{R})={\rm BC}_{2n}\sqcup {\rm BC}_{2n}
  \end{align}
  has two connected components. For groups of odd order $n$, there is only one conjugacy class, and $({\rm B}_\et G)(\mathbb{R})={\rm BC}_n$.
\end{example}

\begin{example}
  We consider a case with non-trivial involution, the group $G=\mu_n$ of $n$-th roots of unity, with the complex conjugation. The underlying finite group is ${\rm C}_n$ and the involution is $g\mapsto g^{-1}$. In this case, any element $g\in {\rm C}_n$ is a cycle, since $gg^{-1}=1$. Twisted conjugation by $h\in{\rm C}_n$ maps $g\mapsto \iota(h)gh^{-1}=gh^{-2}$, i.e., the twisted conjugacy class of an element is its square coset $g\{h^2\mid h\in {\rm C}_n\}$. There are two classes for $n$ even and one class for $n$ odd. The fixed group $K_g=\{h\in G\mid \iota(h)g=gh\}$ of a cycle $g\in {\rm C}_n$ consists of $h$ with $\iota(h)g=gh$, which independently of $g$ is the set of order 2 elements. In particular, we get
  \begin{align}
  ({\rm B}_\et \mu_n)(\mathbb{R})=\left\{\begin{array}{ll} \ast & n \textrm{ odd}\\ {\rm BC}_2\sqcup{\rm BC}_2 & n \textrm{ even}\end{array}\right.
  \end{align}
  This is compatible with the computations of Chow--Witt rings of ${\rm B}_\et\mu_n$ in \cite{dilorenzo:mantovani} ($n$ even) as well as Andrea Lachmann's thesis \cite{lachmann} ($n$ odd).
\end{example}

\begin{example}
  If $G$ is a group of odd order with trivial involution, then $({\rm B}_\et G)(\mathbb{R})={\rm B}G$. In this case, the real cycle class map
  \[
  {\rm H}^p({\rm B}_\et G,{\bf I}^q)\otimes\mathbb{Z}[1/2]\to{\rm H}^p({\rm B}G,\mathbb{Z})
  \]
  is an isomorphism because 2 is invertible in singular cohomology. Likely a transfer argument can be used to show that 2 is invertible on ${\bf I}$-cohomology as well. More generally, odd torsion in ${\bf I}^q$-cohomology of classifying spaces of finite groups (over the real numbers) can be read off from singular cohomology of the real points.
\end{example}

\begin{remark}
  Finite groups present an interesting test case for Borel-character type statements, cf.{}~\cite{deglise:fasel:borel-character} and our discussion of the conjectural picture in Section~\ref{sec:conjectures}. On the one hand, we can consider the Witt ring of ${\rm B}_\et G$, which should be related to the real representation ring ${\rm RO}(G)$. On the other hand, we can consider the unramified Witt group of ${\rm B}_\et G$, which would be related (via the real cycle class map, after inverting 2) to the free abelian group generated by conjugacy classes of involutions (i.e., order 2 elements) in $G$. It would be interesting to understand the sheafification map from the real representation ring to the ring of involutions in this case. For now we just note that the numbers of conjugacy classes of order 2 elements and isomorphism classes of irreducible real representations do not agree in general. For example, the dihedral group ${\rm D}_8$ of order 8 has 4 conjugacy classes of involutions and 5 real irreducible representations, and the quaternion group ${\rm Q}_8$ has 2 conjugacy classes of involutions and 4 real irreducible representations.\footnote{The underlying picture here might be similar to the K-theoretic situation: for the group ${\rm C}_2$, the representation ring has underlying abelian group free of rank 2, while the Chow ring has a $\mathbb{Z}$-summand in degree 0 and a copy of $\mathbb{Z}/2\mathbb{Z}$ in each positive degree.}
\end{remark}

\subsection{Normalizer of maximal torus in \texorpdfstring{${\rm SL}_2$}{SL2}}
\label{sec:normalizer}

We now want to discuss the real cycle class map and real realization for the normalizer $N={\rm N}_{{\rm SL}_2}(T)$ of the maximal torus $T\cong\mathbb{G}_{\rm m}$  in ${\rm SL}_2$; in this case, the Witt-sheaf cohomology was computed by Levine in \cite{levine:normalizer}.

As a group, the normalizer sits in a (non-split) exact sequence
\[
1\to \mathbb{G}_{\rm m}\to {\rm N}_{{\rm SL}_2}(T)\to\mu_2\to 1.
\]
Using the standard choice for $T$, the diagonal matrices, the concrete matrix realization of the normalizer is the following:
\[
{\rm N}_{{\rm SL}_2}(T)(K)=\left\{\left(\begin{array}{cc}t&0\\0&t^{-1}\end{array}\right), \left(\begin{array}{cc}0&t\\-t^{-1}&0\end{array}\right)\mid t\in K\right\}
\]
The complex points of the normalizer are an extension of $\mathbb{C}^\times$ by $\mu_2$, with the complex conjugation acting on $\mathbb{C}^\times$.

We compute the homotopy fixed points of complex conjugation on $N(\mathbb{C})$. For the fun of it, we do the explicit matrix calculations. The objects of $[N(\mathbb{C})\backslash*]^{{\rm hC}_2}$ are elements $g\in N(\mathbb{C})$ with $g\overline{g}=1$. Any element $g\in\mathbb{G}_{\rm m}$ is of the form $g=\op{diag}(z,z^{-1})$, with the homotopy fixed point condition reducing to $z\overline{z}=\|z\|^2=1$. So one component of the homotopy fixed points is the unit circle ${\rm S}^1\subseteq\mathbb{C}^\times$. The elements of the other coset are of the form
\[
\left(\begin{array}{cc}0&z\\-z^{-1}&0\end{array}\right)
\]
with $z\in\mathbb{C}^\times$, and for these, the homotopy fixed point condition reduces to $\overline{z}=-z$. So the other component of the homotopy fixed points is given by the matrices as above with $z=b{\rm i}$, $b\in\mathbb{R}$, which form a copy of $\mathbb{R}^\times$ inside $N(\mathbb{C})$.

Next, we compute the morphisms of $[N(\mathbb{C})\backslash*]^{{\rm hC}_2}$ which are of the form $g\to \overline{h}g h^{-1}$ for $g,h\in N(\mathbb{C})$. We do the case-by-case computation. If $g=\op{diag}(z,z^{-1})$ and $h=\op{diag}(w,w^{-1})$ are in $\mathbb{G}_{\rm m}$, the morphism $g\to \overline{h}g h^{-1}$ will have target the diagonal matrix for
\[
\overline{w}zw^{-1}=\frac{\overline{w}^2}{\|w\|^2}z.
\]
In particular, the target is multiplied by a norm 1 element, and by taking square roots and conjugating, every norm 1 element is of the prescribed form. This means that these morphisms connect arbitrary homotopy fixed points from the component ${\rm S}^1$. For $h\not\in\mathbb{G}_{\rm m}$, the morphism will have target the diagonal matrix for $\overline{w}w^{-1}z^{-1}$. This contributes a couple more automorphisms, but doesn't enlarge the connected component ${\rm S}^1$.

For the other component of homotopy fixed points, let $g$ be the anti-diagonal matrix with right upper entry $z$, and $h=\op{diag}(w,w^{-1})$. Then $\overline{h}g h^{-1}$ is the anti-diagonal matrix with upper right entry $\overline{w}wz$. This means that morphisms multiply by the norm of a complex number, i.e., a morphism connects a homotopy fixed point for $z=b{\rm i}$ with any of its positive multiples. If now $h$ is the anti-diagonal matrix with upper right entry $w$, then $\overline{h}g h^{-1}$ is the anti-diagonal matrix with upper-right entry $w\overline{w}z^{-1}$. In particular, in addition to positive multiples, this exchanges the two components of $\mathbb{R}^\times$ since $z=b{\rm i}$.

We conclude that the groupoid of homotopy fixed points has two connected components, one given by matrices in the unit circle (a copy of ${\rm S}^1$), and the other one given by anti-diagonal matrices with upper-right entry totally imaginary (a copy of $\mathbb{R}^\times$).

It remains to compute the automorphism groups for these two components of homotopy fixed points. The easiest way to do this is for the identity matrix (in the component ${\rm S}^1$) and the matrix
\[
\left(\begin{array}{cc}
  0&{\rm i}\\{\rm i}&0\end{array}\right).
\]
For the identity matrix, the stabilizer group is
\[
\left\{\left(\begin{array}{cc}w&0\\0&w^{-1}\end{array}\right), \left(\begin{array}{cc}0&w\\-w^{-1}&0\end{array}\right)\mid w\in \mathbb{R}^\times\right\}.
\]
This group is essentially the split $\mathbb{R}$-form of the normalizer, it is realized as normalizer of the torus in ${\rm SL}_2(\mathbb{R})$. It contains the cyclic group ${\rm C}_4$ of order 4, generated by a Weyl group representative, as deformation retract. For the imaginary matrix, we get the stabilizer group
\[
\left\{\left(\begin{array}{cc}w&0\\0&w^{-1}\end{array}\right)\mid w\overline{w}=1\right\}\cong{\rm S}^1.
\]
In this case, the anti-diagonal matrices don't contribute because the stabilizer condition is $w\overline{w}=-1$ and thus impossible to satisfy.

Putting everything together, we have thus established the following identification:
\begin{align}
{\rm Re}_{\mathbb{R}}({\rm B}_\et{\rm N}_{{\rm SL}_2}(T))\simeq {\rm B}{\rm N}_{{\rm SL}_2}(T)(\mathbb{C})^{{\rm hC}_2}\simeq {\rm B}{\rm S}^1\sqcup {\rm B}{\rm C}_4.
\end{align}

We now briefly discuss the real cycle class map for the case of the normalizer, and relate the above homotopy fixed point computation to Marc Levine's Witt-sheaf computations in \cite{levine:normalizer}.

To understand the real cycle class map with twisted coefficients, we need to understand the cycle class map on the Picard group. The equivariant Picard group is given by the characters, and so we have
\[
  {\rm CH}^1({\rm B}_\et{\rm N}_{{\rm SL}_2}(T))\cong\mathbb{Z}/2\mathbb{Z},
\]
generated by the sign representation  ${\rm N}_{{\rm SL}_2}(T)\twoheadrightarrow\mu_2\hookrightarrow \mathbb{G}_{\rm m}$. In particular, there is one possible non-trivial twist. We can also easily compute the realization of the non-trivial line bundle. On the component ${\rm BS}^1$ we obtain the trivial line bundle, since the induced homomorphism ${\rm S}^1\hookrightarrow\mathbb{C}^\times\hookrightarrow {\rm N}_{{\rm SL}_2}(T)(\mathbb{C})\to\mu_2$ is the trivial homomorphism. On the component ${\rm BC}_4$ we obtain a nontrivial line bundle corresponding to the (nontrivial) homomorphism ${\rm C}_4\hookrightarrow{\rm N}_{{\rm SL}_2}(T)\to \mu_2$.

For the trivial twist, the real cycle class map is a morphism 
\[
{\rm H}^*({\rm B}_\et{\rm N}_{{\rm SL}_2}(T),{\bf W})\to {\rm H}^*({\rm BS}^1,\mathbb{Z})\oplus {\rm H}^*({\rm BC}_4,\mathbb{Z})
\]
which is an isomorphism after inverting 2, by Theorem~\ref{thm:real-cycle-class}. In particular, in degree 0, the two components of the homotopy fixed points are accounted for by the two generators $1$ and $\overline{q}$ (in the notation of \cite{levine:normalizer}). In degree 2, there is a polynomial generator given by the pullback of the Euler class from ${\rm BSL}_2$, which under the real cycle class map is detected on the component ${\rm BS}^1$.

For the non-trivial twist $\mathscr{L}$, we obtain a morphism 
\[
{\rm H}^*({\rm B}_\et{\rm N}_{{\rm SL}_2}(T),{\bf W}(\mathscr{L}))\to {\rm H}^*({\rm BS}^1,\mathbb{Z})\oplus{\rm H}^*({\rm BC}_4,\mathbb{Z}(\mathscr{L})),
\]
which again is an isomorphism after inverting 2. Since the line bundle $\mathscr{L}$ restricts trivially to the component ${\rm BS}^1$, we still see the nontwisted cohomology of ${\rm BS}^1$ in the twisted Witt-sheaf cohomology of ${\rm B}_\et{\rm N}_{{\rm SL}_2}(T)$. In degrees $\geq 2$, this is the free rank one module over ${\rm H}^*({\rm BSL}_2,{\bf W})$ generated by the Euler class $j^\ast{\rm e}(\mathcal{T})$, as described in \cite{levine:normalizer}. In degree 0, the global sections of the trivial local system over ${\rm BS}^1$ contribute to the singular cohomology on the right-hand side. Under the real cycle class map, this corresponds to a degree 0 class in Witt-sheaf cohomology which was missing from the original description in \cite{levine:normalizer}, but described in \cite{dangelo:normalizer}*{Proposition~3.16} and \cite{levine:atiyah-bott}*{Theorem~5.1}. All in all, we find that the equivariant real cycle class map explains satisfactorily the relation between the Witt-sheaf computation in \cite{levine:normalizer} and the homotopy fixed point picture in the real realization.

Finally, we discuss the link between characteristic classes and the symmetric representation ring. As discussed in \cite{levine:normalizer}*{Section~6}, the irreducible representations of the normalizer $N={\rm N}_{\rm SL_2}(T)$ are the following: trivial and sign representation, and then there are 2-dimensional ``dihedral'' representations where the torus $\mathbb{G}_{\rm m}$ acts with weights $n,-n$ and $\mu_2$ switches the two one-dimensional $\mathbb{G}_{\rm m}$-representations. The trivial and sign representations are obviously symmetric. The dihedral representations are essentially restrictions of representations from ${\rm SL}_2$, which means that the odd powers of the defining representation (i.e., those where the torus is acting with odd weights) are symplectic and the even powers are symmetric.

Again, the pattern matches the description of Witt-sheaf cohomology of \cite{levine:normalizer}. The two 1-dimensional representation correspond to the two degree 0 cohomology classes detecting the two real components. The dihedral representations correspond to the positive degree non-twisted cohomology classes, while the ``twisted dihedral'' representations, i.e., tensor products of dihedral and sign representations, correspond to the positive-degree twisted cohomology classes. The interesting relation $(1+\langle\overline{q}\rangle)e=0$ seems to say that the tensor product of the regular representation of $\mu_2$ with the dihedral representations carries a hyperbolic form.

\subsection{The orthogonal group \texorpdfstring{${\rm O}(1,1)$}{O(1,1)}}

We describe the homotopy fixed point space $\op{B}\op{O}(1,1)^{{\rm hC}_2}$ for the split orthogonal group ${\rm O}(1,1)$. In this small case, we again do the explicit matrix calculations. In the next subsection, we will deal with the general case of (special) orthogonal groups using the formulas from Proposition~\ref{prop:bg-formula}. 

\begin{proposition}\label{prop:O11} Let $G={\rm O}(1,1)$ be the split orthogonal group over $\mathbb{R}$, and consider ${\rm O}(1,1;\mathbb{C})$ with the corresponding antiholomorphic involution. Then we have the following equivalence:
  \[
  {\rm BO}(1,1;\mathbb{C})^{{\rm hC}_2}\simeq {\rm BO}(1,1)\sqcup {\rm BO}(2)\sqcup{\rm BO}(2).
  \]
  The orthogonal groups on the right-hand side are the real ones.
\end{proposition}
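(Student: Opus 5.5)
We apply Proposition~\ref{prop:bg-formula} and compute two things by explicit matrices: the Galois cohomology set ${\rm H}^1({\rm C}_2,{\rm O}(1,1;\mathbb{C}))$, and for each class $[g]$ the twisted fixed group ${\rm O}(1,1;\mathbb{C})^{\sigma_g}$. To make the calculation transparent, realize ${\rm O}(1,1)$ as the orthogonal group of the hyperbolic form $xy$. Then the complex points are
\[
{\rm O}(1,1;\mathbb{C})=\left\{\begin{pmatrix} t&0\\0&t^{-1}\end{pmatrix},\ \begin{pmatrix}0&t\\t^{-1}&0\end{pmatrix}\ \Big|\ t\in\mathbb{C}^\times\right\}\cong \mathbb{C}^\times\rtimes{\rm C}_2,
\]
and $\sigma$ is entrywise complex conjugation. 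The structure parallels the normalizer computation in Section~\ref{sec:normalizer}, except that the off-diagonal entry now has $+t^{-1}$.

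First determine the cocycles $g$ with $g\sigma(g)=1$. For diagonal $g=\op{diag}(z,z^{-1})$ the condition is $z\overline{z}=1$, i.e.\ $z\in{\rm S}^1$. For antidiagonal $g$ with upper-right entry $z$, the product $g\overline{g}$ is $\op{diag}(z\overline{z}^{-1},z^{-1}\overline{z})$, so the condition is $z\in\mathbb{R}^\times$. Next twisted-conjugate by $h$, using $g\mapsto \sigma(h)gh^{-1}$.
\begin{itemize}
\item Diagonal $h=\op{diag}(w,w^{-1})$ multiplies the diagonal entry $z$ by $\overline{w}/w$. This sweeps out all of ${\rm S}^1$, so the diagonal cocycles form one class, represented by $g=1$.
\item On antidiagonal cocycles, diagonal $h$ multiplies $z$ by $w\overline{w}>0$. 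Antidiagonal $h$ sends $z$ to a positive multiple of $z^{-1}$, which has the same sign.
\end{itemize}
Hence the sign of $z\in\mathbb{R}^\times$ is an invariant, and we get three classes, represented by $1$, $\begin{pmatrix}0&1\\1&0\end{pmatrix}$ and $\begin{pmatrix}0&-1\\-1&0\end{pmatrix}$. As a sanity check, this matches ${\rm H}^1(\mathbb{R},{\rm O}(q))$ classifying rank~2 forms, which have signatures $(1,1),(2,0),(0,2)$.

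Finally compute the stabilizers $\{h:\sigma(h)g=gh\}$.
\begin{itemize}
\item For $g=1$ the stabilizer is ${\rm O}(1,1;\mathbb{R})$.
\item For $g=\pm\begin{pmatrix}0&1\\1&0\end{pmatrix}$ and diagonal $h$, the condition reads $\overline{w}=w^{-1}$, so $w\in{\rm S}^1$. For antidiagonal $h$ with entry $w$, the condition likewise gives $w\in{\rm S}^1$. The stabilizer is therefore ${\rm S}^1\rtimes{\rm C}_2$, with ${\rm C}_2$ acting by inversion.
\end{itemize}
To identify this last group with ${\rm O}(2)$, observe that the twisted involution $\sigma_g$ is the real structure of the form $x\overline{x}$-type. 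Concretely, the Cayley-type change of basis $x=u+iv$, $y=u-iv$ turns $xy$ into $u^2+v^2$ and identifies ${\rm S}^1\rtimes{\rm C}_2$ with ${\rm O}(2)$; the sign $-$ gives the negative definite form, whose orthogonal group is again ${\rm O}(2)$. Assembling via Proposition~\ref{prop:bg-formula} yields ${\rm BO}(1,1)\sqcup{\rm BO}(2)\sqcup{\rm BO}(2)$.

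The main obstacle is the bookkeeping in the twisted-conjugacy step: one must verify that the sign of the antidiagonal entry is a genuine invariant, including under antidiagonal $h$, so that exactly three classes remain. The identification of the stabilizer with ${\rm O}(2)$ is routine once the explicit basis change is written down.
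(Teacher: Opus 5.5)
Your proposal is correct and is essentially the paper's argument: an explicit matrix computation, split along the coset decomposition $\mathbb{C}^\times\rtimes\mu_2$, of the cocycles $g\sigma(g)=1$, their twisted conjugacy classes under $g\mapsto\sigma(h)gh^{-1}$ (one class in the identity coset, two in the other coset, separated by the sign of a real parameter), and the corresponding stabilizers, assembled via Proposition~\ref{prop:bg-formula}. The differences are only cosmetic. You work in the hyperbolic model $xy$ with antidiagonal representatives $\pm\begin{pmatrix}0&1\\1&0\end{pmatrix}$, where the paper uses $J=\op{diag}(1,-1)$ and representatives $\op{diag}(1,-1)$, $\op{diag}(-1,1)$, and you make the identification ${\rm S}^1\rtimes{\rm C}_2\cong{\rm O}(2)$ explicit by a basis change. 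You could also state outright, as the paper does, that $\sigma(h)gh^{-1}$ always lies in the same $\mu_2$-coset as $g$, so the two cosets never mix.
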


\begin{proof}
  For the proof, we consider $\op{O}(1,1;\mathbb{C})\cong \mathbb{C}^\times\rtimes\mu_2$ and make case distinctions according to the $\mathbb{C}^\times$-coset decomposition.\footnote{Note that both ${\rm O}(1,1)$ and ${\rm N}_{{\rm SL}_2}(T)$ are extensions of the form $1\to \mathbb{C}^\times\to G\to\mu_2\to 1$, but the first is split while the second is non-split.} An element in the coset for $-1\in \mu_2$ can be written as $Jg$ for $g$ in the coset for $1\in\mu_2$ and $J=\op{diag}(1,-1)$. 
  
  We first determine the possible objects. For $z\in\mathbb{C}^\times$, the condition ${\rm N}(z)^2=z\overline{z}=1$ means $z\in {\rm S}^1$. In the other coset, we have $g=Jz$ with $z\in\mathbb{C}^\times$, and $JzJ\overline{z}=1$ implies $z=\overline{z}$, i.e., $z\in \mathbb{R}^\times$.

  Now we want to determine the morphisms. We first note that because morphisms are of the form $g\to \overline{h}g h^{-1}$, there can be no morphisms between homotopy fixed points in different cosets.

  So let $g\in{\rm S}^1$ and let $h\in\mathbb{C}^\times$. Then $\overline{h}g h^{-1}=\overline{h}h^{-1}g=\frac{\overline{h}^2}{{\rm N}(h)^2}g$. We can thus restrict to $h\in{\rm S}^1$ in which case the morphism has the shape $g\to \frac{g}{h^2}$. Therefore, any two $g,g'$ can be connected by a morphism, using $h=\sqrt{\frac{g}{g'}}$. As a result, we get one connected component. (We can also see that for $h=Jh'$ for $h'\in\mathbb{C}^\times$, there are no additional identifications.) To determine the stabilizer group, we can choose a representative $g\in{\rm S}^1$ of particular easy form - the identity matrix. We see
  \[
  K_{\op{id}}=\{h\in\op{O}(2;\mathbb{C})\mid \overline{h}=h\}=\op{O}(1,1;\mathbb{R}). 
  \]

  Now let $g=J\tau$ for $\tau\in\mathbb{R}^\times$. For $h\in\mathbb{C}^\times$, we have
  \[
  \overline{h}J\tau h^{-1}=J\overline{h}^{-1}\tau h^{-1}=J\tau\overline{h}^{-1}h^{-1}=J\tau{\rm N}(h)^{-1}.
  \]
  This implies that any two $\tau,\tau'$ in the same component of $\mathbb{R}^\times$ can be connected by a morphism (because ${\rm N}(h)>0$). On the other hand, we can also consider $h=Jh'$ for $h'\in\mathbb{C}^\times$ and compute
  \[
  \overline{h} J\tau h^{-1}=J\overline{h'} J\tau (h')^{-1}J=\overline{h'}^{-1}\tau (h')^{-1}J=\overline{h'}^{-1}(h')^{-1}J\tau^{-1}={\rm N}(h')^{-1}J\tau^{-1}.
  \]
  Again, this allows to connect homotopy fixed points in the same connected components of $\mathbb{R}^\times$. However, homotopy fixed points in different connected components of $\mathbb{R}^\times$ cannot be connected by morphisms. In particular, we get two connected components. For the respective stabilizer groups, we can again choose particularly simple representatives, namely $\op{diag}(1,-1)$ and $\op{diag}(-1,1)$. In the first case, we get $\overline{h}\op{diag}(1,-1)=\op{diag}(1,-1)h$ reduces to $\overline{h}=h^{-1}$, hence 
  \[
  K_{\op{diag}(1,-1)}=\left\{h\in \op{O}(2;\mathbb{C})\mid \overline{h}=h^{-1}\right\}=\op{O}(2;\mathbb{R}),
  \]
  and similarly for the other case $\op{diag}(-1,1)$.
\end{proof}

\subsection{Orthogonal groups and characteristic classes for quadratic forms}
\label{sec:orthogonal}

We now consider the general case of orthogonal groups ${\rm O}(p,q)$ and special orthogonal groups ${\rm SO}(p,q)$. In this generality, we will now employ the description in Proposition~\ref{prop:bg-formula} to understand the real realization of their geometric classifying spaces. We will first recall that components of the homotopy fixed point space correspond to different quadratic forms, and the stabilizer group of a homotopy fixed point is the automorphism group of the corresponding quadratic form. Combining all this, we find that the homotopy fixed point space is a disjoint union of indefinite (special) orthogonal groups, which provides preliminary information on Witt-sheaf cohomology.

The following proposition describes the connected components of the homotopy fixed point space for the complex conjugation on ${\rm SO}(p,q;\mathbb{C})$. Of course, all special orthogonal groups are isomorphic over the complex numbers, but the notation is supposed to imply that the complex conjugation on ${\rm SO}(p,q;\mathbb{C})$ is the one induced from the group being the complexification of ${\rm SO}(p,q;\mathbb{R})$. 

\begin{proposition}
  The components of ${\rm BSO}(p,q;\mathbb{C})^{{\rm hC}_2}$ are in bijective correspondence with equivalence classes of quadratic forms over $\mathbb{R}$ of dimension $n$ having the same discriminant as the quadratic form of signature $(p,q)$. More precisely, assuming $q\equiv pq\bmod 2$, the natural inclusion
  \begin{align*}
    \left\{\op{diag}(1_r,-1_s)\mid r+s=p+q \textrm{ and } s\equiv pq\bmod 2\right\}&\hookrightarrow {\rm BSO}(p,q;\mathbb{C})^{{\rm hC}_2}\\
    \op{diag}(1_r,-1_s)&\mapsto \op{diag}(1_r,-1_s)\cdot\op{diag}(1_p,-1_q)
  \end{align*}
  induces a bijection onto $\pi_0{\rm BSO}(p,q;\mathbb{C})^{{\rm hC}_2}$.
\end{proposition}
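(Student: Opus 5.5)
By Proposition~\ref{prop:fp-gpoid-galois}, $\pi_0{\rm BSO}(p,q;\mathbb{C})^{{\rm hC}_2}$ is in natural bijection with the Galois cohomology set ${\rm H}^1({\rm C}_2,{\rm SO}(p,q;\mathbb{C}))$. The proof therefore has two parts: compute this set, and then check that the cocycles named in the statement represent every class exactly once. I will fix the model in which ${\rm SO}(p,q)$ is the special orthogonal group of $J_{p,q}=\op{diag}(1_p,-1_q)$, sitting inside ${\rm SO}(n;\mathbb{C})$ with $n=p+q$.

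\textbf{Identifying the cohomology set.} Write $\sigma(g)=J_{p,q}\overline{g}J_{p,q}$ for the twisted conjugation. The standard Galois-cohomological identification (Serre, Ch.~III) should then be as follows. The short exact sequence $1\to{\rm SO}\to{\rm O}\xrightarrow{\det}\mu_2\to1$ gives an exact sequence of pointed sets
\[
{\rm O}(p,q)\xrightarrow{\det}\{\pm1\}\to{\rm H}^1(\sigma,{\rm SO})\to{\rm H}^1(\sigma,{\rm O})\xrightarrow{\det}{\rm H}^1({\rm C}_2,\mu_2).
\]
Here ${\rm H}^1(\sigma,{\rm O})$ is the set of isometry classes of nondegenerate quadratic forms of dimension $n$, that is, the signatures $(r,s)$. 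The last map sends such a form to its discriminant relative to $J_{p,q}$. The first map is surjective, since ${\rm O}(p,q)$ contains elements of determinant $-1$. Using the twisting principle at every base point, the fibres of ${\rm H}^1(\sigma,{\rm SO})\to{\rm H}^1(\sigma,{\rm O})$ are therefore points. Consequently ${\rm H}^1(\sigma,{\rm SO})$ injects into the set of forms of discriminant $(-1)^q$, i.e.\ it is in bijection with signatures $(r,s)$ with $r+s=n$ and $s\equiv q\bmod 2$.

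\textbf{Matching with the statement.} A 1-cocycle for $\sigma$ is an element $g$ with $gJ\overline{g}J=1$. For $g=\op{diag}(1_r,-1_s)\cdot J_{p,q}$, all entries are real and $\op{diag}(1_r,-1_s)$ commutes with $J_{p,q}$, so $g\sigma(g)=1$ holds. The determinant is $(-1)^{s+q}$, which equals $1$ by the parity hypothesis. (The statement's hypothesis $q\equiv pq\bmod 2$ makes the condition $s\equiv pq$ coincide with $s\equiv q$.) It remains to trace the bijection ${\rm H}^1(\sigma,{\rm O})\cong\{\text{forms}\}$ to see which form this cocycle produces. A cocycle $g$ corresponds to the form whose Gram matrix is $gJ$, up to the convention used for $\sigma$. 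For our $g$ this gives $\op{diag}(1_r,-1_s)$ or its conjugate, whose signature is $(r,s)$. Hence the listed elements hit each admissible class exactly once.

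\textbf{Expected obstacle.} The main work lies in the fibre computation and in the bookkeeping of conventions. For the former, I need surjectivity of $\det$ on every twisted group ${\rm O}(r,s)$, which holds because reflections exist. For the latter, I need to verify the precise form attached to the cocycle $\op{diag}(1_r,-1_s)J_{p,q}$, rather than some inverse or transpose of it. The discriminant/parity condition is the other place where errors are likely, so I will check it directly via determinants as above.
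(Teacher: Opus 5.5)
Your proposal is correct and follows essentially the same route as the paper: pass to ${\rm H}^1({\rm C}_2,{\rm SO}(p,q))$ via Propositions~\ref{prop:fp-gpoid-galois} and~\ref{prop:bg-formula}, identify that set with forms of the same discriminant, and check the representatives. For the middle step the paper simply cites Serre and Adams--Ta\"ibi, whereas you derive it from the $\det$-sequence, twisting and reflections, which is the standard argument behind that citation. The one step you only assert is that the cocycle $g=\op{diag}(1_r,-1_s)J_{p,q}$ yields the form $\op{diag}(1_r,-1_s)$, and the paper proves this by computing the real vectors of the twisted conjugation. In your model ($\mathrm{SO}(n;\mathbb{C})$ with $\sigma(g)=J_{p,q}\overline{g}J_{p,q}$), the fixed vectors of $v\mapsto gJ_{p,q}\overline{v}=\op{diag}(1_r,-1_s)\overline{v}$ are real in the first $r$ coordinates and imaginary in the last $s$. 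Hence $v^{\rm t}v$ restricts to a form of signature $(r,s)$. Since $g$ is a real diagonal involution, the inverse/transpose/conjugate ambiguity you worry about cannot arise.
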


\begin{proof}  
  From the formula in Proposition~\ref{prop:bg-formula}, we find that the components of the homotopy fixed point space are in bijective correspondence with the first Galois cohomology set ${\rm H}^1({\rm C}_2,{\rm SO}(p,q))$. These are the strong inner forms of the real group ${\rm SO}(p,q)$. The identification in terms of equivalence classes of quadratic form with discriminant condition is classical, cf.{}~e.g.{}~\cite{serre:galois-cohom}*{Section III.3.2}, or \cite{adams:taibi}*{Example 8.20}.

  For the more precise statement giving representatives of homotopy fixed points, we first note that the matrices clearly are homotopy fixed points as they satisfy $g\sigma(g)=1$. The description of the map may seem a bit strange but it takes a diagonal matrix describing a quadratic form $\varphi$ of the same discriminant to the matrix we need to twist the given form $(1_p,-1_q)$ with to obtain $\varphi$. For example, the matrix $\op{diag}(1_p,-1_q)$ will map to the identity matrix which in the homotopy fixed point space corresponds exactly to the given form of signature $(p,q)$.

  We briefly recall how the matrices in the homotopy fixed points are used to change the quadratic form over $\mathbb{R}$. The complex conjugation $\sigma$ is the one associated to the quadratic form/orthogonal group of signature $(p,q)$. For a real vector $x$, i.e., one invariant under $\sigma$, we then obtain the value of the real quadratic form by $x^{\rm t}\cdot \op{diag}(1_p,-1_q)\cdot x$. Given a diagonal matrix $J$ as in the statement, we get another involution $g\mapsto J\sigma(g)J$ by twisting/conjugation. The ``real'' vectors for this involution are the ones having purely imaginary components where $J$ has entries $-1$. For such vectors $x$, which we can think of as $\sqrt{J}y$ with $y$ a real vector, we get $x^{\rm t}\cdot \op{diag}(1_p,-1_q)\cdot x=y^{\rm t}\cdot J\op{diag}(1_p,-1_q)\cdot y$. Similarly, the matrices fixed by this involution will have purely imaginary entries in rows and columns corresponding to $-1$ entries of $J$, and by removing the imaginary multiples, we get real matrices preserving the form given by $J\op{diag}(1_p,-1_q)$. This means that the correspondence between quadratic forms over $\mathbb{R}$ and homotopy fixed points is as claimed.
\end{proof}

The same argument extends to a description of representatives for the homotopy fixed point space in the case of orthogonal groups. 

\begin{proposition}
  The components of ${\rm BO}(p,q;\mathbb{C})^{{\rm hC}_2}$ are in bijective correspondence with equivalence classes of quadratic forms over $\mathbb{R}$ of dimension $n$. More precisely, for ${\rm O}(p,q;\mathbb{C})$, the natural inclusion
  \[
  \left\{\op{diag}(1_r,-1_s)\mid r+s=n\right\}\hookrightarrow {\rm BO}(p,q;\mathbb{C})^{{\rm hC}_2}
  \]
  induces a bijection onto $\pi_0{\rm BO}(p,q;\mathbb{C})^{{\rm hC}_2}$.
\end{proposition}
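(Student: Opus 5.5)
Following the proof of the preceding proposition for special orthogonal groups, I will identify the components via Proposition~\ref{prop:bg-formula} with the Galois cohomology set ${\rm H}^1({\rm C}_2,{\rm O}(p,q;\mathbb{C}))$. By the classical correspondence (\cite{serre:galois-cohom}*{Section III.3.2}), this set classifies ${\rm O}(p,q)$-torsors over $\mathbb{R}$, i.e.\ isometry classes of nondegenerate quadratic forms of dimension $n=p+q$ over $\mathbb{R}$. By Sylvester's law of inertia these are exactly the forms $\langle 1_r,-1_s\rangle$ with $r+s=n$, so there are $n+1$ components and both sides of the claimed bijection have $n+1$ elements. In contrast to the ${\rm SO}$ case, there is no discriminant constraint: the determinant condition defining ${\rm SO}$ is what forces $s\equiv pq \bmod 2$ there, and it is absent here.

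Next, I will check that the given diagonal matrices are homotopy fixed points. Here I interpret the inclusion as before, sending $\op{diag}(1_r,-1_s)$ to $J=\op{diag}(1_r,-1_s)\cdot\op{diag}(1_p,-1_q)$; this is the normalization that makes $\op{diag}(1_p,-1_q)$ go to the identity, matching the previous proposition. Such a $J$ is a real diagonal matrix with $J^2=1$, and it is orthogonal for the form $\op{diag}(1_p,-1_q)$. Since $\sigma(J)=J$, we get $J\sigma(J)=1$, so $J\in{\rm Z}^1({\rm C}_2,{\rm O}(p,q;\mathbb{C}))$.

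For the bijection itself, I will reuse the twisting computation from the ${\rm SO}$ proof. The twisted involution $\sigma_J=\op{int}(J)\circ\sigma$ has fixed vectors of the form $\sqrt{J}y$ with $y$ real. On these vectors the form $\op{diag}(1_p,-1_q)$ restricts to the real form $J\op{diag}(1_p,-1_q)=\op{diag}(1_r,-1_s)$. Hence the torsor, equivalently the real form, associated with the cocycle $J$ corresponds to the quadratic form of signature $(r,s)$. Distinct $(r,s)$ give non-isometric forms, so the map is injective, and by the counting argument above it is surjective. As a byproduct, the stabilizer ${\rm O}(p,q;\mathbb{C})^{\sigma_J}$ is identified with ${\rm O}(r,s)$.

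The main obstacle is making the classical identification of ${\rm H}^1$ with quadratic forms compatible with these explicit cocycles, so that the map really is the one described. The twisting computation in the previous paragraph carries exactly this compatibility, and it goes through verbatim.
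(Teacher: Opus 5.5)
Your proposal is correct and is essentially the paper's argument, which simply says the proof for ${\rm SO}(p,q)$ carries over: reduce to ${\rm H}^1({\rm C}_2,{\rm O}(p,q;\mathbb{C}))$ via Proposition~\ref{prop:bg-formula}, use the classical classification of real quadratic forms, check that the diagonal matrices are cocycles, and use the twisting computation $v=\sqrt{J}y$ to identify the form belonging to each cocycle. Your reading of the inclusion as $\op{diag}(1_r,-1_s)\mapsto\op{diag}(1_r,-1_s)\cdot\op{diag}(1_p,-1_q)$ is the right one and is actually needed: if the matrices $\op{diag}(1_r,-1_s)$ were used literally as cocycles, they would give the forms $\op{diag}(1_r,-1_s)\op{diag}(1_p,-1_q)$, which for $p=q=1$ hit signature $(1,1)$ twice (from $\pm 1$) and miss the negative definite form.
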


Now that we have concrete representatives for the components, we can deduce explicit descriptions of the stabilizer groups for such explicit representatives.

\begin{proposition}
  For $J=\op{diag}(1_r,-1_s)\in {\rm BO}(p,q;\mathbb{C})^{{\rm hC}_2}$, the stabilizer group is 
  \[
  K_J=\left\{g\in G\mid \sigma(g)J=J g\right\}.
  \]
  This is an orthogonal group, the automorphism group of the quadratic form corresponding to $\op{diag}(1_p,-1_q)\cdot\op{diag}(1_r,-1_s)$. A similar statement is true for special orthogonal groups.
\end{proposition}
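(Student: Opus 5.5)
The plan is to read off the stabilizer from the general formula \eqref{eq:automorphism-group} and then identify it concretely by an explicit change of coordinates, mirroring the argument in the proof of the proposition on ${\rm BSO}(p,q;\mathbb{C})^{{\rm hC}_2}$.

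\emph{Step 1: the stabilizer as a twisted fixed group.} First, $J=\op{diag}(1_r,-1_s)$ lies in ${\rm O}(p,q;\mathbb{C})$ (it is diagonal with entries $\pm1$, so it preserves any diagonal form). Since $J$ is real, $\sigma(J)=J$ and $J\sigma(J)=J^2=1$, so $J$ is a Galois $1$-cocycle. By Example~\ref{ex:hc2-orbits}, or equivalently \eqref{eq:automorphism-group}, the automorphism group of the object $J$ in the fixed-point groupoid consists of the $h$ with $\sigma(h)Jh^{-1}=J$. This gives exactly $K_J=\{g\mid \sigma(g)J=Jg\}=G(\mathbb{C})^{\sigma_J}$ with $\sigma_J=\op{int}(J)\circ\sigma$.

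\emph{Step 2: identify $K_J$ as a real orthogonal group.} Write $Q=\op{diag}(1_p,-1_q)$, so that $G(\mathbb{C})=\{g\mid g^{\rm t}Qg=Q\}$ and $\sigma$ is entrywise complex conjugation. Let $D=\op{diag}(1_r,{\rm i}\cdot 1_s)$, so that $D^2=J$, $\overline{D}=D^{-1}$ and $\overline{D}=DJ$.

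For $g\in K_J$, set $y=D^{-1}gD$. Using $J=J^{-1}$, the condition $\overline g=JgJ$ gives
\[
\overline{y}=\overline{D}^{-1}\,\overline{g}\,\overline{D}=JD^{-1}\cdot JgJ\cdot DJ=D^{-1}gD=y,
\]
since $J$ commutes with $D$. So $y$ is a real matrix.

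Next, from $g^{\rm t}Qg=Q$ we get $y^{\rm t}(DQD)y=DQD$, using $D^{\rm t}=D$. Because all three matrices are diagonal, $DQD=QD^2=QJ$.

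So $g\mapsto D^{-1}gD$ is an isomorphism of Lie groups $K_J\cong{\rm O}(QJ;\mathbb{R})$, the isometry group of the real form $\op{diag}(1_p,-1_q)\cdot\op{diag}(1_r,-1_s)$. The inverse sends a real isometry $y$ of $QJ$ to $DyD^{-1}$. Reversing the same computations shows that $DyD^{-1}$ lies in ${\rm O}(p,q;\mathbb{C})$ and satisfies the twisted fixed-point condition.

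\emph{Step 3: the special orthogonal case.} Conjugation by $D$ preserves determinants, so restricting to $\det=1$ gives $K_J\cong{\rm SO}(QJ;\mathbb{R})$ in the ${\rm SO}$ case. There we need $J$ with $\det J=1$, or else the twisted representative $J\cdot Q$ from the ${\rm SO}$ proposition; the same computation applies with $J$ replaced by that diagonal matrix.

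The main obstacle is bookkeeping rather than ideas: one must check that the diagonal square root $D$ interacts correctly with both the conjugation and the transpose, since $D$ is not unitary in the naive sense. The key identities are $\overline{D}=D^{-1}=DJ$, together with the fact that all the matrices involved are diagonal and hence commute.
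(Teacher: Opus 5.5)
Your proof is correct and is the paper's argument made explicit. The paper only observes that $K_J$ is the fixed group of the twisted involution $g\mapsto J\sigma(g)J$; the identification of that fixed group with the real isometry group of $\op{diag}(1_p,-1_q)J$ is done in the proof of the preceding ${\rm SO}$ proposition, by rescaling the coordinates where $J=-1$ by ${\rm i}$ and ``removing the imaginary multiples'', which is exactly your conjugation $g\mapsto D^{-1}gD$ with $D=\sqrt{J}$.

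One harmless slip in your closing remark: $D$ is in fact unitary ($\overline{D}^{\rm t}D=1$); the bookkeeping is nontrivial because $D$ is neither real nor orthogonal ($D^{\rm t}D=J$).
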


\begin{proof}
  The defining equation of $K_J$ states that this is the fixed group for the new involution $g\mapsto J\sigma(g)J$.
\end{proof}

We can now combine the previous information on connected components and stabilizers to formulate more precisely the description of Proposition~\ref{prop:bg-formula} of the homotopy fixed points for (special) orthogonal groups. 

\begin{theorem}
  \label{thm:realization-orthogonal}
  Let $G={\rm (S)O}(p,q)$ be the real (special) orthogonal group of signature $(p,q)$. We assume that $q\equiv pq\bmod 2$. Then the following statements describe the homotopy fixed point space of the complexification of $G$:
  \begin{align*}
  {\rm BO}(n;\mathbb{C})^{{\rm hC}_2}&\simeq \bigsqcup_{p+q=n}{\rm BO}(p,q)\\
  {\rm BSO}(p,q;\mathbb{C})^{{\rm hC}_2}&\simeq \bigsqcup_{r+s=p+q;\; s\equiv pq\bmod 2}{\rm BSO}(r,s).
  \end{align*}
\end{theorem}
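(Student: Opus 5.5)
The proof is a direct assembly of Proposition~\ref{prop:bg-formula} with the three preceding propositions on orthogonal groups. By Proposition~\ref{prop:bg-formula}, ${\rm B}G(\mathbb{C})^{\rm hC_2}$ decomposes as $\bigsqcup_{[g]}{\rm B}(G(\mathbb{C})^{\sigma_g})$, indexed by ${\rm H}^1({\rm C}_2,G)$. So two things must be identified: the index set, with explicit representatives, and the twisted fixed groups $G(\mathbb{C})^{\sigma_g}$ for those representatives.

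For ${\rm O}(n)$ I would start from the split-signature model ${\rm O}(p,q;\mathbb{C})$ with its conjugation $\sigma$. By the second proposition above, the components are represented by $J=\op{diag}(1_r,-1_s)$ with $r+s=n$, one for each real quadratic form of rank $n$. By the stabilizer proposition, the automorphism group of the component of $J$ is $K_J=\{g\mid \sigma(g)J=Jg\}$, which is the real orthogonal group of the form $\op{diag}(1_p,-1_q)\cdot J$.

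To turn $K_J$ into a standard ${\rm O}(r',s')$, I would conjugate by the diagonal matrix $\sqrt{J}$, with entries $1$ and ${\rm i}$. This carries the fixed points of $J\sigma(-)J$ to the real points of the group preserving the diagonal form $\op{diag}(1_p,-1_q)J$, exactly as in the proof of the ${\rm SO}$-proposition. The signature $(r',s')$ of that product form runs over all pairs with $r'+s'=n$ as $J$ ranges over the representatives, since each signature arises from exactly one $J$. Reindexing gives $\bigsqcup_{p+q=n}{\rm BO}(p,q)$. This is independent of the starting $(p,q)$, as it must be by Corollary~\ref{cor:cohom-inv-strong-forms}, because all ${\rm O}(p,q)$ of rank $n$ are strong inner forms of one another.

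For ${\rm SO}(p,q)$ the argument is the same, using the first proposition: the representatives $J$ are restricted to $s\equiv pq\bmod 2$, which encodes the fixed discriminant. The fixed group of $J\sigma(-)J$ in ${\rm SO}(p,q;\mathbb{C})$ is the determinant-one part of the orthogonal group of $\op{diag}(1_p,-1_q)J$, namely ${\rm SO}(r',s')$.

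The only step needing care is the bookkeeping that matches the parity condition on $J$ with the condition on the resulting signature $(r',s')$ in the second formula. The negative index of $\op{diag}(1_p,-1_q)J$ is determined by $s$ and $q$ and by the overlap of the $-1$ blocks, so its parity is $q+s$ mod $2$. Under the hypothesis $q\equiv pq\bmod 2$, this has to be reconciled with the stated indexing $s\equiv pq$ in the theorem. I expect this matching to be a convention issue, resolved by the reindexing bijection $J\mapsto \op{diag}(1_p,-1_q)J$, which matches each allowed $J$ with a pair $(r',s')$ of the same discriminant class. Writing out that bijection explicitly, rather than anything conceptual, is the main obstacle.
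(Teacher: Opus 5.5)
Your strategy matches the paper's. The theorem is just Proposition~\ref{prop:bg-formula} combined with the component and stabilizer propositions, and your $\sqrt{J}$-conjugation correctly identifies the fixed group of $J\sigma(-)J$ with the real orthogonal group of $\op{diag}(1_p,-1_q)J$. But the step you defer as ``a convention issue'' is where your version breaks, because you use $J=\op{diag}(1_r,-1_s)$ itself as the Galois cocycle.

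The cocycle attached to the form $\op{diag}(1_r,-1_s)$ is $c_J=\op{diag}(1_r,-1_s)\cdot\op{diag}(1_p,-1_q)$. This is exactly the map written in the ${\rm SO}$-proposition, and the ${\rm O}$-proposition has to be read with the same map. The stabilizer of $c_J$ is then the orthogonal group of $\op{diag}(1_p,-1_q)\,c_J=\op{diag}(1_r,-1_s)$, i.e.\ ${\rm O}(r,s)$ resp.\ ${\rm SO}(r,s)$. The theorem then follows with no reindexing at all. Moreover $\det c_J=(-1)^{s+q}$, so $c_J\in{\rm SO}(p,q;\mathbb{C})$ precisely when $s\equiv q\equiv pq\bmod 2$; this is where the indexing condition comes from.

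With $J$ itself as cocycle, both of your bookkeeping claims are false. First, the negative index of $\op{diag}(1_p,-1_q)J$ is $|r-p|$, so the signatures do not each arise exactly once. For $(p,q)=(1,1)$, the cocycles $\pm I$ both give $(1,1)$ (they are cohomologous), and $(0,2)$ never occurs. So your $J$'s do not even represent $\pi_0$.

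Second, in the ${\rm SO}$ case with $q$ odd (hence $p$ odd by the hypothesis), a $J$ with $s\equiv q$ has $\det J=-1$. So it is not an element of ${\rm SO}(p,q;\mathbb{C})$ and not a cocycle. The twisted form has negative index $\equiv q+s\equiv 0$, which is the wrong discriminant. Your proposed reindexing $J\mapsto\op{diag}(1_p,-1_q)J$ cannot repair this, since it shifts that parity by $q$. Concretely, for $(p,q)=(1,1)$ your recipe returns ${\rm BSO}(2)$. But ${\rm SO}(1,1)\cong\mathbb{G}_{\rm m}$ has trivial ${\rm H}^1$ by Hilbert~90, and the correct answer, as stated in the theorem, is ${\rm BSO}(1,1)$.
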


As the Witt-sheaf cohomology of orthogonal groups has not been computed so far, the above results provide some insight into what we should expect. Using the equivariant real cycle class map, we get the following preliminary computation of Witt-sheaf cohomology of classifying spaces of orthogonal groups over $\mathbb{R}$, with half-integer coefficients: 
\begin{align}
{\rm H}^*_{\rm Zar}({\rm B}_\et{\rm O}(n),{\bf W}[1/2])\cong \bigoplus_{p+q=n}{\rm H}^*_{\rm sing}({\rm BO}(p,q),\mathbb{Z}[1/2]).
\end{align}
As a particular consequence, Witt-sheaf cohomology contains information on all the real forms of a given orthogonal group, and the associated characteristic classes for indefinite orthogonal bundles.

We can interpret this as follows: assume we have a real scheme $X/\mathbb{R}$ and an ${\rm O}(n)$-torsor $E/X$. For each connected component of $X(\mathbb{R})$, the real points of $E$ form a torsor for an indefinite orthogonal group ${\rm O}(p,q)$, but the individual $p$ and $q$ can vary between connected components of $X(\mathbb{R})$. Moreover, the relevant characteristic classes and their degrees vary with $(p,q)$. They will always be Pontryagin and Euler classes, but which ones appear exactly depends on $(p,q)$. In this picture, we can interpret the fact that ${\rm H}^0({\rm B}_\et{\rm O}(n),{\bf W}[1/2])\cong\mathbb{Z}[1/2]^{n+1}$: components with different $(p,q)$ can be separated by degree 0 cohomology classes. Considering twisted cohomology, or the Witt-sheaf cohomology of ${\rm B}_\et{\rm SO}(p,q)$, we also find that there are higher-degree algebraic classes corresponding to Euler-classes for ${\rm (S)O}(p,q)$-bundles in case $p$ or $q$ is even. These classes are not algebraic Euler classes for rank $n$ quadratic forms, but they are connected to algebraic Euler classes of smaller rank quadratic forms via stabilization. 

It can be shown that this is indeed (almost) the formula for Witt-sheaf cohomology of orthogonal groups over general base fields of characteristic $\neq 2$. In fact, the present paper arose mostly from an attempt to make sense of small rank computations for ${\rm O}(n)$ with $n\leq 4$. With a candidate description of the Witt-sheaf cohomology ring arising from the above homotopy fixed point description, it is possible to inductively compute Witt-sheaf cohomology of orthogonal groups inductively using Vistoli's stratification method, following the arguments in \cite{RojasVistoli}. In this argument some boundary maps produce 2-torsion in Witt-sheaf cohomology over general fields which isn't visible over the real numbers.

We finally make some brief remarks on the symmetric representations and degree 0 Witt-sheaf cohomology. For the special orthogonal groups, the defining representation by definition preserves a symmetric form. Therefore, all the exterior powers of the defining representation preserve a symmetric form as well. Additionally, there are half-spin representations for ${\rm SO}(2n)$ and these are self-dual for $n$ even. In particular, the symmetric representation ring is very close to the usual representation ring, quite similar to the well-known close relation between ${\rm R}({\rm SO}(n))$ and ${\rm RO}({\rm SO}(n))$ in the Lie-group case.

The degree 0 Witt-sheaf cohomology has already been computed in \cite{garibaldi:merkurjev:serre}*{Theorem~27.16}, the result for the orthogonal group is
\[
{\rm H}^0({\rm B}_\et{\rm O}(n),{\bf W})\cong {\rm W}(F)^{\oplus(n+1)},
\]
with the generators on the right-hand side corresponding to the exterior powers of the fundamental representation, considered as unramified quadratic forms over the classifying space. The augmentation map ${\rm W}^*(G)\to {\rm H}^0({\rm B}_\et{\rm O}(n),{\bf W})$ is then the ring homomorphism mapping the exterior powers of the fundamental representation to the corresponding classes in Witt-sheaf cohomology.

Similarly, for ${\rm SO}(n)$, we get that degree 0 Witt-sheaf cohomology is a free ${\rm W}(F)$-module, with the rank depending on $n$. For $n=2m+1$, the rank is $m+1$, with generators the first half of the exterior powers of the fundamental representation. For $n=4m$ and $n=4m+2$, the rank is $2m+1$. In the former case, the generators are the first half of the exterior powers, plus a half-spin representation. In the latter case, the half-spin representation isn't symmetric, and we only have the first $2m+1$ exterior powers of the fundamental representation.

\subsection{Remarks on cohomological invariants of spin groups and exceptional groups}
\label{sec:cohom-inv}

Using the formula in Proposition~\ref{prop:bg-formula}, we can also easily derive information on Witt-sheaf cohomological invariants for many other real groups, using the computations of Galois cohomology sets in \cite{adams:taibi}. Essentially, using the results on the equivariant real cycle class map in Theorem~\ref{thm:real-cycle-class}, the isomorphism
\[
  {\rm H}^0({\rm B}_\et G,{\bf W})\otimes\mathbb{Z}[1/2]\to {\rm H}^0(({\rm B}_\et G)(\mathbb{R}),\mathbb{Z}[1/2])\cong\mathbb{Z}[1/2]\left(\pi_0\left(({\rm B}_\et G)(\mathbb{R})\right)\right)
\]
implies that the rank of ${\rm H}^0({\rm B}_\et G,{\bf W})$ (as module over ${\rm W}(\mathbb{R})\cong\mathbb{Z}$) can be identified with the number of connected components of the real realization ${\rm Re}_{\mathbb{R}}({\rm B}_\et G)$. In particular, the number of connected components of $({\rm B}_\et G)(\mathbb{R})$ equals the number of $\mathbb{Z}[1/2]$-linearly independent Witt-sheaf cohomological invariants for $G$, see \cite{garibaldi:merkurjev:serre} for a discussion of Witt-invariants of groups. We first formulate this observation:

\begin{proposition}
  \label{prop:cohom-inv}
  Let $G$ be a split reductive group over $\mathbb{R}$. Then there is an isomorphism
  \[
  {\rm H}^0({\rm B}_\et G,\mathbf{W}[1/2])\cong {\rm H}^0({\rm B}G(\mathbb{C})^{\rm hC_2},\mathbb{Z}[1/2])\cong\mathbb{Z}\langle{\rm H}^1({\rm C}_2,G(\mathbb{C}))\rangle.
  \]
  Consequently, up to torsion, the number of Witt-sheaf cohomological invariants for $G$-torsors over $\mathbb{R}$ equals the number $\#{\rm H}^1({\rm C}_2,G(\mathbb{C}))$ of strong real forms of $G(\mathbb{R})$ with trivial central invariant.
\end{proposition}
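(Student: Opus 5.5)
The plan is to take the equivariant real cycle class map of Theorem~\ref{thm:real-cycle-class} in cohomological degree $p=0$, for the trivial action $G\looparrowright\pt$ and trivial twist, and then identify both sides. First, the Witt sheaf ${\bf W}$ should be viewed as the colimit $\op{colim}_q{\bf I}^q$ along multiplication by $\langle\!\langle-1\rangle\!\rangle$; equivalently, one uses ${\bf I}^q\cong{\bf W}$ after localization. Cohomology of the stack is computed on a fixed approximation $U\times_{/G}\pt$ and commutes with this filtered colimit. By part (4) of Theorem~\ref{thm:real-cycle-class}, ${\rm cyc}_{\mathbb{R}}$ is an isomorphism after inverting $2$. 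Hence we get
\[
{\rm H}^0({\rm B}_\et G,\mathbf{W})\otimes\mathbb{Z}[1/2]\cong {\rm H}^0({\rm Re}_{\mathbb{R}}{\rm B}_\et G,\mathbb{Z}[1/2]).
\]
Since $\mathbb{Z}[1/2]$ is flat, the left side equals ${\rm H}^0({\rm B}_\et G,\mathbf{W}[1/2])$; here one should check briefly that Zariski ${\rm H}^0$ commutes with $-\otimes\mathbb{Z}[1/2]$, which holds because localization is a filtered colimit.

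Next, the target is identified using Theorem~\ref{thm:main1} and Proposition~\ref{prop:real-simplicial}, which give ${\rm Re}_{\mathbb{R}}({\rm B}_\et G)\simeq{\rm B}G(\mathbb{C})^{\rm hC_2}$. Proposition~\ref{prop:bg-formula} then decomposes this space as $\bigsqcup_{[g]\in{\rm H}^1({\rm C}_2,G)}{\rm B}(G(\mathbb{C})^{\sigma_g})$. Each summand is connected, being the classifying space of a topological group. So ${\rm H}^0$ with $\mathbb{Z}[1/2]$-coefficients is the product of copies of $\mathbb{Z}[1/2]$ indexed by ${\rm H}^1({\rm C}_2,G(\mathbb{C}))$. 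This set is finite for a real reductive (indeed any real linear algebraic) group by the classical finiteness of real Galois cohomology, cf.~\cite{serre:galois-cohom}, so the product is the free module $\mathbb{Z}[1/2]\langle{\rm H}^1({\rm C}_2,G(\mathbb{C}))\rangle$. The statement's $\mathbb{Z}\langle\cdots\rangle$ should be read after tensoring with $\mathbb{Z}[1/2]$.

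For the counting consequence, note that ${\rm W}(\mathbb{R})\cong\mathbb{Z}$. The rank of ${\rm H}^0({\rm B}_\et G,\mathbf{W})$ modulo torsion therefore equals $\#{\rm H}^1({\rm C}_2,G(\mathbb{C}))$, since rank is unchanged by inverting $2$. For split $G$, Section~\ref{sec:strong-real-forms} identifies ${\rm H}^1(\sigma,G)$ with $\mathcal{S}_{\sigma,1}(G)$, the strong real forms of trivial central invariant. The split hypothesis enters only in fixing the inner class and base point $\sigma$, and in the finiteness and generation facts from \cite{adams:taibi}.

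The main obstacle I expect is the first step. Theorem~\ref{thm:real-cycle-class} is stated for ${\bf I}^q$, not for ${\bf W}$, so one must justify passing to ${\bf W}[1/2]$. The approach I would try first is that, after inverting $2$, the maps ${\bf I}^{q+1}\to{\bf I}^q$ become isomorphisms and ${\bf W}[1/2]\cong{\bf I}^q[1/2]$, because ${\bf W}/{\bf I}^q$ is $2$-primary torsion. If this route fails, the fallback is to use the signature ${\bf W}\to a_\ret\mathbb{Z}$ directly together with Jacobson's result after inverting $2$. Either way this should be routine.
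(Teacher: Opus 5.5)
Your argument is the paper's own. The equivariant real cycle class map of Theorem~\ref{thm:real-cycle-class} in degree $0$ becomes an isomorphism after inverting $2$. Proposition~\ref{prop:bg-formula} then identifies $\pi_0$ of ${\rm Re}_{\mathbb{R}}({\rm B}_\et G)$ with ${\rm H}^1({\rm C}_2,G(\mathbb{C}))$, and Section~\ref{sec:strong-real-forms} matches that set with strong real forms of central invariant $1$.

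One slip: $\op{colim}_q{\bf I}^q$ along multiplication by $\langle\!\langle-1\rangle\!\rangle$ is $a_{\ret}\mathbb{Z}$ by Jacobson's theorem, not ${\bf W}$, so your first sentence is only correct after inverting $2$. This is harmless, and your anticipated obstacle does not arise, because ${\bf W}={\bf I}^0$ and the theorem applies directly at $q=0$.
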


The relevant numbers of connected components of $({\rm B}_\et G)(\mathbb{R})$, alternatively strong real forms, can then be read off from the tables on pp. 1093/1094 in \cite{adams:taibi}, for any simply-connected almost simple group $G$ over $\mathbb{R}$. In the case of the spin groups, we get the following result (which we only spell out for the split forms of the spin groups):

\begin{corollary}
  \label{cor:spin}
  For the split spin groups over $\mathbb{R}$, we get the following formulas for the number of Witt-sheaf cohomological invariants (up to torsion) from the tables in \cite{adams:taibi}:
  \begin{align}
  \op{rk}_{\mathbb{Z}}{\rm H}^0({\rm B}_\et \op{Spin}(n,n+1),\mathbf{W})&\cong
  \left\lfloor \frac{2n+1}{4}\right\rfloor+\left\{\begin{array}{ll}
  2 & n\equiv 0,3\bmod 4\\
  1 & n\equiv 1\bmod 4\\
  0 & n\equiv 2\bmod 4\\
  \end{array}\right.\\
  \op{rk}_{\mathbb{Z}}{\rm H}^0({\rm B}_\et \op{Spin}(n,n),\mathbf{W})&\cong
  \left\lfloor \frac{n}{2}\right\rfloor+\left\{\begin{array}{ll}
  3 & n\equiv 0\bmod 4\\
  1 & n\equiv 1\bmod 4\\
  0 & n\equiv 2,3\bmod 4\\
  \end{array}\right.
  \end{align}
\end{corollary}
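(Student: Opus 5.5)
The plan is to reduce Corollary~\ref{cor:spin} to Proposition~\ref{prop:cohom-inv}, which gives
\[
\op{rk}_{\mathbb{Z}}{\rm H}^0({\rm B}_\et G,\mathbf{W})=\#{\rm H}^1({\rm C}_2,G(\mathbb{C})).
\]
Two inputs are needed. First, ${\rm H}^0({\rm B}_\et G,{\bf W})$ is a finitely generated abelian group: by Remark~\ref{rem:big-enough} it is computed on a single smooth scheme approximation. Its rank is therefore the $\mathbb{Z}[1/2]$-rank after inverting $2$. Second, ${\rm W}(\mathbb{R})\cong\mathbb{Z}$. With these, the real cycle class map of Theorem~\ref{thm:real-cycle-class}(4) and Proposition~\ref{prop:bg-formula} identify the rank with $\#\pi_0\,{\rm Re}_{\mathbb{R}}({\rm B}_\et G)$. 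In the language of Section~\ref{sec:strong-real-forms}, this is the number $\#\mathcal{S}_{\sigma,1}(G)$ of strong real forms with trivial central invariant, relative to the split real form $\sigma$. So the whole task is to evaluate $\#{\rm H}^1(\sigma,\op{Spin})$ for the split involution in types $B_n$ and $D_n$.

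For this count I would use the tables of \cite{adams:taibi}, which list, for each simply connected almost simple group and each inner class, the numbers of strong real forms with each central invariant. The procedure is as follows.

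\textbf{Type $B_n$.} The split form $\op{Spin}(n,n+1)$ lies in the unique inner class. I read off the count for the central invariant of the split form. Since ${\rm H}^1(\sigma,G)\cong\mathcal{S}_{\sigma,1}$, and central invariants are measured relative to $\sigma$, this is the fibre containing the split form itself.

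\textbf{Type $D_n$.} The split form $\op{Spin}(n,n)$ lies in the equal-rank inner class when $n$ is even and in the other inner class when $n$ is odd. I read off the row corresponding to the central invariant $z=\op{inv}(g_{\rm split})$.

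As a sanity check on the indexing, I would compare against the orthogonal picture of Theorem~\ref{thm:realization-orthogonal}. The map ${\rm H}^1(\op{Spin})\to{\rm H}^1(\op{SO})$ has image the forms ${\rm SO}(r,s)$ of the same discriminant and the same Hasse/Clifford invariant as the split form. Counting these pairs $(r,s)$ with $s\equiv n\bmod 4$-type congruences gives the $\lfloor(2n+1)/4\rfloor$, resp.\ $\lfloor n/2\rfloor$, term. The fibres over $\pm$ definite and compact-type forms, where the twisted ${\rm H}^1({\rm C}_2,\mu_2)$ contributes extra classes, produce the periodic correction $0,1,2,3$ depending on $n\bmod 4$. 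Concretely, I would use the exact sequence
\[
{\rm H}^1(\sigma_g,\mu_2)\to{\rm H}^1(\sigma_g,\op{Spin})\to{\rm H}^1(\sigma_g,{\rm SO})\to{\rm H}^2(\sigma_g,\mu_2)
\]
and twisting: the fibre over $[g]$ is the orbit set ${\rm H}^1({\rm C}_2,\mu_2)/\pi_0({\rm SO}(r,s))$ acting via the connecting map. This fibre has size $2$ exactly when the spinor norm on ${\rm SO}(r,s)(\mathbb{R})$ is trivial, which happens when $r=0$ or $s=0$. It has size $1$ otherwise.

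The main obstacle is this bookkeeping in the last step. One has to determine which $(r,s)$ have Clifford invariant equal to that of the split form, and which of them are definite, so that they contribute two components instead of one. The result depends on $n\bmod 4$ and, for $D_n$, additionally on $n\bmod 8$ via the Hasse invariant of the definite forms. I would first try to read this directly off \cite{adams:taibi}*{Section~10}. Then I would verify the case split $n\equiv0,1,2,3\bmod 4$ using the spinor-norm fibre count above, checking small cases such as $\op{Spin}(1,2)\cong{\rm SL}_2$, where the count is $1$, as consistency tests.
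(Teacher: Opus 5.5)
Your reduction via Proposition~\ref{prop:cohom-inv} to counting $\#{\rm H}^1(\sigma,\op{Spin})$, followed by reading the counts off the tables of \cite{adams:taibi}, is exactly the paper's argument. Your spinor-norm cross-check also works and reproduces both formulas. Relative to the base forms $(n+1,n)$ resp.\ $(n,n)$, the image in ${\rm H}^1(\sigma,{\rm SO})$ is $\{(r,s)\colon s\equiv n \bmod 4\}$, and the fibres have size $2$ exactly over the definite forms. However, nothing depends on $n \bmod 8$. Also, part of the periodic correction (e.g.\ the $+1$ for $n\equiv 1\bmod 4$) comes from the number of admissible $s$, not from definite fibres.
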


Since the Witt-sheaf cohomological invariants for spin groups haven't been determined in general, the real cycle class map seems to provide at least some preliminary information about these cohomological invariants.

\begin{remark}
  We compared the numbers of connected components inferred from the tables in \cite{adams:taibi} with the numbers of mod 2 cohomological invariants in the known cases, for spin groups $\op{Spin}(n)$ for $n\leq 12$, in \cite{garibaldi:spin}*{Table 21B on p.62}, see also the discussion in the thesis of Weinzierl \cite{weinzierl}. The numbers seem to match up for all $n\leq 11$, with two caveats. First, the constant form $\langle 1\rangle$ as a generator of ${\rm H}^0({\rm B}_\et G,{\bf W})$ would be counted as a cohomological invariant in our formulas above, but isn't counted in \cite{garibaldi:spin}, leading to a consistent discrepancy of 1 in the numbers. Second, mod 2 cohomological invariants would be more related to global sections of Milnor K-theory sheaves arising from the fundamental ideal filtration on Witt-sheaf cohomological invariants, which could potentially lead to differences. This happens for $\op{Spin}(12)$, where the numbers of strong real forms and mod 2 cohomological invariants disagree. Note, however, that the computations in \cite{garibaldi:spin} assume $\sqrt{-1}$ is contained in the base field. In particular, the  results in loc.{}~cit.{}~don't actually apply to the real numbers, and the discrepancy might also be due to the number of cohomological invariants for $\mathbb{R}$ and $\mathbb{C}$ being different.
\end{remark}

As a final remark, we note that results like Corollary~\ref{cor:spin} can also be obtained for all the simply-connected exceptional groups, using the tables in pp. 1093/1094 in \cite{adams:taibi}. For ${\rm B}_\et{\rm G}_2$ the rank of global Witt-sheaf sections is 2, for ${\rm B}_\et{\rm F}_4$ it is 3, in accordance with Serre's computations in \cite{garibaldi:merkurjev:serre}*{Example~27.20}. For ${\rm G}_2$, the additional generator is the norm form, for ${\rm F}_4$, the two generators besides $\langle 1\rangle$ are Pfister forms commonly called $f_3$ and $f_5$. 

For a new result, we deduce from the Adams--Ta\"ibi tables that $\op{rk}_{\mathbb{Z}}{\rm H}^0({\rm B}_\et {\rm E}_6,\mathbf{W})$ is 2 for inner forms and 3 for outer forms of the split group of type ${\rm E}_6$ over $\mathbb{R}$. This difference between inner and outer forms might be related to an observation of Kameko--Tezuka--Yagita \cite{kameko:tezuka:yagita} on Milnor mod 2 cohomological invariants. One might also wonder if it's possible to write down unramified quadratic forms in ${\rm H}^0({\rm B}_\et {\rm E}_6,\mathbf{W})$ explicitly; likely they will arise from suitable symmetric representations.

In any case, the possible relation between strong real forms and cohomological invariants (both for the Witt-sheaf as well as mod 2 Milnor K-theory sheaves) seems an interesting questions to pursue further. 

\subsection{Realization for vector bundles on quotient stacks}
\label{sec:vb-realization}

We briefly discuss the realization of (symmetric) vector bundles on quotient stacks, and outline how these can be described more explicitly using Proposition~\ref{prop:bg-formula}. In the simplest case of equivariant line bundles, this describes the cycle class map ${\rm CH}^1_G(X)\to {\rm H}^1({\rm Re}_{\mathbb{R}}[G\backslash X],\mathbb{F}_2)$ relevant for twisted coefficients. In the case of equivariant symmetric vector bundles, this will allow to relate characteristic classes in the cohomology of $[G\backslash X]$ to their topological counterparts on ${\rm Re}_{\mathbb{R}}[G\backslash X]$.

We begin with a discussion of vector bundles. Viewing a vector bundle as a morphism of quotient stacks $[G\backslash X]\to [{\rm GL}_n\backslash *]$, we can simply apply the real realization functor to this morphism. As discussed in Section~\ref{sec:examples} below, ${\rm Re}_{\mathbb{R}}([{\rm GL}_n\backslash *])\cong{\rm BGL}_n(\mathbb{R})$. Therefore, the real realization of a vector bundle on a stack is a morphism
\[
{\rm Re}_{\mathbb{R}}([G\backslash X])\to {\rm BGL}_n(\mathbb{R}),
\]
i.e., a real vector bundle on the real realization of the quotient stack. 

Let us describe in somewhat more detail how to compute this realization in the case of a classifying stack $[G\backslash *]$ for a linear algebraic group $G$. In this case, any equivariant vector bundle is the associated bundle for the universal $G$-torsor and a representation $\rho\colon G\to{\rm GL}_n$. We'll denote the algebraic vector bundle on $[G\backslash*]$ by $V_\rho$. Now by Proposition~\ref{prop:bg-formula}, each component of $[G\backslash*](\mathbb{R})$ corresponds to a strong real form of $G$. Alternatively, the components of the realization are parametrized by (conjugacy classes of) anti-holomorphic involutions $\sigma$ on $G(\mathbb{C})$, and the component corresponding to $\sigma$ is ${\rm B}G(\mathbb{C})^\sigma$. Now for every conjugacy class of anti-holomorphic involution $\sigma$ on $G(\mathbb{C})$, there is a representative such that $G(\mathbb{C})\to{\rm GL}_n(\mathbb{C})$ is compatible with the involutions ($\sigma$ on $G$ and complex conjugation on ${\rm GL}_n$). This induces a representation $\rho_\sigma\colon G(\mathbb{C})^\sigma\to{\rm GL}_n(\mathbb{R})$ (well-defined up to conjugation), and the induced morphism ${\rm B}G(\mathbb{C})^\sigma\to{\rm BGL}_n(\mathbb{R})$ classifies the real realization ${\rm Re}_{\mathbb{R}}(V_\rho)$ on the component ${\rm B}G(\mathbb{C})^\sigma$ of ${\rm Re}_{\mathbb{R}}[G\backslash *]$.\footnote{Note that in this way, the real realization of algebraic vector bundles links real vector bundles over different strong real forms of $G$. We wonder what the significance of this observation is for the Langlands classification of representations of real Lie groups and Langlands parameter spaces.}

\begin{example}
  One interesting example of this situation is the realization for equivariant line bundles on the classifying space ${\rm B}_\et N$ of the normalizer $N$ of the maximal torus in ${\rm SL}_2$. This is discussed in more details in Section~\ref{sec:normalizer}. The Picard group in this case is ${\rm CH}^1_N(*)\cong\mathbb{Z}/2\mathbb{Z}$, and the nontrivial line bundle is induced by the sign representation $N\to\mu_2\hookrightarrow\mathbb{G}_{\rm m}$. The explicit computation of homotopy fixed points in Section~\ref{sec:normalizer} shows that there are two components, ${\rm BS}^1$ and ${\rm BC}_4$. The group ${\rm S}^1$ is embedded as diagonal matrices with complex norm 1 entries, while the group ${\rm C}_4$ is generated by the Weyl group element. It is then immediate that the real realization of this line bundle is the trivial line bundle on ${\rm BS}^1$ and the line bundle corresponding to the sign representation of ${\rm C}_4$ on ${\rm BC}_4$. The fact that this line bundle is non-trivial (on ${\rm BC}_4$ and therefore on the whole space) but trivial on the component ${\rm BS}^1$ means that the non-twisted Euler class from ${\rm BS}^1$ is also related to the twisted Witt-sheaf cohomology. For more details, see Section~\ref{sec:normalizer}.
\end{example}

Now we can consider a similar question for symmetric vector bundles, viewed as morphisms of quotient stacks $[G\backslash X]\to[{\rm O}(n)\backslash*]$. Note that due to the \'etale-local nature of quotient stacks, it doesn't matter which concrete quadratic form (of rank $n$, over the base field $F$) is chosen in the definition of our orthogonal group ${\rm O}(n)$. As discussed in Section~\ref{sec:examples}, the realization of ${\rm B}_\et{\rm O}(n)$ is a disjoint union of classifying spaces of indefinite orthogonal groups over $\mathbb{R}$. This means that applying real realization to an equivariant symmetric vector bundle of rank $n$ produces a morphism
\begin{align}
  \label{eq:symmetric-realization}
{\rm Re}_{\mathbb{R}}([G\backslash X])\to\bigsqcup_{p+q=n}{\rm BO}(p,q),
\end{align}
meaning that each component of the realization of the quotient stack has an ${\rm O}(p,q)$-bundle over it, with the signature $(p,q)$ depending on the component. 

Again, in the case of a classifying space, an equivariant symmetric vector bundle is a symmetric representation $\rho\colon G\to{\rm O}(n)$ of the group $G$. The representation $\rho$ induces a morphism ${\rm H}^1({\rm C}_2;G)\to {\rm H}^1({\rm C}_2;{\rm O}(n))$ of Galois cohomology groups. This morphism determines the induced map on $\pi_0$ of the morphism~\eqref{eq:symmetric-realization}, i.e., it determines which component ${\rm BO}(p,q)$ a given component of ${\rm Re}_{\mathbb{R}}({\rm B}_\et G)$ will map to. As before, if $\sigma$ is an antiholomorphic involution corresponding to a strong real form of $G$, the orthogonal bundle on the relevant component will be classified by a morphism ${\rm B}G(\mathbb{C})^\sigma\to {\rm BO}(p,q)$. This morphism is induced from the representation $G(\mathbb{C})^\sigma\to{\rm O}(p,q)$ obtained from $\rho\otimes\mathbb{C}$ by passing to fixed groups of the relevant anti-holomorphic involutions.

The description above still doesn't quite explain what the realization of a given symmetric representation of ${\rm O}(n)$ will look like over each component. The short answer is that the fundamental representation of the algebraic group ${\rm O}(n)$ will realize to the fundamental representation of the Lie group ${\rm O}(p,q)$ over the component ${\rm BO}(p,q)$. In particular, the quadratic bundle over ${\rm BO}(p,q)$ will have signature $p-q$, which allows to separate connected components of the real realization of ${\rm B}_\et{\rm O}(n)$ in terms of signatures of symmetric representations. Since all symmetric representations of ${\rm O}(n)$ are built via exterior and symmetric powers from the fundamental one, this explains how to compute realization of symmetric vector bundles over ${\rm B}_\et {\rm O}(n)$. A more detailed explanation might be given elsewhere, for now we discuss one basic example on the realization of symmetric representations, the group ${\rm O}(1)=\mu_2$.

\begin{example}
  Consider the sign representation on $\mu_2$, equipped with the quadratic form $\langle 1\rangle$, as a symmetric representation of the linear group $\mu_2$ over $\mathbb{R}$. We can now complexify, consider the associated vector bundle on $\left(\mathbb{C}^n\setminus\{0\}\right)/\mu_2$, and compute the restriction to the fixed point loci under complex conjugation.

  To compute the restriction of the bundle, we want to take the fixed locus of complex conjugation on the quotient $\left(\left(\mathbb{C}^n\setminus\{0\}\right)\times\mathbb{C}\right)/{\mu_2}$, where of course $\mu_2$ acts on all components by the sign representation. In there, a point is fixed if either all coordinates are real, or all coordinates are imaginary. 

  We first consider the component of the fixed locus where all coordinates are real. The realization is a real line bundle over $\mathbb{RP}^{n-1}$; the bundle is nontrivial, because $\mu_2$ acts by the sign representation on the $\mathbb{C}$-component in $\left(\mathbb{C}^n\setminus\{0\}\right)\times\mathbb{C}$. The bundle also has a quadratic form, induced from the form $x\mapsto x^2$ on the $\mathbb{C}$-component. Since the form on the real bundle is obtained by restriction to the real axis $\mathbb{R}\subset\mathbb{C}$, the real realization of the sign representation with form $\langle1\rangle$ is again the sign representation with form $\langle1\rangle$ on the component with all coordinates real. 

  Now we consider the bundle over the other component, where all coordinates are imaginary. The total space of the real line bundle over $\mathbb{RP}^{n-1}$ in this case consists of those points in $\left(\left(\mathbb{C}^n\setminus\{0\}\right)\times\mathbb{C}\right)/{\mu_2}$, where all coordinates are imaginary. Again, since $\mu_2$ acts by sign representation on $\mathbb{C}$, this is the nontrivial bundle over $\mathbb{RP}^{n-1}$. The quadratic form on this real bundle is now the restriction of the form $x\mapsto x^2$ to the imaginary axis ${\rm i}\mathbb{R}$. This means that the real realization of the algebraic symmetric line bundle now corresponds to the sign representation with the form $\langle-1\rangle$. 

  We see that real realizations of algebraic bundles with symmetric forms can have different signatures over the different components of the real realization.
\end{example}

\end{document}